\newcommand{\nc}{\newcommand}
\renewcommand{\frak}{\mathfrak}
\providecommand{\cal}{\mathcal}
\renewcommand{\bold}{\mathbf}
\numberwithin{equation}{section}
\newcommand{\pfname}{Proof.}
\newenvironment{pfof}[1]{\vskip-\lastskip\vskip\medskipamount{\it
    Proof of #1.}}%
                      {$\square$\vskip\medskipamount\par}
\newtheorem{thm}{Theorem}[section]
\newtheorem{theorem}[thm]{Theorem}%[subsection]
\newtheorem{prop}[thm]{Proposition}%[subsection]
\newtheorem{proposition}[thm]{Proposition}%[subsection]
\newtheorem{lemma}[thm]{Lemma} %[subsection]
\theoremstyle{definition}
\newtheorem{definition}[thm]{Definition}%[subsection]
\theoremstyle{definition}
\newtheorem{remark}[thm]{Remark}%[subsection]
\newtheorem{example}[thm]{Example} %[subsection]
\newtheorem{question}[thm]{Question}%[subsection]
\nc{\Theorem}[1]{Theorem~{#1}}
\nc{\Th}[1]{({\sl Th.}~#1)}
\nc{\Thd}[2]{({\sl Th.}~{#1} {#2})}
\nc{\Theorems}[2]{Theorems~{#1} and ~{#2}}
\nc{\Thms}[2]{({\it Thms. ~{#1} and ~{#2}})}
\nc{\Lemmas}[2]{Lemma~{#1} and ~{#2}}
\nc{\manga}[6]{({\it Thms. ~ #1, ~ #2, ~ #3,\\ ~ #4, ~ #5, ~ #6})}
\nc{\Prop}[1]{({\sl Prop.}~{#1})}
\nc{\Proposition}[1]{Proposition~{#1}}
\nc{\Propositions}[2]{Propositions~{#1} and ~{#2}}
\nc{\Props}[2]{({\sl Props.}~{#1} and ~{#2})}
\nc{\Cor}[1]{({\sl Cor.}~{#1})}
\nc{\Corollary}[1]{Corollary~{#1}}
\nc{\Corollaries}[2]{Corollaries~{#1} and ~{#2}}
\nc{\Definition}[1]{Definition~{#1}}
\nc{\Defn}[1]{({\sl Def.}~{#1})}
\nc{\Lemma}[1]{Lemma~{#1}} 
\nc{\Lem}[1]{({\sl Lem.} ~{#1})} 
\nc{\Eq}[1]{equation~({#1})}
\nc{\Equation}[1]{Equation~({#1})}
\nc{\Section}[1]{Section~{#1}}
\nc{\Sections}[1]{Sections~{#1}}
\nc{\Sec}[1]{({\sl Sec.} ~{#1})} 
\nc{\Chapter}[1]{Chapter~{#1}}
\nc{\Chapt}[1]{({\sl Ch.}~{#1})}
\nc{\Ex}[1]{{\sl Ex.}~{#1}}
\nc{\Exa}[1]{{\sl Example}~{#1}}
\nc{\Example}[1]{{\sl Example}~{#1}}
\nc{\Examples}[1]{{\sl Examples}~{#1}}
\nc{\Exercise}[1]{{\sl Exercise}~{#1}}
\nc{\Rem}[1]{({\sl Rem.}~{#1})}
\nc{\Remark}[1]{{\sl Remark}~{#1}}
\nc{\Remarks}[1]{{\sl Remarks}~{#1}}
\nc{\Note}[1]{{\sl Note}~{#1}}
\nc{\Conjecture}[1]{Conjecture~{#1}}
\nc \Proof{{  \it Proof. }}
\nc{\xmu}{\mu}
\nc{\w}{\omega}
\nc{\xv}{\mbox{\boldmath$x$}}
\nc{\uv}{\mbox{\boldmath$u$}}
\nc{\xiv}{\mbox{\boldmath$\xi$}}
\nc{\bbeta}{\mbox{\boldmath$\beta$}}
\nc{\balpha}{\mbox{\boldmath$\alpha$}}
\nc{\bgamma}{\mbox{\boldmath$\gamma$}}
\nc{\bdelta}{\mbox{\boldmath$\delta$}}
\nc{\bepsilon}{\mbox{\boldmath$\epsilon$}}
\nc \Ab{{\ensuremath{\bold A}}}
\nc \ab{{\ensuremath{\bold a}}}
\nc \bb{{\ensuremath{\bold b}}}
\nc \cb{{\ensuremath{\bold c}}}
\nc \Bb{{\ensuremath{\bold B}}}
\nc \Gb{{\ensuremath{\bold G}}}
\nc \Qb{{\ensuremath{\bold Q}}}
\nc \Rb{{\ensuremath{\bold R}}} \nc \Cb{{\ensuremath{\bold C}}} 
\nc \Eb{{\ensuremath{\bold E}}}
\nc \eb{{\ensuremath{\bold e}}}
\nc \Db{{\ensuremath{\bold D}}}
\nc \Fb{{\ensuremath{\bold F}}}
\nc \ib{{\ensuremath{\bold i}}}
\nc \jb{{\ensuremath{\bold j}}}
\nc \kb{{\ensuremath{\bold k}}}
\nc \Kb{{\ensuremath{\bold K}}}
\nc \nb{{\ensuremath{\bold n}}}
\nc \rb{{\ensuremath{\bold r}}}
\nc \Ob{{\ensuremath{\bold O}}}
\nc \Pb{{\ensuremath{\bold P}}}
\nc \pb{{\ensuremath{\bold p}}}
\nc \qb{{\ensuremath{\bold q}}}
\nc \SPb{{\ensuremath{\bold {SP}}}}
\nc \Zb{{\ensuremath{\bold Z}}} 
\nc \zb{{\ensuremath{\bold z}}} 
\nc \gb{{\ensuremath{\bold g}}} 
\nc \fb{{\ensuremath{\bold f}}} 
\nc \ub{{\ensuremath{\bold u}}} 
\nc \vb{{\ensuremath{\bold v}}} 
\nc \yb{{\ensuremath{\bold y}}} 
\nc \xb{{\ensuremath{\bold x}}} 
\nc \Xb{{\ensuremath{\bold X}}} 
\nc \xib{{\ensuremath{\bold \xi}}} 
\nc \Nb{{\ensuremath{\bold N}}} 
\nc \Hb{{\ensuremath{\bold H}}} 
\nc \wb{{\ensuremath{\bold w}}} 
\nc \Wb{{\ensuremath{\bold W}}} 
\nc \syz{{\mathbf {syz}}}
\nc \bnoll{{\ensuremath{\bold 0}}} 
\nc \mf{\frak m} \nc \mh{\hat{\m}} 
\nc \nf{\frak n}
\nc \Of{\frak O}
\nc \of{\frak o}
\nc \rf{\frak r}
\nc \tf{\frak t}
\nc \mufr{{\mathbf \mu}}
\nc \hf{\frak h} 
\nc \qf{\frak q} 
\nc \bfr{\frak b} 
\nc \kfr{\frak k} 
\nc \pfr{\frak p} 
\nc \af{\frak a }
\nc \cf{\frak c }
\nc \sfr{\frak s} 
\nc \ufr{\frak u} 
\nc \g{\frak g} 
\nc \gA{\g_{\Ao}} 
\nc \lfr{\frak l}
\nc \afr{\frak a}
\nc \gfh{\hat {\frak g}}
\nc \gl{\frak { gl }}
\nc \Sl{\frak {sl}}
\nc \SU{\frak {SU}}
\nc{\Homf}{\frak{Hom}}
\newcommand{\on}{\operatorname}
\nc\hankel{\on {Hankel}}
\nc\row{\on {row\ }}
\nc\nullity{\on {nullity }}
\nc\col{\on {col\ }}
\nc\rowm{\on {Row \ }}
\nc\loc{\on {lc \ }}
\nc\nullo{\on {null\ }}
\nc\Nul{\on {Nul\ }}
\nc \Ann {\on {Ann }}
\nc \Ass {\on {Ass \ }}
\nc \Coker {\on {Coker}}
\nc \Co{\on C}
\nc \Diag{\on {Diag}}
\nc \Homo{\on {Hom}}
\nc \Ker {\on {Ker}}
\nc \omod{\on{mod}}
\nc \No {\on N}
\nc \NN {\on {NN}}
\nc \NGo {\on {NG}}
\nc \Oo {\on O}
\nc \ch {\on {ch}}
\nc \rko {\on {rk}}
\nc \Sing {\on {Sing\ }}
\nc \Reg {\on {Reg}}
\nc \CoI {\on {CI}}
\nc \CoM {\on {CM}}
\nc \Gor {\on {Gor}}
\nc \Type {\on {Type}}
\nc \can {\on {can}}
\nc \Top {\on {T}}
\nc \Tr {\on {Tr}}
\nc \rel {\on {rel}}
\nc \tr {\on {tr}}
\nc \sgn {\on {sgn }}
\nc \trdeg {\on {tr.deg}}
\nc \codim {\on {codim }}
\nc \coht {\on {coht}}
\nc \divo {\on {div \ }}
\nc \coh {\on {coh}}
\nc \Clo {\on {Cl}}
\nc \embdim{\on {embdim}}
\nc \ed{\on {ed}}
\nc \embcodim{\on {embcodim  }}
\nc \qcoh {\on {qcoh}}
\nc \grad {\on {grad}\ }
\nc \grade {\on {grade}}
\nc \hto {\on {ht}}
\nc \depth {\on {depth}}
\nc \prof {\on {prof}}
\nc \reso{\on {res}}
\nc \ind{\on {ind}}
\nc \prodo{\on {prod}}
\nc \coind{\on {coind}}
\nc \Con{\on {Con}}
\nc \Crit{\on {Crit}}
\nc \Der{\on {Der}}
\nc \Char{\on {Char}}
\nc \Ch{\on {Ch}}
\nc \Ext{\on {Ext}}
\nc \Eo{\on {E}}
\nc \End{\on {End}}
\nc \ad{\on {ad}}
\nc \Ad{\on {Ad}}
\nc \gr{\on {gr}}
\nc \Fo{\on {F}}
\nc \Gr{\on {Gr}}
\nc \Go{\on {G}}
\nc \GFo{\on {GF}}
\nc \Glo{\on {Gl}}
\nc \PGlo{\on {PGl}}
\nc \Ho{\on {H}}
\nc \CMo{\on {\CM}}
\nc \SCM{\on {SCM}}
\nc \hol{\on {hol}}
\nc{\sgd}{\on{sgd}}
\nc \supp{\on {supp}}
\nc \ssupp{\on {s-supp}}
\nc \singsupp{\on {singsupp}}
\nc \msupp{\on {msupp}}
\nc \spec{\on {spec}}
\nc \spano{\on {span }}
\nc \Span{\on {Span }}
\nc \Max{\on {Max}}
\nc \Mat{\on {Mat}}
\nc \Min{\on {Min}}
\nc \nil{\on {nil}}
\nc \Loc{\on {Loc}}
\nc \Mod{\on {Mod}}
\nc \Rad {\on {Rad}}
\nc \rad {\on {rad}}
\nc \rank {\on {rank}}
\nc \range {\on {range}}
\nc \Slo{\on {SL}}
\nc \soc {\on {soc}}
\nc \Irr {\on {Irr}}
\nc \Reo {\on {Re}}
\nc \Imo {\on {Im}}
\nc \SSo{\on {SS}}
\nc \lub{\on {lub}}
\nc \gldim{\on {gl.d.}}
\nc \length{\on {length}}
\nc \pdo{\on {p.d.}} 
\nc \fdo{\on {f.d.}} 
\nc \ido{\on {i.d.}} 
\nc \dSSo{\dot {\SSo}}
\nc \So{\on S}
\nc \Io{\on I}
\nc \Jo{\on J}
\nc \jo{\on j}
\nc \Ko{\on K}
\nc \PBW{\Ac_{PBW}}
\nc \Ro{\on R}
\nc \To{\on T}
\nc \Ao{\on A}
\nc \Do{{\on D}}
\nc \Bo{\on B}
\nc \Po{\on P}
\nc \Qo{\on Q}
\nc \Zo{\on Z}
\nc \wt{\on {wt}}
\nc \Uh{\hat {\U}}
\nc \T{\on T}
\nc \Lo{\on L}
\nc{\dop}{\on d}
\nc{\eo}{\on e}
\nc{\ado}{\on{ad}}
\nc{\Tot}{\on{Tot}}
\nc{\Aut}{\on{Aut}}
\nc{\sinc}{\on {sinc}}
\nc{\overrightleftarrows}[2]{\overset{#1}{\underset{#2}{\rightleftarrows}}}
\nc{\CCF}{\cal{CF}}
\nc{\CDF}{\cal{DF}}
\nc{\CHC}{\check{\cal C}}
\nc{\Cone}{\on{Cone}}
\nc{\dec}{\on{dec}}
\nc{\Diff}{\on{Diff}}
\nc{\dirlim}{\underset{\to}{\on{lim}}}
\nc{\dpar}{\partial}
\nc{\GL}{\on{GL}}
\nc{\glo}{\on{gl}}
\nc{\CGr}{\cal{G}r}
\nc{\pr}{\on{pr}}
\nc{\semid}{|\!\!\!\times}
\nc{\Hom}{\on{Hom}}
\nc \RHom{\on {RHom}}
\nc \Proj{\mathrm {Proj\ }}
\nc \proj{\mathrm {proj}}
\nc{\Id}{\on{Id}}
\nc{\id}{\on{id}}
\nc{\Ima}{\on{Im}}
\nc{\invtimes}{\underset{\gets}{\otimes}}
\nc{\invlim}{\underset{\gets}{\on{lim}}}
\nc{\Lie}{\on{Lie}}
\nc{\re}{\on{Re }}
\nc{\Pic}{\on{Pic }}
\nc{\LPic}{\on{LPic }}
\nc{\Sch}{\on{Sch}}
\nc{\Sh}{\on{Sh}}
\nc{\Set}{\on{Set}}
\nc{\spo}{\on{sp\  }}
\nc{\Spec}{\on{Spec}}
\nc{\mSpec}{\on{mSpec}}
\nc{\Specb}{\bold {Spec}}
\nc{\Projb}{\bold {Proj}}
\nc{\Specan}{\on{Specan}}
\nc{\Spo}{\on{Sp}}
\nc{\Spf}{\on{Spf}}
\nc{\sym}{\on{sym}}
\nc{\symm}{\on{symm}}
\nc{\rop}{\on{r}}
\nc{\Td}{\on{Td}}
\nc{\Tor}{\on{Tor}}
\nc{\Alg}{\on {Alg}}
\nc{\Artin}{\cal{A}rtin}
\nc{\Dgcoalg}{\cal{D}gcoalg} \nc{\Dglie}{\cal{D}glie}
\nc{\Ens}{\cal{E}ns} \nc{\Fsch}{\cal{F}sch}
\nc{\Groupoids}{\cal{G}roupoids}
\nc{\Holie}{\cal{H}olie}
\nc{\Mor}{\cal{M}or}
\nc{\CF}{\ensuremath{\cal{F}}}
\nc \Kc{{\ensuremath{\cal K}}}
\nc \Lc{{\ensuremath{\cal L}}}
\nc \lcc{{\mathcal l}} 
\nc \CC{{\ensuremath{\cal C}}} 
\nc \Cc{{\ensuremath {\cal C}}}
\nc \Pc{{\ensuremath{\cal P}}}
\nc \Dc{\ensuremath{\mathcal D}}
\nc \DC{\ensuremath{\mathcal C}}
\nc \Ac{{\ensuremath{\cal A}}} 
\nc \Bc{{\ensuremath{\cal B}}}
\nc \Ec{{\ensuremath{\cal E}}}
\nc \Fc{{\ensuremath{\cal F}}}
\nc \Mcc{{\ensuremath{\cal M}}} 
\nc \hM{\hat{\Mcc}} 
\nc \bM{\bar {\Mcc}} 
\nc\hbM{\hat{\bar \Mcc}}  
\nc \Nc{{\ensuremath{\cal N}}}
\nc \Hc{{\ensuremath{\cal H}}} 
\nc \Ic{{\ensuremath{\cal I}}} 
\nc \Oc{\ensuremath{{\cal O}}}
\nc \Och{\hat{\cal O}} 
\nc \Sc{{\ensuremath{{\cal S}}}}
\nc \Tc{\ensuremath{{\cal T}}} 
\nc \Vc{{\ensuremath{{\cal V}}}} 
\nc{\CA}{{\ensuremath{{\cal A}}}}
\nc{\CB}{{\ensuremath{{\cal B}}}}
\nc{\Gc}{{\ensuremath{{\cal G}}}}
\nc{\CH}{\ensuremath{\mathcal H}}
\nc{\CI}{{\ensuremath{{\cal I}}}}
\nc{\CM}{{\ensuremath{{\cal M}}}}
\nc{\CN}{{\ensuremath{{\cal N}}}}
\nc{\CO}{{\ensuremath{{\cal O}}}}
\nc{\Rc}{{\ensuremath{{\cal R}}}}
\nc{\CT}{{\ensuremath{\mathcal T}}}
\nc{\CU}{\ensuremath{{\cal U}}}
\nc{\CV}{\ensuremath{{\cal V}}}
\nc{\CZ}{\ensuremath{{\cal Z}}}
\nc{\Homc}{\ensuremath{{\cal {Hom}}}}
\nc{\fa}{\frak{a}}
\nc{\fA}{\frak{A}}
\nc{\fg}{\frak{g}}
\nc{\fh}{\frak{h}}
\nc{\fI}{\frak{I}}
\nc{\fK}{\frak{K}}
\nc{\fm}{\frak{m}}
\nc{\fP}{\frak{P}}
\nc{\fS}{\frak{S}}
\nc{\ft}{\frak{t}}
\nc{\fX}{\frak{X}}
\nc{\fY}{\frak{Y}}
\nc{\bF}{\bar{F}}
\nc{\bCP}{\bar{\cal{P}}}
\nc{\bm}{\mbox{\bf{m}}}
\nc{\bT}{\mbox{\bf{T}}}
\nc{\hB}{\hat{B}}
\nc{\hC}{\hat{C}}
\nc{\hP}{\hat{P}}
\nc{\htest}{\hat P}
\nc{\nen}{\newenvironment}
\nc{\ol}{\overline}
\nc{\ul}{\underline}
\nc{\ra}{\to}
\nc{\lla}{\longleftarrow}
\nc{\lra}{\longrightarrow}
\nc{\Lra}{\Longrightarrow}
\nc{\Lla}{\Longleftarrow}
\nc{\Llra}{\Longleftrightarrow}
\nc{\hra}{\hookrightarrow}
\nc{\iso}{\overset{\sim}{\lra}}
\nc{\dsize}{\displaystyle}
\nc{\sst}{\scriptstyle}
\nc{\tsize}{\textstyle}
\theoremstyle{definition}
\theoremstyle{remark}
\nc{\Sats}[1]{Sats~\ref{#1}}
\nc{\Sa}[1]{({\sl Sa.}~\ref{#1})}
\nc{\Kor}[1]{({\sl Kor.}~\ref{#1})}
\nc{\Korollarium}[1]{Korollarium~\ref{#1}}
\nc{\Exe}[1]{{\sl Exempel}~\ref{#1}}
\nc{\Anm}[1]{{\sl Anmärkning}~\ref{#1}}
\nc{\Not}[1]{{\sl Not}~\ref{#1}}
\nc{\Formodan}[1]{Förmodan~\ref{#1}}
\nc{\Pastaende}[1]{Påstående~\ref{#1}}
\begin{document} 
\title{Hilbert series of modules over Lie algebroids}
\author{Rolf K{\"a}llstr{\"om}}
\address{Department of Mathematics, University of G{\"a}vle,   S-801 76, Sweden}
\email{rkm@hig.se}
\author{Yohannes Tadesse}
\address{Department of Mathematics,
  University of Stockholm,
S-106 91, Sweden}
\email{tadesse@math.su.se}
\begin{abstract} 
  We consider modules $M$ over Lie algebroids $\g_A$ which are of finite type over a local
  noetherian ring $A$.  Using ideals $J\subset A$ such that $\g_A \cdot J\subset J $ and the length
  $\ell_{\g_A}(M/JM)< \infty$ we can define in a natural way the Hilbert series of $M$ with respect
  to the defining ideal $J$.  This notion is in particular studied for modules over the Lie
  algebroid of $k$-linear derivations $\g_A=T_A(I)$ that preserve an ideal $I\subset A$, for example
  when $A=\Oc_n$, the ring of convergent power series.  Hilbert series over
  Stanley-Reisner rings are also considered.
 \end{abstract}
%  \begin{keyword}
% Hilbert series \sep tangential vector field \sep Lie algebroid \sep
% complete intersection \sep invariant rings \sep isolated singularity
%  \end{keyword}
\maketitle
\tableofcontents

\section{Introduction}
Let $(A, \mf_A, k)$ be an {\it allowed} local commutative noetherian $k$-algebra of characteristic
$0$, which entails in particular that the (generic) rank of the $A$-module of $k$-linear derivations
$T_{A/k}$ coincides with the Krull dimension of $A$.  Let $I$ be an ideal of $A$ and $
T_{A}(I)\subset T_{A}$ be the $A$-submodule of $k$-linear derivations $\delta$ of $A$ such that
$\delta \cdot I \subset I$, which we call the tangential Lie algebroid along $I$. More generally, a
Lie algebroid is an $A$-module $\g_A$ equipped with a structure of Lie algebra over $k$ and a
homomorphism of $A$-modules $\alpha :\g_A\to T_{A}$ satisfying natural compatibility relations for
the Lie algebra and module structures; the notion of module over a Lie algebroid $\g_A$ is more or
less what can be expected \Defn{\ref{def-module}}.  If $M$ is a $\g_A$-module of finite type as
$A$-module, we say that a proper ideal $J$ is a {\it defining ideal} for $M$ if $\alpha(\g_A) \cdot
J \subset J$ and the length $l_{\g_A}(M/JM)< \infty$.  We prove that if $J$ is a defining ideal,
then $l_{\g_A}(J^nM/J^{n+1}M) < \infty$ for every positive integer $n$, so one can define the {\it
  Hilbert series of a $\g_A$-module $M$ with respect to the defining ideal $J$}
\begin{displaymath}
  H_{M}^J (t) =  \sum_{n\geq 0} l_{\g_A}(\frac{J^nM}{J^{n+1}M})t^n \in \Zb[[t]].
\end{displaymath}
This series extracts information about the complicated structure of $\g_A$-modules, which in general
do not have a finite length. Some basic examples of $T_A(I)$-modules are $A$, $I$, the integral
closure of $I$, and the Jacobian ideal of $I$ \citelist{\cite{kallstrom:preserve}*{Sec
  3.3}\cite{kallstrom:liftingder}*{Th. 3.2.2}}. In the study of regular
singular $\Dc_A$-modules $N$, where $\Dc_A$ is the ring of differential operators on $A$, there is a
need to understand $T_{A}(I)$-submodules $N_0 \subset N$ that are of finite type over $A$.

When $\alpha (\g_A)=0$, so $\g_A$ is a Lie algebra over $A$, a defining ideal for $A$ is the same as
$\mf_A$-primary ideal. If moreover our $\g_A$-modules $\frac{J^nM}{J^{n+1}M}$ have complete flags,
so $l_{\g_A}(\frac{J^nM}{J^{n+1}M})= \dim_k(\frac{J^nM}{J^{n+1}M})$, it follows from Hilbert's
theorem that $H_M^J(t)$ is a rational function.  When $(R,\g_R)$ is a Lie algebroid over an allowed
local ring $R$ such that $R$ is simple over $\g_R$ (then $R$ will also be regular) we define its
{\it fibre Lie algebra} by $ \g_k= k\otimes_R \Ker (\g_R \to T_{R/k})$.  For example, if $J$ is a
radical ideal in a regular allowed local ring $A$ of dimension $n$, defining a smooth variety of
codimension $r$, and $\g_A= T_A(J)$, then $J$ is a maximal defining ideal of the $\g_A$-module $A$
and the fibre Lie algebra $\g_k $ of $\g_R=\g_A/J \g_A$ ($R= A/J$) is isomorphic to $\gl_r $, the
general linear algebra.  In general, the length of a $\g_R$-module $M$ of finite type over $R$ is
less than the length of the fibre $k\otimes_R M$ as module over the fibre Lie algebra $ \g_k$, but
sometimes equality holds
\begin{displaymath}\tag{L}
  \ell_{\g_R}(M)=\ell_{\g_k}(k\otimes_RM),
\end{displaymath}
and we then say $M$ is a {\it local system}.  Letting $J_m$ be a
maximal defining ideal of the $\g_A$-module $A$ we prove that $J_m$ is
a maximal defining ideal of any $\g_A$-module $M$ of finite type over
$A$, and that $R=A/J_m$ is a regular local ring.  We can therefore
more generally say $M$ is a local system along the maximal defining
ideal $J_m$ if each homogeneous component of the graded $\g_R$-module $
G^\bullet_{J_m}(M)=\oplus_{i\geq 0} J_m^iM/J_m^{i+1}M$ is a local system. 

A defining ideal $J$ is contained in a unique maximal defining ideal $J_m= \sqrt{J}$, the radical of
$J$ \Prop{\ref{reg-def-ideal}}.

\newtheorem{thmlabel}{\bf Theorem}
\renewcommand{\thethmlabel}{\ref{main}}

\begin{thmlabel}\label{maintheorem} Let $M$ be a
  $\g_A$-module of finite type, $J$  a defining ideal, and $J_m$ be its maximal defining ideal.
  If $M$ is a local system along $J_m$, then $H_M^J(t)$ is a rational
  function and the function $n\to \ell_{\g_A}(M/J^{n+1}M)$ is a quasi-polynomial for high $n$.
\end{thmlabel}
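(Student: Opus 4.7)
The plan is to reduce the rationality of $H_M^J(t)$ to a Hilbert--Serre-type theorem in which the classical $k$-dimension of graded pieces is replaced by the $\g_k$-length on fibres. A standard generating-function argument shows that, once $H_M^J(t)$ is known to be rational with denominator of the form $\prod_i(1-t^{d_i})$ and each $\ell_{\g_A}(J^n M/J^{n+1}M)$ is finite, the partial-sum function $n\mapsto \ell_{\g_A}(M/J^{n+1}M)$ is automatically quasi-polynomial for large $n$. So I would focus on rationality.

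I would first treat the core case $J = J_m$. Here $J_m$ annihilates each $J_m^n M/J_m^{n+1}M$, which is therefore a $\g_R$-module ($R=A/J_m$) whose $\g_A$-length equals its $\g_R$-length; the local-system hypothesis then yields
\[
  \ell_{\g_A}(J_m^n M/J_m^{n+1}M) \;=\; \ell_{\g_k}\bigl(k\otimes_R (J_m^n M/J_m^{n+1}M)\bigr).
\]
Thus $H_M^{J_m}(t)$ equals the Hilbert series, computed with $\ell_{\g_k}$, of the finitely generated graded module $k\otimes_R G^\bullet_{J_m}(M)$ over the standard graded $k$-algebra $k\otimes_R G^\bullet_{J_m}(A)$, the latter being a quotient of a polynomial ring in $d = \dim_k((J_m/J_m^2)\otimes_R k)$ variables. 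The core ingredient is then a Hilbert--Serre theorem in this setting: for a finitely generated graded $\g_k$-module $N$ over a standard graded $k$-algebra with $r$ degree-one generators, $\sum_n \ell_{\g_k}(N_n)\, t^n$ is rational with denominator a divisor of $(1-t)^r$. This is proved by induction on $r$ via the four-term exact sequence
\[
  0 \to K(-1) \to N(-1) \xrightarrow{x_r} N \to N/x_r N \to 0,
\]
additivity of $\ell_{\g_k}$ on short exact sequences, and the resulting identity $H_N(t)=(H_{N/x_rN}(t)-t\,H_K(t))/(1-t)$, noting that $K$ and $N/x_rN$ are annihilated by $x_r$ and hence have one fewer generator.

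For the general case $J\subsetneq J_m$, choose $s\ge 1$ with $J_m^s \subset J$. Then $J_m^s$ annihilates each $J^n M/J^{n+1}M$, and the $\g_A$-stable filtration
\[
  J^n M/J^{n+1}M \;\supset\; J_m(J^nM/J^{n+1}M) \;\supset\; \cdots \;\supset\; J_m^{s-1}(J^nM/J^{n+1}M) \;\supset\; 0
\]
has $\g_R$-module quotients. Assembled across $n$, this produces a finite $\g_A$-equivariant filtration of the whole graded module $G^\bullet_J(M)$ whose successive quotients are finitely generated graded modules over the standard graded $R$-algebra $G^\bullet_J(A)/J_m G^\bullet_J(A)$, to each of which one applies the Hilbert--Serre theorem of the previous paragraph. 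The main obstacle is verifying that these filtration quotients inherit the local-system property so that the fibre computation applies. I would handle this with two observations: first, any subquotient of a $\g_R$-module that is a local system is again a local system, since the general inequality $\ell_{\g_R}(-)\le\ell_{\g_k}(k\otimes_R -)$, combined with the additivity of $\ell_{\g_R}$ and right exactness of $k\otimes_R-$, forces equality on each piece as soon as it holds on the whole; and second, the $J_m$-adic graded pieces of $G^\bullet_J(M)$ can be compared, via Artin--Rees-type relations between the $J$-adic and $J_m$-adic filtrations on $M$, to subquotients of the graded pieces of $G^\bullet_{J_m}(M)$, which are local systems by assumption.
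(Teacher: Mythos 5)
Your overall reduction (pass to the fibre Lie algebra via the local-system hypothesis, then invoke a graded ``Hilbert--Serre'' statement with $\dim_k$ replaced by $\ell_{\g_k}$, then deduce quasi-polynomiality of the partial sums) mirrors the paper's strategy, and the quasi-polynomial step and the reduction of lengths to the fibre are fine. But the step you call the core ingredient has a genuine gap: the induction via the four-term sequence
\[
  0 \to K(-1) \to N(-1) \xrightarrow{\;x_r\;} N \to N/x_rN \to 0
\]
does not make sense $\g_k$-equivariantly. The graded algebra $k\otimes_R G^\bullet_{J_m}(A)$ carries a nontrivial $\g_k$-action by derivations, so multiplication by a degree-one element $x_r$ is not a $\g_k$-module map, $x_rN$ is not a $\g_k$-submodule, and $K$ and $N/x_rN$ are not $\g_k$-modules; hence additivity of $\ell_{\g_k}$ cannot be applied to this sequence. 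The failure is not cosmetic: take $S^\bullet=k[x,y]$ with $\g_k=\Sl_2$ acting on the degree-one part as the standard module and $N=S^\bullet$. Then every $S^n$ is a simple $\g_k$-module, so $\sum_n\ell_{\g_k}(S^n)t^n=1/(1-t)$, whereas the naive recursion (which in effect only sees dimensions) would produce denominator $(1-t)^2$; moreover no nonzero degree-one element is $\g_k$-invariant, so no choice of $x_r$ repairs the induction. This is exactly the point where the theorem stops being a formal exercise.

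What the paper does instead is to convert lengths into dimensions of highest-weight spaces: it first builds a filtration of the graded module whose associated graded $G^\bullet$ is semisimple over $\g_k$ in each degree (this is needed because when the nilpotent radical $\sfr_k\neq0$ the $\nf_k$-invariants of a non-semisimple piece are not even a module over the maximal solvable subalgebra), then uses the representation algebra $\Rc$ to rewrite $(\,\cdot\,)^{\nf_k}$ as $(\Rc\otimes_k\cdot)^{\Lc_k}$, and invokes Hilbert's finiteness theorem in Hadziev's form to conclude that $(\bar S^\bullet)^{\nf_k}$ is a noetherian graded algebra over the solvable algebra $\bar\bfr_k$ and $(G^\bullet)^{\nf_k}$ a finite module over it; only then does the classical graded/weight-graded Hilbert--Serre argument (Lemmas \ref{lemma-hilb} and \ref{hilb-torus}) apply. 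Your proposal is missing precisely this invariant-theoretic input, and without it the rationality claim for $\sum_n\ell_{\g_k}(N_n)t^n$ is unproved. (Your secondary remarks for $J\subsetneq J_m$ -- stability of the local-system property under subquotients, and comparing $J$-adic with $J_m$-adic graded pieces -- are in the spirit of Proposition \ref{short-ex}, but the paper avoids Artin--Rees entirely by using $\ell_{\g_A}(J^nM/J^{n+1}M)=\ell_{\g_R}(G^\bullet_{J_m}(J^nM/J^{n+1}M))$ together with the hypothesis that these pieces are local systems.)
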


The proof of \Theorem{\ref{maintheorem}} is based on a reduction to modules over
the fibre Lie algebra $\g_k$ and by taking invariants over a maximal nilpotent
subalgebra of a Levi factor of $\g_k$, applying Hilbert's finiteness theorem on
the finite generation of invariant rings as extended by Hadziev \cite{hadziev},
who used an idea that arguably can be traced back to the classical invariant
theorists \cite{roberts:invariants}.  

Now the condition in \Theorem{\ref{maintheorem}} that $M$ be a local system along the maximal
defining ideal $J_m$, in the indicated graded sense, perhaps at a first glance seems to be a rather
special and technical one, but this is actually not so. On the contrary, the condition is naturally
satisfied in many important situations and local systems are in fact abundant.  Perhaps the most
important example of local system appears when the defining ideal is a maximal ideal in the ordinary
sense, meaning simply that $R=k$ so that \thetag{L} is self-evident, which is what occurs in the
study of complex analytic singularities (see e.g. \Theorem{\ref{yau-cor}}); another case is when $A$
is regular and is either $\mf_A$-adically complete, or an analytic algebra over the complex numbers
\Th{\ref{complete-prop}}.  If a Lie algebra $\afr$ acts transitively on a regular allowed ring $A$
then we get local systems by localizing finite-dimensional $\afr$-modules
\Th{\ref{localisation}}. Yet another favourable case is worked out in \Section{\ref{toral-section}}
, where we show how to compute Hilbert series for monomial ideals in an allowed regular ring $A$
(see \Theorem{\ref{mon-the}} and \Proposition{\ref{mon-hilb}}); this Hilbert series reflects
symmetries of the monomial ideal unlike the ordinary Hilbert series.  Finally, if a $\g_A$-module
$M$ is cyclic over $A$, then the $\g_R$-module $G^\bullet_{J_m}(M)$ is a direct sum of modules
satisfying \thetag{L}; this is proven for principal ideals in \Proposition{\ref{loc-ideal}}, but the
general case is similar.

In \Section{\ref{hypersurfaces}} we study Hilbert series of complex analytic
singularities. The general set-up is an ideal $I \subset \mf$, where $\mf$ is
the maximal ideal of the ring $\Oc_n$ of convergent power series in $n$
variables, and $\g= T_{\Oc_n}(I)$, where we can assume also without loss of
generality that $\g \subset T_{\Oc_n}(\mf)$, so that the fibre Lie algebra is
$\g_{\Cb}= \g/\mf \g$.  The Hilbert series
\begin{displaymath}
  H_M(t)=\sum_{i\geq 0} \ell_{\g}(\frac{\mf^iM}{\mf^{i+1}M})t^i = \sum_{i \geq 0} \ell_{\g_{\Cb}}(\frac{\mf^iM}{\mf^{i+1}M}) t^i 
\end{displaymath} 
is a useful summary of a $\g$-module $M$ of finite type over $\Oc_n$. The first question to settle
is when the fibre Lie algebra $\g_{\Cb} $ is solvable, as the Hilbert series will then coincide with
the ordinary Hilbert series and thus give us a rather good control of the $\g$-module structure. We
prove that if $\Oc_n/I$ is a complete intersection ring with an isolated singularity and $I \subset
\mf^2$ ($I\subset \mf^3$ when $I$ is principal), then $\g_{\Cb}$ is solvable \Th{\ref{isol-solv}},
which was proven for hypersurfaces by Granger and Schulze \cite{granger-schulze:initial-lieb}.  Now
put $J= (f+ T_{\Oc_n}\cdot f)$, where $f\in \mf$, and assume that $B=\Oc_n/(f)$ has an isolated
singularity. We prove that the fibre Lie algebra of the Lie algebroid $T_{\Oc_n}(J)$ is solvable
\Th{\ref{yau-solv}}, where the proof is based on Schulze and Yau's result \cite{schulze:solvable,
  yau:solvability} that the derivation algebra $T_{A}$ of the modular algebra $A= \Oc_n/J$ is
solvable. Allowing $f$ also to have a non-isolated singularity we consider Hilbert series of $J$
both as $\g= T_{\Oc_n}(I)$ - and $\g= T_{\Oc_n}(J)$ -modules, $H_{A}(t)=\sum_{i \geq 0} \ell_{\g}
(J^i/J^{i+1}) t^i$, proving it is a rational function in either case; moreover by a theorem of Yau
and Mather \cite{mather-yau} (isolated singularities) and Greuel, Lossen and Shustin
\cite{greuel-lossen-shusten} (general case) it follows that $H_{A}(t)$ is completely determined by
the algebraic structure of its degree zero part $A$ (see \Theorem{\ref{yau-cor}}). So there arises a
natural converse problem: If two hypersurfaces $B$ and $B'= \Oc_n/(f')$ of equal dimension, have
equal Hilbert series $H_A(t)= H_{A'}(t)$, how are the rings $B$ and $B'$ then related?

In \Section{\ref{section2}} we work out some basic results for modules $M$ over Lie algebroids
$\g_A$, which are of finite type over $A$.  First we give some salient relations between the length
of $A$ and $M$, where we want to emphasise the importance of \Proposition{\ref{length-prop}}, (5),
for the very definition of Hilbert series \Prop{\ref{defining-module}}.  Then the notion of local
system is explained, and we give some examples; in particular we show how representations of Lie
algebras give rise to local systems after localisation \Th{\ref{localisation}}.  To determine the
structure of fibre Lie algebras of Lie algebroids that contain a (weakly) toral subalgebra we need a
recognition theorem \Th{\ref{recognition}}, which determines the structure of the semi-simple part
of a Lie algebra $\g \subset \gl_k(V)$ that contains a Cartan subalgebra of $\Sl_k(V)$. This is used
in \Section{\ref{toral-section}} to work out the fibre Lie algebras of monomial rings. We give
special attention to Stanley-Reisner rings of simplicial complexes, showing how the structure of the
fibre Lie algebra is closely related to a certain stratification of the vertex set and that the Lie
algebroid Hilbert series turns out to be the ordinary Hilbert series of a certain smaller simplicial
complex given by the same stratification \Thms{\ref{hilb-eq}}{ \ref{lie-alg-hilb}}.

We want to express our deep gratitude to Michel Granger and also for the important input from an
anonymous referee for pointing out several unclear or incorrect statements in earlier versions
of this work, and also for help at improving the disposition. We thank Jan Stevens for the reference
\cite{greuel-lossen-shusten}.

\section{Lie algebroids, modules and defining ideals}\label{section2}
\subsection{Allowed  rings}\label{allowed}
Let $k$ be a field that contains the rational numbers $\Qb$. We will deal with noetherian
commutative $k$-algebras $A/k$ such that the $A$-module of $k$-linear derivations $T_{A/k}$ is ``big
enough'', so that in particular the Jacobian criterion of regularity applies. 

First we recall that any local ring $A$ containing the rational numbers also contains a quasi
coefficient field, which is a subfield $l\subset A$ such that $l \to k_A=A/\mf_A$ is $0$-etale, so
in particular $T_{k_A/l}=0$. 

\begin{theorem}[\cite{matsumura}*{Thms. 30.6, 30.8}] \label{goodring}
  Let $(R,\mf_R)$ be a regular local ring of dimension $n$ containing
  the rational numbers $\Qb$.  Let $l$ be a quasi-coeffient field of
  $R$ and $K$ be a coefficient field of its completion $R^*$ such that
  $l\subset K$. The following conditions are equivalent:
  \begin{enumerate}
  \item There exist $\partial_1, \dots, \partial_n \in T_{R/l}$ ($l$-linear derivations) and
    $f_1, \dots , f_n \in \mf_R$ such that $\det \partial_i(f_j)\not
    \in \mf_R$.
  \item If $\{x_1, \dots , x_n\}$ is a regular system of parameters
    and $\partial_{x_i}$ are the partial derivatives of $R^* = K[[x_1,
    \dots , x_n]]$, $\partial_{x_i}(x_j)= \delta_{ij}$, then
    $\partial_{x_i} \in T_{R/l}$.
  \item $T_{R/l}$ is free of rank $n$.  
  \end{enumerate}
  Furthermore, if these conditions hold, then for any $P\in \Spec R$,
  putting $A= R/P$, we have $T_{A/l}= T_{R/l}(P)/PT_{R/l}$, and $\rank
  T_{A/l} = \dim A$.
\end{theorem}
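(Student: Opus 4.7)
The plan is to establish the equivalence cyclically as $(2)\Rightarrow(3)\Rightarrow(1)\Rightarrow(2)$, with Cohen's structure theorem providing the bridge between $R$ and its completion $R^*=K[[x_1,\dots,x_n]]$. The key external input is that $T_{R^*/K}$ is already a free $R^*$-module on the formal partials $\partial_{x_1},\dots,\partial_{x_n}$, so the entire game reduces to transferring information between $T_{R/l}$ and $T_{R^*/K}$ via the canonical completion map, which is defined precisely because $l\subset K$.

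For $(2)\Rightarrow(3)$, I would note that the matrix $(\partial_{x_i}(x_j))$ is the identity, so $\partial_{x_1},\dots,\partial_{x_n}$ are $R$-linearly independent in $T_{R/l}$; given any $\delta\in T_{R/l}$, the corrected derivation $\delta'=\delta-\sum_i \delta(x_i)\partial_{x_i}$ vanishes on all $x_i$, and Leibniz plus induction forces $\delta'(\mf_R^N)\subset \mf_R^{N}$ for all $N$, so by Krull intersection $\delta'$ vanishes on $\mf_R$; combined with $l$-linearity (which handles the residue-field contribution since $l\to k_A$ is $0$-\'etale, killing $T_{k_A/l}$), this gives $\delta'=0$ and hence freeness. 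For $(3)\Rightarrow(1)$, a free basis $\delta_1,\dots,\delta_n$ of $T_{R/l}$ induces, via $(\delta,f)\mapsto \overline{\delta(f)}$, a pairing $T_{R/l}\otimes_R k_A\times \mf_R/\mf_R^2\to k_A$ which is nondegenerate on the $\mf_R/\mf_R^2$ side (inherited from the nondegeneracy over $R^*$ via the completion map), so a basis of $\mf_R/\mf_R^2$ dual to the $\delta_i$ lifts to $f_1,\dots,f_n\in\mf_R$ realizing (1).

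The delicate direction is $(1)\Rightarrow(2)$. First one checks that the $f_i$ in (1) project to a $k_A$-basis of $\mf_R/\mf_R^2$: any relation $\sum c_if_i\in\mf_R^2$ would force $\delta_j(\sum c_if_i)\in\mf_R$, contradicting invertibility of the Jacobian; hence $f_1,\dots,f_n$ is a regular system of parameters and Cohen gives $R^*=K[[f_1,\dots,f_n]]$. The formal partials $\partial/\partial f_i\in T_{R^*/K}$ then expand, after inverting the matrix $(\delta_i(f_j))$, as combinations of the $\delta_j$ with coefficients that actually lie in $R$ (since the inverse matrix does), placing $\partial/\partial f_i$ inside $T_{R/l}$. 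A formal linear change of variables transports the statement from the particular parameters $f_i$ to an arbitrary regular system $x_1,\dots,x_n$. The descent of the coefficients from $R^*$ back to $R$ is the step I expect to be the main obstacle, and it is exactly where the hypothesis that $l$ is a quasi-coefficient field with $l\subset K$, together with characteristic zero, is essential.

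For the final addendum with $A=R/P$, I would start from the conormal sequence
\[
P/P^2 \;\longrightarrow\; \Omega_{R/l}\otimes_R A \;\longrightarrow\; \Omega_{A/l}\;\longrightarrow\; 0,
\]
and dualize into $A$; local freeness of $\Omega_{R/l}$ (dual to the free $T_{R/l}$ from (3)) identifies $T_{A/l}$ with the kernel of $T_{R/l}\otimes_R A\to \Hom_A(P/P^2,A)$, which unwinds to $T_{R/l}(P)/PT_{R/l}$ in the notation of the theorem. The equality $\rank T_{A/l}=\dim A$ then follows by localizing at the generic point of $\Spec(A)$, where $A$ is regular and $P/P^2\otimes k(P)$ has dimension equal to $\mathrm{ht}(P)=n-\dim A$, so the cokernel of $T_{A/l}\hookrightarrow T_{R/l}\otimes_R A$ has the complementary rank.
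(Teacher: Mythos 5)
The paper does not actually prove this theorem: it is quoted from Matsumura (Thms.\ 30.6 and 30.8), so there is no internal argument to compare yours with, and your proposal has to stand on its own. Its skeleton is the standard one (pass to $R^*=K[[x_1,\dots ,x_n]]$, use that continuous $K$-derivations of $R^*$ are free on the formal partials, and move information across the completion map), and the one step you carry out correctly is the core of $(1)\Rightarrow$ ``the partials with respect to the $f_i$ preserve $R$'': the matrix $(\partial_i(f_j))$ is invertible over $R$, so each $\partial/\partial f_j$ is an $R$-combination of the $\partial_i$. Note, though, that the quasi-coefficient-field hypothesis enters there in showing that the continuous extensions of the $\partial_i$ kill $K$ (via $\Der_l(K,R^*)=0$, i.e.\ $0$-etaleness of $k/l$), not in any ``descent of coefficients'', which in that step is automatic.

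Several other steps have genuine gaps. In $(2)\Rightarrow(3)$, from $\delta'(x_i)=0$ you only get $\delta'(\mf_R^N)\subset\mf_R^N$, and Krull's intersection theorem gives nothing from that: you would need $\delta'(f)\in\mf_R^N$ for every $N$ with $f$ \emph{fixed} (the inference ``$D(\mf_R^N)\subset\mf_R^N$ for all $N$ implies $D$ vanishes on $\mf_R$'' is false, as the Euler derivation shows); moreover $R\neq l+\mf_R$ in general, so ``$l$-linearity handles the residue field'' is not an argument — one must either extend $\delta'$ continuously to $R^*$ and use $\Der_l(K,R^*)=0$ together with the fact that a continuous $K$-derivation of $K[[x]]$ killing all $x_i$ is zero, or run a Hensel-lifting induction modulo $\mf_R^N$. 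In $(3)\Rightarrow(1)$, the nondegeneracy of your pairing is exactly the content of the implication and is not ``inherited from $R^*$'': abstract freeness of $T_{R/l}$ of rank $n$ does not a priori exclude a basis $D_1,\dots ,D_n$ with all $D_i(x_j)\in\mf_R$, i.e.\ image inside $\mf_{R^*}\!\cdot T_{R^*/K}$, and ruling that out needs a real argument. In $(1)\Rightarrow(2)$ the passage from the special parameters $f_i$ to an arbitrary regular system $x_1,\dots ,x_n$ is not a ``formal linear change of variables'': the chain-rule coefficients lie a priori only in $R^*$, and whether they lie in $R$ is precisely the point (you flag this yourself as unresolved); the repair is to use the matrix $\bigl((\partial/\partial f_i)(x_j)\bigr)$, whose entries are in $R$ because the $\partial/\partial f_i$ now preserve $R$ and whose determinant is a unit since it reduces to a change of basis of $\mf_R/\mf_R^2$, and invert it over $R$. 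Finally, in the addendum you invoke ``local freeness of $\Omega_{R/l}$'', which is false — only its dual $T_{R/l}$ is free, and $\Omega_{R/l}$ is typically not even finitely generated — so the identification $\Der_l(R,A)\cong T_{R/l}\otimes_R A$, which is what you actually need both for the kernel description and for the surjectivity (liftability of derivations of $A$) hidden in $T_{A/l}=T_{R/l}(P)/PT_{R/l}$, must again come from the completion/etaleness argument rather than from dualizing a free $\Omega$.
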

If the equivalent conditions in \Theorem{\ref{goodring}} hold, then we
say that $(R, \mf_R)$ satisfies the weak Jacobian condition $(WJ)_l$.
If we now should want to work over an arbitrary base field $k\subset A$ technical problems would
appear, having to do with difficulties in describing $T_{A/k}$, and also for using weight structures
for modules over Lie algebras. To simplify the exposition we therefore make the following definition.

\begin{definition}
  An {\it allowed} $k$-algebra is a local ring of the form $A=R/I$, where $R$ satisfies $(WJ)_k$,
  $I$ is an ideal of $R$, and $k$ is an algebraically closed coefficient field of characteristic
  $0$.
\end{definition}
In particular, $A= R/I= k+\mf_{A}$, and if $P$ is a minimal prime divisor of $I$, then  $\rank
A_{P}\otimes_{A}T_{A/k} = \dim R/P $. 

The {\it main examples} of allowed $k$-algebras $A=R/I$ appear when $R$ is either: (1) a localisation
of a polynomial ring, (2) a formal power series ring and (3) a ring of convergent power series,
where $l$ is either the field of real or complex numbers and $k$ is the field of complex numbers
$\Cb$.

For an ideal $I$ of height $r$ we let $J $ be the ideal that is generated by all the determinants
$\det (\partial_i (f_j))$, where $\partial_i \in T_{R/l}$ and $f_j \in I$, $1\leq i,j \leq r$.  It
is straightforward to see that $T_{R/l}(I) \cdot J \subset J$ (see e.g.  \cite{kallstrom:preserve}).
Therefore the {\it Jacobian ideal} $\bar J = A J $ is a $T_{A/l}$-submodule of $A$.

Recall that the ring of differential operators $
\Dc_{A/l}\subset \End_l (A)$ is defined inductively as $\Dc_{A/l}= \cup
_{m\geq 0} \Dc^m_{A/l} $, $\Dc^0_{A/l}= \End_A(A)= A$, $\Dc^{m+1}_{A/l}= \{P
\in \End_l (A) \ \vert \ [P, A]\subset \Dc^m_{A/l}\}$, where $[P, A]= PA -
A P \subset \End_l (A)$.  It is easy to see that $T_{A/l}\subset \Dc^1_{A/l}
\subset \Dc_{A/l}$, and conversely, if $P\in \Dc^1_{A/l}$, then $P - P(1) \in
T_{A/l}$; hence
\begin{displaymath}
  \Dc^1_{A/l} = A+ T_{A/l} .
\end{displaymath}
In general the algebra $\Dc_{A/l}$ need not be generated by the $A$-submodule $\Dc^1_{A/l}$, and in
fact need not even be noetherian, as was first exemplified in \cite{bernstein-gege:cubic}. On the
other hand we have the following companion to \Theorem{\ref{goodring}}:
\begin{proposition}\label{diffop-gen-deg1}
  Assume that $R/l$ satisfies $(WJ)_l$. Then $\Dc^1_{R/l}$ generates the algebra $\Dc_{R/l}$.
\end{proposition}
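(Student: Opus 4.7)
The plan is to prove by induction on the order $m$ that every $P\in \Dc^m_{R/l}$ lies in the subalgebra $\Ac\subset \Dc_{R/l}$ generated by $\Dc^1_{R/l}=R+T_{R/l}$. The base $m=0$ is trivial since $\Dc^0_{R/l}=R\subset\Dc^1_{R/l}$. For the inductive step, I would construct an $m$-th order ``symbol'' of $P$ as an element of $\sym^m_R T_{R/l}$, and use the basis afforded by $(WJ)_l$ to subtract off an explicit lift and drop the order by one.

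Concretely, fix a regular system of parameters $x_1,\dots ,x_n$ of $R$; by $(WJ)_l$, $T_{R/l}$ is free with dual basis $\partial_1,\dots ,\partial_n$, $\partial_i(x_j)=\delta_{ij}$, and $\Omega_{R/l}$ is free of rank $n$. For $P\in \Dc^m_{R/l}$ and $f_1,\dots ,f_m\in R$ set
$$
\sigma_P(f_1,\dots ,f_m) \;=\; [\cdots [[P,f_1],f_2],\dots ,f_m] \;\in\; \Dc^0_{R/l}=R.
$$
The Jacobi identity combined with $[f_i,f_j]=0$ shows that $\sigma_P$ is symmetric in its arguments, while the identity $[P,fg]=f[P,g]+[P,f]g$ in $\End_l(R)$ shows it is an $l$-linear derivation in each slot. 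Hence $\sigma_P$ factors through an $R$-linear map $\sym^m_R\Omega_{R/l}\to R$, which by freeness of $\Omega_{R/l}$ corresponds to a well-defined element $\sigma(P)\in \sym^m_R T_{R/l}$.

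A direct commutator computation gives $\sigma(\partial^\alpha)=\alpha!\,\partial^\alpha$ for $|\alpha|=m$. Since the characteristic is zero, $\alpha!$ is invertible in $l$, so writing $\sigma(P)=\sum_{|\alpha|=m}c_\alpha\partial^\alpha$ with $c_\alpha\in R$ and setting
$$
Q \;=\; P-\sum_{|\alpha|=m}\frac{c_\alpha}{\alpha!}\,\partial^\alpha,
$$
one gets $\sigma(Q)=0$. By the very definition of the order filtration, vanishing of all $m$-fold commutators of $Q$ with elements of $R$ is equivalent to $Q\in \Dc^{m-1}_{R/l}$. The induction hypothesis then places $Q$, and hence $P$, inside $\Ac$.

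The one step that needs real care is the well-definedness of the symbol: one must verify that the $l$-multilinear, symmetric, slotwise-derivation form $\sigma_P$ actually descends to an $R$-linear functional on $\sym^m_R\Omega_{R/l}$. This is precisely where the hypothesis $(WJ)_l$ is essential, via the freeness (hence projectivity) of $\Omega_{R/l}$. In the absence of this hypothesis, no such clean symbol calculus is available and $\Dc_{A/l}$ need not be generated in order one, as already illustrated by the Bernstein--Gel'fand--Gel'fand cubic cited just before the proposition.
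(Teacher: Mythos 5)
Your overall strategy---peel off the top-order symbol by induction on the order, using that the $m$-fold commutators of $P$ with elements of $R$ define a symmetric, slotwise-derivation form---is sound, and it is genuinely different from the paper's argument, which instead interpolates $P$ on all monomials $X^\alpha$ with $|\alpha|\le m$ by an explicit $\sum_{|\alpha|\le m}a_\alpha\partial^\alpha$ and then invokes a vanishing lemma (proved by induction together with Krull's intersection theorem) to conclude that $P$ equals that operator exactly. However, one step of your write-up is wrong as stated, and it is precisely the step you single out as ``where $(WJ)_l$ is essential'': $(WJ)_l$ does \emph{not} give that $\Omega_{R/l}$ is free of rank $n$. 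The hypothesis only guarantees that the dual module $T_{R/l}=\Hom_R(\Omega_{R/l},R)$ is free of rank $n$, with basis $\partial_{x_1},\dots,\partial_{x_n}$ dual to a regular system of parameters. For the main rings the proposition is aimed at---formal and convergent power series rings---the K\"ahler module $\Omega_{R/l}$ is not even finitely generated, so neither its freeness nor the identification $\Hom_R(\operatorname{Sym}^m_R\Omega_{R/l},R)\cong\operatorname{Sym}^m_R T_{R/l}$ is available, and ``projectivity of $\Omega_{R/l}$'' cannot be what makes the symbol calculus work.

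The gap is repairable without changing your plan, because what your argument actually needs is weaker: (i) that $\sigma_P$ is determined by its values $\sigma_P(x_{i_1},\dots,x_{i_m})$ on tuples of parameters, and (ii) that these values can be matched by an operator $\sum_{|\alpha|=m}\frac{c_\alpha}{\alpha!}\partial^\alpha$. Both follow from the freeness of $T_{R/l}$ alone: any $D\in T_{R/l}$ equals $\sum_i D(x_i)\partial_{x_i}$, so a form which is a derivation in each slot and vanishes on all tuples of parameters vanishes identically (argue one slot at a time); and the matrix whose $(\alpha,\beta)$ entry is the value of $\sigma_{\partial^\alpha}$ on the $m$-tuple of parameters in which $x_i$ occurs $\beta_i$ times is diagonal with entries $\alpha!$, invertible since $\operatorname{char}=0$. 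So define the symbol simply as the family $(c_\alpha)_{|\alpha|=m}$ of values on parameter tuples, bypassing $\Omega_{R/l}$ entirely; then your induction---including your correct observation that vanishing of all $m$-fold commutators with $R$ characterizes membership in $\Dc^{m-1}_{R/l}$---goes through. Compared with the paper's proof, your route avoids Krull's intersection theorem, while the paper's interpolation argument yields in one stroke the explicit normal form $P=\sum_{|\alpha|\le m}a_\alpha\partial^\alpha$ with coefficients in $R$.
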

Select $x_i$ and $\partial_{x_i}$ as in \Theorem{\ref{goodring}}. Given a multi-index $\alpha =
(\alpha_1, \dots , \alpha_n)$, $n= \dim R$, we put $X^\alpha = x_1^{\alpha_1}x_2^{\alpha_2} \cdots
x_n^{\alpha_n} \in R$, $\partial^\alpha
= \partial_{x_1}^{\alpha_1} \partial_{x_2}^{\alpha_2}\cdots \partial_{x_n}^{\alpha_n}\in \Dc_{R/l}$,
$|\alpha| = \sum\alpha_i$, and $\alpha ! = \alpha_1!  \cdots \alpha_n!$.
\begin{lemma}\label{zero-lemma}
If $P \in \Dc^m_{R/l}$ and $P(X^{\alpha})=0$ when $|\alpha|\leq m$, then
  $P=0$.
\end{lemma}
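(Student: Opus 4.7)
The plan is to induct on the order $m$. The base case $m=0$ is immediate: $\Dc^0_{R/l}=R$ acts on itself by multiplication, so $P$ is multiplication by $P(1)$, and the hypothesis (applied with $\alpha=0$) forces $P=0$.

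For the inductive step, suppose the lemma holds at order $m-1$, and let $P\in \Dc^m_{R/l}$ satisfy $P(X^\alpha)=0$ whenever $|\alpha|\le m$. By definition of the order filtration, $[P,x_i]\in \Dc^{m-1}_{R/l}$ for each $i$. Using
\begin{displaymath}
[P,x_i](X^\beta)=P(x_iX^\beta)-x_iP(X^\beta)=P(X^{\beta+e_i})-x_iP(X^\beta),
\end{displaymath}
both terms vanish as soon as $|\beta|\le m-1$, because then $|\beta+e_i|\le m$. The induction hypothesis therefore yields $[P,x_i]=0$ for every $i$. An application of the commutator Leibniz rule $[P,fg]=[P,f]\circ g+f\circ [P,g]$ then shows $P$ commutes with the whole polynomial subalgebra $l[x_1,\ldots,x_n]$; and a secondary induction on $|\alpha|$ based on $P(X^\alpha)=x_iP(X^{\alpha-e_i})+[P,x_i](X^{\alpha-e_i})$ extends the vanishing to $P(X^\alpha)=0$ for every multi-index $\alpha$.

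The last step is to promote this monomial vanishing to $P=0$. I would exploit the $\mf_R$-adic continuity of differential operators: an operator of order $\le m$ satisfies $P(\mf_R^{m+N})\subset \mf_R^N$ for all $N$, which follows by iterated commutators using the hypothesis $\mf_R=(x_1,\ldots,x_n)$ together with the fact that $P$ is now known to commute with each $x_i$. Hence $P$ extends uniquely to a continuous $l$-linear $\widehat P\in \Dc^m_{R^*/l}$ on the completion $R^*=K[[x_1,\ldots,x_n]]$, and $\widehat P$ still kills every monomial $X^\alpha$.

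The hard part, which I expect to be the main obstacle, is getting $\widehat P$ to annihilate arbitrary elements of $R^*$ (and hence of $R$). In $R^*$ every element is the $\mf_{R^*}$-adic limit of a $K$-polynomial in $x_1,\ldots,x_n$, so by continuity it suffices to show $\widehat P$ is $K$-linear on $R^*$. This is where the quasi-coefficient field hypothesis is used: since $l\subset K$ is $0$-\'etale, any $l$-derivation $K\to R^*$ composed with the residue projection $R^*\to K$ is zero, and an induction on order through the filtration $\Dc^\bullet_{R^*/l}$ (using $[\widehat P,c]\in \Dc^{m-1}_{R^*/l}$ for $c\in K$, which vanishes on all monomials and so is zero by the inductive hypothesis of the lemma already applied in $R^*$) yields $\widehat P(K)=0$ and hence $K$-linearity. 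Continuity plus density of $K[x_1,\ldots,x_n]$ in $R^*$ then forces $\widehat P=0$, whence $P=0$.
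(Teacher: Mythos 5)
Your induction coincides with the paper's up to the point where $P$ is shown to kill \emph{every} monomial: the base case $P=P(1)=0$, the vanishing of the commutators $[P,x_i]$ by the inductive hypothesis, and your secondary induction on $|\alpha|$ are exactly the paper's argument that $[P,X^\alpha]=0$ and $P(1)=0$ force $P(X^\alpha)=0$ for all $\alpha$. The divergence is in the final step ``monomial vanishing $\Rightarrow P=0$''. The paper stays inside $R$: expand $f=\sum_{|\alpha|\le i}c_\alpha X^\alpha+f_{i+1}$ with $f_{i+1}\in\mf_R^{i+1}$, kill the polynomial part by linearity over the coefficients, and use that an operator of order $\le m$ sends $\mf_R^{i+1}$ into $\mf_R^{i+1-m}$ together with Krull's intersection theorem. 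You instead pass to the completion $R^*=K[[x_1,\dots,x_n]]$ and try to prove that $\widehat P$ is $K$-linear.

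There is a genuine gap in how you prove that $K$-linearity: the appeal to ``the inductive hypothesis of the lemma already applied in $R^*$'' is circular. To apply the lemma at order $m-1$ to $[\widehat P,c]$, $c\in K$, you must first know $[\widehat P,c](X^\alpha)=0$ for $|\alpha|\le m-1$; but $[\widehat P,c](X^\alpha)=\widehat P(cX^\alpha)-c\widehat P(X^\alpha)=\widehat P(cX^\alpha)$, and the vanishing of $\widehat P$ on $K$-multiples of monomials is precisely what $K$-linearity is meant to deliver --- your hypotheses only control monomials with coefficient $1$ (or in $l$). The step can be repaired by a different induction, on the order $j$, showing that every $Q\in\Dc^j_{R^*/l}$ commutes with multiplication by $K$ (i.e. $\Dc^j_{R^*/l}=\Dc^j_{R^*/K}$): granting this for $j-1$, the map $c\mapsto[Q,c]$ is an $l$-derivation of $K$ into $\Dc^{j-1}_{R^*/l}$, which is a $K$-module with central action by the inductive hypothesis, and $\Der_l(K,M)=0$ because $K/l$ is $0$-\'etale; hence $[Q,c]=0$. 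With that, $\widehat P$ kills $K[x_1,\dots,x_n]$, and continuity plus density finishes. Alternatively, you can avoid the completion altogether as the paper does. (Your instinct that the coefficient field needs care when $l$ is only a quasi-coefficient field is sound --- the paper's ``$c_\alpha\in k$'' glosses exactly this point --- but the inductive appeal you make does not establish it.)
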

\begin{proof}
  We use induction over $m$. If $P\in \Dc_{R/l}^0 = R$, then
  $P=P(1)=0$. Assume the assertion is true when $P\in \Dc^m_{R/l}$, and
  let now $P\in \Dc^{m+1}_{R/l}$, such that $P(X^\alpha)=0$ when $|\alpha|
  \leq m+1$. Then $P^{(i)}=[P, x_i] \in \Dc^m_{R/l}$, and
  $P^{(i)}(X^\alpha) = P(x_i X^\alpha) - x_i P(X^\alpha) =0$ when
  $|\alpha|\leq m$, so by induction, $P^{(i)}=0$. Therefore $[P,
  X^\alpha] =0$, and since $P(1)=0$, we get $P(X^\alpha)=0$ for any
  monomial $X^\alpha$. Expanding an element $f\in R$ in the form $f=
 \sum_\alpha c_\alpha X^\alpha + f_{i+1}$, where $c_\alpha \in k$ and
  $f_{i+1}\in \mf_R^{i+1}$, it follows that $P(f) \in \cap_{i >0}
  \mf_R^i =\{0\}$, by Krull's intersection theorem. Therefore $P=0$.
\end{proof}
\begin{pfof}{\Proposition{\ref{diffop-gen-deg1}}} Let $\Dc(T_{R/l})\subset \Dc_{R/l}$ be the subalgebra that is
  generated by $\Dc^1_{R/l}$.  If $P \in \Dc^m_{R/l}$, define inductively for
  $\alpha$ such that $|\alpha |\leq m$, $a_0 = P(1)$, and $a_\alpha
  =\frac 1{\alpha !}(P(X^\alpha)- \sum_{|\beta| < |\alpha|}
  a_\beta \partial^\beta (X^\alpha) )$, so $\sum_\alpha
  a_\alpha\partial^\alpha \in\Dc(T_{R/l})$.  One checks that $P - \sum_\alpha
  a_\alpha \partial^\alpha $ kills all monomials $X^\alpha$ such that
  $|\alpha |\leq m$, hence by \Lemma{\ref{zero-lemma}} $P =
  \sum_\alpha a_\alpha \partial^\alpha \in \Dc(T_{R/l})$.
\end{pfof}

\Proposition{\ref{diffop-gen-deg1}} was proven in
\citelist{\cite{sweedler:simplealgebras}*{Th. 18.2}\cite{hart:diffoperators}*{Th. 2}\cite{mcconnel-robson}*{15.5.6}\cite{
    bjork:analD}*{Th. 1.1.8}}, assuming either that $R$ is essentially of finite type or is the ring
of convergent power series. After the introduction of parameters, the methods in [loc cit]
presumably can be applied, but the argument above is maybe slightly more direct, and we also want to
stress that the result applies to any regular local $l$-algebra of characteristic $0$ satisfying the
weak Jacobian criterion.

For brevity we will write $T_A=T_{{A/k}}$ and $\Dc_A=\Dc_{{A/k}}$.

\subsection{Modules over Lie algebroids}\label{sec-modules}The following definition is basic to this paper.
\begin{definition} A Lie algebroid over a $k$-algebra $A/k$ is an $A$-module $\g_A$ of finite type,
  which is equipped with a structure of Lie algebra over $k$ and a homomorphism of Lie algebras and
  $A$-modules $\alpha: \g_A\to T_{A}$, such that the compatibility condition $[\delta, r\eta] =
  \alpha(\delta)(r)\eta + r [\delta, \eta]$, $\delta, \eta\in \g_A, r\in A$, is satisfied.
\end{definition}

For
example, if $I$ is an ideal of $A$, then $T_{A}(I)= \{\delta \in T_A \ \vert \ \delta (I)\subset
I\}$ is a Lie algebroid, where $\alpha $ is the inclusion map, and of course Lie algebras over $A$
are Lie algebroids, with $\alpha =0$.
\begin{remark} Strictly speaking, given a sheaf of Lie algebroids $\g_X$ on a scheme $X$, it is the
  spectrum $\Specb \So_{{\Oc_{X}}}(\g_X^*)$ of the symmetric algebra of its dual that is the
  infinitesimal version of a groupoid, and hence its associated ``geometric Lie algebroid'', while
  $\g_X$ is the sheaf of sections of $\Specb \So_{{\Oc_{X}}}(\g_X^*)$ over $X$.  For this reason,
  and in differential geometry in particular \cite{mackenzie-kirill:generalth}, one wants to make a
  clear distinction between the geometric Lie algebroid and its sheaf of sections, where the latter
  often instead are called Lie-Rinehart algebras \cite{huebschmann-duality}, and sometimes Atiyah
  algebras, d-Lie algebras, or pseudo Lie algebras.  All in all, we find our concise ``Lie
  algebroid'' evocative of a sheaf of Lie algebras (over a base field) being represented in a module
  as infinitesimal symmetries both along the fibres and horizontally for nearby fibres of the
  module.
\end{remark}

We want to define modules over a Lie algebroid in the same way as for Lie algebras, where in the
latter case it is a homomorphism $\g_k \to \gl_k(V)$ from a Lie algebra to the general Lie algebra
of a vector space $V$. When dealing with $A$-modules $M$ there is in general no exact Lie algebroid
counterpart to $\gl_k(V)$, due to the possibility that $A$ does not act faithfully on $M$, but one
gets fairly close to having a ``linear Lie algebroid'' of $M$. Define the map $i: A \to \End_k (M)
$, $i(a)(m)= am$.  We have the $A$-submodule
\begin{displaymath}
  \cf_{A}(M)= \{ \delta \in \End_k(M) \ \vert \ [\delta , i(A)]\subset
  i(A)\},
\end{displaymath}
and let $\Dc^1_A(M)= \{p\in \End_k (M) \ \vert \
[p,\End_A(M)]\subset
\End_A(M)\} $ be the module of first order differential operators on $M$, so $i(A)\subset
\Dc^1_A(M)$. By the Jacobi identity for the Lie bracket $[\cdot, \cdot]$ in $\End_k(M)$ it follows
that $\cf_{A}(M) \subset \Dc^1_A(M)$, defining moreover an $A$-submodule and Lie subalgebra over
$k$. If $\Ann M =0$, so the map $i$ is injective, there exists a natural map $\beta: \cf_A(M) \to
T_{A}$, giving $\cf_A(M)$ a structure of Lie algebroid (see \cite{kallstrom:preserve}); if $\delta
\in \cf_A(M)$, then $\beta(\delta)$ is determined by the relation $[\delta, i(a)]=
i(\beta(\delta)(a))$.  More generally, we have the Lie algebroid $\beta : \cf_{A}(M) = \cf_{i(A)}(M
)\to T_{i(A)}$, where $M$ is considered as $A$- or $i(A)$-module.
\begin{definition}\label{def-module}
  \begin{enumerate}
  \item We say that $\g_A$ {\it acts} on an $A$-module $M$ if we are given a map (not necessarily
    $A$-linear) $\rho : \g_A\to \cf_A(M)$ such that $\rho(\delta)(rm)= \alpha (\delta)(r)m +
    r\rho(\delta)(m)$, $\delta \in \g_A, r\in A, m\in M$.
  \item A module $M$ over $\g_A$ is given by a homomorphism $\rho : \g_A\to \cf_A(M)$ both as
    $A$-modules and Lie algebras, which moreover satisfies (1).
  \end{enumerate}
\end{definition}
\begin{remark}
  It is in a sense unnatural to require (1) for a module, i.e. the identity $i(\alpha(\delta)(r)) =
  [\rho(\delta),i(r)]$. For example, if $M$ is torsion free as $A$-module, $M\neq 0$, and
  $\rho(\g_A)\cdot M=0$ (trivial module), then (1) implies $\alpha =0$.  However, (1) follows from
  the first part of (2) if $\Ann_{i(A)} (\rho(\g_A)) =0$. Proof: Let $\delta, \eta \in \g_A$, and
  $r\in A$.  Then $[\delta, r\eta] = \alpha(\delta)(r)\eta + r [\delta, \eta]$, implying that
  $i(\alpha(\delta)(r))\rho(\eta) = [\rho(\delta), i(r)]\rho(\eta)$, which implies
  $i(\alpha(\delta)(r)) = [\rho(\delta), i(r)]$.  Thus (1) is automatic when $M$ is a non-trivial
  $\g_A$-module and torsion free as $A$-module.
\end{remark}

A Lie algebroid $\g_{A}$ acts on itself by $\g_A \to \cf_A(\g_A)$, $\delta \mapsto [\delta, \cdot]$,
but this does not define a $\g_{A}$-module when $\alpha \neq 0$.

Naturally, we usually  write $\delta m= \delta \cdot m = \rho(\delta)(m)$, for $\delta \in \g_A$ and
$m\in M$, when the map $\rho$ is clear from the context.  By \Proposition{\ref{diffop-gen-deg1}},
$T_{R}$-modules are the same as modules over the ring of differential operators $\Dc_{R}$ when $R$
is a regular allowed local $k$-algebra over a field of characteristic $0$.  Since $A$ is noetherian,
if $M$ is a $\g_A$-module of finite type over $A$, then $M$ is noetherian as $\g_A$-module and
contains in particular a maximal proper $\g_A$-submodule $N\subset M$, so the quotient $M/N$ is a
simple $\g_A$-module.

The $A$-module $A$ is always a $\g_A$-module, using the map $\alpha$, and the nilradical $\nil A$ is
a $\g_A$-submodule, since $\Char k =0$ (see \cite{scheja:fortzetsungderivationen, kallstrom:preserve}).
\begin{proposition}\label{regularprop}  Let $A$ be  an allowed 
  ring, and consider the conditions:
  \begin{enumerate}
  \item $A$ is regular.
  \item $\alpha : \g_A \to T_{A}$ is surjective (we say that $\g_A$ is transitive).
\item $A$ is a  simple $\g_A$-module.
\item $A$ is integral,  the depth $\depth \alpha(\g_A)\geq 2$, and $\mf_A^d T_A
  \subset \alpha(\g_A)$ for some integer $d\geq 0$.
  \end{enumerate}
  We have $(3)\Rightarrow (1)$, $(1)  \& (2)$ $   \Rightarrow (3)$, and
  $(4)\Rightarrow (2)$.

  Assume $l_{\g_A}(A)< \infty$. If $I\subset \mf_A$ is an invariant ideal, i.e. $\g_A \cdot I \subset I$,
  then $I \subset \nil A $, and the reduced ring $A/\nil A$ is regular, and simple over $\g_{A}$.
\end{proposition}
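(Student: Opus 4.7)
The plan is to prove each of the three implications separately and then bootstrap to the final claim via a Nakayama-style argument on a minimal invariant ideal. For $(3)\Rightarrow(1)$, I first observe that the nilradical $\nil A$ is $\g_A$-invariant in characteristic zero (by the cited results of Scheja and K\"allstr\"om), and is a proper ideal since $1\notin\nil A$; simplicity then forces $\nil A=0$, so $A$ is reduced. The Jacobian ideal $\bar J\subset A$ is $T_{A/l}$-invariant by the discussion in \Section{\ref{allowed}}, hence $\g_A$-invariant via $\alpha$; reducedness makes $A_P$ a field for every minimal prime $P$, so $\bar J\not\subset P$ and in particular $\bar J\neq 0$. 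Simplicity then forces $\bar J=A$, and the Jacobian criterion of \Theorem{\ref{goodring}} yields regularity of $A$.

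For $(1)\,\&\,(2)\Rightarrow(3)$, I use \Theorem{\ref{goodring}} to pick coordinates $x_1,\dots,x_n$ and partials $\partial_1,\dots,\partial_n$ generating $T_A$ over $A$. Any $\g_A$-invariant ideal $I$ is $T_A$-invariant by (2); for nonzero $f\in I$, Krull's intersection theorem produces a smallest $k$ with $f\in\mf_A^k\setminus\mf_A^{k+1}$, and in the expansion of $f$ in the $x_i$ some degree-$k$ coefficient $c_\alpha$ is nonzero, whence $\partial^\alpha f\equiv\alpha!\,c_\alpha\pmod{\mf_A}$ is a unit lying in $I$; hence $I=A$, and the only $\g_A$-invariant ideals are $0$ and $A$.

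For $(4)\Rightarrow(2)$, set $\g'=\alpha(\g_A)\subset T_A$. The inclusion $\mf_A^d T_A\subset\g'$ makes $T_A/\g'$ an $\mf_A$-torsion module, so $H^0_{\mf_A}(T_A/\g')=T_A/\g'$, while the depth hypothesis gives $H^0_{\mf_A}(\g')=H^1_{\mf_A}(\g')=0$. Integrality of $A$ makes $T_A\hookrightarrow\Der_k(A,\on{Frac}A)$ $A$-torsion-free, so $H^0_{\mf_A}(T_A)=0$, and the local cohomology long exact sequence of $0\to\g'\to T_A\to T_A/\g'\to 0$ then forces $T_A/\g'=0$.

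For the final claim, pass to $\bar A=A/\nil A$: it is reduced and carries an induced $\g_A$-action with $\ell_{\g_A}(\bar A)<\infty$. Take a minimal nonzero $\g_A$-invariant ideal $S\subset\bar A$; then $S^2\subset S$ is also $\g_A$-invariant, so by minimality $S^2=0$ or $S^2=S$. If $S\subsetneq\bar A$ then $S\subset\bar\mf_{\bar A}$, and $S^2=0$ contradicts the reducedness of $\bar A$, while $S^2=S\subset\bar\mf_{\bar A}S\subset S$ with Nakayama forces $S=0$, another contradiction. Hence $S=\bar A$, i.e.\ $\bar A$ is $\g_A$-simple, and applying $(3)\Rightarrow(1)$ to $\bar A$ (viewed through the image of $\g_A$ in $T_{\bar A}$) gives regularity of $\bar A$. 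For any $\g_A$-invariant $I\subset\mf_A$, the image $(I+\nil A)/\nil A$ is a proper $\g_A$-invariant ideal of $\bar A$, hence zero by simplicity, giving $I\subset\nil A$. The main obstacle I foresee is this minimal-ideal-plus-Nakayama step establishing simplicity of $\bar A$; a minor subtlety is verifying that $(3)\Rightarrow(1)$ may be legitimately invoked for $\bar A$ with the Lie algebroid induced from $\g_A$.
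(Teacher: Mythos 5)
Your argument is correct and follows essentially the same route as the paper: the invariant Jacobian ideal plus the Jacobian criterion for $(3)\Rightarrow(1)$, the coordinates and partials supplied by \Theorem{\ref{goodring}} used to differentiate a minimal-order element of an invariant ideal down to a unit for $(1)\,\&\,(2)\Rightarrow(3)$, and the depth-two vanishing of local cohomology for $(4)\Rightarrow(2)$, which the paper phrases equivalently as $\bar \g_X = j_*j^*\bar\g_X$ on the punctured spectrum. For the final claim the paper contradicts finite length via the strictly descending chain $\bar I^n$ in the reduced quotient, while you use a minimal invariant ideal, idempotency and Nakayama; this is the same reducedness-plus-Nakayama mechanism, and both proofs then conclude regularity by applying $(3)\Rightarrow(1)$ to $A/\nil A$.
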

\begin{remark}\label{simpleremark}
  \begin{enumerate}[label=(\roman*)]
  \item $(3)$ in \Proposition{\ref{regularprop}} does not imply $(2)$.  Derivations of a regular
    noetherian ring which have no proper invariant ideals has been much studied, for example in
    \citelist{\cite{seidenberg:diffideals}*{Th. 3 p. 26}
      \cite{hart:der_fin_type}\cite{jordan:diffsimple} \cite{coutinho-levcovitz:diffsimple}}.
  \item If $A$ is simple over $\g_A$, hence regular, and $\mf_A^d T_A \subset \alpha(\g_A)$, one can
    prove directly in local coordinates that $(2)$ follows.
  \end{enumerate}
\end{remark}
\begin{proof}

  $(3)\Rightarrow (1)$: The Jacobian ideal $\bar J$ is a non-zero
  $\g_A$-submodule of the simple $\g_A$-module $A$, hence $\bar
  J=A$. Since $A$ is allowed, it follows that $A$ is regular  \Th{\ref{goodring}}.

  $(1 )\& (2)\Rightarrow (3)$: \Theorem{\ref{goodring}} implies, since $A$ is regular allowed, that
  $T_{A}$ is free and also that $A$ has a regular system of parameters $x_1, \dots , x_r$, $r= \dim
  A$, together with derivations $\partial_1, \dots , \partial_r \in T_{A}$ satisfying
  $\partial_i(x_j)= \delta_{ij}$.  By the same theorem the $T_A$-invariant ring $ l \subset
  A^{T_{A}} \subset (\cap_i (A^*)^{\partial_{x_i}}) \cap A = K \cap A= l$ ($K$ is determined by $l$
  \cite{matsumura}*{Th. 28.3, (iv)}), hence if $f\in A \setminus l $, then there exists
  $\partial_{x_i}$ such that $\partial_{x_i} (f)\neq 0$.  Moreover, $\partial_{x_i} (\mf_A \setminus
  \mf_A^2)\subset (A \setminus \mf_A) \cup \{0\}$, and by induction over $r\geq 1$,
  $\partial_{x_i}\cdot (\mf_A^r\setminus \mf_A^{r+1}) \subset (\mf_A^{r-1}\setminus \mf_A^r) $.  Let
  $I$ be a $T_A$-invariant non-zero ideal of $A$ and select $f\in I\cap \mf_A^r $ with minimal $r$,
  so that $f \in \mf_A^r \setminus \mf_A^{r+1}$. If $r \geq 1$, selecting $\partial_{x_i}$ such that
  $\delta (f) \neq 0$, then implies that $\partial_{x_i}(f)\in \mf^{r-1}\setminus \mf^r$, so that
  $r$ cannot be minimal.  It follows that $r=0$, and therefore $A = A f \subset I$. Since $\alpha$
  is surjective this implies that $A$ is simple over $\g_A$.

  $(4)\Rightarrow (2) $: Put $X= \Spec A$ and let $j: X_0= X\setminus \{\mf_A\}\to X$ be the open
  inclusion.  Letting $\bar {\g}_X$ and $T_X$ be the coherent sheaves of $\Oc_X$-modules that are
  associated to the $A$-modules $\alpha (\g_A)\subset T_A$, we get an injective map of
  $\Oc_X$-modules $\alpha : \bar {\g}_X \to T_X$. Since $\mf_A^d T_A\subset \alpha(\g_A)$, it
  follows that $j^*(\bar g_X) = j^*(T_X)$, and since $\depth \alpha (\g_A)\geq 2$, so the local
  cohomology group $H^1_{\mf_A}(\alpha (\g_A))=0$, we get
\begin{displaymath}
  \bar \g_X  = j_*j^*(\bar \g_X) = j_*j^*(T_X). 
\end{displaymath}
Since $\bar \g_X \subset T_X$ and as $A$ is integral, hence $T_A$ is torsion free, so $T_X \subset
j_*j^*(T_X)$, it follows that $ \bar \g_X = T_X = j_*j^*(T_X)$. Therefore the $A$-module of global
sections $\alpha(\g_A)$ of $\bar \g_X$ coincides with the $A$-module $T_A$ of global sections of
$T_X$.

  We prove the last assertion. As mentioned above $\nil A$ is preserved by $\g_A$, so $A'= A/\nil A$
  is a $\g_{A}$-module of finite length.  If, on the contrary, $\bar I = I \mod \nil A $ is a
  non-zero ideal in $\mf_{A'}$, by Nakayama's lemma and since $A'$ is reduced, $\bar I^n\subset
  \mf_{ A'} $, $n=1,2,\dots $ is a strictly descending sequence of $\g_{A}$-invariant ideals, so
  $A'$ will not have a finite length. Therefore $I \subset \nil A$.  It follows that $\nil A$ is a
  maximal $\g_A$-submodule of $A$, hence $A/\nil A$ is simple, and therefore it is regular, since
  $(3)$ implies $(1)$.
\end{proof}

Condition (3) in  \Proposition{\ref{regularprop}} has the  following implication:
\begin{proposition}\label{freelemma}
  Let $\g_A$ be a Lie algebroid over an allowed ring $A$ such that $A$ is simple over $\g_A$.  If
  $\g_A$ acts on an $A$-module of finite type $M$, then $M$ is free. In particular, $\g_A$ is free
  over $A$.
\end{proposition}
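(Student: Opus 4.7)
The plan is to study the Fitting ideals of $M$ and use the simplicity hypothesis to force one of them to vanish. From the implication $(3)\Rightarrow(1)$ of \Proposition{\ref{regularprop}}, simplicity of $A$ over $\g_A$ gives $A$ regular. Let $r=\dim_k(M/\mf_A M)$, which by Nakayama is the minimal number of $A$-generators of $M$, and fix a minimal surjection $\pi\colon A^r\twoheadrightarrow M$ with kernel $K$. Since $A$ is noetherian, $K$ is finitely generated, yielding a presentation $A^s\xrightarrow{\phi}A^r\to M\to 0$ whose matrix $(a_{ij})$ has entries in $\mf_A$ by minimality. The Fitting ideal $\on{Fitt}_{r-1}(M)$ is then the ideal $(a_{ij})\subset\mf_A$.

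The central technical step is to show that $\on{Fitt}_{r-1}(M)$ is $\g_A$-invariant. Given $\delta\in\g_A$, I would lift the action of $\delta$ on $M$ to a $k$-linear Leibniz operator $\tilde\delta\colon A^r\to A^r$ with $\pi\tilde\delta=\rho(\delta)\pi$, by picking $\tilde\delta(e_j)$ to be any preimage under $\pi$ of $\rho(\delta)(\pi(e_j))$ and extending via $\tilde\delta(rv)=\alpha(\delta)(r)v+r\tilde\delta(v)$; such a lift exists because $A^r$ is free. The relation $\pi\tilde\delta=\rho(\delta)\pi$ forces $\tilde\delta(K)\subset K$. Expanding $\tilde\delta(k_i)$ for each generator $k_i=\sum_j a_{ij}e_j$ of $K$ in two ways—once by the Leibniz rule and once as an $A$-combination of the $k_m$—and matching coefficients in the basis $(e_j)$, one obtains an identity expressing $\alpha(\delta)(a_{ij})$ as an $A$-combination of the entries $a_{mn}$. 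Hence $\alpha(\g_A)\cdot\on{Fitt}_{r-1}(M)\subset\on{Fitt}_{r-1}(M)$.

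Because $A$ is simple over $\g_A$ and $\on{Fitt}_{r-1}(M)$ is a proper $\g_A$-invariant ideal, it must vanish. Vanishing of every entry $a_{ij}$ forces $K=0$, so $M\cong A^r$ is free. For the last assertion, apply the result to $M=\g_A$ with the adjoint action $\rho(\delta)(\eta)=[\delta,\eta]$: the Lie algebroid compatibility $[\delta,r\eta]=\alpha(\delta)(r)\eta+r[\delta,\eta]$ is precisely the Leibniz rule of \Definition{\ref{def-module}}(1), and $[\rho(\delta),i(r)]=i(\alpha(\delta)(r))$ shows $\rho(\delta)\in\cf_A(\g_A)$. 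Thus the adjoint defines an action (although not a module structure, as noted in the text), and the first part of the proposition yields freeness of $\g_A$.

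The main obstacle is the Fitting-ideal invariance, which requires the careful bookkeeping of the lifting argument so that Leibniz extension produces an operator preserving $K$ whose induced action on the presentation matrix lies in the ideal $(a_{mn})$. The rest—identifying $\on{Fitt}_{r-1}(M)$ with the ideal of entries of a minimal presentation and drawing the conclusion $K=0$ from simplicity—is formal.
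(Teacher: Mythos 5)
Your proof is correct and follows essentially the same route as the paper: the paper argues exactly that the Fitting ideals of $M$ are $\g_A$-invariant (citing \cite{kallstrom:preserve} for that fact), hence equal to $0$ or $A$ by simplicity of $A$, which forces $M$ to be free, and then applies this to the adjoint action to conclude $\g_A$ is free. The only difference is that you verify the invariance yourself, for the top Fitting ideal of a minimal presentation, by lifting $\rho(\delta)$ to a Leibniz operator on $A^r$ preserving the relation module — a correct filling-in of the step the paper delegates to the reference.
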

\begin{proof} 
  The Fitting ideals of $M$ are $\g_A$-invariant ideals of $A$ (see
  e.g. \cite{kallstrom:preserve}). Since $A$ is simple it follows that
  all Fitting ideals equals either $0$ or $A$, implying $M$ is free. A
  Lie algebroid $\g_A$ acts on itself by the adjoint action
  $\rho(\delta)(\eta)= [\delta, \eta]$, therefore $\g_A$ is free.
\end{proof}

\begin{proposition}\label{length-prop} Let $\g_A$ be a Lie algebroid
  over a local $k$-algebra $A$, and $M$ be $\g_A$-module of finite
  type as $A$-module.
  \begin{enumerate}
  \item If $M$ is simple and $\Ann M =0 $, then $A$ is simple and, in particular, $M$ is free over
    $A$.
  \item If $\ell_{\g_A}(M)< \infty$ and $\Ann_A M\subset \nil A$, then $ A/\nil A$ is simple.
  \item If $A$ is simple over $\g_A$, then $\ell_{\g_A}(M)\leq \rank
    (M)$.
  \item If $\ell_{\g_A}(A)< \infty$, then $\ell_{\g_A}(M) < \infty$.
  \item  $\ell_{\g_A}(A/\Ann M) \leq  \ell_{\g_A}(M).$
\end{enumerate}
   \end{proposition}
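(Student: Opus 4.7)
The plan is to prove the five assertions in the order $(1) \to (3) \to (5) \to (2) \to (4)$, reflecting the logical dependencies. For (1), any $\g_A$-invariant ideal $I \subset A$ yields a $\g_A$-submodule $IM \subset M$ via the Leibniz identity $\delta(xm) = \alpha(\delta)(x)m + x\delta(m)$, so simplicity of $M$ forces $IM \in \{0, M\}$: the first case gives $I \subset \Ann M = 0$, and the second, combined with Nakayama on the finitely generated $A$-module $M$, forces $I = A$ (otherwise $I \subset \mf_A$ and $M = IM \subset \mf_A M \subsetneq M$). Thus $A$ is $\g_A$-simple and Proposition~\ref{freelemma} gives freeness of $M$. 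For (3), $A$ is regular by Proposition~\ref{regularprop} (from $(3)\Rightarrow(1)$ there), in particular a domain, and $M$ is free by Proposition~\ref{freelemma}, so $\rank M$ is well defined. Tensoring a $\g_A$-composition series $0 = M_0 \subsetneq \cdots \subsetneq M_l = M$ with the fraction field $K$ of $A$ gives a chain in $M_K$; any stagnation $(M_{j-1})_K = (M_j)_K$ would make $M_j/M_{j-1}$ finitely generated and $A$-torsion over the domain $A$, hence with a nonzero $A$-annihilator, which would be a nontrivial $\g_A$-invariant ideal of $A$, contradicting $\g_A$-simplicity. Hence $l \leq \rank M$.

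For (5), I would take a composition series $\Ann M = I_0 \subsetneq I_1 \subsetneq \cdots \subsetneq I_r = A$ of $\g_A$-invariant ideals realising $r = \ell_{\g_A}(A/\Ann M)$, and consider the induced chain of $\g_A$-submodules $I_j M \subset M$; showing this chain is strict gives $r \leq \ell_{\g_A}(M)$. The strictness is the main obstacle of the proof. The plan is as follows: if $I_{j-1}M = I_jM$, then the determinant identity (Cayley--Hamilton) applied to a generating set $m_1,\dots,m_s$ of $M$ forces every $x \in I_j$ to satisfy a monic relation $x^s + c_{s-1}x^{s-1} + \cdots + c_0 \in \Ann M$ with $c_i \in I_{j-1}^{s-i}$, whence $I_j \subset \sqrt{I_{j-1}}$. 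Invoking the $\g_A$-invariance of radicals in characteristic zero, the structure of the simple factor $I_j/I_{j-1}$, which by (1) is free over its annihilator $\mathfrak{n}_j$ (a prime ideal, since $A/\mathfrak{n}_j$ is regular by Proposition~\ref{regularprop}), and the fact that the square $(I_j/I_{j-1})^2$ is a $\g_A$-submodule of the simple module $I_j/I_{j-1}$ and thus equals $0$ or $I_j/I_{j-1}$, a case analysis on the $\g_A$-invariant ideal $\sqrt{I_{j-1}}$ between $I_{j-1}$ and $\bar A$ eliminates both possibilities and yields the contradiction.

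For (2), applying (5) gives $\ell_{\g_A}(A/\nil A) \leq \ell_{\g_A}(A/\Ann M) \leq \ell_{\g_A}(M) < \infty$, since $\Ann M \subset \nil A$. For any proper $\g_A$-invariant ideal $I \subsetneq A$ (so $I \subset \mf_A$), the descending chain of $\g_A$-submodules $I^n M$ stabilises by finite length; Nakayama then forces $I^N M = 0$ for some $N$, whence $I^N \subset \Ann M \subset \nil A$ and $I \subset \nil A$. Thus $A/\nil A$ has no proper nonzero $\g_A$-invariant ideals, i.e., is $\g_A$-simple. Finally (4) follows by applying (2) with $M = A$ (using $\Ann A = 0 \subset \nil A$) to get $A/\nil A$ $\g_A$-simple, and then filtering $M$ by the powers $(\nil A)^j M$, which terminate since $\nil A$ is a nilpotent ideal in the Noetherian ring $A$; each quotient is a finitely generated $\g_A$-module over the $\g_A$-simple ring $A/\nil A$, hence has finite $\g_A$-length bounded by its $A/\nil A$-rank via (3), so $\ell_{\g_A}(M) < \infty$.
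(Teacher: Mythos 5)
Your treatments of (1), (2) and (4) are essentially correct: (1) is the paper's own argument, your (2) (stabilise $I^nM$, apply Nakayama to get $I^N\subset \Ann M\subset \nil A$, hence $I\subset \nil A$) is a clean variant of the paper's reduced-ring argument, and (4) then follows as you say. In (3) note that you need $A$ to be a domain, which you get from Proposition~\ref{regularprop} and hence from the \emph{allowed} hypothesis; the paper's own route (all $\g_A$-subquotients are free by Proposition~\ref{freelemma}, and rank is additive in exact sequences of free modules) avoids this. The genuine gap is in (5), which is also the part of the proposition the paper actually needs for defining Hilbert series.

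The strictness claim on which your proof of (5) rests is false, so no case analysis can rescue it. Take $\g_A=0$ (a legitimate Lie algebroid, so $\g_A$-invariant ideals are all ideals and $\ell_{\g_A}$ is ordinary length), $A=k[x,y]/(x,y)^2$ and $M=A/(x)\oplus A/(y)$. Then $\Ann M=(x)\cap(y)=0$, and the composition series $0\subsetneq (x+y)\subsetneq (x,y)\subsetneq A$ of invariant ideals starting at $\Ann M$ has $(x+y)M=(x,y)M$ (both equal $k(\bar y,0)+k(0,\bar x)$), so the induced chain $I_jM$ stagnates even though each $I_j/I_{j-1}$ is simple and $\Ann M\subset I_{j-1}$. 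The configuration you hope to exclude therefore actually occurs, while the conclusion $\ell(A/\Ann M)=3\le 4=\ell(M)$ still holds; your Cayley--Hamilton step $I_j\subset\sqrt{I_{j-1}}$ is correct but produces no contradiction (here $\sqrt{(x+y)}=(x,y)$). Moreover, as written the decisive "case analysis" is only announced, not carried out. This is precisely why the paper's proof of (5) does not push a chain of ideals into $M$: it first replaces $A$ by $A_1=A/\Ann M$, passes to the associated graded objects $G(A_1)$ and $G(M)$ with respect to $J=\nil A_1$ so that, by (2), the base ring $B=A_1/J$ is $\g_B$-simple and all $\g_B$-subquotients are free (hence have zero annihilator), and then inducts on $\ell_{\g_A}(M)$, applying the induction hypothesis to $G(M_1)$ for a proper $\g_A$-submodule $M_1\subset M$. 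Some argument of that kind, in which faithfulness is propagated through an induction on the length of the module rather than through a chain of ideals, is what (5) requires.
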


    \begin{proof}
      (1): If $I \subset A$ is a non-zero $\g_{A}$-invariant ideal, then since $\Ann_{A}(M)=0$, $I
      M$ is a non-zero $\g_{A}$-submodule, hence $IM = M$ since $M$ is simple, hence $I= A$ by
      Nakayama's lemma; hence $A$ is simple.  That $M$ is free follows from
      \Proposition{\ref{freelemma}}.

      (2): Recall that $\nil A$ is a $\g_A$-invariant ideal in $A$, so that $A_1
      = A/\nil A$ and $M_1= M/\nil A \cdot M $ are $\g_A$-modules in a natural
      way.  We have $ \ell_{\g_A}(M_1) \leq \ell_{\g_A}(M) <\infty$, and if on
      the contrary $I\subset A_1$ is a non-zero $\g_A$-invariant ideal, $I\neq
      A_1$, since $A_1$ is reduced, Nakayama's lemma implies that $I^n \cdot
      M_1$ is a strictly decreasing sequence of submodules, which results in a
      contradiction. Therefore $A_1$ is a simple $\g_A$-module.

 (3): If $0 \to M_1 \to M \to M_2 \to
      0 $ is a short exact sequence of $\g_A$-modules, by \Proposition{\ref{freelemma}} all these
      modules are free, and $\rank M = \rank M_1 + \rank M_2$.  From this follows that in any
      descending chain of $\g_A$-modules each subquotient is free over $A$, and hence it is
      stationary, and that the number of simple subquotients cannot exceed $\rank M$.

  (4): Since $\ell_{\g_A}(A) < \infty$ it follows from
  \Proposition{\ref{regularprop}} that the maximal proper
  $\g_A$-submodule $I$ of $A$ equals the nilradical. Put $B= A/I$ and
  $\g_B = B \otimes_A \g_A$, which is a Lie algebroid over the simple
  $\g_B$-module $B$.  Since $I^n =0$ for high $n$, there exists a
  finite set $\Omega$ of indices, so that $i\in \Omega$ if and only if
  $I^iM/I^{i+1}M \neq 0 $. Therefore by (2)
  \begin{displaymath}
    \ell_{\g_A}(M) = \ell_{\g_A} (\oplus_{i\in \Omega} \frac {I^iM}{I^{i+1}M}) = \sum_{i\in \Omega} \ell_{\g_B}(\frac {I^iM}{I^{i+1}M})< \infty.
  \end{displaymath}
%
%
% Let $0 \subset I_{n+1} \subset I_{n} \subset \cdots \subset
%      A $ be a composition series of $A$. Then $0 \subset I_{n+1}M
%      \subset I_{n}M \subset \cdots \subset M$ is a chain of
%      $\g_A$-modules, and it suffices to see that each subquotient
%      $I_iM/I_{i+1}M$ is of finite length. But $I_iM/I_{i+1}M$ is a
%      module over the Lie algebroid $(\g_{A_i}= A_i \otimes_A \g_A, A_i = I_i/I_{i+1})$,
%      where $A_i$ is simple over $\g_{A_i}$. Since $I_iM/I_{i+1}M$ is
%      of finite type over $A_i$, by (1) $l_{\g_A}(I_iM/I_{i+1}M) =
%      l_{\g_{A_i}}(I_iM/I_{i+1}M)\leq \rank (I_iM/I_{i+1}M)$.

  (5): We can assume that $\ell_{\g_A}(M)< \infty$, and we use induction over $n=\ell_{\g_A}(M)$.
  Since $\alpha (\g_A) \cdot \Ann M \subset \Ann M$, putting $A_1 = A/\Ann M$, then $\g_{A_1}=
  \g_A/(\Ann (M) \g_A)$ is a Lie algebroid over $A_1$ and $M$ is a $\g_{A_1}$-module such that
  $\Ann_{A_1} M =0$.  Since $\ell_{\g_{A_1}}(A_1)= \ell_{\g_A}(A_1)$ and $\ell_{\g_A}(M)=
  \ell_{\g_{A_1}}(M) $, it suffices thus to prove
      \begin{displaymath}
        \ell_{\g_{A_1}}(A_1)\leq \ell_{\g_{A_1}}(M).
\end{displaymath} 
If $M$ is a simple, (1) implies that $A_1$ is simple, proving the assertion when $n=1$.  Put $J
=\nil A_1$ , $B=A_1/J$, and $G(M)= \oplus_{i\geq 0} J^iM/J^{i+1}M$. Put also $\g_B=
\g_{A_1}/J\g_{A_1}$, which is a Lie algebroid over $B$ since $\alpha(\g_{A_1}) \cdot J \subset J$
(\cites{scheja:fortzetsungderivationen, kallstrom:preserve}). Then $G(M)$ and $G(A_1)$ are
$\g_{B}$-modules, and $\Ann_B (G(M))=0$. Moreover, $\ell_{\g_{A_1}}(A_1)=\ell_{\g_B}(G(A_1))$ and
$\ell_{\g_{A_1}}(M) = \ell_{\g_B}(G(M))$, so it is equivalent to prove
\begin{displaymath}
  \ell_{\g_B} (G(A_1)) \leq \ell_{\g_B}(G(M)),
\end{displaymath}
when $\Ann_B G(M)=\{0\}$, which we thus know is true when $M$ is simple.  Since $\ell_{\g_B}(G(M))=
\ell_{\g_{A_1}}(M)= \ell_{\g_A}(M)< \infty$, (2) implies that $B$ is a simple $\g_B$-module, and so
any $\g_B$-module of finite type is free over $B$ \Prop{\ref{length-prop}}.  Assume $n >1$ and that
the assertion is true for all $\g_A$ and $M$ such that $\ell_{\g_A}(M)\leq n-1$; hence the above
inequality holds when $\ell_{\g_B}(G(M))= \ell_{\g_A}(M)\leq n-1$.  Since $M$ is not simple there
exists a proper $\g_A$-submodule $M_1 \subset M$, so that $G(M_1)$ is a non-zero free $B$-module
\Prop{\ref{length-prop}}, in particular $\Ann_B G(M_1)= \{0\}$, hence by induction,
$\ell_{\g_B}(G(A_1)) \leq \ell_{\g_B}(G(M_1)) < \ell_{\g_B}(G(M))$.
% If $s\geq 2$, put $I=\sum P_i$ and $M_1 = IM$. For each prime we have $(M_1)_{P_i} = M_{P_i}$,
% implying $\supp M_1=\supp M = \Spec A_1$. Moreover, $T_{A_1} \cdot P_i \subset P_i$, and therefore
% $\alpha(\g_{A_1})\cdot I \subset I$, implying that $M_1$ is a proper $\g_{A_1}$-submodule of $M$
% such that $\supp M_1 = \supp M$. Therefore $\supp G(M_1) = \Spec B$, hence since $B$ is reduced
% $\Ann G(M_1)= 0$.  By induction $\ell_{\g_{A_1}}(M) > \ell_{\g_{A_1}}(M_1)\geq
% \ell_{\g_{A_1}}(A_1)$.
\end{proof}

% For a $\g_B$-module $N$, let $[N]$ be its image of in the Grothendieck group of
% $\g_B$-modules that are of finite type over $B$.  We can then write
% \begin{displaymath}
%   [G(M)] = \sum_{i=1}^r [N_i]
% \end{displaymath}
% where $N_i$ are $\g_B$ modules such that $\ell_{\g_B}(N_i)< n$ and $\cup \supp
% N_i = \Spec B$, since $\Ann_B G(M) =0$.  Since $B$ is reduced the canonical map
% \begin{displaymath}
%   B \to \bigoplus_{i=1}^r \frac B{\Ann_B G(N_i)}
% \end{displaymath}
% is injective. Therefore 
% \begin{displaymath}
%   \ell_{\g_B}(B) \leq \sum_{i=1}^r  \ell_{\g_B}(\frac B{\Ann_B G(N_i)}) \leq
%   \sum_{i=1}^r \ell_{\g_B} (N_i) = \ell_{\g_B} (G(M))  
% \end{displaymath}

  \subsection{Fibre Lie algebras and Local systems}\label{fibreslocal} Let  $(R, \mf_R)$ be a local allowed
  $k$-algebra which is {\it simple} over a Lie algebroid $\g_R$, so we have the exact sequence
    \begin{equation}\label{ex-seq}
      0 \to \g \to       \g_{R}\to  \bar \g_R\to 0, 
    \end{equation}
    where $\g$ is a Lie algebra over $R$ and $\bar \g_R $ is a Lie subalgebroid of $T_{R}$, such
    that $R$ contains no $\bar \g_R$-invariant ideals.  In practice, most likely $\bar \g_R= T_R$
    when R is $\bar \g_R$-simple, but see \Remark{\ref{simpleremark} (i)}, and if $\bar \g_R$ is a
    proper submodule of $T_R$ we have:
    \begin{lemma}\label{splitlemma-1} The sequence of $R$-modules
   \begin{displaymath}
     0 \to \bar \g_R \to T_R \to     \frac {T_R}{\bar \g_R}\to 0
 \end{displaymath}
 is split exact,  and each module is free.
 \end{lemma}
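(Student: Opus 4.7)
The plan is to establish freeness of each of the three modules in the sequence and then deduce splitness from projectivity of the quotient. First, since $R$ is simple over $\g_R$, \Proposition{\ref{regularprop}} (the implication $(3) \Rightarrow (1)$) shows $R$ is regular. As $R$ is also allowed, \Theorem{\ref{goodring}} gives that $T_R$ is free of rank $\dim R$.

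Next, I would observe that the action of $\g_R$ on $R$ factors through $\bar{\g}_R$ via $\alpha$, so an ideal $I \subset R$ is $\g_R$-invariant if and only if it is $\bar{\g}_R$-invariant; hence $R$ is simple over $\bar{\g}_R$ as well. \Proposition{\ref{freelemma}} then gives that $\bar{\g}_R$ is free over $R$ (applying it to the adjoint action of $\bar{\g}_R$ on itself, which satisfies the Leibniz compatibility of \Definition{\ref{def-module}}(1) by the Lie algebroid axiom $[\delta, r\eta] = \alpha(\delta)(r)\eta + r[\delta,\eta]$).

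For the quotient, the inclusion $\bar{\g}_R \hookrightarrow T_R$ provides an adjoint action of $\bar{\g}_R$ on $T_R$ satisfying the same Leibniz rule, and since $\bar{\g}_R$ is a Lie subalgebroid (closed under the bracket) this action descends to an action of $\bar{\g}_R$ on the $R$-module of finite type $T_R/\bar{\g}_R$. A second application of \Proposition{\ref{freelemma}} yields that $T_R/\bar{\g}_R$ is free over $R$.

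Finally, since $T_R/\bar{\g}_R$ is free, it is projective, and so the short exact sequence
\begin{displaymath}
 0 \to \bar{\g}_R \to T_R \to T_R/\bar{\g}_R \to 0
\end{displaymath}
splits as a sequence of $R$-modules. The only real point requiring care—and the sole potential obstacle—is to confirm that the adjoint bracket qualifies as an action in the sense needed to invoke \Proposition{\ref{freelemma}} (in particular that $[\delta,\cdot] \in \cf_R(T_R)$ and that the induced action on $T_R/\bar\g_R$ is well-defined), but both verifications are immediate from the Lie algebroid compatibility and the subalgebroid property of $\bar{\g}_R$.
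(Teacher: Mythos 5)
Your proof is correct and follows essentially the same route as the paper: all three terms are of finite type over $R$ (via \Theorem{\ref{goodring}}), the adjoint action of $\bar\g_R$ on each term together with simplicity of $R$ lets one invoke \Proposition{\ref{freelemma}} to get freeness, and splitness then follows from projectivity of the free quotient. Your extra verifications (that $R$ is simple over $\bar\g_R$ and that the adjoint bracket is an action descending to $T_R/\bar\g_R$) are exactly the points the paper leaves implicit.
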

 \begin{proof} All terms in the sequence are of finite type over $R$ due to
   \Theorem{\ref{goodring}}, and since  $\bar \g_R$ acts on each of them we can conclude from
   \Proposition{\ref{freelemma}}.
 \end{proof}
 \begin{remark}
   It follows from \Theorem{\ref{goodring}} that $T_R$ is a {\it simple} Lie algebroid, so that if
   $\bfr$ is a Lie subalgebroid of $T_R$ such that $[T_R, \bfr] \subset \bfr$, then either $\bfr =0$
   or $\bfr = T_R$.  Therefore the exact sequence in \Lemma{\ref{splitlemma-1}} is not split as Lie
   algebroids (see before \Proposition{\ref{complete-prop}}) .
 \end{remark}

 The quotient
    \begin{displaymath}
      \g_k = k\otimes_R \g
    \end{displaymath}
    is a finite-dimensionaI Lie algebra over $k$, which we call the { \it fibre Lie algebra} of
    $\g_R$. 
    \begin{example}\label{fibre-example}
      Let $A$ be an allowed regular ring (e.g. $A= \Oc_n$), $I$ an ideal, and put $\g_A=
      T_A(I)$. Then $\g_A$ preserves the (radical) ideal $I_1$ of the singular locus of $V(I)$,
      $\g_A \subset T_A(I_1)$. Similarly, $T_A(I_1)$ preserves the ideal $I_2$ of the singular locus
      of $V(I_1)$, so $\g_A \subset T_A(I_1)\subset T_A(I_2) $. Iterating one eventually arrives at
      an ideal $I_n$ such that $A_n= A/I_n$ is regular, and $\g_A \subset T_A(I_n)$. However,
      $A/I_n$ need not be a simple module over $\g_A/I_n\g_A$ (it can happen that $\g_A\subset
      I_nT_A$), so let $J_m$ be a maximal ideal such that $\g_A \subset T_A(J_m)$, where now $I
      \subset I_1 \subset \cdots \subset I_n \subset J_m$; the ideal $J_m$ will be a so-called
      defining ideal of the $\g_A$-module $A$, discussed below \Sec{\ref{def-ideal-section}}.
      Putting $R= A/J_m$ and $\g_R = \g_A/J_m\g_A $ we can consider the exact sequence
      (\ref{ex-seq}), where $\bar \g_R \subset T_R = T_A(J_m)/J_mT_A$.  Here $\g = ( T_A(I)\cap J_mT_A)/ (J_mT_A(I))$ and the fibre Lie algebra
      \begin{displaymath}
        \g_k = \frac{\g}{\mf_R \g} = \frac{ T_A(I)\cap
        J_mT_A}{ \mf_A(T_A(I)\cap J_mT_A) + J_m T_A(I) }.
    \end{displaymath}
    The fibre Lie algebra $\g_k$ encodes symmetries of the singularites of $V(I)$.  For example, if
    $I= (x_1, x_2)\subset \Oc_3$, then we get the commutative Lie algebra $\g_k = k
    \overline{x_1\partial_{x_1}} + k\overline {x_2
        \partial_{x_2}}$, where $\overline{x_i\partial_{x_i}}$ is represented by
      $x_i \partial_{x_i}\in T_A(I)$. 
    \end{example}
    In \Theorem{\ref{mon-the}} the fibre Lie algebra is computed for monomial ideals
    and \Section{\ref{hypersurfaces}} contains some complex analytic examples.

    We have a natural specialisation homomorphism of Lie algebras $f: \g \to \g_k$, $\delta \mapsto
    1\otimes \delta$, and given a $\g_R$-module $M$, which is of finite type over $R$, there is a
    $\g$-linear specialisation map $M \to k\otimes_R M $, so that its fibre
    \begin{displaymath}
      k\otimes_R M
    \end{displaymath}
    is a $\g$-module in a natural way, and since clearly $\mf_R \g \cdot k\otimes_R M =0$, the fibre
    is even a $\g_k$-module, which we call the associated {\it fibre module} of the $\g_R$-module
    $M$.  Letting $\Mod_f(\g_R)$ be the category of $\g_R$-modules that are of finite type over $R$
    and $\Mod_f(\g_k)$ the category of $\g_k$-modules that are of finite dimension over $k$, we thus
    have defined the fibre functor
    \begin{eqnarray*}
      F:  \Mod_f(\g_R)&\to&  \Mod_f(\g_k),  \\ M &\mapsto& k\otimes_R M.
    \end{eqnarray*}

    Since $R$ is simple, so $\g$ and $M$ are free over $R$ \Prop{\ref{freelemma}}, any choice of
    basis of $M$ and $\g$ induces an isomorphism of $R$-modules
\begin{eqnarray*}
  M &\to&    R\otimes_k k\otimes_R M,\\
  \g &\to& R \otimes_k \g_k.
\end{eqnarray*}
However, the second map is in general not a homomorphism of Lie algebras over $R$. Also the first
map is not a homomorphism of $\g$-modules, where $\g$ acts trivially on $R$, and even less a
homomorphism of $\g_R$-modules.  Still, we have  some good behaviour:
\begin{prop}\label{lem-ineq} Let $M$ be a $\g_R$-module of finite type.
  \begin{enumerate}
  \item The functor $F$ is exact and faithful.
\item 

$      l_{\g_R}(M)\leq l_{\g_k}(k\otimes_R M).$

  \end{enumerate}
\end{prop}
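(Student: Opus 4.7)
The plan is to exploit the powerful freeness property granted by Proposition~\ref{freelemma}: since $R$ is simple over $\g_R$, every $\g_R$-module of finite type over $R$ is automatically free as an $R$-module. This single fact drives both parts of the proposition.

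For exactness in part (1), I would start with a short exact sequence $0 \to M_1 \to M \to M_2 \to 0$ of $\g_R$-modules of finite type over $R$. Because $M_2$ is free over $R$, the sequence splits as a sequence of $R$-modules (equivalently, $\Tor_1^R(k, M_2) = 0$), so applying $k \otimes_R -$ preserves exactness, giving exactness of $F$. Faithfulness then follows formally from exactness and freeness: given a $\g_R$-morphism $f : M \to N$ with $F(f) = 0$, the image $f(M)$ is a $\g_R$-submodule of $N$ of finite type over $R$, hence free over $R$ by Proposition~\ref{freelemma}; exactness of $F$ gives $F(f(M)) = \Ima F(f) = 0$, so $k \otimes_R f(M) = 0$. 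A non-zero free $R$-module has non-zero fibre over $k$ (by Nakayama, or directly from freeness), so $f(M) = 0$ and hence $f = 0$.

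For the length inequality in part (2), I would take any strictly descending chain of $\g_R$-submodules
\begin{displaymath}
M = M_0 \supsetneq M_1 \supsetneq \cdots \supsetneq M_n = 0
\end{displaymath}
of length $n = \ell_{\g_R}(M)$. Applying the exact functor $F$ yields a chain
\begin{displaymath}
F(M) = F(M_0) \supseteq F(M_1) \supseteq \cdots \supseteq F(M_n) = 0
\end{displaymath}
of $\g_k$-submodules whose consecutive quotients are $F(M_i/M_{i+1})$. Each quotient $M_i/M_{i+1}$ is a non-zero $\g_R$-module of finite type over $R$, hence free of positive rank by Proposition~\ref{freelemma}, and so $F(M_i/M_{i+1}) = k \otimes_R (M_i/M_{i+1}) \neq 0$. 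Therefore the chain in $F(M)$ is strictly descending, and refining to a composition series of $\g_k$-modules only increases length, giving $\ell_{\g_k}(F(M)) \geq n = \ell_{\g_R}(M)$.

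There is no real obstacle here; the one point that must be handled with some care is faithfulness, since the fibre functor is not in general conservative on morphisms of arbitrary $R$-modules (consider multiplication by a parameter). The subtlety is resolved precisely by the observation that the image of any $\g_R$-morphism between finite type $\g_R$-modules lies inside a free $R$-module category, where $k \otimes_R -$ detects zero.
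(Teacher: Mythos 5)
Your proposal is correct and follows essentially the same route as the paper: exactness of $F$ from the freeness of finite type $\g_R$-modules (Proposition~\ref{freelemma}), faithfulness by showing the image of a morphism killed by $F$ has zero fibre and is therefore zero (the paper packages this as Lemma~\ref{0-lemma}, using the split inclusion plus Nakayama, which is the same content as your freeness argument), and the length inequality by using exactness together with Nakayama/freeness to see that strict chains of $\g_R$-submodules stay strict after applying $F$.
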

Simple examples show that $F$ is not a full functor.
\begin{lemma}\label{0-lemma}
  If $N$ is a $\g_R$-submodule of $\mf_RM$, then $N=0$.
\end{lemma}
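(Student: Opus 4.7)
The plan is to reduce the statement to a linear-algebra computation modulo $\mf_R$ by exploiting \Proposition{\ref{freelemma}}, according to which every $\g_R$-module of finite type over $R$ is $R$-free. The main obstacle is seeing why the hypothesis that $N$ is $\g_R$-stable (and not merely $R$-stable) is needed; this enters exactly through the freeness of the quotient $M/N$.

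First I would observe that $N$ is of finite type as an $R$-module, being a submodule of the finite-type module $\mf_R M$ over the Noetherian ring $R$. Since $N$ is $\g_R$-stable by hypothesis, the quotient $M/N$ inherits a $\g_R$-module structure of finite type over $R$, and so \Proposition{\ref{freelemma}} (applicable because $R$ is simple over $\g_R$) makes $M/N$ a free $R$-module.

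Freeness of $M/N$ implies $\Tor_1^R(M/N,k) = 0$, where $k = R/\mf_R$, so tensoring the short exact sequence $0 \to N \to M \to M/N \to 0$ with $k$ over $R$ preserves exactness on the left; in particular the induced map $N \otimes_R k \to M \otimes_R k$ is injective. On the other hand, the hypothesis $N \subset \mf_R M$ says precisely that the composition $N \hookrightarrow M \twoheadrightarrow M/\mf_R M = M \otimes_R k$ is the zero map, so $N \otimes_R k \to M \otimes_R k$ is simultaneously injective and zero. Therefore $N \otimes_R k = 0$, equivalently $N = \mf_R N$, and Nakayama's lemma applied to the finitely generated module $N$ over the local ring $R$ yields $N = 0$.

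The only nontrivial input is the freeness of $M/N$ supplied by \Proposition{\ref{freelemma}}: it is what kills the Tor term and keeps the residue-fibre map injective. Without this, an $R$-submodule of $\mf_R M$ could easily be nonzero (take $N = \mf_R$ inside $M = R$), so the Lie-algebroid hypothesis is doing genuine work in exactly this step.
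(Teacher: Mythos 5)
Your argument is correct and matches the paper's proof in all essentials: both use \Proposition{\ref{freelemma}} to make $M/N$ free over $R$, deduce that $k\otimes_R N \to k\otimes_R M$ is injective (the paper via splitting of the exact sequence, you via vanishing of $\Tor_1^R(M/N,k)$, which is the same mechanism), and conclude $N=\mf_R N$ and hence $N=0$ by Nakayama. No gaps.
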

\begin{proof}
  Since $R$ is a simple $\g_R$-module the $\g_R$-module $M/N$, being of finite type over $R$, is fee
  over $R$ \Proposition{\ref{freelemma}}. Hence the exact sequence $0 \to N \to M \to M/N \to 0$ is
  a split and therefore the map $k\otimes_R N \to k\otimes_R M$ is injective.  Since $N \subset
  \mf_R M$ this map is $0$, hence $N= \mf_R N$, so that by Nakayama's lemma, $N=0$.
\end{proof}

\begin{pfof}{\Proposition{\ref{lem-ineq}}}
  (1): That $F$ is exact follows since the $\g_R$-modules are free \Prop{\ref{freelemma}}. Having
  proved exactness we now prove faithfulness. A
  morphism $\phi \in Hom_{\g_R}(M_1 , M_2) $ maps to a morphism $\bar \phi \in Hom_{\g_k}(
  k\otimes_R M_1, k\otimes_R M_2)$, and $\bar \phi =0 $ means $\phi (M_1 )\subset \mf_R M_2$.   Now
  the assertion follows from \Lemma{\ref{0-lemma}}.

  (2): Let $M_1 $ be a proper submodule of $M$. Since $F$ is exact, Nakayama's lemma implies that
  $k\otimes_R M_1 $ is a proper submodule of $k\otimes_R M$. From this observation the assertion
  immediately follows.
\end{pfof}

We are interested in modules whose structure is well reflected in its corresponding fibre module.
    \begin{definition}\label{local-system}
      Let $\g_R$ be a Lie algebroid such that $R$ is simple over $\g_R$. A {\it local system} over
      $\g_R$ is a $\g_R$-module $M$ which is of finite type as $R$-module, such that
      \begin{displaymath}\label{local-sys-def}
        l_{\g_R}(M)=  l_{\g_k}(F(M)).
      \end{displaymath}
      Let $\Loc(\g_R)$ denote the category of local systems over $\g_R$.
   \end{definition} 
   This notion is important for computing Hilbert series of $\g_R$-modules
   in \Section{\ref{loc-syst-sec}}. The seemingly trivial case when $R=k$, so $\g_R = \g_k$ and
   therefore all $\g_R$-modules of finite type belong to $\Loc(\g_R)$, is important in the study of
   analytic algebras \Sec{\ref{hypersurfaces}}.  Other concrete examples will be
   worked out later: criteria to have local systems are in \Propositions{\ref{local-cond}}{
     \ref{loc-ideal}}; the complex analytic case is described in \Proposition{\ref{complete-prop}};
   localisations of representations of Lie algebras, \Theorem{\ref{localisation}}; monomial ideals,
   \Section{\ref{toral-section}}.

   \begin{remark}
     On a complex manifold $(X, \Oc_X)$ with tangent sheaf $T_{X/\Cb}$ a $T_{X/\Cb}$-module $M$ is
     the same as an integrable connection, and its sheaf of horizontal sections
     $\Lc= M^{T_{X/\Cb}} $ forms a local system in the usual sense.  Locally near a point $x$ the
     local system it is determined by its fibre, isomorphic to $M/\mf_x M$, which can be regarded as
     a vector space over the trivial fibre Lie algebra $\g_{\Cb}= 0$ of $\g_R = T_{X/\Cb,x}$, with
     $R= \Oc_{X,x}$. Thus ``locally'' on $X$ local systems in the usual sense correspond to a local
     systems in the sense of \Definition{\ref{local-sys-def}}, due to Cauchy's theorem for
     integrable connections.
   \end{remark}

   % \begin{remark}
   %   If $M$ is a finitely generated $\g_R$-module, but not necessarily finite over $R$, we can say
   %   it is a local system if $\ell_{\g_k}(H^\bullet (k\otimes^L_R M)) = \ell_{\g_R}(M)$, where
   %   $H^\bullet (k\otimes^L_R M)$ is the homology of the derived tensor product.  To make the
   %   exposition more digestable we decided to stay away from derived categories.
   % \end{remark}
   Since all $\g_{R}$-subquotients of $M$ are free over $R$
   \Prop{\ref{freelemma}} it is straightforward to see that $M\in
   \Loc(\g_R)$ if and only if each simple subquotient of $M $ induces a
   simple $\g_k$-subquotient of $k\otimes_R M$.
  % \begin{remark}
   %   Although the fibre functor $F: \Loc (\g_R)\to \Mod(\g_k)$, $F(M)=
   %   k\otimes_R M$ preserves length it is in general not fully
   %   faithful, and it does not have an adjoint functor.
   % \end{remark}
    \begin{proposition}\label{short-ex}
Assume that $R$ is simple over the Lie algebroid $\g_R$ and let
\begin{displaymath}
  0\to M_1 \to M \to M_2 \to 0
\end{displaymath}
be an exact sequence of $\g_R$-modules of finite type over $R$. Then
$M\in \Loc (\g_R)$  if and only if  $M_1, M_2\in \Loc (\g_R)$.
    \end{proposition}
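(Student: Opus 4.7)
The plan is to reduce the proposition to a clean book-keeping argument with lengths, using the exactness of the fibre functor $F$ established in \Proposition{\ref{lem-ineq}} together with the inequality $\ell_{\g_R}(N)\leq \ell_{\g_k}(F(N))$ from the same proposition.

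First I would note that all lengths in sight are finite: because $R$ is simple over $\g_R$, \Proposition{\ref{freelemma}} implies that every $\g_R$-module of finite type over $R$ is free, and then \Proposition{\ref{length-prop}}~(3) gives $\ell_{\g_R}(N)\leq \rank_R N<\infty$ for $N\in\{M_1,M,M_2\}$. Consequently the usual additivity of length in a short exact sequence of $\g_R$-modules yields
\begin{displaymath}
  \ell_{\g_R}(M)=\ell_{\g_R}(M_1)+\ell_{\g_R}(M_2).
\end{displaymath}
Next I would apply the fibre functor $F$: by \Proposition{\ref{lem-ineq}}~(1) the sequence
\begin{displaymath}
  0\to F(M_1)\to F(M)\to F(M_2)\to 0
\end{displaymath}
is exact in $\Mod_f(\g_k)$, so additivity of $\g_k$-length gives
\begin{displaymath}
  \ell_{\g_k}(F(M))=\ell_{\g_k}(F(M_1))+\ell_{\g_k}(F(M_2)).
\end{displaymath}

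Now the proof is just a comparison. By \Proposition{\ref{lem-ineq}}~(2) we have $\ell_{\g_R}(M_i)\leq \ell_{\g_k}(F(M_i))$ for $i=1,2$ and $\ell_{\g_R}(M)\leq \ell_{\g_k}(F(M))$. If $M_1,M_2\in \Loc(\g_R)$, then summing the two equalities $\ell_{\g_R}(M_i)=\ell_{\g_k}(F(M_i))$ gives $\ell_{\g_R}(M)=\ell_{\g_k}(F(M))$, hence $M\in \Loc(\g_R)$. Conversely, assume $M\in \Loc(\g_R)$. Then
\begin{displaymath}
  \ell_{\g_k}(F(M_1))+\ell_{\g_k}(F(M_2))=\ell_{\g_k}(F(M))=\ell_{\g_R}(M)=\ell_{\g_R}(M_1)+\ell_{\g_R}(M_2),
\end{displaymath}
while each summand on the left dominates the corresponding summand on the right; equality of the sums therefore forces equality in each term, so $M_1,M_2\in \Loc(\g_R)$.

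There is no real obstacle here: the entire content of the statement is that local systems form a Serre subcategory of $\Mod_f(\g_R)$, and this follows purely formally from exactness and faithfulness of $F$ plus the one-sided length inequality. The only thing to be careful about is to invoke \Proposition{\ref{freelemma}} and \Proposition{\ref{length-prop}}~(3) at the start to guarantee finiteness of the $\g_R$-lengths, so that the additivity arguments are legitimate.
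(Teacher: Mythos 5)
Your proof is correct and follows essentially the same route as the paper: exactness of the fibre functor (via freeness/flatness of the modules over the $\g_R$-simple ring $R$), additivity of lengths on both sides, the one-sided inequality $\ell_{\g_R}\leq\ell_{\g_k}\circ F$ from \Proposition{\ref{lem-ineq}}, and the resulting squeeze forcing termwise equality. The only difference is cosmetic: you spell out the finiteness of the lengths via \Proposition{\ref{freelemma}} and \Proposition{\ref{length-prop}}~(3), which the paper leaves implicit.
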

    \begin{proof}
      Since $M,M_1,M_2$ are free over $R$, hence flat, \Proposition{\ref{lem-ineq}} implies
      \begin{displaymath}
        \ell_{\g_R}(M)\leq \ell_{\g_k}(k\otimes_RM) =
        \ell_{\g_k}(k\otimes_R M_1) + \ell_{\g_k}(k\otimes_R M_2) \geq
        \ell_{\g_R}(M_1) + \ell_{\g_R}(M_2).
    \end{displaymath} 
    Since $ \ell_{\g_R}(M)= \ell_{\g_R}(M_1) + \ell_{\g_R}(M_2)$, if one of the inequalities is an
    equality, then the other is too, so that $\ell_{\g_k}(k\otimes_R M_1) + \ell_{\g_k}(k\otimes_R
    M_2) = \ell_{\g_R}(M_1) + \ell_{\g_R}(M_2)$; as moreover $ \ell_{\g_k}(k\otimes_R M_i) \geq
    \ell_{\g_R}(M_i)$, $i=1,2$, the assertion follows.
    \end{proof}

    A simple $T_{R}$-module, finitely generated over $R$, is a local system if and only if it is of
    rank $ 1$ as $R$-module, but still there exist a great many non-isomorphic modules in
    $\Loc(T_R)$.
    \begin{example}\label{exp-example}
      Let $R= \Cb[x]_{(x)}$ be the localisation of the polynomial ring at the maximal ideal $(x)$ so
      $T_R = R\partial_x$ and $\Dc_R$ is the localisation of the first Weyl algebra at $(x)$.  The
      fibre Lie algebra of $\g_R= T_R$ is $0$.
      \begin{enumerate}
      \item The $T_R$-module $M= \Dc_R/\Dc_R (\partial^2 + x)$ is simple of rank $2$ over $R$
        \cite{macconnell-robson:weylsimple}*{Prop. 5.18}, while the fibre module is just a
        2-dimensional vector space, so it is not simple.
      \item  The $T_R$-module $M_p= R e^p $, $p\in R$, is a
        a simple $T_R$ -module \Prop{\ref{length-prop}, (3)}; the exponential generator is
        interpreted symbolically by the relation $\partial_x \cdot e^p = \partial_x (p) e^p$.  We
        have $M_p \cong M_q$ if and only of there exists $\phi \in R$ such that $(\partial_x
        -\partial_x(p))\phi e^q =0$, i.e. $\partial_x (\phi)/\phi = \partial_x(p-q)$. Such a $\phi$
        exists if and only of $p-q \in \Cb$. Replacing $R$ by the ring of convergent power series
        $\Cb[[x]]$, then all modules $M_p$ are isomorphic.
      \end{enumerate}

    \end{example}
    For general Lie algebroids it is a non-trivial task to decide if a given $\g_R$-module belongs
    to $\Loc(\g_R)$, but we do have some general conditions ensuring this. Before discussing these
    we need more notation.

    Let $\Dc(\g_k)$ and $\Dc(\g)$ be the universal enveloping algebra of the fibre Lie algebra and
    the kernel $\g$ in (\ref{ex-seq}). Let $\Dc(\g_R)$ be the enveloping ring of differential
    operators of a Lie algebroid $\g_R$ (see \cite{bei-ber:jantzen}*{1.2.5}); one can, when
    considering a $\g_R$-module $\rho : \g_R \to \cf_R(M)$ \Sec{\ref{sec-modules}}, instead of
    having to construct $\Dc(\g_R)$, consider its image in $\End_k (M)$, which coincides with the
    $k$-algebra that is generated $\rho(\g_R)$ and $i(R)$ in $ \End_k(M)$.  

    We can extend the observation before \Example{\ref{exp-example}}. Say that a $\g$-module $M$ is
    cyclic if there exists an element $m\in M$ such that $M = \Dc(\g) m$.
    \begin{lemma} \label{cyclic} Assume that $M\in \Loc(\g_R)$. Each simple subquotient of $M$ is
      cyclic over its restriction to $\g\subset \g_R$.  If $M$ is simple and $m\in M \setminus \mf_R M$, then
      $\Dc(\g) m = M$.
    \end{lemma}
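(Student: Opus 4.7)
The plan is to prove the second (special) assertion first and then deduce the first assertion from it. For the reduction, by \Proposition{\ref{short-ex}} every $\g_R$-subquotient of a local system is again a local system, so any simple subquotient $N$ of $M$ is itself a simple object of $\Loc(\g_R)$. Since $N\neq 0$ is of finite type over the local ring $R$, Nakayama's lemma gives $N\setminus \mf_R N \neq \emptyset$, and applying the second assertion to $N$ yields an element $n\in N$ with $\Dc(\g)n = N$, which is exactly cyclicity of $N$ as a $\g$-module in the sense defined just before the lemma.

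For the second assertion, suppose $M$ is simple and belongs to $\Loc(\g_R)$. The defining equality $\ell_{\g_R}(M)=\ell_{\g_k}(k\otimes_R M)$ forces $\ell_{\g_k}(V)=1$, where $V=k\otimes_R M$; that is, $V$ is a simple $\g_k$-module. Pick $m\in M\setminus \mf_R M$, so that its image $\bar m\in V$ is nonzero, and put $N=\Dc(\g)m\subset M$. A short computation using $\alpha(\g)=0$ and the compatibility condition in \Definition{\ref{def-module}} (1) shows that $\mf_R M$ is stable under $\Dc(\g)$ and that the induced action of $\Dc(\g)$ on $V$ factors through $\Dc(\g_k)$; hence the image $(N+\mf_R M)/\mf_R M$ of $N$ in $V$ is a $\g_k$-submodule containing $\bar m \neq 0$. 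Simplicity of $V$ over $\g_k$ then forces $N+\mf_R M = M$.

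To finish, note that $M$ is of finite type over $R$, so is $M/N$, and the relation $M=N+\mf_R M$ translates to $M/N=\mf_R(M/N)$; Nakayama's lemma gives $M/N=0$, that is, $\Dc(\g)m = M$. The only genuinely delicate step is checking that the restriction of the action of $\Dc(\g)$ on $M$ to the fibre $V$ factors through $\Dc(\g_k)$, so that the image of $N$ in $V$ qualifies as a $\g_k$-submodule; this is where the hypothesis $\g=\ker(\g_R\to \bar\g_R)$, built into \eqref{ex-seq}, is used, and once it is handled the remainder of the proof is a direct Nakayama argument driven by simplicity of the fibre.
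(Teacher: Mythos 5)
Your proof is correct and follows essentially the same route as the paper: simple subquotients of a local system are local systems with simple fibre (\Proposition{\ref{short-ex}}), the $\g_k$-action on the fibre is induced by the specialisation $\g\to\g_k$ so the image of $\Dc(\g)m$ in $k\otimes_R M$ is a $\g_k$-submodule equal to the whole fibre, and Nakayama's lemma then gives $\Dc(\g)m=M$. The only difference is cosmetic (you prove the second assertion first and deduce the first, while the paper argues directly on a simple subquotient), and your explicit check that $\Dc(\g)$ preserves $\mf_R M$ and acts on the fibre through $\Dc(\g_k)$ is exactly the point the paper uses implicitly.
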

    For any $\g_R$-module $M$ of finite type over $R$ such that $k\otimes_k M$ is simple over
    $\g_k$, it follows that $M$ is simple over $\g_R$ \Prop{\ref{lem-ineq}, (2)} and that moreover
    $M = \Dc(\g)m$ when $m\in M \setminus \mf_R M$.  I can see no reason, however, why simple
    $\g_R$-modules that are cyclic over $\g$ need have simple fibre modules.

    \begin{proof}
      If $L$ is a simple subquotient of $M$, then $L\in \Loc(\g_R)$ \Prop{\ref{short-ex}}, so that
      $k\otimes_R L$ is a simple $\g_k$-module.  Therefore, if $l\in L\setminus  \mf L$, then
      \begin{displaymath}
        k\otimes_R L = \Dc(\g_k)    1\otimes l = 1\otimes \Dc(\g) l,       
      \end{displaymath}
      so that $ L \subset \Dc(\g)l + \mf_R L$, which by Nakayama's lemma implies that $\Dc(\g)
      l = L$.  
    \end{proof}

    \begin{prop}\label{local-cond} Let $\g_R$ be a Lie algebroid where $R$ is
      simple over $\bar \g_R$, and $M$ be a $\g_R$-module which is of finite type over $R$. Assume
      that $M$ contains a finite-dimensional $k$-subspace $V\subset M$ that generates $M$ as
      $R$-module, and that $\g_R$ contains elements $\{\delta_1, \dots , \delta_s\}\subset \g_R$
      whose image $\{\alpha(\delta_1), \dots , \alpha(\delta_s)\}$ generates the Lie algebroid $\bar
      \g_R\subset T_R$, and satisfying the following condition: 
      \begin{displaymath}\tag{L}
        \delta_i v \in   \Dc(\g)v + \mf_R M, \quad v \in V,\quad i=1, \dots , s. 
      \end{displaymath}

 Then $M\in \Loc (\g_R)$. 
\end{prop}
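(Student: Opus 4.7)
By Proposition \ref{lem-ineq}(2), $\ell_{\g_R}(M) \leq \ell_{\g_k}(F(M))$; my plan is to establish the reverse inequality by showing that every $\g_k$-submodule $W \subseteq F(M) = k\otimes_R M$ lifts to a $\g_R$-submodule $N$ of $M$ with $\pi(N) = W$, where $\pi : M \to F(M)$ is the natural projection. Applying the lift term-by-term to a composition series of $F(M)$ as a $\g_k$-module then produces a strictly ascending chain of $\g_R$-submodules of $M$ of length $\ell_{\g_k}(F(M))$, the strictness being guaranteed by Lemma \ref{0-lemma}.

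For a given $\g_k$-submodule $W$, I take
\[
N_W \;:=\; \{m \in M : \Dc(\g_R)\,m \subseteq \pi^{-1}(W)\},
\]
the largest $\g_R$-submodule of $M$ contained in $\pi^{-1}(W)$. This is manifestly a $\g_R$-submodule with $\pi(N_W) \subseteq W$, and the crucial task is to show the reverse inclusion; equivalently, to exhibit for each $\bar v \in W$ a lift $\tilde v \in N_W$ with $\pi(\tilde v) = \bar v$. Given a preimage $v \in V$ of $\bar v$, I plan to build a \emph{horizontal} lift $\tilde v \in M$: an element with $\pi(\tilde v) = \bar v$ and $\delta_i \tilde v \in \Dc(\g) \tilde v$ for every $i$. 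Once such a $\tilde v$ is in hand, the commutator identity $[\delta_i, \Dc(\g)] \subseteq \Dc(\g)$ (which follows from $[\g_R,\g] \subseteq \g$, since $\g = \ker\alpha$ is an ideal in $\g_R$) together with the fact that $\Dc(\g)$ commutes with $R$ propagates inductively to give $\Dc(\g_R)\tilde v = R \cdot \Dc(\g) \tilde v$; hence $\pi(\Dc(\g_R) \tilde v) = \Dc(\g_k)\bar v \subseteq W$ by the $\g_k$-stability of $W$, which places $\tilde v \in N_W$ as required.

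The horizontal lift $\tilde v$ is constructed by an $\mf_R$-adic iteration. Starting from $v_0 := v$, hypothesis (L) furnishes $D_i \in \Dc(\g)$ and error $m_i^{(0)} \in \mf_R M$ with $\delta_i v_0 = D_i v_0 + m_i^{(0)}$; one iteratively corrects $v_{n+1} = v_n + u_{n+1}$ with $u_{n+1} \in \mf_R^{n+1} M$ chosen to solve the linearized equation $(\delta_i - D_i)u_{n+1} \equiv m_i^{(n)} \pmod{\mf_R^{n+2} M}$, thereby reducing the error $\delta_i v_{n+1} - D_i v_{n+1}$ to $\mf_R^{n+2} M$. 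The main obstacle is the solvability of this correction step at each stage, which I expect to follow from the graded form of (L) on the associated graded $\gr_{\mf_R} M$: on each piece $\mf_R^n M/\mf_R^{n+1} M$ the $\delta_i$-action reduces to a $\g_k$-module action, in which the required $u_{n+1}$ can be found. Convergence of the sequence $(v_n)$ to a genuine horizontal $\tilde v$ is automatic in the $\mf_R$-adic topology by Krull's intersection theorem when $R$ is complete; the general case reduces to the complete case via faithful flatness of the completion $\hat R$ over $R$, noting that the hypotheses of the proposition and the local-system property both descend from $\hat R \otimes_R M$ back to $M$.
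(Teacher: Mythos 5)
The decisive gap is your construction of a horizontal lift $\tilde v$ satisfying $\delta_i\tilde v\in\Dc(\g)\tilde v$ \emph{exactly}, rather than modulo $\mf_RM$; such lifts need not exist under the hypotheses of the proposition, and neither of the devices you propose produces them. First, the $\mf_R$-adic iteration is not well founded: since $R$ is simple over $\bar{\g}_R$, the derivations $\alpha(\delta_i)$ do not preserve $\mf_R$, so $\delta_i$ carries $\mf_R^{n+1}M$ only into $\mf_R^{n}M$; consequently the correction $u_{n+1}\in\mf_R^{n+1}M$ creates new error terms of order $n$ (worse than the error you are trying to remove), and the $\delta_i$ induce no action on $\gr_{\mf_R}M$, so the ``graded form of \thetag{L}'' you invoke to solve the linearized equation is not available. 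Second, horizontal lifts are genuinely absent in the non-complete case the proposition covers: take $R=\Cb[x]_{(x)}$, $\g_R=T_R$ (so $\g=0$, $\Dc(\g)=k$) and $M=Re^{x^2}$ as in Example \ref{exp-example}. Condition \thetag{L} holds for $V=ke^{x^2}$, since $\partial_x\cdot e^{x^2}=2xe^{x^2}\in\mf_RM$, and $M$ is indeed a local system, but a horizontal generator $\tilde v=\phi e^{x^2}$ with $\partial_x\tilde v=c\tilde v$ would force $\phi'=(c-2x)\phi$ with $\phi$ a unit of $R$, which has no solution in $\Cb[x]_{(x)}$. Finally, the reduction to $\hat R$ does not close the hole, because the local-system property does not descend along completion: $M=\Dc_R/\Dc_R(\partial_x^2+x)$ is $T_R$-simple of rank $2$, hence not a local system, yet $\hat R\otimes_RM$ is one by Proposition \ref{complete-prop}. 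All completion yields is $\ell_{\g_R}(M)\le\ell_{\g_{\hat R}}(\hat R\otimes_RM)=\ell_{\g_k}(k\otimes_RM)$, i.e.\ the inequality of Proposition \ref{lem-ineq} you already have, not its reverse; a descent argument would have to re-use \thetag{L} over $R$ itself, which is the original problem.

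For contrast, the paper's proof never lifts anything to $M$: it first checks that \thetag{L} is inherited by the submodule and quotient in a short exact sequence (any generating subspace is congruent to $V$ modulo $\mf_RM$ by Nakayama), so Proposition \ref{short-ex} reduces to $M$ simple; then simplicity gives $\Dc(\g_R)v=M$, and \thetag{L}, used only modulo $\mf_RM$, allows one to replace $\Dc(\g'_R)$ by $\Dc(\g)$ in the fibre, giving $\Dc(\g_k)(1\otimes v)=1\otimes\Dc(\g)\Dc(\g'_R)v=1\otimes M$; hence the fibre is $\g_k$-simple and $\ell_{\g_k}(k\otimes_RM)=1=\ell_{\g_R}(M)$. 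Working modulo $\mf_RM$ throughout is exactly what makes the hypothesis usable without any completeness or integrability input, and it is the fibre-level form of your argument that you would need in any case.
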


\begin{proof} Let $0 \to M_1 \to M \to M_2 \to 0$ a short exact sequence of $\g_R$-modules, where
  $M$ contains a finite-dimensional generating subspace $V$ satisfying \thetag{L}.  If $V'$ is
  another $k$-subspace of $M$ that generates $M$ over $R$, by Nakayama's lemma we have
  \begin{displaymath}
    V' \equiv    V \mod \mf_R M,
  \end{displaymath}
  so that $V'$ will also satisfy \thetag{L}.   Therefore, if $V_1 $ is a $k$-subspace of $M_1$ such
  that $M_1= R V_1$, $V_1\subset V_1' \subset M$ (identifying $M_1$ with a subset of $M$), and $M= R
  V_1'$, it follows that
\begin{displaymath}
  \delta_i v \in ( \Dc(\g)v + \mf_R M)\cap M_1 = \Dc(\g)v + \mf_R M_1  , \quad v\in V_1, \quad
  i=1, \dots , s,
\end{displaymath}
where the equal sign follows from the fact that $\Dc(\g)v\in M_1$ and $M_1$ and $M$ are
free \Prop{\ref{freelemma}}.  Therefore $V_1$ satisfies \thetag{L} for the
module $M_1$.  Also, the image $V_2$ of $V$ in $M_2$ satisfies \thetag{L} for
the module $M_2$. Since $M$ is of finite length \Prop{\ref{length-prop},(3)}, it
follows therefore from \Proposition{\ref{short-ex}} that it suffices to prove
that $k\otimes_R M$ is simple over $\g_k$ when $M$ is simple. Let $\g'_R$ be the
Lie subalgebroid of $\g_R$ that is generated by the $\delta_i$, so $\g_R = \g +
\g'_R$ (see (\ref{ex-seq})).  Let $1\otimes v\in k\otimes_R M$ be a non-zero
element, where   $v\in V$.  We then get (as detailed below)
      \begin{displaymath}
        \Dc(\g_k)(1\otimes v) = 1\otimes \Dc(\g)v = 1\otimes
        \Dc(\g)\Dc(\g'_R)v= 1\otimes \Dc(\g_R)v = 1\otimes M.
      \end{displaymath}
      The first equality follows since the $\g_k$-action on the fibre module comes by the
      specialisation map $f: \g \to \g_k$.  Since $\delta_i \cdot v = Q_iv + m'$ where $m' \in \mf_R
      M$, $Q_i \in \Dc(\g)$, we get for any $P\in \Dc(\g'_R)\subset \Dc(\g_R)$ that $Pv \equiv Q v
      \mod \mf_R M$, for some $Q\in \Dc(\g)$. As moreover the $\g$-action is $R$-linear, this
      explains the second equality.  The third equality follows from the fact that $\g$ and $\g_R'$
      together generates $\g_R$, and the last equality follows since $M$ is simple.
    \end{proof}

    Say that a Lie algebroid is {\it split}, or has an integrable connection, if the homomorphism $\alpha
    : \g_A \to \bar \g_A= \Imo (\alpha)$ has a split $\phi: \bar \g_A \to \g_A $ as $A$-modules and
    Lie algebras.  If a split $\phi$ is chosen, we write $\g_A = \g \rtimes \bar \g_A$, where $\g=
    \Ker (\alpha)$, and call it a semi-direct product of $\bar \g_A$ by $\g$.  Notice that a
    transitive Lie algebroid over an allowed regular ring $R$ always is of the form $\g_R=\g \oplus
    T_R$ as $R$-module \Prop{\ref{freelemma}}, where $T_R$ is of finite type by
    \Theorem{\ref{goodring}}, but in general $\g_R$ need not be isomorphic to $\g \rtimes T_R$ ,
    i.e.  $\g_R$ has no integrable connection.
    % However, if we are studying $\g_R$-modules $M$ of finite type over $R$, so $M$ is free over $R$
    % \Prop{\ref{freelemma}}, then its associated Lie algebroid $\cf_{R}(M)\cong \cf_R(R^n)$, some
    % integer $n$, has an integrable connection (for instance the map $T_R \to \cf_R(R^n) $
    % corresponding the direct sum of the $T_R$-module $R$ by itself $n$ times), although such a
    % connection need not arise from a connection of $\g_R$. More precisely, given the integrable
    % connection $\nabla: T_R \to \cf_{R/k}(M)$ there need not exist a map $\hat \nabla : T_R \to
    % \g_R$ such that $\nabla = \rho \circ \hat \nabla$, where $\rho : \g_R \to \cf_{R}(M)$ is the
    % action of $\g_R$ on $M$.

    We have the following basic examples of split Lie algebroids, and some other ones are in
    \Lemma{\ref{splitlemma}}.
\begin{proposition}\label{complete-prop} Assume that $R$ is either a
   formal power series ring over $k$ or a ring of convergent power
   series over $k= \Cb$ or $k= \Rb$. Let $\g_R$ be a transitive Lie
   algebroid, i.e. $\alpha(\g_R)= T_{R}$, and $M$ be a $\g_R$-module
   of finite type over $R$. Then
    \begin{enumerate}
  \item  $\g_R \cong \g \rtimes T_{R}$.
  \item $M= RM^{T_{R}}= R\otimes_k M/\mf_R M $, where after a choice of split $T_R \to \g_R$, giving
    an isomorphism in (1), the subalgebroid $T_{R}$ acts trivially on $M/\mf_R M$. Hence $M\in \Loc
    {\g_R}$.
  \end{enumerate}
\end{proposition}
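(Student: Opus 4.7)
The plan is twofold: first split the exact sequence $0 \to \g \to \g_R \to T_R \to 0$ as Lie algebras over $R$ (claim (1)), then use the resulting integrable connection on $M$ together with a Frobenius-type theorem to trivialize it (claim (2)), from which the local-system property will follow.

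For (1), I would pick a regular system of parameters $x_1, \dots, x_n$ of $R$ with corresponding partial derivations $\partial_{x_i} \in T_R$ forming an $R$-basis (available by \Theorem{\ref{goodring}}), and lift each $\partial_{x_i}$ to $\tilde D_i \in \g_R$ via the surjection $\alpha$. Since $[\partial_{x_i}, \partial_{x_j}] = 0$, the elements $\omega_{ij} := [\tilde D_i, \tilde D_j]$ lie in $\g$, and I would look for corrections $Y_i \in \g$ such that $D_i := \tilde D_i + Y_i$ pairwise commute. This amounts to the Maurer--Cartan system
\[
\omega_{ij} + \tilde D_i(Y_j) - \tilde D_j(Y_i) + [Y_i, Y_j] = 0.
\]
In the formal power series case one solves this order by order along the $\mf_R$-adic filtration, the step-wise obstructions being cocycles killed by term-wise integration in the coordinates $x_i$ (a formal Poincar\'e lemma for the free $R$-module $\g$); $\mf_R$-adic completeness then assembles the approximations into a global $Y_i$. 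In the convergent case the same iteration converges by the classical Cauchy--Kowalewski / analytic Frobenius theorem. Given commuting lifts $D_i$, the formula $\phi(\sum f_i \partial_{x_i}) := \sum f_i D_i$ defines an $R$-linear Lie-algebra section of $\alpha$, so $\g_R \cong \g \rtimes T_R$.

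After fixing a splitting, $T_R \subset \g_R$ acts on $M$ by $k$-linear derivations, i.e.\ as an integrable connection, and by \Proposition{\ref{regularprop}} (transitivity) together with \Proposition{\ref{freelemma}}, $M$ is free over $R$. The formal/analytic Frobenius theorem for integrable connections on free power-series modules then gives: the specialization $V := M^{T_R} \to M/\mf_R M$ is a $k$-linear isomorphism and $R \otimes_k V \to M$ is an $R$-linear bijection, so $M = R M^{T_R}$ and $T_R$ acts trivially on $M/\mf_R M$. For the local-system property, note that every $\g_R$-submodule $N \subset M$ is free over $R$ (hence an $R$-direct summand) and is $T_R$-stable, so under the identification $M = R \otimes_k V$ we get $N = R \otimes_k W$ for the $k$-subspace $W = N/\mf_R N \subset V$. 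The $\g$-action on $M$ is $R$-linear (since $\alpha(\g) = 0$) and therefore descends modulo $\mf_R$ to a $\g_k$-action on $V \cong M/\mf_R M$, under which $\g_R$-stability of $N$ becomes $\g_k$-stability of $W$. This length-preserving bijection yields $\ell_{\g_R}(M) = \ell_{\g_k}(M/\mf_R M)$, hence $M \in \Loc(\g_R)$.

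The main obstacle is step (1): producing pairwise commuting lifts of a commuting basis of $T_R$ inside $\g_R$ requires solving a Maurer--Cartan equation in $\g$ over $R$, and is precisely where the hypothesis that $R$ be a formal or convergent power series ring enters essentially -- one needs both $\mf_R$-adic completeness (or analytic convergence) and the vanishing of the Poincar\'e-type obstructions, neither of which is available for an arbitrary local allowed ring. Step (2) is then a routine invocation of the classical theory of integrable connections on free power-series modules.
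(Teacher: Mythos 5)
Your plan is sound and ends up close to the paper's, but the routes differ in part (1) and in how the local-system conclusion is extracted. For (1) the paper does not correct commuting lifts order by order: it takes an arbitrary $R$-linear section $\phi_0$ of $\alpha$ (which exists because all terms of the sequence are free \Prop{\ref{freelemma}}), forms the curvature $2$-form $\omega(\delta\wedge\delta')=[\phi_0(\delta),\phi_0(\delta')]-\phi_0([\delta,\delta'])\in\g$, and uses the formal/convergent Poincar\'e lemma to find a primitive $\eta$ and replace $\phi_0$ by $\phi_0-\eta$. Your Maurer--Cartan iteration on lifts $D_i=\tilde D_i+Y_i$ of the $\partial_{x_i}$ is a legitimate alternative (and commuting lifts do suffice, since $\sum f_i\partial_{x_i}\mapsto\sum f_iD_i$ is then a section of Lie algebroids); its cost is that in the convergent case ``solve order by order and invoke Cauchy--Kowalewski'' is not by itself a convergence proof -- one needs a majorant argument or integration one variable at a time, which is precisely the device the paper uses in its proof of (2) via the explicit exponential gauge $\exp(-\sum x_d^{i+1}\Phi_i/(i+1))$. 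Your citing the classical Frobenius theorem for (2) instead is acceptable; the paper itself calls that step standard.

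The genuine gap is your last step. From $M=R\otimes_kV$ with $V=M^{T_R}$ you assert a bijection between $\g_R$-submodules $N\subset M$ and $\g_k$-submodules $W\subset M/\mf_RM$, ``under which $\g_R$-stability of $N$ becomes $\g_k$-stability of $W$''. The direction $N\mapsto W$ is fine (apply (2) to $N$), and it only re-proves the inequality $\ell_{\g_R}(M)\leq\ell_{\g_k}(M/\mf_RM)$ already given by \Proposition{\ref{lem-ineq}}. The direction you actually need for $\ell_{\g_R}(M)\geq\ell_{\g_k}(M/\mf_RM)$ -- that a $\g_k$-stable $W$ yields a $\g_R$-submodule, or at least that a composition series of the fibre lifts to a chain in $M$ -- is not a formal consequence of the $R$-linearity of the $\g$-action: $R$-linearity produces the fibre action, but does not by itself give $\g\cdot(R\otimes_kW)\subset R\otimes_kW$, because $\g$ need not commute with the chosen copy of $T_R$ and hence need not preserve $V$. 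The claim is true, and can be repaired in either of two ways: (i) quote \Proposition{\ref{local-cond}} -- with the flat generating space $V$ and $\delta_i=\phi(\partial_{x_i})$ the hypothesis (L) reads $\delta_i v=0\in\Dc(\g)v+\mf_RM$, so it holds trivially; or (ii) argue directly that for flat $w\in W$ and $Y\in\g$ one has $\rho(\phi(\partial))^{\alpha}\rho(Y)w=\rho((\ad\phi(\partial))^{\alpha}Y)w$, so every Taylor coefficient of $Y\cdot w$ lies in $\g_k\cdot W\subset W$ and hence $Y\cdot w\in R\otimes_kW$. Without one of these, the key inequality behind $M\in\Loc(\g_R)$ is unproved.
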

\begin{proof} (1): In the sequence (\ref{ex-seq}), where $\bar \g_R = T_{R}$, all $R$-modules are
  free since $R$ is simple over $T_R$ \Prop{\ref{freelemma}} , so there exists an $R$-linear split
  $\phi_0: T_{R}\to \g_R$. It gives rise to a map $\omega: \wedge^2 T_{R}\to \g$ defining a
  2-cocycle in the de~Rham complex $\Omega^\bullet_{R}(\g)$ (the curvature), by $\omega (\delta
  \wedge \eta )= [\phi_0(\delta), \phi_0(\eta)] - \phi_0([\delta, \eta])$, $\delta, \eta \in
  T_R$. Since Poincar\'e's lemma holds when $R$ is either complete or a ring of convergent power
  series, there exists an $R$-linear map $\eta :T_{R}\to \g$ such that $\omega = d \eta$. The map
  $\phi= \phi_0 -\eta$ then defines an integrable connection on $\g_R$.

  (2): This is standard (see \cite[Th. 1.1.25]{bjork:analD }), but we still include a proof. Let $(x_1,
  \dots ,x_d)$ be a regular system of parameters of $\mf_R$, and let
  $\partial_{i}$ be an $R$-basis of $T_{R}$ such that
  $\partial_i(x_j)= \delta_{ij}$.  We contend that there exist free
  generators $m_i$ of $M$ such that $\partial_{i}\cdot m_j =0$. We
  know that $M$ is free over $R$, so let $m' =(m_1', \dots , m'_n)$ be
  a column matrix of free generators, so $\partial_d m = \Phi m$ for
  some $n\times n$-matrix with coefficients in $R$. Write
  \begin{displaymath}
    \Phi = \sum_{i\geq 0} x_d^i\Phi_i
  \end{displaymath}
  where the $\Phi_i$ are independent of $x_d$, and put
  \begin{displaymath}
    \psi = \exp(-\sum_{i\geq 0} \frac{x^{i+1}_d}{i+1}\Phi_i).
  \end{displaymath}
  This series converges to a nonsingular matrix when $R$ is the ring
  of convergent power series $k\{x_1, \dots, x_d \}$ ($k=\Rb$ or $k=
  \Cb$) or the ring of formal power series $ k[[x_1, \dots , x_d]]$,
  since the exponent is divisible by $x_d$. Putting $m= \Psi m'$ we
  get our basis of $M$, satisfying $\partial_d m =0$, i.e.  we have
  free generators of $M$ that belong to $M^{\partial_d}$. The space
  $M^{\partial_d}$ is a module over the Lie algebroid $T_{R^0}$ where
  $R^0 = R^{\partial_d} = k\{x_1, \dots , x_{d-1}\} $ (resp. $k[[x_1,
  \dots , x_{d-1}]]$). Putting $N = \sum R^0 m_i \subset
  M^{\partial_d}$ we have a $T_{R^0}$-module of finite type over the
  $T_{R^0}$-simple module $R^0$. An induction over $d= \dim R$
  completes the proof that $M$ has free $T_{R}$-invariant generators.
\end{proof}
% \begin{example} When $R$ is incomplete, then the conclusion of
%   \Proposition{\ref{complete-prop}} is false. Put $R= k[x,y]$,
%   $I=(y)$, $A= R/I$, $\g_R = R \delta $, where $\delta = \partial_x +
%   ay\partial_y$, $a\neq 0$, $a\in k[x]\subset R$, so $\g_R \subset
%   T_R(I)$. The $A$-Lie algebroid $\g_A= \g_R/I\g_R = A \overline
%   \delta$-module is transitive ($\alpha : \g_A \to T_A $, $\overline
%   \delta \mapsto \partial_x$, is an isomorphism), and $I/I^2 $ is a
%   $\g_A$-module with action $\overline \delta p(x) = p'(x) + ap(x)$,
%   when $p\in k[x]\cong I/I^2$.  Hence $I/I^2 \cong A_1/(A_1(\partial_x
%   -a))$, where $A_1$ is the first Weyl algebra in the variable
%   $x$. But $ A_1/A_1(\partial_x -a)$ is isomorphic to $k[x]$ only as
%   $k[x]$-module, and not as $A_1$-module.
% \end{example}
\begin{remark}
  Let $M$ be a $\g_A$-module of finite length and of finite type over
  the allowed local ring $A$. The completed $\hat A$-module $\hat M =
  \hat A \otimes_A M$ is then a module over the completed Lie
  algebroid $\g_{\hat A} = \hat A \otimes_A \g_A$. In general, the
  length increases upon completion: $\ell_{\g_A}(M) \leq
  \ell_{\g_{\hat A}}(\hat M)$.
\end{remark}
% We put the following well-known lemma here for reference, for the lack
% of a better place.
% \begin{lemma} Let $\nf$ be a maximal nilpotent Lie subalgebra of a
%   Levi subalgebra in a Lie algebra $\g_k$ over an algebrically closed
%   field $k$ of characteristic $0$.  If $V$ is a finite-dimensional
%   $\g_k$-module, then
%   \begin{displaymath}
%     l_{\g_k}(V) = \dim_k V^{\nf},
% \end{displaymath}
% where $V^{\nf}= \{v \in V \ \vert \ \nf_k \cdot v =0\} $.
%   \end{lemma}

\subsection{Localisations of representations} \label{Localisations of representations} Given a Lie
algebra $\afr$ over $k$ and a homomorphism of $k$-Lie algebras $\alpha : \afr \to T_R $, we get a
Lie algebroid $\g_R = R\otimes_k \afr$, where $[r_1\otimes \delta, r_2\otimes \eta ] = r_1
\alpha(\delta)(r_2)\otimes \eta - r_2 \alpha(\eta) (r_1)\otimes \delta + r_1r_2\otimes [\delta,\eta
]$, $r_1, r_2 \in R$, $\delta, \eta \in \afr$. Define also $\alpha_R : \g_R \to T_R$, $r\otimes
\delta \mapsto r \alpha(\delta)$. We first give conditions that ensure that $\g_R$ be split.
\begin{lemma}\label{splitlemma} If there exists a Lie subalgebra $\nf \subset \afr$ such that the induced map $\nf\to
  k\otimes_R T_R$ is an isomorphism, then $\g_R \cong \g \rtimes T_R$.
\end{lemma}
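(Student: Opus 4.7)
The plan is to produce an explicit split $\phi: T_R \to \g_R$ of $\alpha_R$ that is simultaneously $R$-linear and a homomorphism of Lie algebras over $k$; given such a $\phi$, the decomposition $\g_R \cong \g \rtimes T_R$ follows immediately from the definition of semi-direct product recalled just before the lemma, since $\g = \ker \alpha_R$. Because $R$ is an allowed regular local ring, Theorem~\ref{goodring} gives that $T_R$ is free of finite rank over $R$, so Nakayama's lemma is available. First I would pick a $k$-basis $\delta_1,\dots,\delta_d$ of $\nf$; by the isomorphism hypothesis, $\alpha(\delta_1),\dots,\alpha(\delta_d)$ is a $k$-basis of $k \otimes_R T_R$, hence by Nakayama it lifts to an $R$-basis of $T_R$.

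Next I would define $\phi: T_R \to \g_R$ as the unique $R$-linear map sending $\alpha(\delta_i)$ to $1 \otimes \delta_i \in \g_R = R \otimes_k \afr$. The identity $\alpha_R \circ \phi = \id_{T_R}$ is then immediate by $R$-linearity and evaluation on the basis, so $\phi$ splits $\alpha_R$ as $R$-modules, which is already the assertion that $\g_R \cong \g \oplus \phi(T_R)$ as $R$-modules.

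The main obstacle is showing that $\phi$ respects brackets. The idea is to expand arbitrary $t,s \in T_R$ in the basis $\{\alpha(\delta_i)\}$ with coefficients in $R$ and apply the Leibniz-type formula for the $\g_R$-bracket recorded just above the lemma. The $R$-derivative terms produced on the two sides match because $\alpha_R \circ \phi = \id$, which reduces the check to comparing $\phi([\alpha(\delta_i), \alpha(\delta_j)])$ with $[1 \otimes \delta_i, 1 \otimes \delta_j] = 1 \otimes [\delta_i,\delta_j]$. Here the hypothesis that $\nf$ is a Lie subalgebra of $\afr$ is essential: it ensures $[\delta_i,\delta_j] \in \nf$, so writing $[\alpha(\delta_i),\alpha(\delta_j)] = \alpha([\delta_i,\delta_j])$ and expanding $[\delta_i,\delta_j]$ in the $k$-basis $\{\delta_k\}$ produces only scalar coefficients in $k$, which are then transported correctly through the $R$-linear map $\phi$ to $1 \otimes [\delta_i,\delta_j]$. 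Once this is verified, $\phi$ is the desired split and the lemma is proved.
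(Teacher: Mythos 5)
Your proposal is correct and follows essentially the same route as the paper's proof: use the hypothesis plus Nakayama (and freeness of $T_R$) to identify $R\otimes_k\nf$ with $T_R$, define $\phi(r\alpha(\delta))=r\otimes\delta$ for $\delta\in\nf$, note $\alpha_R\circ\phi=\id$, and use $[\nf,\nf]\subset\nf$ to see that $\phi$ is compatible with the brackets, hence a split giving $\g_R\cong\g\rtimes T_R$. Your write-up merely spells out the bracket verification on general elements via the Leibniz formula, which the paper leaves implicit.
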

\begin{proof}
  The assumption together with Nakayama's lemma implies that $R\otimes_k \nf \cong T_R$, as
  $R$-modules.  Define $\phi : T_R \to \g_R$ by $r\alpha( \delta) \to r\otimes \delta $, $r\in R$,
  $\delta \in \nf$, which is a well-defined $R$-linear homomorphism such that $\alpha_R \circ \phi =
  \id$. Since $[\nf, \nf]\subset \nf$ it follows that $\phi$ satisfies
  \begin{displaymath}
    \phi([ \alpha(\delta), \alpha(\delta')]) = \phi (\alpha ([\delta, \delta '])),
  \end{displaymath}
  which implies that $\phi$ is a split.
\end{proof}
If $V$ is an $\afr$-module, then $\Delta(V)= R\otimes_k V$ is a $\g_R$-module, such that $r\otimes
\delta (r'\otimes v) = r\alpha(\delta)(r')\otimes v + rr'\otimes \delta (v)$, $r,r'\in R$, $v\in
V$. This defines an exact functor $\Delta : \Mod(\afr)\to \Mod(\g_R)$, which we call the {\it
  localization functor}.  Let $\Mod_{fd}(\afr)$ be the category of finite-dimensional
$\afr$-modules.
\begin{remark}
  Notice that although $F(\Delta(V))= k\otimes_R \Delta(V) $ is isomorphic to $V$ as $k$-space, the
  left hand side is a $\g_k$-module, while $V$ is defined to be an $\afr$-module. For example, if
  $\afr$ is a complex simple Lie algebra, $X=G/B$ its associated Borel variety, and $R= \Oc_{X,x}$
  for a point $x$ in $X$, then $\g_\Cb$ is the kernel of the map $\afr \to \Cb \otimes_R T_{X/\Cb,x}$;
  i.e. $\g_\Cb$ is isomorphic to the Borel subalgebra of $\afr$ that stabilizes $x$.
\end{remark}
In general $\Delta(V)$ need not be of finite length even if $V\in \Mod_{fd}(\afr)$ when $R$ is
non-simple over $\g_R$, so it is satisfying to know, on the other hand, that if $R$ contains no
non-trivial $\afr$-invariant ideals, it is in fact a local system.

    \begin{theorem}\label{localisation} Let $\afr$ be a
      finite-dimensional Lie algebra over $k$, $R$ be a regular allowed $k$-algebra, and $\alpha:
      \afr\to T_R$ be a homomorphism of Lie algebras such that $R$ has no $\afr$-invariant proper
      ideals.  If $V $ is a finite-dimensional $\afr$-module, then
   \begin{displaymath}
     \ell_{\g_R}(\Delta(V)) = \ell_{\g_k}(V),
   \end{displaymath}
   so that we have a functor $\Delta : \Mod_{fd}(\afr)\to \Loc (\g_R)$, which is exact and faithful.
 \end{theorem}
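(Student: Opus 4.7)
My plan is to show $\Delta(V)\in\Loc(\g_R)$, from which the length equality $\ell_{\g_R}(\Delta(V))=\ell_{\g_k}(V)$ follows directly from Definition~\ref{local-system}. Exactness of $\Delta$ was already observed before the theorem, and faithfulness is formal: since $F\circ\Delta$ recovers $V$ on the underlying $k$-space, any $\afr$-morphism $\phi$ with $\Delta(\phi)=0$ must already be zero.

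I would first verify that $R$ is simple as a $\g_R$-module. Since $\g_R=R\otimes_k\afr$ acts on $R$ via $\alpha_R(r\otimes d)=r\alpha(d)$, the $\g_R$-invariant ideals coincide with the $\afr$-invariant ones, so the hypothesis gives simplicity. Consequently the fibre Lie algebra $\g_k$ is well-defined, every $\g_R$-module of finite type is $R$-free by \Proposition{\ref{freelemma}}, and tensoring the sequence \eqref{ex-seq} with $k$ identifies $\g_k$ with a Lie subalgebra of $\afr$ in such a way that $F(\Delta(V))\cong V$ inherits its $\g_k$-module structure by restriction. \Proposition{\ref{lem-ineq}} already yields the inequality $\ell_{\g_R}(\Delta(V))\le\ell_{\g_k}(V)$, so only the reverse inequality remains.

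Using exactness of $\Delta$ together with \Proposition{\ref{short-ex}}, I would reduce by induction on $\dim_kV$ to the case of an $\afr$-simple $V$, and then invoke \Proposition{\ref{local-cond}}. Take $\delta_i=1\otimes d_i$ for $d_1,\dots,d_s$ a $k$-basis of $\afr$: their images $\alpha(d_i)$ generate $\bar\g_R=R\cdot\alpha(\afr)$ as an $R$-module and hence as a Lie algebroid. For the generating $k$-subspace, fix a $\g_k$-composition series $V_0\subset V_1\subset\cdots\subset V_n=V$ of $V$, pick a $\g_k$-cyclic generator $\bar v_i\in V_i/V_{i-1}$ for each simple subquotient, and choose lifts $\tilde v_i\in\Delta(V)$ such that $\g_R\cdot\tilde v_i\subseteq\Dc(\g)\tilde v_i+\mf_R\Delta(V)$; set $V_{\mathrm{gen}}=\bigoplus_ik\tilde v_i$. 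Condition~(L) is then immediate on each basis lift, and one checks that it persists on $k$-linear combinations, because the closure property ensures $\Dc(\g)(\sum c_i\tilde v_i)$ is large enough to absorb $\delta_j(\sum c_i\tilde v_i)$ modulo $\mf_R\Delta(V)$.

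The principal obstacle is constructing the lifts $\tilde v_i$ with the required closure property. In the complete or convergent case they come directly from the horizontal sections supplied by \Proposition{\ref{complete-prop}}: under a chosen splitting $\g_R\cong\g\rtimes T_R$, the $T_R$-invariant basis of $\Delta(V)$ makes $\g_R$ act on each $\tilde v_i$ through the residual $\g_k$-action, which stays inside $R\tilde v_i$ as required. For a general allowed $R$ the lifts are built iteratively modulo successive powers of $\mf_R$: starting from the naive lift $1\otimes\bar v_i$, at the $n$th stage one adds a correction in $\mf_R^n\Delta(V)$ killing the obstruction to closure, where the freeness guaranteed by \Proposition{\ref{freelemma}} and the simplicity of $R$ over $\g_R$ ensure that obstructions at each step vanish; Nakayama's lemma together with a standard Artin--Rees argument then assembles the correction series into an honest element of $\Delta(V)$.
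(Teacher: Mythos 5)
Your skeleton matches the paper's: observe that $R$ is $\g_R$-simple, get the inequality $\ell_{\g_R}(\Delta(V))\le \ell_{\g_k}(V)$ from \Proposition{\ref{lem-ineq}}, reduce via exactness and \Proposition{\ref{short-ex}} to suitable simple pieces, and then try to verify condition \thetag{L} of \Proposition{\ref{local-cond}} for lifts of a basis adapted to a $\g_k$-composition series of the fibre, with $\delta_i$ chosen so that their images generate $\bar\g_R$. But the step you yourself identify as the principal obstacle is exactly where the proposal breaks down. Outside the complete or convergent case, your construction of the lifts $\tilde v_i$ by ``iterative corrections in $\mf_R^n\Delta(V)$, assembled by Nakayama and Artin--Rees'' is not a proof: an infinite correction series with terms in increasing powers of $\mf_R$ has no limit in a non-complete local ring (this is precisely why \Proposition{\ref{complete-prop}} is restricted to formal or convergent power series), Artin--Rees does not produce such limits, and the asserted vanishing of the obstruction at each stage is never justified. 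Nor can you pass to the completion and descend, since completion can only increase the $\g$-length, which is the wrong direction for the needed lower bound. The paper's key idea, which your proposal is missing, is a purely \emph{finite} correction: using \Lemma{\ref{splitlemma-1}} one picks $x_1,\dots,x_r\in\mf_R$ with $\alpha(\delta_i)(x_j)=\delta_{ij}+\phi_{ij}$, $\phi_{ij}\in\mf_R$, and forms the operator $\Lambda=\prod_{i=1}^r(1-x_i\delta_i)\in\Dc(\g_R)$; the commutator identity $\delta_i\Lambda=\Lambda\delta_i-\delta_i+\eta_i$ with $\eta_i\in\mf_R\Dc(\g_R)$ shows that applying $\Lambda$ to any lifted basis yields a basis $v_j$ with $\delta_i v_j\in\mf_R\Delta(V)$. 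Since condition \thetag{L} is only required modulo $\mf_R\Delta(V)$, this first-order statement suffices, and no completeness enters.

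Two secondary points would also need repair even granting the lifts. First, your generating subspace is too small: one $\g_k$-cyclic vector per composition factor of $V$ spans $\Delta(V)$ over $R$ only when all factors are one-dimensional; \Proposition{\ref{local-cond}} needs a subspace that generates $\Delta(V)$ over $R$, i.e.\ a full lifted basis (as the paper takes). Second, your claim that \thetag{L} ``persists on $k$-linear combinations'' does not follow from the per-vector closure $\delta_j\tilde v_i\in\Dc(\g)\tilde v_i+\mf_R\Delta(V)$: for a combination $v=\sum c_i\tilde v_i$ one only gets $\delta_j v\in\sum_i\Dc(\g)\tilde v_i+\mf_R\Delta(V)$, not $\delta_j v\in\Dc(\g)v+\mf_R\Delta(V)$, and the proof of \Proposition{\ref{local-cond}} uses \thetag{L} for every element of the generating subspace. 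With the paper's $\Lambda$-corrected basis this issue disappears, because there $\delta_j v\in\mf_R\Delta(V)$ for every $v$ in the span.
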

 \Theorem{\ref{localisation}} can be regarded as a weak version of the well-known equivalence
 between the category of $G$-equivariant sheaves on a homogeneous space $G/H$ and finite-dimensional
 $H$ -modules, where $H$ is a closed subgroup of an algebraic group $G$ (defined over $k$).  In most
 important examples of \Theorem{\ref{localisation}} the Lie algebroid $\g_R$ is transitive, so that
 $ \bar g_R = \Imo (\alpha : \g_R \to T_R ) = T_R$, but we do not need this assumption in the proof,
 essentially because of \Lemma{\ref{splitlemma-1}}.

 To get a rough idea of the proof assume for simplicity that $\g_R$ is split, so that we can regard
 $T_R$ as a Lie subalgebroid of $\g_R$, as in \Lemma{\ref{splitlemma}}, although this is not a
 requirement.  Then the idea is to find a basis $B$ of the $R$-module $\Delta(V)$ that is adapted to
 a composition series of $F(\Delta(V))$, and which moreover satisfies $T_R \cdot B \subset \mf_R \Delta(V)$; compare
 also to \Proposition{\ref{local-cond}}.

 \begin{pfof} {\Theorem{\ref{localisation}}}
   It suffices to prove that $M= \Delta(V)$ is a local system when $V$ is simple over $\afr$
   \Prop{\ref{short-ex}} and define $\bar \g_R$ as in (\ref{ex-seq}).  Let $\{\delta _1,\dots,
   \delta_r\}\subset \afr$ be a minimal subset such that $\bar \g_R = \sum_{i=1}^r
   R\alpha(\delta_i)$.  Since $\bar \g_R$ is a free $R$-module \Prop{\ref{freelemma}} and $T_R$ is
   free of rank $\dim R $ \Th{\ref{goodring}}, it follows that $r \leq \dim R$ ($r= \dim R$ if $\bar
   \g_R= T_R$).  Since $R$ is simple over $\bar \g_R$ it follows from \Lemma{\ref{splitlemma-1}} that
   there exists a subset $\{x_1, \dots , x_r \}\subset \mf_R$ such that $\alpha(\delta_i)(x_j) =
   \delta_{ij} + \phi_{ij}$, where $\phi_{ij}\in \mf_R$ (the Kronecker symbol $\delta_{ij}$ clearly
   is not an element in $\g_R$). Therefore,
   \begin{displaymath}
     [\delta_i ,1- x_j \delta_j] = -(\delta_{ij} + \phi_{ij})\delta_i- x_j [\delta_i, \delta_j] =
     -\delta_{ij}\delta_i + \eta_{ij}, \quad \eta_{ij} \in \mf_R \g_R.
   \end{displaymath}
   Putting $\Lambda= \prod_{i=1}^r (1-x_i\delta_i)\in \Dc(\g_R)$, we get  
\begin{align*}\tag{C}
  \delta_i\cdot \Lambda &= \Lambda \cdot \delta_i + \sum_{s=1}^r( \prod_{1\leq j \leq s-1}(1-
  x_{j}\delta_{j}) ) [\delta_i, 1- x_s\delta_s ] ( \prod_{s+1\leq j \leq r}(1-
  x_{j}\delta_{j}) )\\
  & = \Lambda \cdot \delta_i + \sum_{s=1}^r( \prod_{1\leq j \leq s-1}(1- x_{j}\delta_{j}) ) (
  -\delta_{is}\delta_i + \eta_{is}) ( \prod_{s+1\leq j \leq r}(1-
  x_{j}\delta_{j}) )\\
  & = \Lambda \cdot \delta_i - \delta_i + \eta_i, \quad \eta_i \in \mf_R \Dc(\g_R),
\end{align*}
where for the last step we have made use of the observation
\begin{align*} [(1-x_j\delta_j), -\delta_{is}\delta_i + \eta_{is} ] &= \delta_{is}[x_j\delta_j,
  \delta_i] - [x_j \delta_j , \eta_{is}] \\ &=\delta_{is} (x_j [\delta_j, \delta_i] - (\delta_{ij}
  + \phi_{ij}) \delta_j) - [x_j \delta_j , \eta_{is}]
\\ &= \delta_{sj}\delta_j + \epsilon_{is},\quad    \epsilon_{is}\in \mf_R \g_R.
\end{align*}

Let $ W_s \subset \cdots \subset W_{i+1} \subset W_i \subset \cdots \subset W_1\subset W_0 = F(M)$
be a composition series of the $\g_k$-module $F(M)$ and select a basis of $F(M)$ that is compatible
with the composition series. This basis can be lifted to a basis $B'= (v'_1, \dots ,
v'_n)$ of $M$, and we let $\tilde W_i \subset M$ be the $R$-submodule that is generated by the part
of the basis $B'$ that corresponds to a basis of $W_i$. We then have
\begin{displaymath}\tag{I}
  \Dc(\g) \tilde W_i \subset \tilde W_i + \mf_R M.
\end{displaymath}
Since $\Lambda \cdot v'_i \equiv v'_i \mod \mf_R M$ it follows that $B= (\Lambda v_i')=
(v_1, \dots , v_n)$ is also a basis of $M$, and by \thetag{C} 
\begin{displaymath}\tag{K}
  \delta_i \cdot v_j \in \mf_R M, \quad i=1, \dots , r, j=1, \dots , n.
\end{displaymath}
Set now $ L_i = \Lambda \tilde W_i$, yielding a filtration of $R$-modules $L_s \subset\cdots \subset
L_{i+1}\subset L_i\subset \cdots \subset L_1 \subset L_0 = M$. Let $\g'_R$ be the Lie subalgebroid
of $\g_R$ that is generated by the $\delta_i$.  We have now (as detailed below)
\begin{displaymath}\tag{*}
  \Dc(\g_R)  L_i = \Dc(\g)\Dc(\g'_R) L_i \subset  \Dc(\g) (L_i + \mf_RM)  \subset \Dc(\g) \tilde W_i + \mf_R M \subset
  \tilde W_i + \mf_R M.
\end{displaymath}
The equality follows since $\g_R= \g + \g'_R$ and the first inclusion follows from \thetag{K}. The
second inclusion follows since the $\g$-action on $M$ is $R$-linear, so that $\Dc(\g)\mf_R M \subset
\mf_R M$, and since $L_i \subset \tilde W_i + \mf_RM$. The last  inclusion follows from
\thetag{I}.

The inclusion \thetag{*} implies that $\Dc(\g_R)L_{i+1}\neq \Dc(\g_R) L_i$ when $W_{i+1} \neq
W_{i}$, which implies $\ell_{\g_R}(M)\geq \ell_{\g_k}(V)$. Together with
\Proposition{\ref{lem-ineq}}, (2), this completes the proof that $M\in \Loc(\g_R)$.
 \end{pfof}

 Here is a concrete computational illustration for the Lie algebra $\Sl_2$, where we do not lift the
 composition series of the fibre module in the same way as in the proof of
 \Theorem{\ref{localisation}}.
   \begin{example}\label{loc-rep}
     Let $R= k[x]$ be the polynomial ring of one variable over $k$. The Lie algebra $\afr= \Sl_2(k)
     = kX_- \oplus kH \oplus kX_+$, where $[X_+, X_- ]= H$, $[H, X_\pm] = \pm 2 X_\pm$, acts on $R$ by
     the map $\alpha : \g \to T_R $, $\alpha(H)= 2 x\partial_x$, $\alpha(X_+)=x^2\partial_x$ and
     $\alpha(X_-) =-\partial_x$. Then $\g_R = R\otimes_k \afr$ is a transitive Lie algebroid over
     $R$, $\g := \Ker (\g_R\to T_R) = R(x\otimes H - 2 \otimes X_+) + R(1\otimes H +2x\otimes X_-)$,
     and we have the integrable connection $T_R \to \g_R$, $a\partial_x \mapsto a\otimes X_-$, so
     $\g_R = \g \rtimes T_R$. The fibre Lie algebra $\g_k = k\otimes_R \g $ is 2-dimensional, hence
     it is a solvable Lie algebra.  If $V$ is a finite-dimensional $\afr$-module, we have the
     $\g_R$-module $M= \Delta(V)$.  By \Theorem{\ref{localisation}} 
     \begin{displaymath}
       \ell_{\g_R}(M) = \ell_{\g_k}(V)  = \dim_k V,
   \end{displaymath}
   so that each simple subquotient of $M$ is of rank $1$ over $R$, and $M$ is a local system over
   $\g_R$. Let us compute the composition series of $M$ when $V$ is simple. The Cartan algebra $\hf
   = k H$ gives a weight decomposition $M= \oplus_{\lambda \in \Zb} M_\lambda$, where $H m = \lambda
   m$ when $m\in M_\lambda$.  Let $\lambda_0$ be the lowest integer such that $M_{\lambda_0}\neq 0$,
   so $\dim_k V_{\lambda_0+2i} =1$ when $i=0,1,\dots , -\lambda_0$, and $\dim_k V =- \lambda_0+1$.
   We have $M_\lambda = \oplus_{\mu +i = \lambda} x^i\otimes v_\mu$, so that $\dim M_\lambda - \dim
   M_{\lambda -2} =1$ when $\lambda = \lambda_0 +2i $, $i=1 , 2, \dots ,- \lambda_0$, and $\dim
   M_\lambda = \dim M_{-\lambda_0} $ when $\lambda = -\lambda_0 + 2i$, $i= 0, \dots $.  It follows
   that there exists a nonzero vector $m_{\lambda_0 +2i} \in M_{\lambda_0 + 2i}$ such that
   $(1\otimes X_-) m_{\lambda_0 +2i} =0$, when $i=0, \dots , -\lambda_0$. In fact, putting
   $m_{\lambda_0} = 1\otimes _kv_{\lambda_0}$, where $v_{\lambda_0}$ is a basis of $V_{\lambda_0}$,
   and
   \begin{displaymath}\tag{*}
 m_{\lambda_0 +2i} = (1\otimes X_+ - (\lambda_0 +2(i-1)) x) m_{\lambda_0 + 2(i-1)} , \quad i=1,
   \dots , -\lambda_0,
 \end{displaymath}
we get
 \begin{displaymath}
     m_{\lambda_0 +2i} = [\prod_{j=1}^i (1\otimes X_+ -(\lambda_0 + 2(j-1))  x\otimes 1)]m_{\lambda_0}.
   \end{displaymath}
   % Define the $\g_R$-module $R \mu_i$, by putting $(1\otimes X_-) \mu_i =0$, $(1\otimes H)\mu_i =
   % (\lambda_0 + 2(i-1)) \mu_i$ and $(1\otimes X_+) \mu_i =\frac 12(\lambda_0 +2(i-1)) x \mu_i$.
   % Since $R$ is simple, it follows that $R\mu_i $ is a simple $\g_R$-module.    
% We have the isomorphism
% \begin{displaymath}
%   \bigoplus_{i=1}^{\dim V}  R \mu_i \to  R\otimes_k V, \quad \sum P_i \mu_i \mapsto  \sum P_i m_{\lambda_0 +2i},
% \end{displaymath}
% so that $R\otimes V$ is  a semi-simple $\g_R$-module. The map is clearly surjective, and it is
% injective since the ranks agree. 
   This results in a filtration by $\g_R$-modules
   \begin{displaymath}
     \Dc(\g_R) m_{-\lambda_0}\subset \cdots \subset      \Dc(\g_R) m_{\lambda_0 +2i}  \subset        \Dc(\g_R)
     m_{\lambda_0 +2(i-1)}\subset \cdots     \subset  \Dc(\g_R) m_{\lambda_0 }
     = M.
  \end{displaymath}
  The module $\Dc(\g_R) m_{-\lambda_0}$ is simple and the remaining successive quotients are
 \begin{displaymath}
   \frac{\Dc(\g_R) m_{\lambda_0 +2(i-1)}}  {\Dc(\g_R) m_{\lambda_0
       +2i}} \cong R\mu_i , \quad i= 1, \dots , -\lambda_0,
 \end{displaymath}
 where $\mu_i = m_{\lambda_0 +2(i-1)} \mod \Dc(\g_R) m_{\lambda_0 +2i} $ generates a module wich is
 cyclic over $R$ and satisfies the equations $(1\otimes X_-) \mu_i =0$, $(1\otimes H)\mu_i =
 (\lambda_0 + 2(i-1)) \mu_i$ and $(1\otimes X_+) \mu_i =(\lambda_0 +2(i-1)) x \mu_i$ (see \thetag{*}).  Since $R$ is
 simple, it follows that $R\mu_i $ is also simple as $\g_R$-module. This indeed results in a composition
 series of $M$ of length $-\lambda_0 +1$.
\end{example}

  \subsection{Ideals of  definition}\label{def-ideal-section}
  When $M$ is a $\g_A$-module and the length $\ell_{\g_A}(M)= \infty$ it is interesting to study the
  length function $n \mapsto \ell_{\g_A}(M/I^{n+1}M)$ for certain ideals $I$ of $A$.  For this
  purpose we make the following fundamental definition.
\begin{definition}
  Let $(A/k, \g_A)$ be a Lie algebroid and $M$ a $\g_A$-module. An ideal of definition relative to
  $M$ is an ideal $J\subset A$, $J \neq A$, such that $\alpha(\g_A)\subset T_{A}(J)$ and
  $\ell_{\g_A}(M/JM)< \infty$.  A {\it defining ideal} is an ideal of definition relative to the
  $\g_A$-module $A$.
\end{definition} 
If $\alpha (\g_A)=0$, then an ideal $J$ of definition (relative to $A$) is the same as the
``classical'' one, that $\mf_A^l \subset J \subset \mf_A$ for some integer $l \geq 1$ for some
maximal ideal $\mf_A$.  That defining ideals relative to $A$ exist follows from Zorn's lemma, and in
fact there exist defining ideals for any $\g_A$-module of finite type over $A$.
\begin{proposition}\label{defining-module} Let $M$ be a $\g_A$-module
  of finite type over $A$.
  \begin{enumerate}
  \item If $J$ is an ideal of definition relative to $A$, then $J$ is
    an ideal of definition relative to $M$.
  \item If $J$ is an ideal of definition relative to $M$, then
    $\ell_{\g_A}(M/J^{n+1}M)< \infty$ for any positive integer $n$.
 \end{enumerate}
 \end{proposition}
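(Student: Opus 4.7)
The plan is to obtain both parts as direct consequences of Proposition~\ref{length-prop}, specifically its parts (4) and (5).

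For (1), I would pass to the quotient Lie algebroid $\bar{\g}=\g_A/J\g_A$ over $\bar A=A/J$, which is well-defined precisely because $\alpha(\g_A)\cdot J\subset J$; the module $M/JM$ is then a $\bar{\g}$-module of finite type over $\bar A$. The hypothesis that $J$ is a defining ideal translates into $\ell_{\bar\g}(\bar A)=\ell_{\g_A}(A/J)<\infty$, so Proposition~\ref{length-prop}(4) applied to the pair $(\bar A,\bar\g)$ gives $\ell_{\bar\g}(M/JM)<\infty$. Since $J$ acts as zero on $M/JM$, the $\g_A$-action factors through $\bar\g$, so $\bar\g$-submodules of $M/JM$ coincide with $\g_A$-submodules, and this is the same as $\ell_{\g_A}(M/JM)<\infty$.

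For (2) I would induct on $n$, using the short exact sequence
\begin{displaymath}
0 \to J^nM/J^{n+1}M \to M/J^{n+1}M \to M/J^nM \to 0
\end{displaymath}
of $\g_A$-modules to reduce the claim to showing $\ell_{\g_A}(J^nM/J^{n+1}M)<\infty$ for every $n\geq 0$. The central idea is to replace $J$ by a comparable defining ideal of $A$, which is where the main (though still modest) obstacle lies: applying Proposition~\ref{length-prop}(5) to the finite-length $\g_A$-module $M/JM$ yields the ideal $I=\Ann_A(M/JM)\supset J$ with $\ell_{\g_A}(A/I)<\infty$. One then checks, by short Leibniz-rule arguments, that $I$ is $\g_A$-stable (for $a\in I$ and $\delta\in\g_A$, $\delta(a)m=\delta(am)-a\delta(m)\in JM$, using the $\g_A$-stability of $JM$) and, more importantly, that $I$ annihilates every quotient $J^nM/J^{n+1}M$: indeed, for $a\in I$, $aJ^nM\subset J^n\cdot aM\subset J^n\cdot JM=J^{n+1}M$. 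Consequently $J^nM/J^{n+1}M$ becomes a finitely generated module over the Lie algebroid $\g_A/I\g_A$ on $A/I$, and a further application of Proposition~\ref{length-prop}(4) to this pair, using $\ell_{\g_A/I\g_A}(A/I)=\ell_{\g_A}(A/I)<\infty$, delivers $\ell_{\g_A}(J^nM/J^{n+1}M)<\infty$ and completes the induction.
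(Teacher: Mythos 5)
Your proposal is correct and follows essentially the same route as the paper: part (1) is the paper's reduction to the quotient algebroid over $A/J$ plus Proposition~\ref{length-prop}(4), and part (2) is the paper's argument of passing to $J_1=\Ann_A(M/JM)$ via Proposition~\ref{length-prop}(5), noting that $J_1$ is $\g_A$-stable and kills each $J^nM/J^{n+1}M$, and then applying (4) over $A/J_1$ before summing over the finite filtration. The extra verifications you include (stability of the annihilator, $J_1\cdot J^nM\subset J^{n+1}M$) are exactly the details the paper leaves implicit.
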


 \begin{proof} (1): Put $B= A/J$ and $\g_B= B\otimes_A \g_A$, so $\g_B$ is a Lie algebroid over $B$
   and since $J$ is a defining ideal, $B$ is of finite length over $\g_B$.  Then apply
   \Proposition{\ref{length-prop}}, (4), to the $\g_B$-module $M/JM$.

   (2): Put $J_1 = \Ann (M/JM) $ and $A_1 = A/J_1$. By \Proposition{\ref{length-prop}}, (5),
   $\ell_{\g_{A}}(A_1)< \infty$. Since $J^iM/J^{i+1}M$ and $A_1$ are modules over
   $\g_{A_1}=\g_A/J\g_A$, it follows by \Proposition{\ref{length-prop}}, (4), that
   $\ell_{\g_A}(J^iM/J^{i+1}M) = \ell_{\g_{A_1}}(J^iM/J^{i+1}M) < \infty$, $i=0,1,2,\dots $,
   implying $\ell_{\g_A}(M/J^{n+1}M) < \infty$, $n= 0,1,2, \dots $.
\end{proof}

\begin{lemma} Let $I$ and $ J$ be $\g_A$-stable ideals in $A$.
  \begin{enumerate}
  \item If $I \subset J$ and $I$ is defining, then $J$ is also defining.
  \item If $I$ is a defining ideal, then $I^n$ is a defining ideal for any positive integer $n$.
  \item If $J$ is defining and $J^n \subset I$ for some positive integer, then $I$ is also defining.
    If $\sqrt{I}= \sqrt{J}$, then $I$ is defining if and only of $J$ is defining.
  \item If $A$ is a local ring, there exists a unique maximal defining ideal.
  \end{enumerate}
  \end{lemma}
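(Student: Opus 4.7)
\begin{pf}
For (1), since $J$ is $\g_A$-stable by hypothesis, one only needs to check that $\ell_{\g_A}(A/J)<\infty$. The plan is to note that $A/J$ is a $\g_A$-quotient of $A/I$, so any descending chain of $\g_A$-submodules of $A/J$ lifts to one in $A/I$, whence $\ell_{\g_A}(A/J)\le \ell_{\g_A}(A/I)<\infty$.

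For (2), the ideal $I^n$ is $\g_A$-stable by the Leibniz rule (since $\alpha(\g_A)\cdot I\subset I$ one has $\alpha(\g_A)\cdot I^n\subset I^n$). Finite length follows by applying \Proposition{\ref{defining-module}}, (2), to the $\g_A$-module $M=A$ with the defining ideal $I$: this yields $\ell_{\g_A}(A/I^{n})<\infty$, so $I^n$ is defining. Part (3) is then immediate from (1) and (2): if $J^n\subset I$ with $J$ defining and $I$ $\g_A$-stable, then $J^n$ is defining by (2) and $I\supset J^n$ is defining by (1). For the second sentence of (3), by Noetherianity $\sqrt{I}=\sqrt{J}$ gives integers $n,m$ with $J^n\subset I$ and $I^m\subset J$, so the equivalence follows by applying the first sentence in both directions.

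For (4), the key step is to show that the sum $J_1+J_2$ of two defining ideals is again a defining ideal, and then invoke Zorn's lemma (or just the ascending chain property) to conclude the existence of a unique maximal one. The ideal $J_1+J_2$ is $\g_A$-stable since each summand is. Since $A$ is local and $J_1,J_2$ are proper, $J_1+J_2\subset \mf_A$, so it is proper. Finally, $A/(J_1+J_2)$ is a $\g_A$-quotient of $A/J_1$, so by the argument of (1) its length is finite. Hence $J_1+J_2$ is defining, and uniqueness of a maximal defining ideal follows: any two maximal defining ideals $J_1,J_2$ satisfy $J_1\subset J_1+J_2$ and $J_2\subset J_1+J_2$, forcing $J_1=J_1+J_2=J_2$ by maximality. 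The main (mild) obstacle is ensuring that the maximal element exists; this is where the ascending chain condition for $\g_A$-stable ideals, inherited from the noetherian property of $A$, is used.
\end{pf}
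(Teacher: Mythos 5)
Your proposal is correct and follows the paper's own route: (1) by comparing lengths of the quotients, (2) by invoking Proposition~\ref{defining-module},(2) for $M=A$, (3) by combining (1) and (2) together with the Noetherian fact that equal radicals give mutual containments of powers, and (4) by observing that the sum of defining ideals is proper (it lies in $\mf_A$) and again defining, which forces any two maximal ones to coincide. The only difference is that you spell out the details the paper leaves as ``evident,'' which is fine.
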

  (If $\Char A >0$, then $\sqrt{I}$ need not be preserved by $\g_A$ even if $I$ is preserved.)
  \begin{proof}
    (1): Evident. (2): This follows from \Proposition{\ref{defining-module},(2)}. (3): Combine (1)
    and (2). (4): Just notice that the sum of any maximal defining ideals belongs to $\mf_A$ so is
    not equal to $A$.
  \end{proof}
  When a $\g_A$-module of finite type is not finitely generated over $A$ there need not exist a
  defining ideal.  For example, the Weyl algebra $M= k\left< x, \partial_x\right>$ is a module over the Lie
  algebroid $T_{k[x]/k}$ (by left multiplication) that lacks a defining ideal. On the other hand, if
  $\ell_{\g_A}(M)< \infty$ then any preserved ideal is defining, and if $\ell_{\g_A}(A)< \infty$ and
  $M$ is of finite type over $A$, then $\ell_{\g_A}(M)< \infty$ (see
  \Proposition{\ref{length-prop}}, (4)).
\begin{proposition}\label{reg-def-ideal}
  Let $A/k$ be an allowed ring and $\g_A$ a Lie algebroid over
  $A$.
  \begin{enumerate}
  \item If $J$ is a maximal defining ideal, then $A/J$ is a regular ring, hence if $A$ is also a
    regular ring, then $J= (x_1, \dots , x_r)$, where $x_1, \dots , x_r$ are elements in $\mf_A$
    that form a subset of a regular system of parameters of $A$.
  \item If $J$ is a $\g_A$-defining ideal in $A$, then its radical $J_m= \sqrt{I}$ is the unique
    maximal defining ideal containing $J$.
\end{enumerate}
\end{proposition}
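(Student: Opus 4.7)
\textbf{For (1).} First I would argue that maximality of $J$ forces $A/J$ to be reduced. Indeed, setting $\bar A = A/J$ and $\g_{\bar A} = \g_A/J\g_A$, the ring $\bar A$ has finite length as a $\g_{\bar A}$-module, and $\nil \bar A$ is a $\g_{\bar A}$-invariant ideal because $\Char k = 0$ (Seidenberg). If $\nil \bar A \neq 0$, then its preimage $J'\subset A$ is a $\g_A$-stable ideal strictly containing $J$, and the surjection $A/J\twoheadrightarrow A/J'$ gives $\ell_{\g_A}(A/J')\leq \ell_{\g_A}(A/J)<\infty$, so $J'$ would be a defining ideal strictly larger than $J$, contradicting maximality. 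Hence $\bar A$ is reduced, and the last assertion of Proposition~\ref{regularprop} then gives that $\bar A = A/J$ is a regular ring (and simple over $\g_{\bar A}$). When $A$ is itself regular, the fact that both $A$ and $A/J$ are regular local rings immediately implies, by standard commutative algebra, that $J$ is generated by part of a regular system of parameters of $A$.

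\textbf{For (2).} The plan has three steps. First, I would verify that $\sqrt{J}$ is $\g_A$-invariant: this is the classical Seidenberg-type result valid because $\Char k = 0$, applied to each derivation in the image $\alpha(\g_A)\subset T_A$ and to the ideal $J$. Second, I would show $\sqrt{J}$ is itself a defining ideal. Since $J\subset\sqrt{J}\subsetneq A$, we get a $\g_A$-equivariant surjection $A/J\twoheadrightarrow A/\sqrt{J}$, so
\begin{displaymath}
\ell_{\g_A}(A/\sqrt{J}) \leq \ell_{\g_A}(A/J) < \infty,
\end{displaymath}
which together with $\alpha(\g_A)\cdot\sqrt{J}\subset\sqrt{J}$ shows $\sqrt{J}$ is defining. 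Third, I would show no defining ideal strictly contains $\sqrt{J}$. If $J'\supsetneq\sqrt{J}$ were such, then $J'/\sqrt{J}$ would be a nonzero proper $\g_A$-stable ideal of $A/\sqrt{J}$. But $A/\sqrt{J}$ is reduced and has finite length over $\g_A/\sqrt{J}\g_A$, so by the last clause of Proposition~\ref{regularprop} it is a simple $\g_A$-module, contradicting the existence of the proper submodule $J'/\sqrt{J}$. Therefore $\sqrt{J}$ is a maximal defining ideal containing $J$, and by part (4) of the lemma immediately preceding the proposition (uniqueness of the maximal defining ideal in a local ring) it is \emph{the} unique such.

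\textbf{Main obstacle.} The routine step is invoking Proposition~\ref{regularprop} and Seidenberg's theorem on radicals; the only substantive point is making sure that ``maximality'' is tested against the correct class of ideals, namely $\g_A$-stable proper ideals $I\subset A$ with $\ell_{\g_A}(A/I)<\infty$, so that the obstruction to enlarging $J$ is precisely an invariant ideal in the finite-length quotient $\bar A$. Once that framework is set, both statements reduce to applying the reduced/simple dichotomy of Proposition~\ref{regularprop} to the quotient $\bar A$.
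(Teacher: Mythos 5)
Your proposal is correct and takes essentially the same route as the paper, whose proof simply invokes the last assertion of Proposition~\ref{regularprop} for both parts; you merely make explicit the steps the paper leaves implicit (maximality of $J$ killing $\nil(A/J)$ in (1), and the simplicity of $A/\sqrt{J}=(A/J)/\nil(A/J)$ together with part (4) of the preceding lemma for uniqueness in (2)).
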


\begin{proof}
  (1): By \Proposition{\ref{regularprop}}, $A/J$ is a regular ring. If
  $A$ is also regular it follows that $J$ is generated by subset of a
  regular system of parameters.
 
  (2): This follows since $A/J/\nil (A/J)$ is a simple module over $\g_A$ \Prop{\ref{regularprop}}.
\end{proof}

\section{Hilbert series}\label{hilb-section}
In this section we will first discuss commutative graded algebras over Lie algebras, and their
graded modules, leading up to rational generating functions for the lengths of the homogeneous
components of a module. This is then applied to modules over Lie algebroids.

\subsection{Representation algebras}\label{rep-alg} We refer to \citelist{\cite{bourbaki-lie-Ch1}*{\S 6}
  \cite{bourbaki-lie-ch7-9}*{\S 6-7}} for unexplained notions  in this section pertaining to Lie algebras.

Let $\g_k$ be a finite-dimensional Lie algebra over the algebraically closed field $k$. By a graded
$\g_k$-algebra we mean a graded Noetherian commutative $k$-algebra $S^\bullet= \oplus_{i\geq 0} S^i
$ which at the same time is a $\g_k$-module by a homomorphism of $k$-Lie algebras $\g_k \to
T_{S^\bullet/k}$, such that $\g_k \cdot S^i \subset S^i $.  A graded $(S^\bullet, \g_k)$-module is a
graded $S^\bullet$-module and $\g_k$-module $M^\bullet = \oplus_{i\in \Zb} M^i$ such that $\delta
\cdot M^i \subset M^i$ and $\delta (sm)= \delta (s)m + s \delta (m)$, $\delta \in \g_k$, $m\in
M^\bullet$. We let $\Mod(S^\bullet, \g_k)$ be the category of graded $(S^\bullet, \g_k) $-modules
that are of finite type over $S^\bullet$.

Let $\rf$ be the radical of $\g_k$, $\sfr= [\g_k, \rf] = [\g_k, \g_k] \cap \rf \subset \rf$ the
nilpotent radical (see \cite{bourbaki-lie-Ch1}*{\S 5, Th 1, \S 6, Prop. 6}), and put
\begin{displaymath}
  Q= (\frac{\rf}{\sfr})^* = (\frac {\g_k}{[\g_k, \g_k]})^*
\end{displaymath}
where the isomorphism is induced from the inclusion $\rf \subset \g_k$.  Here the character group
$\Ch(\g_k)= Q$ is a subgroup of the character group $\Ch (\rf)= (\rf/[\rf, \rf])^*$, where the group
structure is by addition of dual vectors.  The group algebra $k[Q]$ is the set of functions $Q \to
k$ which has the value $0$ for almost all points in $Q$; its elements are commonly described by the
expressions $\sum a_qq$, designating the function that maps $q$ to $a_q$.  The algebra $k[Q]$ is
also an $\rf$-module by defining $r \cdot \sum a_qq = \sum a_qq( r)q $ and since every element $q\in
Q$ satisfies $q(\sfr)=0$, so $q$ extends to a homomorphism of Lie algebras $\g_k \to k$, it follows
that $k[Q]$ is also a $\g_k$-module. Since $Q$ parametrizes the isomorphism classes of simple
1-dimensional $\g_k$-modules we get that $k[Q]$ is a $\g_k$-algebra which is a semi-simple
$\g_k$-module such that every simple 1-dimensional $\g_k$-module occurs with multiplicity $1$.

Let $\Lc_k= \g_k/\rf$ be the semi-simple quotient and $\hf \subset \bfr_k $ a Cartan algebra and
Borel subalgebra of $\Lc_k$.  Let $P_{++}\subset \hf^*$ be the set of integral dominant weights,
which is a commutative sub-semigroup of $\hf^*$, and $\{\omega_1, \dots , \omega_l\}$ be the set of
fundamental weights.  For a weight $\phi \in P_{++}$ we let $L_\phi$ be the simple
finite-dimensional $\Lc_k$-module of heighest weight $\phi$.  Put $L = \oplus_{i=1}^l L_{\omega_i}$
and let $S^\bullet(L)$ be its symmetric algebra, which is a graded $\Lc_k$-algebra.  The length of a
weight $\phi= \sum_{i=1}^l m_i \omega_i\in P_{++} $ is $l(\phi)= \sum_{i=1}^l m_i$.  In each
degree $n$, the weight space $S^n(L)_{\phi}$ is 1-dimensional when $l(\phi)=n$.  Therefore there
exists an $\Lc_k$- submodule $J^n\subset S^n(L)$ such that
\begin{displaymath}
S^n(L)/J^n \cong \bigoplus_{l(\phi) =n}  L_{\phi}.
\end{displaymath}
Moreover, the $\Lc_k$-submodule $S^n(L) J^m$ is contained in $ J^{n+m}$, since $S^n(L) J^m $
contains no weight vectors of length $ n+m$. Therefore $J^\bullet= \oplus_{m\geq 1} J^m \subset
S^\bullet(L) $ is an $\Lc_k$-invariant ideal, hence $\Rc_{\Lc_k}^\bullet:= S^\bullet(L)/J^\bullet$
is an $\Lc_k$-algebra, which clearly is graded and noetherian. By construction,
\begin{displaymath}
  \Rc^\bullet_{\Lc_k}= \bigoplus_{n\geq 0} \bigoplus_{l(\phi)=n} L_\omega  = \bigoplus_{\phi \in P_{++}}L_{\omega}
\end{displaymath}
as $\Lc_k$-module, so that it
contains every simple finite-dimensional $\Lc_k$-module with multiplicity $1$.  We note also that
$\Rc_{\Lc_k}^\bullet$ is a $\g_k$-algebra such that $\rf \cdot \Rc_{\Lc_k}^\bullet =0$.  The tensor
product
\begin{displaymath}
  \Rc^\bullet_{\g_k}= k[Q]\otimes_k \Rc_{\Lc_k}^\bullet
\end{displaymath}
of the $\g_k$-algebras $k[Q]$ and $\Rc^\bullet_{\Lc_k}$ is then a semi-simple $\g_k$-algebra, where
each simple $\g_k$-module has multiplicity $1$.  We call $\Rc^\bullet_{\g_k}$ the representation
algebra of $\g_k$.

\begin{remark}\label{rep-remark}
  \begin{enumerate}
  \item Let $\g_k$ is a split semi-simple Lie algebra. The construction of the representation
    algebra is then due to Cartan, and therefore the map $L_\omega\otimes L_{\omega'} \to L_{\omega
      + \omega'} $ that arises from the ring structure is called ``Cartan multiplication'' in
    $\Rc_{\g_k}^\bullet$. Generators and relations for $\Rc_{\g_k}^\bullet$ are described by Kostant
    \cite{lancaster-towber:1}.  A version for Lie groups was introduced in \cite{hadziev} under the
    name ``universal algebra''. Bernstein, Gelfand, and Gelfand
    \cite{gelfand-gelfand-bernstein:models} and Gelfand and Zelevinsky
    \cite{gelfand-zelevinsky:models} have made concrete realisations of representation algebras
    (a.k.a. ``models'') for different Lie groups.
  \item Assume that $\g_k$ is semi-simple and $\Gamma $ be a sub-semigroup of $P_{++}$. Then
    $\Rc^\bullet_{\Gamma} = \oplus_{\gamma \in \Gamma} L_{\gamma}$ is naturally a sub $\g_k$-algebra
    of $\Rc^\bullet_{\g_k}$, and such algebras have natural geometric interpretations.  Let for
    example $\Gamma = \{n \lambda \ \vert \ n= 1,2, \dots \}$, where $\lambda \in P_{++}$.  Let $G$
    be a semi-simple algebraic group over $k$ and $\g_k$ be its Lie algebra.  Then
    \begin{displaymath}
      \Proj  \Rc^\bullet_\Gamma = G/P,
    \end{displaymath}
    where $P$ is a parabolic subgroup of $G$ \cite{ramanan-ramanathan:proj-norm}.  Representation
    algebras are studied geometrically in \cite{ramanathan-kempf}.
  \end{enumerate}

\end{remark}

\subsection{Algebras over solvable Lie algebras}\label{toruscase}
Let $(S^\bullet, \g_k)$ be a noetherian graded $\g_k$-algebra, where
$\g_k$ is a finite-dimensional solvable Lie algebra. Let $\Ch(\g_k)=
(\g_k/[\g_k,\g_k])^*$ be the character group, and if $\chi \in
\Ch(\g_k) $, we put $S^\bullet_{\chi} = \{s\in S^\bullet \ \vert \ (X-
\chi (X))^n s =0, n\gg 1, X\in \g_k\}$.  Similarly we define
$M^\bullet_\chi$ when $M^\bullet\in \Mod(S^\bullet, \g_k)$, and put
$\supp_{\g_k} M^\bullet= \{\chi \in \Ch(\g_k)\ \vert\
M^\bullet_{{\chi}}\neq 0\}$.  It is interesting to find noetherian
subalgebras $S_1 \subset S^\bullet$ and $S_1$-submodules of finite type
$M_1\subset M^\bullet$. The theorem below is one way to achieve this
using subsets of  $C= \supp_{\g_k} S^\bullet$ and $C_M=\supp_{\g_k}
M^\bullet$; this will be applied  in \Theorem{\ref{hilb-liealg}} to get
new rational Hilbert series from a given one.

Note that $C$ is a commutative sub-semigroup of $\Ch(\g_k)$, where the
binary operation is induced by the ring structure of $S^\bullet$, and
we have
  \begin{eqnarray*}
    S^\bullet &=& \bigoplus_{\chi \in C  } S^\bullet_{\chi},\\
    M^\bullet &=& \bigoplus_{\phi \in C_M} M^\bullet_\phi.
  \end{eqnarray*}

\begin{theorem}[\cite{kallstrom:two-extensions}]\label{solv-theo}
  Let $S^\bullet $ be a noetherian graded $\g_k$-algebra, where $\g_k$
  is a solvable Lie algebra, and $M^\bullet \in \Mod(S^\bullet,
  \g_k)$.
  \begin{enumerate}
  \item Let $\Gamma $ be a sub-semigroup of $C$, and put $\Gamma^c =
    C\setminus \Gamma$.  Then
    \begin{displaymath}
      S^\bullet_\Gamma = \bigoplus_{\chi \in \Gamma} S^\bullet_{\chi}
    \end{displaymath}
    is a graded $\g_k$-algebra, and if moreover $\Gamma + \Gamma^c \subset
    \Gamma^c$, then $S^\bullet_\Gamma$ is noetherian.
  \item Let $\Gamma $ be a sub-semigroup of $C$, $\Phi$ be a subset of
    $C_M$, and put $\Phi^c = C_M \setminus \Phi$. Consider the
    conditions:
  \begin{enumerate}
  \item $\Gamma \cdot \Phi \subset \Phi$,
  \item $ \Gamma^c \cdot \Phi \subset \Phi^c$.
  \end{enumerate}
  Then $(a)$ implies that
\begin{displaymath}
  M_\Phi^\bullet = \bigoplus_{\phi \in \Phi } M^\bullet_\phi
\end{displaymath}
is a graded $(S^\bullet_\Gamma, \g_k)$-module. If also (b) is
satisfied, then $M^\bullet_\Phi$ is of finite type over
$S_\Gamma^\bullet$.
\end{enumerate}
\end{theorem}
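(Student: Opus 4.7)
The plan rests on the weight decompositions $S^\bullet=\bigoplus_{\chi\in C} S^\bullet_\chi$ and $M^\bullet=\bigoplus_{\phi\in C_M} M^\bullet_\phi$ stated just above the theorem, combined with the compatibility
\[
S^\bullet_\chi\cdot S^\bullet_{\chi'}\subset S^\bullet_{\chi+\chi'},\qquad S^\bullet_\chi\cdot M^\bullet_\phi\subset M^\bullet_{\chi+\phi}.
\]
These inclusions come from the Leibniz rule for the $\g_k$-action together with the fact that, since $\g_k$ is solvable, every weight is a character: a binomial expansion of $(X-(\chi+\chi')(X))^n(ss')$ shows that products land in the correct generalized weight space. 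I would record these two inclusions at the start and use them throughout.

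For part (1), the equality $\Gamma+\Gamma\subset\Gamma$ and the $\g_k$-stability of each $S^\bullet_\chi$ immediately make $S^\bullet_\Gamma$ a graded $\g_k$-subalgebra. For noetherianity under the extra hypothesis $\Gamma+\Gamma^c\subset\Gamma^c$, the plan is contraction--extension. Given an ideal $I\subset S^\bullet_\Gamma$, form $\tilde I=S^\bullet\cdot I$, which by noetherianity of $S^\bullet$ is generated by finitely many elements $f_1,\dots,f_n$, and these can be chosen in $I$. Any $x\in I$ then has a presentation $x=\sum s_i f_i$ with $s_i\in S^\bullet$; split each $s_i=s_i^\Gamma+s_i^{\Gamma^c}$ according to the weight decomposition. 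Then $s_i^\Gamma f_i\in S^\bullet_\Gamma$ while the hypothesis forces $s_i^{\Gamma^c} f_i\in S^\bullet_{\Gamma^c}$. Comparing in the direct sum $S^\bullet=S^\bullet_\Gamma\oplus S^\bullet_{\Gamma^c}$ gives $\sum s_i^{\Gamma^c} f_i=0$ and $x=\sum s_i^\Gamma f_i$. Hence $f_1,\dots,f_n$ generate $I$ over $S^\bullet_\Gamma$, so every ideal of $S^\bullet_\Gamma$ is finitely generated.

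Part (2)(a) is immediate: $\Gamma+\Phi\subset\Phi$ gives $S^\bullet_\Gamma\cdot M^\bullet_\Phi\subset M^\bullet_\Phi$, and $M^\bullet_\Phi$ is a $\g_k$-submodule because each $M^\bullet_\phi$ is; the Leibniz rule inherited from $M^\bullet$ upgrades this to a graded $(S^\bullet_\Gamma,\g_k)$-module. For the finite-type statement under (b), I run the same extension--contraction trick on the module side. Let $N=S^\bullet\cdot M^\bullet_\Phi\subset M^\bullet$; since $M^\bullet$ is noetherian over $S^\bullet$, $N$ is finitely generated, and one may choose generators $m_1,\dots,m_r$ already inside $M^\bullet_\Phi$ (replace an original family of generators by their $\Phi$-weight components and observe that the new family still generates $N$). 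For $m\in M^\bullet_\Phi$, write $m=\sum s_i m_i$ and split $s_i=s_i^\Gamma+s_i^{\Gamma^c}$; then (a) puts $s_i^\Gamma m_i\in M^\bullet_\Phi$ and (b) puts $s_i^{\Gamma^c} m_i\in M^\bullet_{\Phi^c}$, so comparison in the direct sum $M^\bullet=M^\bullet_\Phi\oplus M^\bullet_{\Phi^c}$ forces $m=\sum s_i^\Gamma m_i$. Thus $m_1,\dots,m_r$ generate $M^\bullet_\Phi$ as an $S^\bullet_\Gamma$-module.

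The main potential obstacle is justifying the weight-space conventions uniformly: the direct sum decompositions of $S^\bullet$ and $M^\bullet$ are already stated, but the product rule for generalized weight spaces and the ability to choose generators from $M^\bullet_\Phi$ (rather than only from $N$) need to be made explicit. Once that bookkeeping is in place, the two halves of the theorem share one single mechanism, namely splitting coefficients into $\Gamma$- and $\Gamma^c$-parts and reading off the compatible direct-sum component; the remaining work is routine.
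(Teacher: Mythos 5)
Your proof is correct, but it runs on a different mechanism than the paper's. The paper's (hinted) argument is in the style of Hilbert's finiteness theorem: choose $r$ so that the pieces $S_\Gamma^1,\dots,S_\Gamma^r$ generate the extended ideal $S^\bullet\,(S^\bullet_\Gamma)_+$ over $S^\bullet$, let $B^\bullet$ be the subalgebra of $S^\bullet_\Gamma$ generated by $S^0_\Gamma$ and these pieces, and show $B^\bullet=S^\bullet_\Gamma$; noetherianity is then deduced from finite generation of $S^\bullet_\Gamma$ as a graded algebra in bounded degrees. You instead observe that the hypothesis $\Gamma+\Gamma^c\subset\Gamma^c$ (resp.\ (a) together with (b)) makes $S^\bullet_{\Gamma^c}$ (resp.\ $M^\bullet_{\Phi^c}$) a complement of $S^\bullet_\Gamma$ (resp.\ $M^\bullet_\Phi$) that absorbs all terms with $\Gamma^c$-coefficients, and you run the classical direct-summand/contraction argument: extend an ideal (or $M^\bullet_\Phi$) to $S^\bullet$ (or $M^\bullet$), take finitely many generators inside the original object, and project the resulting presentations back along the weight-space splitting. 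This is shorter, more elementary, and treats (1) and (2) by one uniform device; what it does not directly produce is finite generation of $S^\bullet_\Gamma$ as an algebra with explicit degree bounds, which is what the paper's route gives and what is used downstream (the exponents $n_i$ in the Hilbert-series denominators) --- though nothing is lost, since a noetherian graded ring is generated over its degree-zero part by finitely many homogeneous elements (apply your part (1) argument to the ideal $(S^\bullet_\Gamma)_+$). Two points to tighten when writing it up: the product rules $S^\bullet_\chi S^\bullet_{\chi'}\subset S^\bullet_{\chi+\chi'}$ and $S^\bullet_\chi M^\bullet_\phi\subset M^\bullet_{\chi\cdot\phi}$ for \emph{generalized} weight spaces should be proved as you indicate (the binomial expansion works precisely because $\g_k$ acts by derivations and by operators satisfying the Leibniz rule on $M^\bullet$); and your parenthetical recipe for choosing generators of $N=S^\bullet M^\bullet_\Phi$ inside $M^\bullet_\Phi$ --- replacing arbitrary generators by their $\Phi$-components --- is not justified as stated (the $\Phi^c$-components may be needed to generate $N$); the correct and immediate argument is that $N$ is a noetherian $S^\bullet$-module generated by the subset $M^\bullet_\Phi$, so finitely many elements of $M^\bullet_\Phi$ already generate it, exactly as you did for the ideal case.
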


We give a hint at how to prove \Theorem{\ref{solv-theo}}. Since $S^\bullet$ is graded
noetherian there exists an integer $r$ such that the graded $ S_\Gamma^0$-submodule of $V =
\oplus_{i=1}^r S_\Gamma^i\subset (S_\Gamma^\bullet)_+$ generates $S^\bullet (S_\Gamma)_+^\bullet$ over
$S^\bullet$. Let $B^\bullet$ be the subalgebra of $ S_\Gamma^\bullet$ that is generated by $V$ and $
S_\Gamma^0$, so in particular $B^i = S_\Gamma^i$ when $0\leq i \leq r$; clearly,  $B^\bullet$ is
noetherian. The proof is then   by showing that  $B^\bullet = S^\bullet_\Gamma$.
\begin{remark}
  \begin{enumerate}

  \item Let $(S^\bullet, \rf_k)$ be an $\rf_k$-algebra with $\rf_k$ a solvable Lie
    algebra. % Let $\Gamma =
    % \Gamma^1/\Gamma^0 $ is a subquotient of $C= \supp _{\bar \bfr_k}
    % S^\bullet$, where
    Let $\Gamma^0 \subset \Gamma^1$ be sub-semigroups of $C$. Then $S^\bullet_{\Gamma^0}\subset
    S^\bullet_{\Gamma^1}$ are $ \rf_k$-subalgebras of $(S^\bullet, \rf_k)$, and
    $S_{\Gamma^1/ \Gamma^0}^\bullet := S^\bullet_{\Gamma^1}/S^\bullet_{\Gamma^0} = \oplus_{\chi \in
      \Gamma^1 \setminus \Gamma^0} S^\bullet_\chi$ is again an $\rf_k$-algebra.

    We can generalize \Remark{\ref{rep-remark}, (2)}.  Let, as in \Section{\ref{rep-alg}}, $\rf_k$ be the radical of a Lie algebra $\g_k$. If $\Gamma \subset Q\times P_{++} $ is a
    sub-semigroup we put
    \begin{displaymath}
      \Rc^\bullet_\Gamma = \oplus_{\gamma \in \Gamma} L_\gamma \subset \Rc^\bullet_{\g_k},
    \end{displaymath}
    which a $\g_k$-subalgebra. More generally, if $\Gamma^0 \subset \Gamma^1 \subset Q\times P_{++}$
    are two sub-semigroups, then
    \begin{displaymath}
      \Rc^\bullet_{\Gamma^1/\Gamma^0} = \frac{      (\Rc^\bullet_{\g_k})_{\Gamma^1}} {(\Rc^\bullet_{\g_k}) _{\Gamma^0}}
    \end{displaymath}
    is a $ \g_k$-algebra such that the multiplicty $[\Rc^\bullet_{\Gamma^1/\Gamma^0} : L_\phi] =1$
    when $\phi \in \Gamma^1 \setminus \Gamma^0 $, and otherwise $[\Rc^\bullet_{\Gamma^1/\Gamma^0} :
    L_\phi]=0$.
  \end{enumerate}
\end{remark}
\subsection{Hilbert series over solvable Lie algebras} 
Let $\Zb[C]$ be the group ring of the monoid $C$, and $\Zb[C][t]$ the polynomial ring over $\Zb[C]$.
Let $\Zb[C_M][t]$ be the set of functions $f: C_M \times \Nb\to \Zb$ which take the value $0$ for
almost all points in $C_M\times \Nb$. We write $f= \sum_{\phi, n} k_{\phi, n} \phi t^n$ (finite sum)
for the function that maps $(\phi, n) \in C_M \times \Nb$ to the integer $k_{\phi,n}$.  We regard
$f$ as a polynomial, though we do not have a counterpart of polynomial products since $C_M$ is not a
monoid.  Similarly, let $\Zb[C][[t]]$ be the ring of formal power series with coefficients in the
ring $\Zb[C]$, and $\Zb[C_M][[t]]$ be the set of all functions $C_M \times \Nb \to \Zb$. The action
of $C$ on $C_M$ gives $\Zb[C_M][t]$ ($\Zb[C_M][[t]]$) a structure of $\Zb[C][t]$-module
($\Zb[C][[t]]$-module), so that $m \chi t^{n'} \cdot f = \sum m k_{\phi, n} (\chi \cdot \phi )
t^{n+n'}$.

% As usual,
% say that $H\in \Zb[C_M][t] $ is divisible by $f\in \Zb[C][t]$ if there
% exists an element $G \in \Zb[C_M][t]$ such that $H= f \cdot G$.

Let $\g_k$ be a solvable Lie algebra, $S^\bullet$ a Noetherian graded $\g_k$-algebra, 
$M^\bullet\in \Mod(S^\bullet, \g_k)$, and $C_M$ and $C$ be defined as in (\ref{toruscase}). The
equivariant Hilbert series of $M^\bullet$ is
\begin{displaymath}
  H^{eq}_{M^\bullet}(t)=\sum_{n\geq n_0, \chi   \in C_M} \dim_k
  (M^n_\chi)  \chi t^n\in
  \Zb[C_M][[t]], 
\end{displaymath}
where $n_0$ is an integer such that $M^n=0$ when $n < n_0$ (such an integer exists since $M^\bullet$
is a module  of finite type over a Noetherian graded ring).
\begin{lemma}\label{lemma-hilb}
  Let $M^\bullet\in \Mod(S^\bullet, \g_k)$.  Then 
  \begin{displaymath}
    H^{eq}_{M^\bullet}(t) = \frac{f(t)}{\prod_{\chi \in R} (1-\chi t^{n_\chi})^{d_\chi}},
\end{displaymath}
for some finite subset $R\subset C$ and integers $d_\chi >0$, and $f\in \Zb[t]$. The integers
$n_{\chi}$ are determined by choosing generators of $S^\bullet$ that belong to $S^{n_\chi}_\chi$.
\end{lemma}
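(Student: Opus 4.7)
The strategy is to emulate the classical argument for rationality of the Hilbert series, but to route it through a polynomial ring bigraded by $\Nb \times C$, so that the $\g_k$-action is never required to lift.

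First I would choose a finite set of ``semi-invariant'' homogeneous generators $s_1,\dots,s_r$ of $S^\bullet$ as an algebra over $S^0$ (which is implicitly finite-dimensional over $k$ for the numerical Hilbert series $\sum \dim_k M^n_\chi\, \chi t^n$ to make sense), with $s_i \in S^{n_i}_{\chi_i}$. This is possible because, since $\g_k$ is a finite-dimensional solvable Lie algebra, each graded piece decomposes as $S^n = \oplus_{\chi} S^n_\chi$ into generalized weight spaces, and any finite generating set may be replaced by its projections onto these spaces. Next, introduce the polynomial ring $P = S^0[x_1,\dots,x_r]$ and the surjection $\pi : P \to S^\bullet$, $x_i \mapsto s_i$. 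Endow $P$ with the $(\Nb \times C)$-bigrading in which $x_i$ has bidegree $(n_i,\chi_i)$. Because $\g_k$ acts on $S^\bullet$ by derivations, multiplication satisfies $S^{n}_{\chi}\cdot S^{m}_{\chi'} \subset S^{n+m}_{\chi+\chi'}$, so $\pi$ is bigraded and $M^\bullet$ becomes a finitely generated $(\Nb \times C)$-bigraded $P$-module via $\pi$. Crucially one does \emph{not} transfer any $\g_k$-action to $P$; only the bigrading is used.

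A direct calculation in $\Zb[C][[t]]$ gives
\begin{displaymath}
  H^{eq}_{P}(t) \;=\; \sum_{e \in \Nb^r} \chi_1^{e_1}\cdots \chi_r^{e_r}\, t^{e_1 n_1 + \cdots + e_r n_r} \;=\; \prod_{i=1}^r \frac{1}{1 - \chi_i t^{n_i}},
\end{displaymath}
and similarly $H^{eq}_{P(-m,-\phi)}(t) = \phi t^m \prod_{i=1}^r (1-\chi_i t^{n_i})^{-1}$. Hilbert's syzygy theorem applied in the multigraded setting over $P$ now supplies a finite resolution of $M^\bullet$ by finitely generated $(\Nb \times C)$-bigraded free $P$-modules, and additivity of $H^{eq}$ on short exact sequences of bigraded vector spaces collapses this resolution to $H^{eq}_{M^\bullet}(t) = f(t)/\prod_{i=1}^r (1 - \chi_i t^{n_i})$ with $f \in \Zb[C_M][t]$. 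Grouping factors with equal $(\chi,n)$ yields the stated form.

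The principal obstacle is that the $s_i$ are in general only \emph{generalized} eigenvectors of $\g_k$ and not genuine eigenvectors (as the toy example $\g_k = k\partial$ acting on $k[x,y]$ by $\partial(x)=1$, $\partial(y)=0$ illustrates, where no eigenvector generator can replace $x$). This rules out the naive induction on $r$ via the short exact sequence
\begin{displaymath}
  0 \to (0:_M s_r) \to M(-n_r,-\chi_r) \xrightarrow{\,s_r\,} M^\bullet \to M^\bullet / s_r M^\bullet \to 0,
\end{displaymath}
since neither $(0 :_M s_r)$ nor $M^\bullet/s_r M^\bullet$ need be $\g_k$-stable: indeed $X(s_r m)=(Xs_r)m+s_r(Xm)$ couples the two terms whenever $Xs_r \notin ks_r$. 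The detour through $P$ sidesteps this entirely, because the $(\Nb \times C)$-bigrading on $P$ is preserved by multiplication regardless of whether the $s_i$ are genuine eigenvectors.
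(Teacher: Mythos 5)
Your first step coincides with the paper's: since $\g_k$ acts by degree-preserving derivations, generalized weight spaces multiply, $S^n_\chi M^m_\phi\subset M^{n+m}_{\chi+\phi}$, so one may choose generators $s_i\in S^{n_i}_{\chi_i}$ and work with the $\Nb\times C$-bigrading. The gap is the appeal to Hilbert's syzygy theorem over $P=S^0[x_1,\dots,x_r]$. Nothing in the hypotheses makes $S^0$ a field; as you yourself note, it is only (implicitly) finite-dimensional, i.e.\ Artinian, over $k$, and over such a base $P$ need not have finite global dimension, so a finitely generated bigraded $P$-module need not admit a finite free resolution. For instance, take $\g_k$ acting trivially, $S^\bullet=P=k[\epsilon]/(\epsilon^2)[x]$ with $\deg x=1$, and $M^\bullet=P/(\epsilon)$: the minimal resolution $\cdots\xrightarrow{\ \epsilon\ }P\xrightarrow{\ \epsilon\ }P\to M^\bullet\to 0$ is infinite and periodic, so your resolution-plus-additivity argument never terminates, although the conclusion of the lemma of course still holds. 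This is exactly why the classical statement the paper follows (Matsumura, Th.~13.2) is phrased for an Artinian degree-zero part and proved by induction on the number of algebra generators using an additive function, with no homological input; the lemma is that argument run with the additive function $N\mapsto\sum_\chi\dim_k N_\chi\,\chi\in\Zb[C_M]$. Your route can be repaired either by assuming $S^0=k$ (enough for the paper's applications, but more than is stated) or by first filtering $M^\bullet$ by powers of the radical of $S^0$ (a nilpotent, $\g_k$-stable, hence weight-decomposable ideal), reducing to a polynomial ring over a product of copies of $k$; but as written the syzygy step fails.

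Moreover, the reason you give for discarding the inductive route is not an obstruction. Since $s_r\in S^{n_r}_{\chi_r}$ is bihomogeneous and translation by $\chi_r$ in the character group is injective, multiplication by $s_r$ is a map of bidegree $(n_r,\chi_r)$, so $(0:_M s_r)$ and $M^\bullet/s_rM^\bullet$ inherit the $\Nb\times C_M$-decomposition and are finitely generated over the subalgebra generated by $s_1,\dots,s_{r-1}$; the induction is carried out for bigraded modules, not for objects of $\Mod(S^\bullet,\g_k)$, and the equivariant Hilbert series only sees the bigrading, so $\g_k$-stability of the kernel and cokernel is never needed. That is precisely the paper's proof (``induction over the number of generators, homogeneous relative to the grading $C_M\times\Nb$''), and it works uniformly over an Artinian $S^0$. (Your toy example $\partial(x)=1$, $\partial(y)=0$ also does not satisfy $\g_k\cdot S^i\subset S^i$ unless $x$ has degree $0$, in which case the weight spaces are infinite-dimensional; a correct illustration would be $\partial(x)=y$, $\partial(y)=0$ in degree $1$ --- and even there the bigraded induction goes through.)
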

\begin{proof} 
  Proceed as in the ordinary proof of the rationality of Hilbert series of finitely generated graded
  modules over commutative noetherian graded algebras, by induction over the number of generators,
  which are required to be homogeneous relative to the grading $C_M\times \Nb$ (see
  \cite{matsumura}*{Th. 13.2} for the ordinary case, and for torus actions, see
  \cite{renner:torus}).
\end{proof}
\begin{remark}
  Replacing the solvable Lie algebra by a torus, Renner
  \cite{renner:torus} interpreted the integers $d_\chi$ in the
  rational presentation of $H^{eq}_{S^\bullet}(t)$ in terms of the
  geometry of $X=\Proj S^\bullet $ and its line bundle $\Oc_X(1)$, and
  also related it to the geometry of $R\subset C$.
\end{remark}
Let $\Gamma \subset C$ be a sub-semigroup and $\Phi \subset C_M$ a
subset such that the natural $\Gamma$-action on $C_M$ preserves
$\Phi$, $\Gamma \cdot \Phi \subset \Phi$, so we have the
$(S^\bullet_\Gamma, \g_k)$-module $M^\bullet_\Phi$ (see
\Theorem{\ref{solv-theo}}).  We define the $\Phi$-Hilbert series
\begin{displaymath}
  H^\Phi_{M^\bullet} (t) = \sum_{n\geq n_0} \dim_k (M_\Phi^n)t^n \in \Zb[[t]].
\end{displaymath}
Recall that $\dim_k (M_\Phi^n) = \ell_{\g_k} (M_\Phi^n) $ since $k$ is
algebraically closed of characteristic $0$, and $\g_k$ is solvable.

\begin{lemma}\label{hilb-torus} Assume that $\Gamma\subset C$ and
  $\Phi\subset C_M$ satisfy the conditions in
  \Theorem{\ref{solv-theo}}, (1) and (2). Then
  \begin{displaymath}
    H^\Phi_{M^\bullet} (t) = \frac {g(t)}{\prod_{i=1} ^d(1-t^{n_i})}.
  \end{displaymath}
  The integers $n_i$ are determined by the degrees of a choice of
  homogeneous generators of $S_\Gamma^\bullet$.
\end{lemma}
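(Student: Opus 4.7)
The plan is to reduce the statement directly to \Lemma{\ref{lemma-hilb}}, applied not to $M^\bullet$ over $S^\bullet$ but to the restricted pair $(S^\bullet_\Gamma, M^\bullet_\Phi)$, and then to specialise the equivariant series to the ordinary one by an augmentation map. The hypotheses of \Theorem{\ref{solv-theo}} have been tailored precisely so that this restricted pair satisfies the finiteness assumptions needed for Lemma~\ref{lemma-hilb}.

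First, \Theorem{\ref{solv-theo}}, (1) together with the condition $\Gamma + \Gamma^c \subset \Gamma^c$ gives that $S^\bullet_\Gamma$ is a noetherian graded $\g_k$-algebra; and (2) with the two conditions (a) and (b) in force gives that $M^\bullet_\Phi$ is a graded $(S^\bullet_\Gamma, \g_k)$-module of finite type over $S^\bullet_\Gamma$. Since $\g_k$ is solvable and $k$ is algebraically closed of characteristic $0$, Lie's theorem lets me choose a finite set of homogeneous generators of $S^\bullet_\Gamma$ that are weight vectors, of respective degrees $n_1,\dots,n_d$, with weights $\chi_1,\dots,\chi_d \in \Gamma$ (with repetitions).

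Next, I apply \Lemma{\ref{lemma-hilb}} to the noetherian graded $\g_k$-algebra $S^\bullet_\Gamma$ and the finitely generated graded module $M^\bullet_\Phi$, obtaining
\begin{displaymath}
H^{eq}_{M^\bullet_\Phi}(t) \;=\; \frac{f(t)}{\prod_{\chi \in R}(1-\chi t^{n_\chi})^{d_\chi}} \;\in\; \Zb[\Gamma][[t]],
\end{displaymath}
where the exponents $n_\chi$ (counted with multiplicity $d_\chi$) are exactly the degrees of the chosen generators of $S^\bullet_\Gamma$. Now consider the augmentation $\epsilon\colon \Zb[C_M][[t]] \to \Zb[[t]]$ sending every character to $1$. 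Since each weight component $M^n_\phi$ is finite-dimensional and $M^n_\Phi = \bigoplus_{\phi \in \Phi} M^n_\phi$, I get
\begin{displaymath}
\epsilon\bigl(H^{eq}_{M^\bullet_\Phi}(t)\bigr) \;=\; \sum_{n,\phi \in \Phi} \dim_k(M^n_\phi)\, t^n \;=\; \sum_n \dim_k(M^n_\Phi)\, t^n \;=\; H^{\Phi}_{M^\bullet}(t).
\end{displaymath}
On the other hand $\epsilon$ collapses each denominator factor $(1-\chi t^{n_\chi})$ to $(1-t^{n_\chi})$, and sends the polynomial $f(t) \in \Zb[\Gamma][t]$ to some $g(t) \in \Zb[t]$. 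Relabelling the exponents as $n_1,\dots,n_d$ gives exactly the claimed formula.

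The only slightly delicate point is the choice of homogeneous weight-vector generators for $S^\bullet_\Gamma$; this is ensured by Lie's theorem on the solvable algebra $\g_k$ acting on each finite-dimensional graded piece $(S^\bullet_\Gamma)_+^{\leq r}$ from the proof of \Theorem{\ref{solv-theo}}. Everything else is formal: the hypotheses of the theorem were designed so that the restriction to $\Gamma, \Phi$ preserves the noetherian/finite-type setting needed for the Hilbert--Serre style rationality argument, and specialising the equivariant series to the ordinary one through $\epsilon$ is harmless because the coefficient ring $\Zb[C_M]$ enters each degree $t^n$ through a finite sum.
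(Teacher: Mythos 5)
Your argument is correct, but it is not the paper's route. The paper's own proof is two lines: \Theorem{\ref{solv-theo}} gives that $S^\bullet_\Gamma$ is noetherian and $M^\bullet_\Phi$ is of finite type over it, and since $\g_k$ is solvable and $k$ is algebraically closed of characteristic $0$ one has $\ell_{\g_k}(M^n_\Phi)=\dim_k M^n_\Phi$, so the classical Hilbert--Serre theorem (\cite{matsumura}*{Th. 13.2}) applied directly to these dimensions finishes the proof; no equivariant series enters. You instead apply \Lemma{\ref{lemma-hilb}} to the restricted pair $(S^\bullet_\Gamma, M^\bullet_\Phi)$ and then specialise all characters to $1$. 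This does work, provided you phrase it via clearing denominators: the equivariant statement is the identity $D\cdot H^{eq}_{M^\bullet_\Phi}=f$ in $\Zb[C_M][[t]]$ with $D=\prod_\chi(1-\chi t^{n_\chi})^{d_\chi}$, and the summation map $\int$ is compatible with the $\Zb[C][[t]]$-module structure degreewise, $\int(a\cdot m)=\epsilon(a)\int(m)$, because the sum of the coefficients of $a\cdot m$ equals the product of the two coefficient sums (possible coincidences among the characters $\chi\cdot\phi$ are harmless, and each graded coefficient has finite support since $\dim_k M^n<\infty$); hence $\prod_\chi(1-t^{n_\chi})^{d_\chi}\,H^\Phi_{M^\bullet}(t)=\int(f)\in\Zb[t]$. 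Note that this is exactly the point of the remark that follows the lemma in the paper, where the authors say they see no apparent reason why $\int$ should send the rational function $H^{eq}_{M^\bullet_\Phi}(t)$ to a rational function of the same form; your specialisation argument, made precise as above, supplies such a reason, so your route is genuinely different (and in a sense stronger), at the cost of being less elementary than the paper's direct appeal to Hilbert's theorem. Two small inaccuracies: the generators of $S^\bullet_\Gamma$ need only be homogeneous for the $C\times\Nb$ bigrading, i.e. lie in the generalized weight spaces $S^n_\chi$ (take homogeneous components of any generating set); Lie's theorem does not provide literal weight-vector generators, and none are needed. Also the numerator of the equivariant series has coefficients in $\Zb[\Phi]$ (the module side), not $\Zb[\Gamma]$, which does not affect the conclusion $\int(f)\in\Zb[t]$.
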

\begin{proof}
  By \Theorem{\ref{solv-theo}} $S_\Gamma^\bullet$ is noetherian and
  $M_\Phi^\bullet$ is finitely generated. Then the result follows from
  Hilbert's theorem \cite[Th. 13.2]{matsumura}.
\end{proof}

For any finite subset $\Omega \subset C_M$ we have a summation map $\int : \Zb[\Omega]\to \Zb$,
$\sum a_\phi \phi \mapsto \sum a_\phi$. It induces a map, which we denote the same, $\int : \Zb
[C_M][[t]]\to \Zb[[t]]$.

\begin{remark} Clearly $H^{\Phi}_{M^\bullet} (t) = \int (
  H^{eq}_{M^\bullet_{\Phi}}(t))$, but we can see no apparent other reason
  why $\int$ should map the rational function $
  H^{eq}_{M^\bullet_\Phi}(t)$ to a rational function of the same form.
\end{remark}

\subsection{ Case of general Lie algebras} We keep the notation in \Section{\ref{toruscase}}, so
$(S^\bullet, \g_k)$ is a graded Noetherian $\g_k$-algebra, $\g_k$ is a finite-dimensional Lie
algebra, and $M^\bullet\in \Mod(S^\bullet, \g_k)$.  Let $\bar \bfr_k$ be a maximal solvable Lie
subalgebra of $\g_k$, and for a subset $\Phi \subset \supp_{\bar \bfr_k} M$ we put
\begin{displaymath}
  \ell^{\Phi}_{\g_k}(M^n) = \sum_{\phi \in
    \Phi} [M^n: L_\phi],
\end{displaymath}
where $[M^n: L_\phi]$ is the multiplicity of the simple $\g_k$-module $L_\phi$ whose highest weight
is $\phi$.
\begin{remark}
  We can write $\bar \bfr_k = \rf \oplus \bfr_k$, where $\bfr_k$ is a maximal solvable Lie
  subalgebra of a Levi subalgebra of $\g_k$. This gives an inclusion
  $Q\oplus \hf_k^* = (\rf_k/\sfr_k )^* \oplus \hf_k^* \subset (\rf_k/[\rf_k, \bar \bfr_k])^* \oplus
  \hf_k^*= (\bar \bfr_k/[\bar \bfr_k, \bar \bfr_k])^* = \Ch(\bar \bfr_k)$
  (see \Section{\ref{rep-alg}}), and the weights $\phi\in \Phi$ that give a non-zero contribution to
  the sum above in fact belong to the subset $Q\oplus \hf_k^*$.
\end{remark}

The following theorem is crucial for proving the rationality of Hilbert series
of local systems \Th{\ref{main}}, quoted in the introduction to this paper.

\begin{theorem}\label{hilb-liealg} Let $S^\bullet$ be a Noetherian
  graded $\g_k$-algebra and $M^\bullet$ be graded module over $(S^\bullet, \g_k)$, which is of
  finite type over $S^\bullet$. Let $\Gamma$ be a sub-semigroup of $ C = \supp _{\bar
    \bfr_k}S^\bullet$ and $\Phi \subset C_M=\supp_{\bar \bfr_k} M^\bullet $ a subset satisfying the
  conditions in \Theorem{\ref{solv-theo}}, (1) and (2).

  Then the Hilbert series
  \begin{displaymath}
    H_{M^\bullet}^{eq, \Phi}(t) =  \sum_{n\in \Zb, \phi \in \Phi} [M^n: L_\phi] \phi t^n \in
    \Zb[\Phi] [[t]]
\end{displaymath}
and  
 \begin{displaymath}
       H_{M^\bullet}^\Phi(t) =  \int (    H_M^{eq, \Phi}(t))=
       \sum_{n\in \Zb } \ell^{\Phi}_{\g_k}(M^n) t^n\in
    \Zb [[t]]
\end{displaymath}
  are rational functions of the form
  \begin{eqnarray}\label{eq:1}
    H^{eq, \Phi}_{M^\bullet}( t)&=&\frac {f^{eq}_M(\chi, t)}{\prod_{\chi\in \Xi}(1-\chi t^{n_\chi} )},\\
    \label{eq:2}    H^{\Phi}_{M^\bullet}(t)&=& \frac{f_M(t)}{\prod_{i=1}^r(1-t^{n_i})},
 \end{eqnarray}
 where the poly\-nomials $f^{eq}_M(\chi, t)\in \Zb[\Phi, t]$ and $f_M(t)\in \Zb[t]$, and $\Xi
 \subset \Gamma$ is a finite subset.
\end{theorem}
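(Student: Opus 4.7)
The plan is to reduce the problem to the solvable-algebra setting handled by \Lemma{\ref{hilb-torus}}. Writing $\bar \bfr_k = \rf \oplus \bfr_k$ with $\bfr_k$ a Borel of a Levi subalgebra $\Lc_k$, I would let $\nf \subset \bfr_k$ denote the nilradical (lifted to $\g_k$), so that $\bar \bfr_k/\nf$ is abelian with character group sitting inside $Q \oplus \hf^*$, as in the preceding remark. Standard highest-weight theory applied degree-wise gives
\begin{displaymath}
  [M^n : L_\phi] = \dim_k (M^n)^{\nf}_\phi,
\end{displaymath}
so that $H^{eq,\Phi}_{M^\bullet}(t)$ coincides with the $(\bar \bfr_k/\nf)$-equivariant Hilbert series of the graded module $(M^\bullet)^{\nf}$ over the graded algebra $(S^\bullet)^{\nf}$. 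This exchange converts the $\g_k$-multiplicity count into an ordinary weight-multiplicity count for a solvable Lie algebra acting on a graded module.

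The crucial technical step, and the one I expect to be the main obstacle, is to establish that $(S^\bullet)^{\nf}$ remains a Noetherian graded algebra and that $(M^\bullet)^{\nf}$ is finitely generated over it. Since $\g_k$ need not be reductive, classical Hilbert finiteness does not apply directly to $\nf$-invariants. I would invoke Hadziev's extension \cite{hadziev}: via the representation algebra $\Rc^\bullet_{\g_k}$ of \Section{\ref{rep-alg}} one has a canonical identification $(-)^{\nf} \cong ((-) \otimes_k \Rc^\bullet_{\g_k})^{\g_k}$, and since $\Rc^\bullet_{\g_k}$ is itself Noetherian, classical finiteness applied to the action of the Levi $\Lc_k$ on $S^\bullet \otimes_k \Rc^\bullet_{\g_k}$, combined with the solvable radical acting through characters, propagates finite generation to the $\g_k$-invariants, and likewise to the invariants of $M^\bullet \otimes_k \Rc^\bullet_{\g_k}$.

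Once finite generation is secured, the hypotheses on $\Gamma$ and $\Phi$ translate directly to sub-semigroup and subset conditions for the $(\bar \bfr_k/\nf)$-supports of $(S^\bullet)^{\nf}$ and $(M^\bullet)^{\nf}$ satisfying \Theorem{\ref{solv-theo}}, (1) and (2). Applying \Lemma{\ref{hilb-torus}} to the localised pair $((S^\bullet)^{\nf}_{\Gamma}, (M^\bullet)^{\nf}_{\Phi})$ then yields the rational presentation \qr{eq:1}, with the denominator determined by the degrees of a choice of homogeneous $(\bar \bfr_k/\nf)$-weight generators of $(S^\bullet)^{\nf}_{\Gamma}$. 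The non-equivariant form \qr{eq:2} follows at once by applying the integration map $\int$ of \Section{\ref{toruscase}}.
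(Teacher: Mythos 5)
Your overall skeleton (pass to nilpotent invariants, use the Roberts--Hadziev trick to get Noetherian finiteness, then quote \Lemma{\ref{hilb-torus}}) is indeed the paper's strategy, but the proposal skips the one genuine difficulty the paper's proof is built around: when the nilpotent radical $\sfr_k=[\g_k,\rf]$ acts nontrivially on $M^\bullet$, the space $(M^\bullet)^{\nf_k}$ is \emph{not} an $\rf$-module (for $m\in (M^n)^{\nf_k}$, $X\in\nf_k$, $r\in\rf$ one has $X\cdot(rm)=[X,r]\cdot m$ with $[X,r]\in\sfr_k$, which need not kill $m$), and likewise $(S^\bullet)^{\nf_k}$ need not be $\rf$-stable. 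So your assertion that $H^{eq,\Phi}_{M^\bullet}(t)$ ``coincides with the $(\bar\bfr_k/\nf)$-equivariant Hilbert series of $(M^\bullet)^{\nf}$ over $(S^\bullet)^{\nf}$'', and the phrase ``the solvable radical acting through characters'', presuppose a $\bar\bfr_k$-structure that simply is not there in general, and without it \Theorem{\ref{solv-theo}} and \Lemma{\ref{hilb-torus}} cannot be invoked. The paper eliminates this obstruction first: it builds a filtration of $M^\bullet$ by $(S^\bullet,\g_k)$-submodules inducing a composition series in every homogeneous degree, replaces $M^\bullet$ by the associated graded $G^\bullet$ (semisimple over $\g_k$, so $\sfr_k\cdot G^\bullet=0$) and $S^\bullet$ by its image $\bar S^\bullet$ in $\End_k G^\bullet$ (also killed by $\sfr_k$); multiplicities are preserved, $[M^n:L_\phi]=[G^n:L_\phi]$, and only then does the invariant-theoretic reduction go through.

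A second, related flaw: the identification $(-)^{\nf}\cong\bigl((-)\otimes_k\Rc^\bullet_{\g_k}\bigr)^{\g_k}$ you invoke is false for non-semisimple modules --- the right-hand side computes $\bigoplus_\phi \Hom_{\g_k}(L_\phi,-)$, i.e.\ socle multiplicities, not $\nf_k$-invariants or composition multiplicities. The correct statement, and the one the paper uses, is taken over the Levi factor: $(-)^{\nf_k}=\bigl(\Rc_{\Lc_k}\otimes_k(-)\bigr)^{\Lc_k}$, valid for arbitrary finite-dimensional pieces because restriction to the semisimple $\Lc_k$ is automatically semisimple; Hilbert's finiteness theorem is then applied to $\Lc_k$-invariants of $\bar S^\bullet$ and $G^\bullet$. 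On the semisimplified objects your formula and the paper's agree, which is another reason the semisimplification step is not optional but the heart of the proof; also note your claim $[M^n:L_\phi]=\dim_k(M^n)^{\nf}_\phi$ needs the generalized $\rf$-weight decomposition (whose summands are $\g_k$-submodules) to even make sense, since $(M^n)^{\nf}$ carries no $\rf$-action a priori.
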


Letting $\Gamma=C$ and $\Phi =C_M$, we note in particular that the
full Hilbert series $H^{eq}_{M^\bullet}:= H^{eq, C_M}_{M^\bullet}(t)$
and the generating function $H_{M^\bullet}(t) :=
H^{C_M}_{M^\bullet}(t)$ of the lengths $\ell_{\g_k}(M^n)$ are rational
functions.

\begin{remark}
  The assertion (\ref{eq:1}) was noted in \cite{renner:torus} when $\g_k$ is a semi-simple Lie
  algebra and $\Phi = P_{++}$.
\end{remark}
\begin{proof} 
  There exists a Levi subalgebra $\Lc_k \subset \g_k$, and a Borel algebra $\bfr_k \subset \Lc_k$
  such that $\bar \bfr_k = \rf + \bfr_k$.  Put $\nf_k = [\bfr_k,\bfr_k ]$. 

  We first make a remark: Recall that the length $\ell_{\g_k}(M)$ of a finite-dimensional
  $\g_k$-module $M$ equals $\dim_k M^{\nf_k}$. Now the $\nf_k$-invariant space $(M^n)^{\nf_k}$ of
  the $\g_k$-module $M^n$ is a $\bfr_k$-module, but need not be an $\rf$-module when the nilpotent
  radical $\sfr_k\neq 0$ and $M^n$ is not semi-simple. Therefore the $((S^\bullet)^{\nf_k},
  \bfr_k)$-module $(M^\bullet)^{\nf_k}$ is not provided with a structure of $ \bar \bfr_k$-module,
  which would be needed for applying \Lemma{\ref{hilb-torus}}.  We therefore need to eliminate the
  disturbance caused by the presence of a non-trivial nilpotent radical.

  For this purpose we construct a filtration of the $(S^\bullet, \g_k)$-module $M^\bullet$ by
  $(S^\bullet, \g_k)$-submodules, which induces a $\g_k$-module composition series in each
  homogeneous component $M^n$.  Such a filtration can be constructed inductively.  There is an
  integer $n_0$ such that $M^{n}=0$ when $n < n_0$.  Let $\cdots \gamma^{i+1}_{n_0}\subset
  \gamma^{i}_{n_0}\subset \cdots$ be a composition series of the $\g_k$-module $M^{n_0}$ and
  $\Gamma^{i,\bullet}_{n_0} \subset M^\bullet$ be the $(S^\bullet, \g_k)$-submodule that is
  generated by $\gamma^i_{n_0}$, so $\Gamma^{i,n_0}_{n_0}= \gamma^i_{n_0}$.  Beware that we will
  below reuse the index $i$ several times to simplify the notation.  Let $\gamma^i_{n_0+1}$ be a
  refinement of the filtration $\Gamma^{i, n_0+1}_{n_0}$ into a composition series of $M^{n_0+1}$,
  and $\Gamma_{n_0+1}^{i, \bullet}\subset M^\bullet$ be the refinement of the filtration
  $\Gamma_{n_0} ^{i, \bullet}$ that is determined by $\gamma^i_{n_0+1}$.  Then $ \Gamma_{n_0+1}^{i,
    \bullet}\subset M^\bullet$ is a filtration by $(S^\bullet, \g_k)$-submodules which induces a
  composition series of the $\g_k$-modules $M^{n_0}$ and $M^{n_0+1}$. Inductively, knowing the
  filtration $\Gamma_{n_0+r}^{i, \bullet}$ we refine $\Gamma_{n_0+r}^{i, n_0+r+1}$ into a filtration
  $\gamma_{n_0+r+1}^{i}$ of $M^{n_0+r+1}$, and let $\Gamma_{n_0+r+1}^{i, \bullet}$ be the
  $(S^\bullet, \g_k)$-filtration which is the refinement of $\Gamma_{n_0+r}^{i, \bullet}$ that is
  determined by $\gamma^i_{n_0+r+1}$.  The associated graded $(S^\bullet, \g_k)$-module
\begin{displaymath}
  \bigoplus_{n\geq n_0} \bigoplus_{i} \frac {\Gamma_{n_0 +  r}^{i,n } }{\Gamma_{n_0 +
      r}^{i+1, n}}
\end{displaymath}
is semi-simple over $\g_k$ in degrees $n \leq n_0 +r$.  Moreover, the
sequence of filtrations $r \mapsto \Gamma_{n_0 + r}^{i, \bullet}$ of
$M^\bullet$  has the property that for a  fixed $n$   and for all   $r$ such that $n\leq n_0+r$ we
have $\Gamma_{n_0 + r}^{i, n } = \Gamma_{n}^{i, n } $, so we can put
  \begin{displaymath}
    G^n(M^\bullet)  =    \bigoplus_i  \frac {\Gamma_{n}^{i,n } }{\Gamma_{n}^{i+1, n}},
  \end{displaymath}
  where thus on the right hand we have the common module $\oplus_i \frac {\Gamma_{n_0 + r}^{i,n }
  }{\Gamma_{n_0 + r}^{i+1, n}}$ for all $r$ such that $n \leq n_0 +r$.  Put
  \begin{displaymath}
      G^\bullet =   \bigoplus_{n\geq n_0} \bigoplus_{i} \frac {\Gamma_{n}^{i,n } }{\Gamma_{n}^{i+1, n}} = \bigoplus_{n \geq n_0}  G^n(M^\bullet).
  \end{displaymath}
% \begin{displaymath}
%   G^\bullet =   \lim_{r\to \infty}  \bigoplus_{n\geq n_0} \bigoplus_{i} \frac {\Gamma_{n_0 +  r}^{i,n } }{\Gamma_{n_0 +
%       r}^{i+1, n}} = \bigoplus_{n \geq n_0}  G^n(M^\bullet).
% \end{displaymath}
  Then $G^\bullet$ is a $(S^\bullet, \g_k)$-module which is semi-simple over $\g_k$, so that the
  nilpotent radical $\sfr_k$ (the intersection of the kernels of all the simple
  $\g_k$-modules) acts trivially on $G^\bullet$, i.e.  $\sfr_k \cdot G^\bullet =0$.  Let $L_\phi$ be the
  simple module corresponding to the heighest weight $\phi \in Q \times P_{++}$. Then we have by a
  heighest weight argument for short exact sequences, noting also that the multiplicity function
  $[\cdot : L_\phi ]$ is additive in short exact sequences, that
\begin{displaymath}
  [M^n: L_\phi] = [G^n: L_\phi] = \dim_k (G^n)^{\nf_k}_\phi;
\end{displaymath}
we also take notice that since $\sfr_k \cdot G^n=0$ it follows that $(G^n)^{\nf_k}$ is a
module over $\bar \bfr_k $, and we can therefore define the weight space $(G^n)^{\nf_k}_\phi$.  Let
$\bar S^\bullet$ be the image of $S^\bullet$ in $\End_k G^\bullet$. Then if $s\in \sfr_k $, $u\in
\bar S^\bullet$, and $g \in G^\bullet$, we have $(s\cdot u) g = s \cdot (ug) - u (s\cdot g) = s (ug)
= 0$, $ug \in G^n$ and $\sfr_k \cdot G^n =0 $; hence $\sfr_k \cdot \bar S^\bullet=0$, and therefore
$(\bar S^\bullet)^{\nf_k}$ is a $\bar \bfr_k$-algebra. We conclude that the Hilbert series
$H^{eq}_{M^\bullet}(t)$ coincides with the Hilbert series of the $((\bar S^\bullet)^{\nf_k}, \bar
\bfr_k )$-module $(G^\bullet)^{\nf_k}$.

Let $\Rc= \Rc_{\Lc_k}$ and $U(\Lc_k)$ be the representation algebra and enveloping algebra of
$\Lc_k$, respectively. Note that if $M$ is a finite-dimensional $\Lc_k$-module, then for any
$\Lc_k$-linear map $f: U(\Lc_k)\otimes_{U(\nf_k)} k \to M $, there exist maps $g:
U(\Lc_k)\otimes_{U(\nf_k)} k \to \Rc$ and $h: \Rc \to M $ such that $f= h\circ g$, where $g$ is
uniquely determined up to multiplication by a constant; hence
\begin{displaymath}
Hom_{\Lc_k}(U(\Lc_k)\otimes_{U(\nf_k)}k , M)= Hom_{\Lc_k}(\Rc, M).
\end{displaymath}
The graded dual $\g_k$-module $\Rc^*= \oplus_{\omega \in P_{++}} L_\omega^*$ is isomorphic to the
$\g_k$-module $\Rc$, where the latter moreover is an algebra. We have now (as detailed below)
  \begin{align*}
    ( \bar S ^\bullet)^{\nf_k}& =  Hom_{\nf_k}(k, \bar S^\bullet ) =
    Hom_{\Lc_k}(U(\Lc_k)\otimes_{U(\nf_k)}k , \bar S^\bullet ) \\
&=    Hom_{\Lc_k}(\Rc ,    \bar S^\bullet) =    (\Rc^*\otimes_k \bar S^\bullet)^{\Lc_k} =
    (\Rc \otimes_k \bar S^\bullet)^{\Lc_k}.
  \end{align*}
  The second equality is an adjunction isomorphism and the third was explained above.  Since each
  homogeneous component $\bar S^n$ is of finite dimension it contains only finitely many
  non-isomorphic $\Lc_k$-modules, so that only finitely may terms in $\Rc$ contributes to
  $Hom_{\Lc_k}(\Rc, \bar S^n)$. This implies the fourth equality.

 Similarly,
\begin{displaymath}
  (  G^\bullet) ^{\nf_k} = (\Rc \otimes_k G^\bullet)^{\Lc_k}.
\end{displaymath}
One can note that the grading of $\Rc$ induces a second grading of the invariant ring and module,
but we do not use this.  By Hilbert's theorem about invariant rings and modules\footnote{The original
argument is for groups, see \cite{hilbert-invarianttheory} and a modern treatment
\cite[Th. 3.3]{dolgachev-invariants}; the proof for semisimple Lie algebras is similar, see
\cite{kallstrom:two-extensions} which also extends this result to Lie algebroids.} it follows that
$(\bar S^\bullet)^{\nf_k}$ is a noetherian graded $ \bar \bfr_k$-algebra and $(G^\bullet)^{\nf_k} $
is a finitely generated graded $((\bar S^\bullet)^{\nf_k}, \bar \bfr_k)$-module.  Since $\Gamma$ and
$\Phi$ satisfy the conditions in \Theorem{\ref{solv-theo}} it follows also that $((\bar
S^\bullet)_\Gamma^{\nf_k}, \bar \bfr_k)$ is noetherian and $(G^\bullet)^{\nf_k}_\Phi $ is a $((\bar
S^\bullet)^{\nf_k}_\Gamma, \bar \bfr_k)$-module of finite type.  \Lemma{\ref{lemma-hilb}} now
implies that the Hilbert series $ H^{eq, \Phi}_{M^\bullet} (t) = H^{eq, \Phi}_{G^\bullet}(t) $ is of
the form that is described in equation (\ref{eq:1}).  Similarly, \Lemma{~\ref{hilb-torus}} implies
equation ~\eqref{eq:2}.
\end{proof}
\begin{remark}
  The key idea of expressing $\nf_k$-invariants as $\Lc_k$-invariants was used by Roberts
  \cite{roberts:invariants}, then again by Hadziev \cite{hadziev}, to extend Hilbert's finiteness
  theorem.
\end{remark}
\begin{example}
  Let $\Rc^\bullet$ be the representation algebra of a free sub-semigroup $ \Gamma\subset C \subset
  Q \times P_{++}$, so $\Gamma \cong \Nb^l$ for some integer $l$. If $\omega_1, \dots , \omega_l$
  are free generators of $\Gamma$, then
  \begin{eqnarray*}
    H^{eq,\Gamma}_{\Rc^\bullet}( t)& =& \sum_{n\geq 0} (\sum_{\sum_{i=1}^l
      n_i = n} \sum_{i=1}^l n_i \omega_i) t^n = \prod_{i=1}^l \frac
    1{1-\omega_i t},\\
   H^\Gamma_{\Rc^\bullet}(t)& =&  \sum_{n \geq 0} \binom{n+l-1}{l-1}t^n =  \frac 1{(1-t)^{l}}.
  \end{eqnarray*}
\end{example}
\begin{example}\label{sl2-example}  Let $\g_k = \Sl_2(k)$, so $P_{++} = \{1, 2, \dots \}$
  and the representation algebra $\Rc^\bullet_{\g_k}$ can be identified
  with the symmetric algebra $ S^\bullet =S^\bullet(V)$, where $V$ is
  the simple 2-dimensional $\Sl_2(k)$-module.  The Hilbert series of
  the $(S^\bullet, \g_k )$-module $M^\bullet_d = S^\bullet (S^d(V)) $ is
  the ordinary Hilbert series of the graded ring
  \begin{displaymath}
    (M_d^\bullet)^{\nf_k} = (M_d^\bullet  \otimes_k \Rc^\bullet)^{\g_k}
    =S^\bullet(S^d(V)\otimes_k V)^{\g_k},  
\end{displaymath}
where $\nf_k$ is a maximal nilpotent subalgebra of $\g_k$.  Hence
$(M_d^\bullet)^{\nf_k}$ coincides with the covariant algebra $C^\bullet_d
$ of binary forms of degree $d$; see \cite[\S
3.3]{springer:invarianttheory}. The coefficients of the Hilbert series
$H_{C^\bullet_d}(t)$ can be determined using the Cayley-Sylvester
decomposition of $S^n(S^d(V))$,
  \begin{displaymath}
\dim_k C^n_d   = \sum_{e\geq 0}  [S^n(S^d(V)): S^e(V)] = \sum_{e\geq 0} (p(n,d;  \frac {nd-e}{2})- p(n,d; \frac {nd-e}2 -1)),
  \end{displaymath}
  where $p(d,n; m)$ is the number of partitions of size $m$ inside the
  rectangle $d\times n$.  The rational representation of Hilbert
  series of invariant algebras is a classical problem; a formula for
  the rational function $H_{S^\bullet (W)}(t)$ for simple
  $\Sl_2(k)$-modules $W$ is presented in \cite{springer-sl2}, and
  computations for some non-simple ones can be found in
  \cite{bedratyuk:binary}.  In \cite[\S 3.4]{springer:invarianttheory}
  one can find descriptions of the algebra $C_d^\bullet$ for low
  $d$. For example:

  $C_2^\bullet = k[x_1, x_2]$ where $x_1$ and $x_2$ are algebraically
  independent elements of degree $1$ and $2$, respectively. We get
  \begin{displaymath}
    H_{M_2^\bullet} (t) = \sum_{n\geq 0} ([\frac n2] +1)  t^n=
    \sum_{n\geq 0}( \frac n2 -\frac
    12  |\cos (\frac {(n+1)\pi}2)|+1)t^n= \frac1{(1-t) (1-t^2)},
  \end{displaymath}
  where $[\cdot ]$ denotes the integer part.  %$n^2/4$.

  $C_3^\bullet = k[x_1, x_2, x_3, x_4]/(x_1^2+ x_2^3 + x_3^2 x_4)$,
  where $\deg x_1 =3$, $\deg x_2 = 2$, $\deg x_3= 1$, $\deg x_4 = 4$.
  The Hilbert series of the $(S^\bullet (V), \g_k)$-module
  $M_3^\bullet$ can now be computed:
\begin{displaymath}
  H_{M_3^\bullet}(t) =
  \sum_{n\geq 0} (\frac{n^2}8+\frac n2+ \frac 3{16} (-1)^n+\frac 14\cos(\frac{ n\pi}2)+\frac 9{16})t^n =  \frac{t^2-t+1}{(1-t)^2(1-t^4)}. 
\end{displaymath}
We thank L. Bedratyuk for providing us with the Hilbert
quasi-polynomial for $C^\bullet_3$.
\end{example}

  \subsection{ Local systems}\label{loc-syst-sec}  This section contains the main general  theorems about Hilbert series
  of local systems. Let $A$ be an allowed local $k$-algebra and $\g_A$ a Lie algebroid over
  $A$. By a $\g_A$-algebra we mean a pair $(S^\bullet, \g_A)$ where $S^\bullet$
  is a commutative graded $A$-algebra with an action of $\rho: \g_A\to
  T_{S^\bullet/k}$ such that $\g_A \cdot S^i \subset S^i$, and
  $\rho(\delta)(as)= \alpha(\delta)(a)s + a \rho(\delta)(s)$, $a\in A, s\in
  S^\bullet, \delta \in \g_A$.

 \begin{theorem}\label{loc-system-hilbert}
   Let $(S^\bullet, \g_R)$ be a graded $\g_R$-algebra where $R$ is
   simple as $\g_R$-module, and let $M^\bullet$ be a $(S^\bullet,
   \g_R)$-module of finite type over $S^\bullet$, such that $M^n=0$ when $n < n_0$. Assume that each
   $M^n \in \Loc (\g_R)$. Then the Hilbert series
  \begin{displaymath}
    H_{M^\bullet} (t) = \sum_{n \geq n_0} \ell_{\g_R}(M^n) t^n  \in \Zb[[t]]
  \end{displaymath}
is a rational function of the form
\begin{displaymath}
  H_{M^\bullet} (t) = \frac{f_M(t)}{\prod_{i=1}^r(1-t^{n_i})}
\end{displaymath}
where $f_M(t)\in \Zb[t]$.
\end{theorem}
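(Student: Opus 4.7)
The plan is to reduce the claim to \Theorem{\ref{hilb-liealg}} by pushing the entire graded situation down to the fibre Lie algebra $\g_k$ attached to the exact sequence $0\to \g\to \g_R\to \bar\g_R\to 0$, and invoking the local-system hypothesis to identify the two notions of length.

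For the first step I would observe that since $\alpha(\delta)=0$ for $\delta\in \g=\Ker(\g_R\to T_R)$, the subalgebra $\g\subset \g_R$ acts $R$-linearly on both $S^\bullet$ and $M^\bullet$. Consequently $\g$ preserves $\mf_R S^\bullet$ and $\mf_R M^\bullet$, so the quotients $\bar S^\bullet:=k\otimes_R S^\bullet$ and $\bar M^\bullet:=k\otimes_R M^\bullet$ inherit $R$-linear $\g$-actions, which factor through $\g_k=\g/\mf_R\g$. This turns $\bar S^\bullet$ into a graded $\g_k$-algebra and $\bar M^\bullet$ into a graded $(\bar S^\bullet,\g_k)$-module. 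Noetherianness of $S^\bullet$ descends to $\bar S^\bullet$, finite generation of $M^\bullet$ over $S^\bullet$ descends to $\bar M^\bullet$ over $\bar S^\bullet$, and $\bar M^n=0$ for $n<n_0$; thus $(\bar S^\bullet,\bar M^\bullet)$ fits the hypotheses of \Theorem{\ref{hilb-liealg}}.

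The second step is the length comparison. Because each $M^n$ is a local system, \Definition{\ref{local-system}} gives
\[
\ell_{\g_R}(M^n)=\ell_{\g_k}(k\otimes_R M^n)=\ell_{\g_k}(\bar M^n)
\]
for every $n\geq n_0$. Summing term by term therefore yields $H_{M^\bullet}(t)=\sum_{n\geq n_0}\ell_{\g_k}(\bar M^n)\,t^n$, i.e.\ the full $\g_k$-length generating function of the graded module $\bar M^\bullet$. Applying \Theorem{\ref{hilb-liealg}} with $\Gamma=C:=\supp_{\bar\bfr_k}\bar S^\bullet$ and $\Phi=C_{\bar M}$, choices that trivially satisfy the conditions of \Theorem{\ref{solv-theo}}, then delivers the rational expression $f_M(t)/\prod_{i=1}^r(1-t^{n_i})$, with the $n_i$ read off from the degrees of a chosen set of homogeneous generators of $\bar S^\bullet$.

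The main obstacle is purely bookkeeping: verifying that the reduced object $(\bar S^\bullet,\g_k)$ really qualifies as a graded $\g_k$-algebra in the sense required by \Theorem{\ref{hilb-liealg}}, in particular that noetherianness and finite generation survive the base change along $R\to k$ and that the $\g$-action is genuinely $R$-linear so as to descend to $\g_k$. Once this is cleared, the local-system hypothesis does the essential work of transporting $\g_R$-length to $\g_k$-length, and the rationality of $H_{M^\bullet}(t)$ becomes an immediate corollary of the invariant-theoretic machinery (of Roberts--Hadziev type) already packaged in \Theorem{\ref{hilb-liealg}}.
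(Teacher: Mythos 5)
Your proposal is correct and takes essentially the same route as the paper: the published proof is exactly the one-line reduction $\ell_{\g_R}(M^n)=\ell_{\g_k}(k\otimes_R M^n)$ supplied by the local-system hypothesis, followed by an application of \Theorem{\ref{hilb-liealg}} with $\Gamma=C$ and $\Phi=C_M$ to the fibre data. Your additional bookkeeping—checking that the $R$-linear action of $\g=\Ker(\g_R\to T_R)$ descends to a graded $\g_k$-algebra structure on $k\otimes_R S^\bullet$ and a finite-type graded module structure on $k\otimes_R M^\bullet$—simply makes explicit what the paper leaves implicit.
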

We abstain from writing down the straightforward $\Gamma$-equivariant
generalisations that can be deduced from \Theorem{\ref{hilb-liealg}}.

\begin{proof}  Since $M^n\in \Loc(\g_R)$ we have
  \begin{displaymath}
    H_{M^\bullet} (t) = \sum_{n\geq n_0} \ell_{\g_k}(k\otimes_R M^n),
  \end{displaymath}
so  the assertion follows from \Theorem{\ref{hilb-liealg}}.
\end{proof}

Note that if $J_m$ is a maximal defining ideal in an allowed $k$-algebra $A$, and $N$ is a
$\g_A$-module of finite length, then $\ell_{\g_A}(N)= \ell_{\g_R}(G^\bullet_{J_m}(N))$, where
$G^\bullet_{J_m}(N) =\oplus_{i\geq 0} J_m^iN/J_m^{i+1}N $ and $\g_R= \g_A/J_m\g_A$. Recall also that $R$ is a
simple $\g_R$-module \Prop{\ref{reg-def-ideal}}.
\begin{theorem}\label{main}  Let $A$ be an allowed $k$-algebra and $\g_A$ be a Lie algebroid over $A$.
  Let $M$ be a $\g_A$-module of finite type as $A$-module, $J$ be a defining ideal of $A$ (and hence
  of $M$ \Prop{\ref{defining-module}}), and put
  $M^\bullet = \oplus_{n\geq 0} J^nM/J^{n+1}M$. Let $J_m=\sqrt{J}$ be the maximal defining ideal of
  $A$ and put $R= A/J_m$ and $\g_R = \g_A/J_m\g_A$, which is a Lie algebroid over $R$.  Assume that
  $G^i_{J_m}(M^n) \in \Loc (\g_R)$ for each integer $n,i =0,1,2,\dots $.
  \begin{enumerate}
  \item The Hilbert series
    \begin{displaymath}
      H_M(t) = \sum_{n\geq 0} \ell_{\g_A}(M^n)t^n \in \Zb[[t]]
    \end{displaymath}
    is a rational function of the form
    \begin{displaymath}
      H_M(t)=   \frac{f_M(t)}{\prod_{i=1}^r(1-t^{n_i})},
    \end{displaymath}
    where $f_M(t)\in \Zb[t]$.
\item  The  length function
  \begin{displaymath}
    n \mapsto \chi_M^J (n)= \ell_{\g_A}(M/J^{n+1}M)
  \end{displaymath}
  is determined by a quasi-polynomial $\phi_M^J(t)$ with integer
  coefficients for high $n$ (so $\chi_M^J (n) =\phi_M^J(n)$ for high $n$). The degree and leading coefficient of this
  quasi-polynomial are well-defined numbers, so
  \begin{displaymath}
    \phi^J_M(t) = \frac e{d!} t^d +  g(t),
  \end{displaymath}
  where $g(t)$ is a quasi-polynomial of degree at most $d-1$.
\end{enumerate}
\end{theorem}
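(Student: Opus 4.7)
My strategy for part (1) is to reduce to \Theorem{\ref{loc-system-hilbert}} by passing to a double associated graded object, first with respect to $J$ and then (componentwise) with respect to $J_m$, so that each resulting graded piece becomes a local system over $\g_R$. Part (2) will then follow from part (1) by standard properties of rational generating functions whose poles all lie at roots of unity.

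Concretely, I form the noetherian graded $\g_A$-algebra $S^\bullet = \gr_J(A) = \oplus_{n\geq 0} J^n/J^{n+1}$ and the graded $(S^\bullet,\g_A)$-module $M^\bullet = \gr_J M$ of finite type; by construction $J$ acts as zero on both. \Proposition{\ref{reg-def-ideal}} supplies that $R=A/J_m$ is simple over $\g_R$ and that $J_m = \sqrt{J}$, so some power $J_m^p \subset J$ and hence the $J_m$-adic filtration on each component $S^n$ and $M^n$ terminates after at most $p$ steps. I then take the componentwise double associated graded $U^\bullet := \gr_{J_m}(S^\bullet)$, $V^\bullet := \gr_{J_m}(M^\bullet)$, where within each fixed degree I filter by $J_m$-powers. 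Since $J_m$ annihilates these, $U^\bullet$ becomes a noetherian graded $\g_R$-algebra over $R$ (finitely generated by $U^1 = \gr_{J_m}(J/J^2)$ over the finite length $R$-algebra $U^0 = \gr_{J_m}(A/J)$) and $V^\bullet$ is a finitely generated graded $(U^\bullet,\g_R)$-module. Each component $V^n = \oplus_{i \geq 0} G^i_{J_m}(M^n)$ is a direct sum of local systems by hypothesis, hence a local system by iterated application of \Proposition{\ref{short-ex}}. \Theorem{\ref{loc-system-hilbert}} then delivers the required rational form for $\sum_n \ell_{\g_R}(V^n)t^n$; the local-system equality together with additivity along the finite $J_m$-adic filtration on $M^n$ identifies this series with $H_M(t) = \sum_n \ell_{\g_A}(M^n)t^n$, proving (1).

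For part (2), the series $\sum_n \chi_M^J(n)t^n$ equals $H_M(t)/(1-t)$ and so is rational with denominator $(1-t)\prod_i(1-t^{n_i})$, whose poles lie only at roots of unity; a standard partial-fraction argument produces a quasi-polynomial $\phi_M^J(n)$ agreeing with $\chi_M^J(n)$ for $n$ large, whose degree $d$ and leading coefficient $e/d!$ are the well-defined invariants attached to the order $d+1$ of the pole at $t=1$ and its residue. The main technical hurdle in this plan is the bookkeeping of the previous paragraph—verifying that the double associated graded genuinely yields a noetherian graded $\g_R$-algebra together with a finitely generated graded module over it, and in particular that the products $U^n \cdot V^m \subset V^{n+m}$ descend well-definedly to the double associated graded—after which the remaining steps invoke only results already established in the paper.
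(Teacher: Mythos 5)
Your proposal is correct and takes essentially the same route as the paper: part (1) is reduced to Theorem \ref{loc-system-hilbert} by passing to the $J_m$-adic associated graded of each $J^nM/J^{n+1}M$, using $\ell_{\g_A}(N)=\ell_{\g_R}(G^\bullet_{J_m}(N))$ together with the local-system hypothesis (and \Proposition{\ref{short-ex}} for the direct sums), and part (2) follows from (1) by the standard partial-fraction argument, for which the paper cites \cite{dichi-sangare:quasi-poly}. Your double associated graded $U^\bullet=\gr_{J_m}(\gr_J A)$, $V^\bullet=\gr_{J_m}(\gr_J M)$ simply makes explicit the noetherian graded $\g_R$-algebra over which the paper's module $N^\bullet$, $N^n=G^\bullet_{J_m}(J^nM/J^{n+1}M)$, is of finite type, which is a careful rendering of the same argument rather than a different one.
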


\begin{remark}
  \begin{enumerate}
  \item We do not know if it suffices that
    \begin{displaymath}
      \frac{J_m^n (J^iM/J^{i+1}M)}{J_m^{n+1}(J^iM/J^{i+1}M) } ) =\frac {J_m^nJ^iM}{J_m^{n+1} J^iM +
      (J_m^nJ^iM)\cap (J^{i+1}M)}
  \end{displaymath}
are local systems for finitely many $n, i$ to conclude
    that they are local systems for all $n,i$.
  \item The condition that $ J_m^nJ^iM/(J_m^{n+1} J^iM + (J_m^nJ^iM)\cap (J^{i+1}M))$ are
    local systems for all $n$ and $i$ is satisfied when $A$ is an
    analytic algebra \Prop{\ref{complete-prop}}.
  \item If $M$ is not a local system, then the Hilbert series still is rational, but the proof
    requires a different approach, using a Tannakian theorem for $\g_A$-modules in connection with a
    field extension to make our $\g_A$-modules into local systems.  This will be treated in a
    separate work, where \Theorem{\ref{maintheorem}} will be extended to any local ring $A$ and
    $\g_A$-module $M$ of finite type over $A$. Still, the actual computation of the Hilbert
    quasi-polynomial is much more tractable for local systems since it is then reduced to that of a
    module of invariants over a Lie subalgebra of the fibre Lie algebra, while the general case
    involves the hard problem of computing differential Galois groups for partial differential
    equations.
  \end{enumerate}
\end{remark}

 \begin{proof}
We have
\begin{align*}
  H_M(t)& = \sum_{n\geq 0}\ell_{\g_A}(\frac {J^nM}{J^{n+1}M})t^n =  \sum_{n\geq 0}\ell_{\g_R}(
  G^\bullet_{J_m}(\frac {J^nM}{J^{n+1}M}))t^n\\ &=    \sum_{n\geq 0}\ell_{\g_R}(
  G^\bullet_{J_m}(\frac {J^nM}{J^{n+1}M}))t^n.
\end{align*}
so the assertion follows from \Theorem{\ref{loc-system-hilbert}}, for the pair $(S^\bullet= 
\oplus_{n\geq 0} J_m^n/J_m^{n+1}, \g_R)$, applied to the graded module $N^\bullet$, where $N^n =
G^\bullet_{J_m}(\frac {J^nM}{J^{n+1}M})$ (for fixed $n$ the module $G^i_{J_m}(\frac {J^nM}{J^{n+1}M}) $ is
non-zero only for finitely many $i$). It is well-known that (1) implies (2). For a discussion why
the quasi-polynomial has a well-defined degree $d$ and leading coefficient $e$, see
\cite{dichi-sangare:quasi-poly}.
\end{proof}

\begin{definition}\label{def-dimension-mult} Let $(A, \g_A)$, $M$, and $J$ be as in
  \Theorem{\ref{main}}.  The dimension $d_{\g_A}(M, J)$ and multiplicity
  $e_{\g_A}(M,J)$ of $M$ is the integer $d$ and rational number $e$ in
  \Theorem{\ref{main}}, (2).
  \end{definition}

  The number $d_{\g_A}(M, J)$ is independent of the choice of defining
  ideal $J$. To see this, let $J'$ be any other defining ideal. Then
  $J_1=J+ J'$ is a defining ideal and $J \subset J_1$.  Since
  $l_{\g_A}((J_1^{n+1}+J)/J) < l_{\g_A}((J_1^n+J)/J)$ as long as
  $(J_1^n+J)/J \neq 0$, it follows that $J_1^n \subset J \subset J_1$
  when $n\gg 1$.  Therefore there exist positive integers $a$ and $b$
  such that $J^a \subset J'$ and $J'^b \subset J$. From this the
  assertion follows. Thus we can write $d_{\g_A}(M)$, without
  reference to a choice of defining ideal.

  \begin{remark}
    It is straightforward to see that the dimension $d(M)$ and
    multiplicity $e(J,M)$ have the same properties as expounded in
    \cite[\S 13]{matsumura}.
  \end{remark}

  \subsection{Structure of fibre Lie algebras}
Recall that for a $\g_k$-module $M$ of finite dimension we have
\begin{displaymath}
  \ell_{\g_k}(M) = \dim_k M^{\nf_k} ,
\end{displaymath}
where $\nf_k$ is a maximal nilpotent subalgebra of a Levi subalgebra
of $\g_k$, and thus to compute lengths of $\g_k$-modules a first step
is to find the structure of the Levi factors of $\g_k$.

Let $\DC_s(\g_k)$ denote the derived central series of a Lie algebra $\g_k$, so $\DC_0(\g_k)=\g_k$,
$\DC_s(\g_k)= [\DC_{s-1}(\g_k), \DC_{s-1}(\g_k)]$, $s=1, \dots$, and $\g_k$ is solvable if
$\DC_s(\g_k)=0$ when $s \gg 1$. Since $k$ is algebraically closed of characteristic $0$, we have
$\ell_{\g_k}(M)= \dim_k M$ when $\g_k$ is solvable.  The following proposition shows more precisely
the relevance of knowing if the fibre Lie algebra of a Lie algebroid is solvable.  Recall that if
$\mf_A$ is stable under $\g_A$ then the maximal defining ideal is $\mf_A$, and $k=
R$ in the notation of  \Section{\ref{fibreslocal}}.

We recall that if $\g_A$ is a Lie algebroid and $I$ and $\mf_A$ both are defining ideals of the
$\g_A$-module $A$, then $\sqrt{I}= \mf_A$ \Prop{\ref{reg-def-ideal}, (2)}, and $R = A/\mf_A =k$, so
the fibre Lie algebra is $\g_k = \g_A/\mf_A \g_A$.
\begin{proposition}\label{solv-fibre} Let $\g_A$ be a Lie algebroid
  over a local ring $A$ that preserves the maximal ideal $\mf_A$. Let $I$ be a defining ideal of the
  $\g_A$-module $A$ and $M$ be a $\g_A$-module which is of finite type over $A$. If the fibre Lie
  algebra $\g_{k}= \g_A/\mf_A \g_A$ is solvable, then
  \begin{displaymath}
    \sum_{i \geq 0} \ell_{\g_A} (\frac {I^i M}{I^{i+1}M})t^i  =
    \sum_{i\geq 0} \dim_{k} (\frac {I^i M}{I^{i+1}M})t^i = \frac {f(t)}{(1-t)^{d-1}}, 
  \end{displaymath}
  where $f\in \Zb[t]$.
\end{proposition}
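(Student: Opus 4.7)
The plan is to split the two equalities. For the first equality, I would exploit that $\g_k$ is solvable and $k$ is algebraically closed of characteristic $0$, so by Lie's theorem every simple $\g_k$-module is one-dimensional and $\ell_{\g_k}(V)=\dim_k V$ for every finite-dimensional $\g_k$-module $V$. The task is then to transfer this identity of $\g_k$-length and $k$-dimension up to $\g_A$-length on each $N_i:=I^iM/I^{i+1}M$.

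To do this I observe that because $\g_A$ preserves $\mf_A$, the maximal ideal $\mf_A$ is itself a defining ideal and hence, by \Proposition{\ref{reg-def-ideal}}, the unique maximal defining ideal; in particular $\sqrt{I}=\mf_A$ and by noetherianness $\mf_A^N\subset I$ for some $N$. Each $N_i$ has finite $\g_A$-length by \Proposition{\ref{defining-module}} and is annihilated by $\mf_A^N$, so it is finite-dimensional over $k$. I would then filter $N_i$ by the $\mf_A$-adic submodules $\mf_A^jN_i$, which are $\g_A$-stable since $\g_A\cdot\mf_A\subset\mf_A$; the filtration terminates in finitely many steps, and each successive quotient $\mf_A^jN_i/\mf_A^{j+1}N_i$ is killed by $\mf_A$, hence is naturally a finite-dimensional module over $\g_k=\g_A/\mf_A\g_A$. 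Summing the Lie-theorem identity $\ell_{\g_k}(\mf_A^jN_i/\mf_A^{j+1}N_i)=\dim_k(\mf_A^jN_i/\mf_A^{j+1}N_i)$ over $j$ yields $\ell_{\g_A}(N_i)=\dim_k N_i$, which is the first equality.

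The second equality is then the classical Hilbert--Serre theorem: the graded module $G^\bullet:=\bigoplus_{i\geq 0}I^iM/I^{i+1}M$ is finitely generated over the noetherian graded $k$-algebra $\gr_I(A)=\bigoplus_i I^i/I^{i+1}$, whose degree-zero part $A/I$ is finite-dimensional over $k$ (since $I$ is $\mf_A$-primary) and which is generated in degree one by the image of $I$. Rationality with a denominator of the form $(1-t)^{d-1}$ then follows from \cite{matsumura}*{Th.~13.2}, with the exponent identified via the standard comparison between the degree of the Hilbert quasi-polynomial of $G^\bullet$ and the convention of \Definition{\ref{def-dimension-mult}}; once the first equality is proven this dimension $d$ agrees with the $\g_A$-theoretic dimension, because both Hilbert series coincide.

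The substantive content of the argument is the first equality; there the only mild point to check is that the $\mf_A$-adic filtration of $N_i$ is $\g_A$-stable and that each graded piece genuinely acquires a $\g_k$-action, both of which are immediate from the hypothesis that $\g_A$ preserves $\mf_A$. After this reduction Lie's theorem collapses $\g_k$-length to $k$-dimension, and the classical rationality theorem delivers the closed form without further work.
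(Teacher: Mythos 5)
Your argument is correct and follows essentially the same route as the paper: pass to the $\mf_A$-adic associated graded of each $I^iM/I^{i+1}M$ (a finite filtration by $\g_A$-submodules, whose graded pieces are $\g_k$-modules), use Lie's theorem for the solvable $\g_k$ to identify $\g_k$-length with $k$-dimension, and then invoke the classical Hilbert theorem \cite{matsumura}*{Th.~13.2} for the rational form. Your extra remarks that $\sqrt{I}=\mf_A$ (so $I$ is $\mf_A$-primary) and that the pieces are finite-dimensional only make explicit what the paper leaves implicit.
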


\begin{proof} If $N$ is an $A$-module we put $G_{\mf_A}^\bullet (N)= \oplus_{i\geq 0}
  \mf_A^iN/\mf_A^{i+1}N$.  The first equality follows from the equalities
  \begin{displaymath}
\ell_{\g_A}(\frac {I^i M}{I^{i+1}M}) = \ell_{\g_A} (G^\bullet_{\mf_A}(\frac
  {I^i M}{I^{i+1}M})) = \ell_{\g_k} (G^\bullet_{\mf_A}(\frac {I^i M}{I^{i+1}M})) = \dim_k
  (G^\bullet_{\mf_A}(\frac {I^i M}{I^{i+1}M})) .
\end{displaymath}
The last equality follows from Hilbert's theorem \cite[Th. 13.2]{matsumura}.
\end{proof}

Given a maximal defining ideal $J= J_m $ for a Lie algebroid $\g_A$ one gets the vector space
$V_J= J/\mf_AJ$, which is a module over the fibre Lie algebra $\g_k = k\otimes_R \g$, where $R= A/J$
and $\g= \Ker (\g_A/J\g_A \to T_R)$ (see \Section{\ref{fibreslocal}}). It turns out that the
$\g_k$-module $V_J$ often gives the structure of Levi subalgebras of $\g_k$.
\begin{proposition}\label{solv-kernel}
  Let $\g_A$ be a Lie algebroid over a regular allowed $k$-algebra $A$ and let $J$ be a maximal
  defining ideal of $A$.  Put $\bfr_A = \Ker (\alpha: \g_A \to T_A)$, and assume that $\bfr_k=
  \bfr_A/\mf_A \bfr_A$ is a solvable Lie algebra. Then the Levi factors of the fibre Lie algebra
  $\g_k$ of $\g_A/J \g_A$ act faithfully on $V_J= J/\mf_A J$. Hence the Levi sub-algebras of $\g_k$
  are determined by the Levi sub-algebras of the image of $\g_k$ in $\gl (V_J)$.
 \end{proposition}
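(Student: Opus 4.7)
The plan is to show that the kernel $\Kc$ of the induced representation $\phi\colon \g_k\to \gl(V_J)$ is a solvable ideal of $\g_k$. Once this is established, any Levi subalgebra $\Lc\subset \g_k$ is semisimple, so $\Lc\cap \Kc$ is a semisimple ideal of $\Lc$ that is simultaneously solvable (being contained in $\Kc$), forcing $\Lc\cap \Kc=0$; then $\Lc$ embeds into $\g_k/\Kc\subset \gl(V_J)$, which is exactly the claim, and the second assertion about Levi subalgebras is then a formal consequence.

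First I would unpack $\g_k$.  Setting $\tilde\g:=\alpha^{-1}(JT_A)\subset \g_A$, we have $\g_k=\tilde\g/(\mf_A\tilde\g+J\g_A)$, and the action $[\delta]\cdot \bar\jmath=\overline{\alpha(\delta)(\jmath)}$ is well defined because $\alpha(\tilde\g)\subset JT_A$ forces $\alpha(\delta)$ to preserve $\mf_AJ$ and makes $\mf_A\tilde\g+J\g_A$ act trivially on $V_J$.  Since $\alpha(\bfr_A)=0$, the image $\bar\bfr$ of $\bfr_A$ in $\g_k$ lies in $\Kc$; the inclusion $\mf_A\bfr_A\subset \mf_A\tilde\g$ shows $\bar\bfr$ is a homomorphic quotient of $\bfr_k$, hence solvable by hypothesis.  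Using the exact sequence $0\to \bfr_A\to \g_A\xrightarrow{\alpha}\bar\g_A\to 0$, the anchor then induces an isomorphism
\[
\g_k/\bar\bfr\cong \tilde{\bar\g}_k:=(\bar\g_A\cap JT_A)/(\mf_A(\bar\g_A\cap JT_A)+J\bar\g_A),
\]
under which $\phi$ corresponds to the natural representation of $\tilde{\bar\g}_k$ on $V_J$.

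By \Proposition{\ref{reg-def-ideal}} I may choose a regular system of parameters $x_1,\dots,x_n$ of $A$ with $J=(x_1,\dots,x_r)$, so $T_A=\bigoplus_{i=1}^n A\partial_{x_i}$.  A direct coordinate calculation, exploiting $\partial_{x_j}(J)=0$ for $j>r$, identifies the kernel of $JT_A\to \gl(V_J)$ with
\[
\Nc=\mf_AJ\partial_{x_1}\oplus\cdots\oplus \mf_AJ\partial_{x_r}\oplus J\partial_{x_{r+1}}\oplus\cdots\oplus J\partial_{x_n},
\]
and a bracket computation using Leibniz together with $\partial_{x_i}(\mf_AJ)\subset \mf_A$ and $\partial_{x_j}(\mf_AJ)\subset J$ for $j>r$ shows that $\Nc$ is a Lie subalgebra of $T_A$ with $[\Nc,\Nc]\subset \mf_AJT_A$; in particular $\Nc/\mf_AJT_A$ is abelian.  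Since the decomposition $\Nc=\mf_AJT_A+JT_A(J)$ is immediate and $\bar\g_A\subset T_A(J)$, the kernel $\Kc/\bar\bfr$ of $\tilde{\bar\g}_k\to \gl(V_J)$ is the image of $\bar\g_A\cap \Nc$ in $\tilde{\bar\g}_k$.

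Filter $\Kc/\bar\bfr$ by $\Kc_1:=$ image of $\bar\g_A\cap \mf_AJT_A$.  The top quotient $(\Kc/\bar\bfr)/\Kc_1$ embeds into the abelian algebra $\Nc/\mf_AJT_A$, so is abelian.  For the bottom piece, the submodules $L_s:=\bar\g_A\cap \mf_A^sJT_A$ satisfy $[L_s,L_{s'}]\subset L_{s+s'}$ by Leibniz combined with $\partial(\mf_A^s)\subset \mf_A^{s-1}$; Artin--Rees for the finitely generated $A$-module $\bar\g_A\subset T_A$ gives a constant $c$ with $L_s\subset \mf_A^{s-c}\bar\g_A$ for $s\geq c$, and the inclusion $\mf_A\bar\g_A\cap JT_A\subset \mf_A\tilde{\bar\g}+J\bar\g_A$ (which uses that $J$ is prime, as it is generated by part of a regular system of parameters) shows that $L_s$ maps to zero in $\tilde{\bar\g}_k$ once $s\geq c+1$.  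Hence the induced descending central series of $\Kc_1$ terminates, making $\Kc_1$ nilpotent, so $\Kc/\bar\bfr$ is solvable, and $\Kc$ is a solvable extension of $\Kc/\bar\bfr$ by $\bar\bfr$.  The main technical obstacle is the commutative-algebra bookkeeping that identifies $\Nc$, verifies $[\Nc,\Nc]\subset \mf_AJT_A$ in coordinates, and combines Artin--Rees with the primality of $J$ to kill the deep terms of the filtration; once this is in place, the Lie-theoretic conclusion is formal.
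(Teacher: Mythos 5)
Your overall strategy is the paper's: everything reduces to showing that $\Kc=\Ker(\g_k\to\gl(V_J))$ is a solvable ideal, and your identification of $\g_k$, the isomorphism $\g_k/\bar\bfr\cong(\bar\g_A\cap JT_A)/(\mf_A(\bar\g_A\cap JT_A)+J\bar\g_A)$, the computation $\Nc=\Ker(JT_A\to\gl(V_J))$, the bracket estimates $[\Nc,\Nc]\subset\mf_AJT_A$ and $[L_s,L_{s'}]\subset L_{s+s'}$, and the closing Levi argument all check out. The one step that is not justified as written is the inclusion $\mf_A\bar\g_A\cap JT_A\subset\mf_A(\bar\g_A\cap JT_A)+J\bar\g_A$, which you attribute to the primality of $J$. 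Primality buys you only that the quotient $\bar\g_A/(\bar\g_A\cap JT_A)\cong\bar\g_R\subset T_R$ is a torsion-free $R$-module, and torsion-freeness is not enough for this kind of statement: for a general pair of finitely generated modules $\bar K\subset\bar N$ over a regular local ring of dimension $\geq 2$ with torsion-free quotient one can have $\mf_R\bar N\cap\bar K\not\subset\mf_R\bar K$ (e.g. $R=k[[y,z]]$, $\bar N=R^2$, $\bar K=R\cdot(z,-y)$, whose quotient is the ideal $(y,z)$; the element $(z,-y)$ lies in $\mf_R\bar N\cap\bar K$ but not in $\mf_R\bar K$). What actually rescues your inclusion is that $\bar\g_R$ is \emph{free} over $R$: since $J$ is the maximal defining ideal, $R$ is simple over $\g_R$ (\Proposition{\ref{reg-def-ideal}}, \Proposition{\ref{regularprop}}), so $\bar\g_R$ is free by \Proposition{\ref{freelemma}} (this is exactly \Lemma{\ref{splitlemma-1}}); hence $0\to(\bar\g_A\cap JT_A)/J\bar\g_A\to\bar\g_A/J\bar\g_A\to\bar\g_R\to 0$ splits over $R$, and applying $k\otimes_R-$ gives $\mf_A\bar\g_A\cap JT_A\subset\mf_A(\bar\g_A\cap JT_A)+J\bar\g_A$. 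Even simpler, you can avoid the inclusion altogether: apply Artin--Rees a second time, to the submodule $\bar\g_A\cap JT_A\subset\bar\g_A$, obtaining $c'$ with $\mf_A^{m}\bar\g_A\cap(\bar\g_A\cap JT_A)\subset\mf_A^{m-c'}(\bar\g_A\cap JT_A)$; combined with your first Artin--Rees estimate $L_s\subset\mf_A^{s+1-c}\bar\g_A$ this yields $L_s\subset\mf_A(\bar\g_A\cap JT_A)$ for $s\gg 0$, which is all your argument uses. With either repair the proof is complete.

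For comparison, the paper proves the same solvability statement without coordinates and without splitting off $\bar\bfr$ first: it pulls $\af_k=\Kc$ back to a Lie subalgebroid $\af_A\subset\g_A$ with $\bfr_A\subset\af_A$, $\alpha(\af_A)\subset JT_A$ and $\alpha(\af_A)(J)\subset\mf_AJ$, notes that the derived series of $\alpha(\af_A)$ descends strictly in the $\mf_A$-adic filtration of $T_A$ (the same bracket estimate underlying your $\Nc$), invokes Krull's intersection theorem to get $\cap_s\DC_s(\af_A)\subset\bfr_A$, and then uses the solvability of $\bfr_k$ together with $\dim_k\af_k<\infty$ to conclude $\DC_{r_1}(\af_k)=0$. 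Your route is more explicit (a concrete kernel $\Nc$, and a two-step filtration with abelian top and nilpotent bottom), at the cost of the coordinate bookkeeping and the extra module-theoretic input discussed above; the paper's route is shorter but less transparent about where the kernel sits inside $JT_A$.
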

 \begin{proof}
   It suffices to see that $\af_k := \Ker(\g_k \to \gl(V_J))$ is
   solvable.  Let $\af\subset \g \subset \g_A/J\g_A$ be the inverse image of $\af_k$
   for the projection $\g \to \g_k= \g/\mf_A \g$, and $\af_A$ the
   inverse image of $\af$ for the projection $\g_A \to
   \g_A/J\g_A$. Then $\af_A$ is a Lie sub-algebroid of $\g_A$ such that
   \begin{displaymath}
     \bfr_A \subset  \af_A  , \quad  \alpha(\af_A) \subset JT_A, \quad \alpha (\af_A ) (J) \subset \mf_A
     J.
   \end{displaymath}  
   Therefore,
\begin{displaymath}
  \alpha (\DC_1(\af_A))=  [\alpha(\af_A), \alpha(\af_A)] \subset \mf_AJ T_A \subset \mf_A^2T_A.
\end{displaymath}
Since $[\mf^k_A T_A, \mf^k_A T_A]\subset \mf_A^{2k-1}T_A$, and
$\cap_{i \geq 1} \mf_A^i T_A =0$ by Krull's intersection theorem, we
have
\begin{displaymath}
  \cap_{s\geq 1} \DC_s(\af_A) \subset  \bfr_A. 
\end{displaymath}
Since $\bfr_k$ is solvable there exists a positive integer $r$ such that
$\DC_r(\bfr_A)\subset \mf_A \bfr_A$.  Therefore 
\begin{displaymath}
  \cap_{s\geq 1} \DC_s(\af_A) \subset  \mf_A \bfr_A \subset \mf_A \af_A.
\end{displaymath}
Since $\dim \af_k < \infty$ it follows that there exists a positive
integer $r_1$ such that $\DC_{r_1}(\af_A) \subset \mf_A \af_A$, so
$\DC_{r_1}(\af_k)=0$.
 \end{proof}
 By \Proposition{\ref{solv-kernel}}, if $\bfr_k$ is solvable we can recognize the structure of the
 Levi factors of $\g_k$ from its image in $\gl_k(V_J)$. A number of results dealing with the problem
 of identifying Lie sub-algebras of $\gl_k(V)$, where $V$ is any finite-dimensional $k$-space, can
 be found in \cite[Ch. 1]{katz:exponential}.  The following consequence of one of these results is
 essential for determining the structure of fibre Lie algebras in the next section. 
 \begin{theorem}\label{recognition}
   Let $\g\subset \gl_k(V) $ be a Lie subalgebra with radical $\rf \subset \g$, and $\hf$ be a
   Cartan subalgebra of $\Sl_k(V)\subset \gl_k(V)$, and assume that $\hf \subset \g $.  Let
   $V_{r+1}=0 \subset V_r \subset V_{r-1}\subset \cdots \subset V_1 \subset V_0 = V$ be a
   composition series of $V$ as $\g$-module, and put $n_i= \dim_k V_i/V_{i+1}$, $i=0,1, \dots , r $
   be the dimensions of the simple subquotients. Let $F$ be the subset of indices $f$ such that $n_f
   \geq 2$. Then
   \begin{displaymath}
     \g \cong \rf\rtimes \bigoplus_{f\in F} \Sl_{n_f}.
   \end{displaymath}
\end{theorem}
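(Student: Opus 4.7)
\emph{The plan} is to exploit the weight decomposition of $V$ with respect to $\hf\subset\Sl_k(V)$, realize $\g$ inside the parabolic stabilising the filtration, and identify the Levi factor block by block. Since $\hf$ is a Cartan of $\Sl_k(V)$, we can pick a basis $v_1,\dots,v_n$ of $V$ (where $n=\dim_k V$) on which $\hf$ acts diagonally with pairwise distinct weights $\chi_1,\dots,\chi_n$. Each $V_i$ is $\g$-stable and hence $\hf$-stable, so it is a weight subspace, and the composition series determines a partition $\{1,\dots,n\}=\bigsqcup_{i=0}^{r}B_i$ with $|B_i|=n_i$, where $B_i$ indexes a basis of $V_i/V_{i+1}$. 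Let $\pfr\subset\gl_k(V)$ be the parabolic subalgebra stabilising the filtration, with Levi decomposition $\pfr=\mf\oplus\nf$ where $\mf=\bigoplus_i\gl(V_i/V_{i+1})\cong\bigoplus_i\gl_{n_i}$ is block diagonal and $\nf$ is its nilradical. Then $\g\subset\pfr$, and we set $\bar\g\subset\mf$ to be the image of $\g$ modulo $\g\cap\nf$.

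\emph{The heart of the argument} is to show that $\bar\g\supseteq\bigoplus_{f\in F}\Sl_{n_f}$. Since $\hf\subset\g$ injects into $\bar\g$, the subalgebra $\bar\g$ is $\ad\hf$-stable, and since the nonzero $\ad\hf$-weights on $\mf$ are the pairwise distinct differences $\chi_a-\chi_b$ with $a\ne b$ in a common block, one obtains a weight decomposition $\bar\g=\bar\g_0\oplus\bigoplus_{(a,b)\in I}kE_{ab}$ for some set $I$ of ordered pairs. Fix $f\in F$: the induced action of $\bar\g$ on $V_f/V_{f+1}$ is simple, because this quotient is simple over $\g$ and $\nf$ annihilates it. Simplicity forces the directed graph on $B_f$ with edges $\{(b,a) : (a,b)\in I,\ a,b\in B_f\}$ to be strongly connected, since the $\bar\g$-orbit of $v_b$ is generated by the outgoing edges from $b$. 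The identity $[E_{ab},E_{bc}]=E_{ac}$ for pairwise distinct $a,b,c$ forces the same graph to be closed under concatenation of directed paths. A strongly connected, path-closed relation on a finite set is everything off the diagonal, so $\bar\g$ contains all $E_{ab}$ with $a\ne b$ in $B_f$, and together with the Cartan $\hf_f\subset\bar\g_0$ these span $\Sl_{n_f}$.

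\emph{To conclude}, decompose $\mf=\bigoplus_f\Sl_{n_f}\oplus Z(\mf)$ with $Z(\mf)=\bigoplus_f k\cdot I_{n_f}$ central in $\mf$. The previous step combined with this decomposition gives $\bar\g=\bigoplus_{f\in F}\Sl_{n_f}\oplus W$ as a direct sum of Lie algebras, with $W=\bar\g\cap Z(\mf)$ an abelian ideal, so $\bar\g/W\cong\bigoplus_{f\in F}\Sl_{n_f}$ is semisimple. The composition $\g\twoheadrightarrow\bar\g\twoheadrightarrow\bar\g/W$ is surjective and its kernel contains $\rf$ (the image being semisimple), giving a surjection $\g/\rf\twoheadrightarrow\bar\g/W$. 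Choosing a Levi subalgebra $\Lc\subset\g$ by Levi's theorem, the composition $\Lc\hookrightarrow\g\twoheadrightarrow\bar\g/W$ has kernel $\Lc\cap(W+\nf)$, which is a solvable ideal of $\Lc$ because $W+\nf$ is a solvable ideal of $\pfr$; semisimplicity of $\Lc$ forces this kernel to vanish. Hence $\Lc\hookrightarrow\bar\g/W$ injectively, and combined with the surjection above this yields $\Lc\cong\bar\g/W=\bigoplus_{f\in F}\Sl_{n_f}$. Therefore $\g=\rf\rtimes\Lc\cong\rf\rtimes\bigoplus_{f\in F}\Sl_{n_f}$, as claimed. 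The main obstacle is the middle paragraph: Cartan containment, simplicity of each subquotient, and bracket closure must be combined to produce all of $\Sl_{n_f}$ inside $\bar\g$; once that identification is secured, the rest is standard Levi theory.
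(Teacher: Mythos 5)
Your proof is correct, but it takes a genuinely different route from the paper's. The paper passes immediately to the semisimple quotient: each composition factor $W_f=V_f/V_{f+1}$ is $\g$-simple, so the nilpotent radical acts trivially and $\rf$ acts by scalars (Schur), giving an embedding $\g/\rf \to \bigoplus_f \Sl_k(W_f)$; then, since the image of $\hf$ in each factor contains a Cartan of $\Sl_k(W_f)$ and in particular the element $\Diag(n_f-1,-1,\dots,-1)$, the paper invokes Kostant's recognition theorem (as cited from Katz) to conclude that each projection $\g_f$ equals $\Sl_k(W_f)$. You instead work inside the parabolic $\pfr$ stabilising the flag and prove the recognition step by hand: the full Cartan $\hf$ forces an $\ad\hf$-root-space decomposition of the image $\bar\g$ in the Levi (using that the roots $\chi_a-\chi_b$ are pairwise distinct), irreducibility of each $W_f$ gives strong connectivity of the resulting edge set, and $[E_{ab},E_{bc}]=E_{ac}$ gives path-closure, whence all off-diagonal $E_{ab}$ in each block lie in $\bar\g$. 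Your approach buys a self-contained, elementary argument with no appeal to Kostant's theorem (at the price of using the full Cartan, whereas Kostant's criterion needs only the single element above), and your closing Levi-theory bookkeeping ($\bar\g=\bigoplus_f\Sl_{n_f}\oplus W$, kernel of $\Lc\to\bar\g/W$ solvable hence zero) makes the semidirect-product conclusion more explicit than the paper, which stops once $\g_f=\Sl_k(W_f)$ is established. Two cosmetic points to tighten: the containment of the block Cartan in $\bar\g_0$ is obtained from the brackets $[E_{ab},E_{ba}]=E_{aa}-E_{bb}$ rather than from the projection of $\hf$ itself (whose block components need not individually lie in $\bar\g$); and the shortcut from strong connectivity to a complete off-diagonal edge set should pass through simple paths, since the bracket identity only concatenates edges with distinct endpoints. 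Both are trivial to repair and do not affect the argument.
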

\begin{proof} Put $W_i= V_{i}/V_{i+1}$, which is a simple $\g$-module, hence the nilpotent radical
  $\sfr$ acts trivially on $W_i$, and by \cite{bourbaki-lie-Ch1}*{\S 5, Th 1}, $[\g, \rf] = [\g,
  \g]\cap \rf =[\g, \rf]= \sfr$, hence the canonical homomorphism $\g\to \gl_k(W_i)$ maps $\rf$ to $
  k\id \subset \gl_k(W_i)$. Therefore there exists an injective homomorphism
  \begin{displaymath}
\g/\rf \to \bigoplus_{f\in F} \Sl_{k}(W_f).
\end{displaymath}
Let $\g_f$ be the image of $\g/\rf$ in $ \Sl_{k}(W_f)$.  Since $\hf\subset \g$, the image of $\hf$
in $\g_f$ contains a Cartan subalgebra of $\Sl_k(W_f)$. In particular, in a corresponding  weight basis of $W_f$,
the diagonal matrix $\Diag(n_f-1, -1, \dots , -1)$ is contained in $ \g_f $, and as $\g/\rf$ is
semi-simple, hence $\g_f$ is semi-simple, it follows by Kostant's theorem (see \cite[Theorem
1.1]{katz:exponential}) that $\g_f = \Sl_k(W_f)$.
\end{proof}
\section{Toral Lie algebroids}\label{toral-section} 
In (\ref{local-toric}) we first consider general ideals $I$ in an allowed regular local $k$-algebra
$A$ and Lie subalgebroids $\g_A$ of the tangential derivations $T_A(I)$, showing how some simple
assumption on the existence of certain good elements in $\g_A$ and a good basis of $I$ gives us
local systems \Prop{\ref{loc-ideal}}. Then we make the assumption that $\g_A$ contains the
derivations $x_i \partial_{x_i}$ in some regular system of parameters of $A$, again ensuring that we
get local systems and therefore rational Hilbert series and polynomial generalized Hilbert-Samuel
functions \Th{\ref{mon-the}}.  In (\ref{sec-monomial}) we switch from local to graded rings, working
instead with monomial ideals in a polynomial ring, and describe the structure of the fibre Lie
algebras in detail together with the Hilbert series of a monomial ring.  Finally in (\ref{stanley})
we compute the Hilbert series of the Stanley-Reisner ring of a simplicial complex $\Delta$ by
relating it to a certain stratification of the vertex set of $\Delta$, and show how it recovers the
face vector of a certain smaller simplicial complex $\tilde \Delta$ that can be associated with
$\Delta$.

\subsection{Local toric ideals}\label{local-toric}
Let $A$ be regular and allowed, and $I= (f_1, \dots ,f_s) \subset A$ be an ideal.  Let $\g_A\subset
T_A(I)$ and $J_m$ be a maximal defining ideal of the $\g_A$-module $A$. Then $R= A/J_m$ is a simple
module over the Lie algebroid $\g_R = \g_A/J_m\g_A$ and by \Proposition{\ref{defining-module}} it
follows that $\ell_{\g_A} (I/J_m I) < \infty $.  We ask the question when each homogeneous component
of the $\g_R$-module
\begin{displaymath}
  G^\bullet_{J_m}(I) = \oplus_{n\geq 0} \frac{J_m^nI}{J_m^{n+1}I}
\end{displaymath}
belongs to
$\Loc(\g_R)$.  If moreover $\alpha(\g_R)= T_R$ and $R$ is as in \Proposition{\ref{complete-prop}}
then $G^\bullet_{J_m}(I)$ is always a local system, but in general it is a subtle question. The
following proposition, based on \Proposition{\ref{local-cond}}, shows that if one can select the
generators in such a way that the ``differential part'' of $\g_A$ does not spread $f_i$ out
too much, then we do get local systems.

Let $\g^A= \{\delta \in \g_A \ | \ \alpha(\delta) \in J_mT_A\}$. It is the inverse image in
$\g_A$ of $\g= \Ker (\g_R \to T_R)$. Let also $\Dc(\g)^A$ be the  subalgebra of $\Dc(\g_A)$ that is
generated by $\g^A$. 

\begin{prop}\label{loc-ideal} Let $\g_A$ be a Lie algebroid and $I= (f_1, \dots , f_s)$ an  ideal in
  a regular allowed ring $A$ such that $\g_A\cdot I \subset I$. Let $\{x_1, \dots , x_n\}$ be a
  regular system of parameters for $A$ such that $J_m=(x_1, \dots , x_r)$ (see
  \Proposition{\ref{reg-def-ideal}}, (1)), and put $R= A/J_m$ and $\g_R= \g_A/J_m\g_A$.  Assume that there exist $\delta_1,
  \dots , \delta_l\in \g_A$ that map to generators of the Lie algebroid $\bar \g_R = \Imo(\g_R \to
  T_R) \subset T_R$ such that
  \begin{displaymath}\tag{L-I}
\delta_j \cdot
  f_i\in   \Dc(\g)^A   f_i \mod (x_{r+1}, \dots , x_n)+ I \cdot (x_1,
  \dots , x_r), \quad j=1, \dots , l.
\end{displaymath}
Then $G^p_{J_m}(I)\in \Loc (\g_R)$, $p= 0,1,2, \dots $. 
\end{prop}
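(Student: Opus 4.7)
The plan is to apply \Proposition{\ref{local-cond}} to each $\g_R$-module $M_p := G^p_{J_m}(I) = J_m^p I/J_m^{p+1}I$. For the finite-dimensional $k$-subspace I would take $V_p\subset M_p$ to be the $k$-span of $\{\,\overline{x^\alpha f_i}:|\alpha|=p,\ 1\le i\le s\,\}$, where $\alpha$ ranges over multi-indices in $x_1,\dots,x_r$. Since $J_m^p$ is generated over $A$ by the monomials $x^\alpha$ of degree $p$ and $I$ by $f_1,\dots,f_s$, the products $x^\alpha f_i$ generate $J_m^p I$ over $A$; because $J_m\cdot M_p=0$, their classes generate $M_p$ as an $R$-module. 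The $\delta_1,\dots,\delta_l$ of the hypothesis already lift generators of $\bar\g_R\subset T_R$.

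It remains to verify condition (L) of \Proposition{\ref{local-cond}}: that $\delta_j v\in\Dc(\g)\,v+\mf_R M_p$ for every $v=\overline{x^\alpha f_i}\in V_p$ and every $j$. By Leibniz,
\begin{equation*}
  \delta_j(x^\alpha f_i)\;=\;\delta_j(x^\alpha)\,f_i+x^\alpha\,\delta_j(f_i).
\end{equation*}
Reading (L-I) as $\delta_j(f_i)=P_j^if_i+g_j^i$ with $P_j^i\in\Dc(\g)^A$ and $g_j^i\in(x_{r+1},\dots,x_n)I+IJ_m=\mf_A I$, one finds $x^\alpha g_j^i\in\mf_A J_m^p I$, whose class lies in $\mf_R M_p$. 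Because $J_m\g_A$ acts trivially on $M_p$, the action of $\Dc(\g)^A$ on $M_p$ factors through $\Dc(\g)$, and in particular $P_j^i v\in\Dc(\g)v$. Expanding $P_j^i(x^\alpha f_i)$ by iterated Leibniz presents $x^\alpha P_j^i f_i$ as $P_j^i(x^\alpha f_i)$ minus correction terms, each containing at least one derivation of $\g^A$ applied to $x^\alpha$.

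The technical step remaining is to show that these correction terms together with $\overline{\delta_j(x^\alpha) f_i}$ lie in $\Dc(\g)v+\mf_R M_p$. The key input is that for every $\tau\in\g^A$ one has $\alpha(\tau)\in J_m T_A$, hence $\tau(x_k)\in J_m$ for all $k$, so that
\begin{equation*}
  \tau(x^\alpha)\;=\;\sum_{k=1}^{r}\alpha_k\,x^{\alpha-e_k}\,\tau(x_k)\;\in\;J_m^p,
\end{equation*}
and its class in $J_m^p/\mf_A J_m^p$ is a $k$-linear combination of the monomials $x^\beta$ with $|\beta|=p$; iterated derivations in $\g^A$ preserve both $J_m^p$ and $\mf_A J_m^p$, so the Leibniz corrections admit an analogous description in the fibre $k\otimes_R M_p$. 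The main obstacle I foresee is the bookkeeping identifying the image of these fibrewise rearrangements inside the $\Dc(\g)$-submodule generated by $v$, rather than merely inside $V_p$; once this comparison is made one packages $\delta_j v$ as $Qv+m'$ with $Q\in\Dc(\g)$ and $m'\in\mf_R M_p$, verifying (L). An application of \Proposition{\ref{local-cond}} then gives $G^p_{J_m}(I)\in\Loc(\g_R)$ for every $p\ge 0$.
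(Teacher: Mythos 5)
Your plan is the same as the paper's: apply \Proposition{\ref{local-cond}} to $M_p=G^p_{J_m}(I)$ with the spanning set $\{\overline{x^\alpha f_i}\}_{|\alpha|=p}$, lift generators of $\bar \g_R$ to $\delta_1,\dots,\delta_l$, and use \thetag{L-I} to handle $x^\alpha\delta_j(f_i)$. But the step you set aside as ``bookkeeping'' is the crux, and for the generators you chose it cannot be closed. Writing $\alpha(\delta_j)=\eta_j+\sum_{i>r}a_i\partial_{x_i}$ with $\eta_j\in J_mT_A$, one only gets $\delta_j(x^\alpha)=\eta_j(x^\alpha)\in J_m^p$; in general this does \emph{not} lie in $J_m^{p+1}+\mf_A J_m^p$, so in the fibre $k\otimes_R M_p$ the term $\delta_j(x^\alpha)f_i$ is a nonzero $k$-combination of the \emph{other} generators $\overline{x^\beta f_i}$, $|\beta|=p$ (the commutation corrections between $x^\alpha$ and $P^i_j\in\Dc(\g)^A$ have exactly the same shape). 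Nothing forces these classes into $\Dc(\g)\,\overline{x^\alpha f_i}+\mf_R M_p$: the vertical component $\eta_j$ need not equal $\alpha(\tau)$ for any $\tau\in\g^A$, so it cannot be absorbed into $\Dc(\g)$.

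This is a genuine failure, not a presentational one. Take $A=k[[x_1,x_2,y]]$, $\g_A=A\delta$ with $\delta=\partial_y+x_1\partial_{x_2}$, $I=A$ (so \thetag{L-I} is trivially true), $J_m=(x_1,x_2)$, $p=1$. Here $\g=0$, hence $\Dc(\g)=k$, while $\bar\delta\cdot\bar x_2=\bar x_1\notin k\,\bar x_2+\mf_R M_1$ in $M_1=J_m/J_m^2$; so condition (L) of \Proposition{\ref{local-cond}} fails for your $V_1$ even though all hypotheses of the proposition hold. The conclusion $M_1\in\Loc(\g_R)$ is still true (here by \Proposition{\ref{complete-prop}}, or directly), but only after replacing the monomial generators by ones flattened along $\delta$, e.g. $\bar x_1$ and $\bar x_2-y\bar x_1$, in the spirit of the correction operator $\Lambda=\prod(1-x_j\delta_j)$ in the proof of \Theorem{\ref{localisation}}. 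So what is missing from your proposal is precisely such a change of the generating subspace (or some other mechanism controlling the $J_mT_A$-components of the $\delta_j$), not bookkeeping over the fixed $V_p$. For comparison, the paper's own proof compresses this point into the single assertion $\delta_j\cdot x^\alpha f_i\equiv x^\alpha\delta_j(f_i)\bmod J_mI$ before passing to $G^p_{J_m}(I)$; you have correctly located the delicate step, but as written your argument does not cross it. (Two smaller points: (L) must be checked for every $v\in V_p$, not only the spanning monomials; and your reading of the modulus in \thetag{L-I} as $(x_{r+1},\dots,x_n)I+J_mI$ is stronger than the literal statement, though it is indeed the reading needed to put $x^\alpha g^i_j$ into $\mf_R M_p$.)
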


Thus if $\g_A$ and $I$ satisfy \thetag{L-I} it follows from \Theorem{\ref{main}} that the Hilbert
series $H_{I}(t)$ of the $\g_A$-module $I$ is rational.  In particular, if $ I= (f)$ is a principal
ideal and $\g_A =T_A((f))$, then \thetag{L-I} is trivially satisfied, so the Hilbert series
$H_{(f)}(t)$ is always rational. An interesting situation is when we have ideals $I_1$ and $I$ of
$A$ such that $\g_A = T_A(I_1)\subset T_A(I)$, where e.g. $I=A$ (so that \thetag{L-I} is trivially
true) or that $I$ is some ideal that is naturally associated with $I_1$, e.g.  $I$ is the integral
closure or the Jacobian ideal of $I_1$. One may ask when the condition \thetag{L-I} is satisfied in
these two cases.

\begin{proof}
  The $R$-module $G^p_{J_m}(I)= J_m^pI/J_m^{p+1}I$ is generated by the vector space $V\subset
  G^p_{J_m}(I) $ that is generated by the elements $m_{\alpha,i} = X^\alpha f_i \ \mod J_m^{p+1}I$,
  $i=1, \dots , s$, $|\alpha| = p$, where $X^\alpha = x_1^{\alpha_1}x_2^{\alpha_2}\cdots
  x_r^{\alpha_r}$.  Since
  \begin{displaymath}
    \alpha(\delta_i) \in T_A(J_m)  = J_mT_A   + \sum_{i=r+1}^n A\partial_{x_i}
  \end{displaymath}
it follows that $\delta_j \cdot X^\alpha f_i \equiv X^\alpha \delta_j (f_i) \mod J_m I $. Hence by   \thetag{L-I},
\begin{displaymath}
 \delta_j \cdot  X^\alpha f_i \in  X^\alpha \Dc(\g)^A f_i  
\mod (J_m I + (x_{r+1}, \dots , x_n))
\end{displaymath}
Since $\mf_R = (x_{r+1}, \dots , x_n)\mod J_M$ this implies that $\bar \delta_j\cdot m_{\alpha, i}
\subset \Dc(\g) m_{\alpha, i} + \mf_R G^p_{J_m}(I)$, where $\bar \delta_i = \delta_ i \mod J_m
\g_A\in \g_R$. Hence by \Proposition{\ref{local-cond}} it follows that $G^p_{J_m}(I)\in \Loc(\g_R)$.
\end{proof}

Let $\qb_A$ be a Lie sub-algebroid of $T_A(\mf_A)$ and $\qb_k= \qb_A/\mf_A \qb_A$ its fibre Lie
algebra. Say that $\qb_A$ is a {\it weakly toral} Lie algebroid if the $\qb_k$-module $
\mf_A/\mf_A^2$ satisfies:
\begin{enumerate}\renewcommand{\labelenumi}{(\roman{enumi})}
\item $\ell_{\qb_k}(\mf_A/\mf_A^2) = \dim _k \mf_A/\mf_A^2$ $\Leftrightarrow \qb_k$ is solvable.
\item $\mf_A/\mf^2_A$ is multiplicity free, i.e. every irreducible $\qb_k$ -submodule (defined by a
  character of $\qb_k$) occurs with at most multiplicity $1$.
\item  $(\mf_A/\mf_A^2)^{\qb_k}=0$.
\end{enumerate}
It follows that if $\qb_A$ is weakly toral, then there exists a
regular system of coordinates $\{x_1, \dots , x_n\}$ such that
$\mf_A/\mf_A^2 = \oplus_{i=1}^n k \bar x_i$, where $ x_i \equiv x_i
\mod \mf_A^2$, and $k \bar x_i \ncong k \bar x_j$ as $\qb_k$-modules
when $i\neq j$. Say that $\qb_A$ is a {\it toral } Lie algebroid if
the derivations $\nabla_{x_i}= x_i \partial_{x_i}$, $i=1, \dots , n$,
are contained in $\qb_A$ for some choice of regular system of
parameters; clearly, toral Lie algebroids are weakly toral. The
advantage of the notion of weakly toral as opposed to toral Lie
algebroids, besides being more general, is of course that the former
does not refer to a choice of parameters.

Say that an ideal $I\subset A$ is {\it monomial} if there exists a
toral algebra $\qb_A$ such that $\qb_A \subset T_A(I)$. We have the
following characterisation:

\begin{proposition}\label{mon-ideal} The following are  equivalent  for an ideal $I\subset
  A$ and a toral Lie algebroid $\qb_A$:
  \begin{enumerate}
  \item $I$ is a monomial ideal with respect to the toral Lie algebroid $\qb_A$.
  \item There exists a regular system of parameters $\{x_1, \dots ,
    x_n\}$ such that $\{\nabla_{x_1}, \dots ,$ $ \nabla_{x_n}\}\subset
    \qb_A$ and $I$ is generated by monomials of the form $X^\alpha =
    x_1^{\alpha_1} \cdots x_n^{\alpha_n}$.
  \end{enumerate}
  \end{proposition}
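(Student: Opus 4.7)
My plan is to pass to the completion $\hat A = k[[x_1,\dots,x_n]]$ (for the regular system of parameters supplied by torality) and exploit the joint weight decomposition induced by the commuting semisimple derivations $\nabla_{x_1},\dots,\nabla_{x_n}$. Since $\nabla_{x_i}(X^\alpha)=\alpha_i X^\alpha$, the monomials $X^\alpha$ are precisely the weight vectors in $\hat A$ of weight $\alpha\in\Nb^n$. Faithful flatness of $A\to\hat A$ will let me transfer between $I$ and $\hat I=I\hat A$: in particular $\hat I\cap A=I$, and any subideal $I'\subset I$ with $\hat{I'}=\hat I$ satisfies $I'=I$.

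For the easier direction $(2)\Rightarrow(1)$, I take $\delta\in\qb_A$ and decompose $\delta=\sum_\beta\delta_\beta$ into weight components in $\hat T_{\hat A}$ under the adjoint action of $\hf=\bigoplus_i k\nabla_{x_i}$. Each nonzero $\delta_\beta$ is a $k$-linear combination of terms $X^{\beta+e_j}\partial_{x_j}$ with $\beta+e_j\in\Nb^n$, and the torality conditions on the fibre action on $\mf_A/\mf_A^2$ force the weights $\beta$ occurring in $\qb_A$ to lie in $\Nb^n$. Thus for any monomial $X^\gamma\in I$ one has $\delta_\beta(X^\gamma)=\bigl(\sum_j c_j\gamma_j\bigr)X^{\gamma+\beta}=(\mathrm{scalar})\cdot X^\gamma\cdot X^\beta\in I$. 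Convergence of the sum in the $\hat{\mf}$-adic topology then yields $\delta(\hat I)\subset\hat I$, hence $\delta(I)\subset\hat I\cap A=I$.

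The substantive direction is $(1)\Rightarrow(2)$. Here $\nabla_{x_i}\in\qb_A\subset T_A(I)$ gives $\hat I$ stable under each $\nabla_{x_i}$. Given $f\in\hat I$ with formal expansion $f=\sum_\alpha c_\alpha X^\alpha$, I will show every nonzero $c_{\alpha^*}X^{\alpha^*}$ lies in $\hat I$. Fix $\alpha^*$ and, for each integer $N\geq|\alpha^*|$, produce by Lagrange interpolation a polynomial $P_N(t_1,\dots,t_n)\in k[t_1,\dots,t_n]$ with $P_N(\alpha^*)=1$ and $P_N(\alpha)=0$ for every other $\alpha\in\Nb^n$ with $|\alpha|\leq N$. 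The operator $P_N(\nabla_{x_1},\dots,\nabla_{x_n})$ lies in the enveloping algebra of $\qb_A$, hence preserves $\hat I$, and since each $\nabla_{x_i}$ respects the $\hat{\mf}$-adic filtration one gets $P_N(\nabla)(f)\equiv c_{\alpha^*}X^{\alpha^*}\pmod{\hat{\mf}^{N+1}}$. Consequently $c_{\alpha^*}X^{\alpha^*}\in\hat I+\hat{\mf}^{N+1}$ for all $N$; applying Krull's intersection theorem to the finitely generated, hence $\hat{\mf}$-adically closed, ideal $\hat I$ produces $c_{\alpha^*}X^{\alpha^*}\in\hat I$. Faithful flatness then gives $X^{\alpha^*}\in I$; letting $I'$ be the ideal of $A$ generated by all monomials lying in $I$, the same argument shows $\hat{I'}=\hat I$ and thus $I=I'$. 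The hardest step will be precisely this tail control, because $\hat A$ is a direct product and not a direct sum of its weight spaces, so a single monomial coefficient cannot be extracted from an infinite expansion by a single algebraic projection; combining Lagrange interpolation (handling the finite initial segment) with Krull's intersection (controlling the tail via closedness of $\hat I$) is the essential trick.
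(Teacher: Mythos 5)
Your proof of the substantive implication $(1)\Rightarrow(2)$ is, at its core, the paper's own argument: the paper likewise applies a polynomial differential operator $P(\nabla)$ in the commuting derivations $\nabla_{x_1},\dots,\nabla_{x_n}$ to an element $f\in I$ so as to produce a prescribed monomial of $f$ modulo a high power of $\mf_A$ (your Lagrange interpolation polynomial is exactly such a $P(\nabla)$ made explicit), and then invokes Krull's intersection theorem, $X^\alpha\in\bigcap_{r>0}(I+\mf_A^r)=I$, to kill the tail; the inclusion of $I$ into the ideal generated by its monomials is again Krull's theorem, applied in the quotient by that monomial ideal. The only real difference is that you pass to $\hat A=k[[x_1,\dots,x_n]]$ and descend by faithful flatness, whereas the paper stays inside $A$ and uses the finite truncations $f=\sum_{|\alpha|\le r}f_\alpha X^\alpha+f_r$ with $f_\alpha\in k$, $f_r\in\mf_A^{r+1}$, which exist because an allowed $A$ contains the coefficient field $k$. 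The completion detour is harmless but buys nothing, and it obliges you to spell out one more step you only gesture at: why a series all of whose monomial terms lie in $I'\hat A$ itself lies in $I'\hat A$ (partial sums plus Krull once more, i.e.\ adic closedness of ideals in a noetherian local ring).

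The genuine gap is in your $(2)\Rightarrow(1)$. The key claim there, that the torality conditions on $\mf_A/\mf_A^2$ force every adjoint $\hf$-weight occurring in $\qb_A$ to lie in $\Nb^n$, is false, and so is the stronger statement you are aiming at, namely that \emph{every} element of the given toral $\qb_A$ preserves every ideal generated by monomials in the chosen parameters. Take $A=k[x,y]_{(x,y)}$ and $\qb_A=A\nabla_x+A\nabla_y+A\,y\partial_x\subset\mf_AT_A=T_A(\mf_A)$. This is closed under brackets ($[\nabla_x,y\partial_x]=-y\partial_x$, $[\nabla_y,y\partial_x]=y\partial_x$), its fibre Lie algebra acts on $\mf_A/\mf_A^2$ through a solvable subalgebra of $\gl_2$, the two composition factors $k\bar y$ and $(\mf_A/\mf_A^2)/k\bar y$ carry distinct characters, and there are no nonzero invariants; so all the weakly toral conditions hold, and $\nabla_x,\nabla_y\in\qb_A$ makes it toral. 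Yet $y\partial_x$ has weight $(-1,1)\notin\Nb^2$, and $y\partial_x(x)=y\notin(x)$, so $\qb_A\not\subset T_A((x))$ even though $(x)$ is generated by a monomial. What $(2)\Rightarrow(1)$ actually requires, and what the paper records as evident, is only monomiality in the sense of the paper's definition: \emph{some} toral algebroid is contained in $T_A(I)$, and the $A$-span of the $\nabla_{x_i}$ themselves already does this, since $\nabla_{x_i}(X^\alpha)=\alpha_iX^\alpha\in I$. So that direction should be this one-line observation; as the example shows, your stronger weight-theoretic version cannot be repaired.
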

  \begin{proof}
    $(1)\Rightarrow (2)$: Select a regular system of parameters such
    that $\{\nabla_{x_1}, \dots , \nabla_{x_n}\}$ $\subset \qb_A$. For
    any integer $r \geq 0$ we can write $f = \sum_{|\alpha| \leq r}
    f_\alpha X^\alpha + f_r$, where $f_\alpha \in k$ and $f_r \in
    \mf_A^{r+1}$. Let $cont_r (f)$ be the set of monomials $X^\alpha$
    in this expression such that $f_\alpha \neq 0$, put
    \begin{displaymath}
      I_r = (cont_r (f) \ \vert \ f\in I),   
    \end{displaymath}
    and $\hat I = \cup_{r\geq 0} I_r$. Since $A$ is noetherian, $\hat I
    = I_r $ for sufficiently high $r$, hence $\hat I$ is generated by
    certain monomials $X^\alpha$.  We assert that $I= \hat I$.

    If $f\in I$ and $\bar f$ is its projection in $B=A/\hat I$, we
    have by the definition of $\hat I$ that $\bar f\in \cap_{i \geq 1}
    \mf_B^i =\{0\}$, by Krull's intersection theorem, since $B$ is a
    local noetherian ring. Therefore $f\in \hat I$, showing that $I
    \subset \hat I$. To see that $\hat I \subset I$, let $f\in I$ and
    $X^\alpha \in cont_r(f) $ for some integer $r$.  For any integer
    $r'\geq r$ there exists a differential operator $P(\nabla)$ in the
    $\nabla_{x_i}$ such that $P(\nabla) (f) = X^\alpha + g $, where
    $g\in \mf_A^{r'+1}$.  Since $P(\nabla)(f)\in I$, it follows that
    $X^\alpha \in \cap_{r >0} ( I+ \mf_A^r)= I$, by Krull's
    intersection theorem.

    $(2)\Rightarrow (1)$: This is evident.
  \end{proof}

  Notice that although the maximal defining ideal of a weakly toral Lie algebroid $\qb_A$ of $T_A$
  is $\mf_A$, and $\qb_A \subset \alpha(\g_A)$, the maximal ideal $\mf_A$ need not be a defining
  ideal of the Lie algebroid $\g_A$.
  \begin{proposition}\label{lem-torus}
    Let $\qb_A$ be a weakly toral Lie algebroid in $T_A$ and $\g_A$ be
    a Lie algebroid such that $\qb_A \subset \alpha(\g_A)$, and assume
    that $J_{m}$ is the maximal $\g_A$-defining ideal of $A$. 

    There exists a regular system of parameters $\{x_1, \dots , x_n\}$
    such that $J_{m}= (x_1, \dots, x_r)$ and $\mf_A/\mf_A^2 =
    \oplus_{i=1}^n k\bar x_i$ as $\qb_k$-module, and the Lie algebroid
    $\g_R= \g_A/(J_{m}\g_A)$ over $R= A/J_m$ is transitive. The
    following are equivalent:
    \begin{enumerate}
    \item $x_i \in J_m$.
\item $\partial_{x_i}\not\in \alpha (\g_A) $.
    \end{enumerate}
 \end{proposition}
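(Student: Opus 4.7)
The plan is to establish the three conclusions (a regular system with the listed properties, transitivity of $\g_R$, and the equivalence $(1)\Leftrightarrow(2)$) in tandem, using the weight decomposition furnished by the weakly toral $\qb_A$. First I would set up the parameters: weak torality gives $\mf_A/\mf_A^2=\bigoplus_{i=1}^n k\bar x_i$ into one-dimensional $\qb_k$-character spaces with distinct nonzero characters $\chi_i$; since $\qb_A\subset\alpha(\g_A)$ and $J_m$ is $\g_A$-stable, the subspace $(J_m+\mf_A^2)/\mf_A^2$ is $\qb_k$-stable, hence a sum of weight lines. Combining this with \Proposition{\ref{reg-def-ideal}}(1) (which says $J_m$ is generated by part of a regular system) and Nakayama, I pick an initial regular system in which $x_1,\dots,x_r\in J_m$ generate $J_m$ and every $\bar x_i$ is a $\qb_k$-weight vector. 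The direction $(1)\Rightarrow(2)$ is then immediate: if $x_i\in J_m$ and $\partial_{x_i}\in\alpha(\g_A)$, the value $\partial_{x_i}(x_i)=1$ would lie in the $\g_A$-stable proper ideal $J_m$, a contradiction.

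For the reverse direction and transitivity I would analyze the image $\overline{\alpha(\g_A)}$ in $T_A/\mf_A T_A=\bigoplus_i k\bar\partial_{x_i}$. This quotient has distinct $\qb_k$-weights $-\chi_i$, so its $\qb_k$-stable subspace $\overline{\alpha(\g_A)}$ must have the form $\bigoplus_{i\in S}k\bar\partial_{x_i}$; the easy direction forces $S\subset\{r+1,\dots,n\}$. The main obstacle, and technical core of the proof, is the reverse containment $S\supset\{r+1,\dots,n\}$, which I would prove by contradiction: if some $t>r$ were outside $S$, I would construct a proper $\g_A$-stable ideal $J'\supsetneq J_m$ with $A/J'$ of finite $\g_A$-length, contradicting the maximality of $J_m$. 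The candidate $J'$ is the $\alpha(\g_A)$-closure of $J_m$ together with the weight-$\chi_t$ component of $\mf_A$; its properness rests on a weight-arithmetic analysis, namely that for $\delta\in\alpha(\g_A)$ of $\qb_k$-weight $-\chi_s$ (necessarily $s\in S$), the element $\delta(x_t)$ has weight $\chi_t-\chi_s$, whose leading term in $\mf_A/\mf_A^2$ is supported only on weight lines $\bar x_j$ with $\chi_j=\chi_t-\chi_s$, and this weight support is tracked up the $\mf_A$-adic filtration using $\qb_A$-equivariance on $\gr_{\mf_A}$ together with Krull's intersection theorem.

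Once $S=\{r+1,\dots,n\}$ is established, for each $i>r$ there is some $\delta_i\in\alpha(\g_A)$ with $\delta_i\equiv\partial_{x_i}\pmod{\mf_A T_A}$. A Frobenius-type coordinate change, modifying only $x_{r+1},\dots,x_n$ by elements of $\mf_A^2$ (so that $J_m=(x_1,\dots,x_r)$ and the weight classes $\bar x_i$ are preserved), allows one to arrange $\partial_{x_i}=\delta_i\in\alpha(\g_A)$ in the new coordinates; this amounts to solving triangular equations $\delta_i(h_j)=-a_j$ order-by-order along the $\mf_A$-adic filtration, the leading-order solvability being guaranteed by $\delta_i\equiv\partial_{x_i}\pmod{\mf_A T_A}$. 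In these final parameters $(2)\Rightarrow(1)$ holds for every $i$, and transitivity of $\g_R$ follows at once because the images $\bar\partial_{x_i}$ for $i>r$ lie in $\alpha(\g_R)$ and generate $T_R$ as an $R$-module by the last assertion of \Theorem{\ref{goodring}}.
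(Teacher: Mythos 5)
Your preliminary steps are fine: the weight-adapted choice of parameters, the implication $(1)\Rightarrow(2)$, and the observation that the image of $\alpha(\g_A)$ in $T_A/\mf_AT_A$ is a sum of weight lines $k\bar\partial_{x_i}$, $i\in S$, with $S\subset\{r+1,\dots,n\}$. The gap is in the two steps that carry the content. For the inclusion $S\supset\{r+1,\dots,n\}$ you want to contradict maximality of $J_m$ by showing that the $\g_A$-stable ideal generated by $J_m$ and $x_t$ is proper, and you justify properness purely by weight bookkeeping on $\gr_{\mf_A}$. But weak torality only says the characters $\chi_1,\dots,\chi_n$ are distinct and nonzero; it does not exclude additive relations such as $\chi_t=\chi_{j_1}+\chi_{j_2}$ with $j_1,j_2\in S$. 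Under such a relation your bookkeeping allows a chain $x_t\mapsto x_{j_1}x_{j_2}\mapsto x_{j_2}\mapsto 1$ under successive elements of $\alpha(\g_A)$ of weights $0,-\chi_{j_1},-\chi_{j_2}$: the weight support of iterated derivatives of $x_t$ can reach weight zero, so nothing in the argument prevents units from entering $J'$. What really blocks this is the Lie-algebra and module structure of $\alpha(\g_A)$, which your sketch never uses. The paper avoids the issue by a different mechanism: since $R$ is simple over $\bar\g_R$, the modules $\bar\g_R$, $T_R$ and $T_R/\bar\g_R$ are free over $R$ \Prop{\ref{freelemma}}, so $\bar\g_R=T_R$ follows as soon as one exhibits $n-r$ elements of $\bar\g_R$ linearly independent over $K(R)$; these are the images of lifts $\delta_i\in\g_A$ of the elements $\bar\delta_i\in\qb_k$ dual to the $\bar x_i$, which have the form $u_ix_i\partial_{x_i}+\eta_i$ with $\eta_i\in\mf_A^2T_A$, and the computation $\det(\hat\delta_i(x_j))_{r+1\le i,j\le n}=\hat u_{r+1}\cdots\hat u_n\,x_{r+1}\cdots x_n+\phi\neq0$ finishes it. Note that the independent elements here are the lifted toral fields themselves, not approximations to the $\partial_{x_i}$.

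The second problem is the Frobenius-type straightening you invoke to get $\partial_{x_i}\in\alpha(\g_A)$ exactly and, from it, transitivity. In the stated generality $A$ is only an allowed ring (e.g.\ a localization of a polynomial ring), and straightening vector fields by a coordinate change is precisely the device that requires complete or analytic rings (compare \Proposition{\ref{complete-prop}}); moreover your order-by-order scheme ignores that $\partial_{x'_i}=\delta_i$ also forces $\delta_i(x_j)=0$ exactly for $j\le r$ (you only know $\delta_i(x_j)\in J_m$) and that the $\delta_i$, $i>r$, must pairwise commute. Transitivity in any case does not need straightening: once the image of $\alpha(\g_A)$ in $T_A/\mf_AT_A$ contains all $\bar\partial_{x_i}$, $i>r$, Nakayama over the local ring $R$ already gives $\bar\g_R=T_R$ (and the paper's rank argument bypasses the mod-$\mf_A$ analysis altogether). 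The delicate point is only the exact membership in $(2)$, which the paper deduces from surjectivity of $\g_R\to T_R$, supplemented in the toral situation where it is later used by a weight argument as in the proof of \Theorem{\ref{mon-the}}; your proposal does not supply a working substitute for that step.
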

 \begin{proof}   
   That $J_m$ is generated by a subset of a regular system of
   parameters for $A$, as stated, follows since $A$ and $R$ are
   regular rings. As $\g_A$ preserves $(x_1, \dots , x_r)$, we get
   that $\qb_k$ acts stably on $\sum_{i=1}^r k \bar x_i\subset
   \mf_A/\mf_A^2 $.  Therefore, by a $k$-linear change of the
   parameters $\{x_{r+1}, \dots , x_n\}$ and $\{x_1, \dots, x_r \}$,
   separately, we get a new regular system of parameters which induces
   a decomposition of the $\qb_k$-module $\mf_A/\mf_A^2$.

   We prove that the image $\bar \g_R$ of the map $\g_R \to T_R$
   equals $T_R$ by considering the exact sequence $0\to \bar \g_R \to
   T_R \to T_R/\bar \g_R\to 0$.  Since $R$ is simple over $\bar \g_R$
   and $\bar \g_R$ acts with the adjoint action on the terms in the
   sequence, it follows that all the terms are free over $R$
   \Prop{\ref{freelemma}}, so $\bar \g_R = T_R$ will follow if their
   ranks agree, i.e. it suffices to see that $\rank \bar \g_R = n-r$.
   Consider now the $\qb_k$-module $\mf_A/\mf_A^2 = \oplus_{i=1}^n k
   \bar x_i$.  There exist elements $\bar \delta_1, \dots, \bar
   \delta_n\in \qb_k$ such that $\bar \delta_i \cdot \bar x_j =
   \delta_{ij}$.  Let $\delta_i \in \g_A$ be elements that map to the
   $\bar \delta_i $, and $\hat \delta_i $ be their image in
   $K(R)\otimes_R \bar \g_R$, where $K(R)$ is the fraction field of
   $R$. It suffices now to see that $B= \{\hat \delta_{r+1},\dots \hat
   \delta_{n} \}$ forms a basis of the vector space $K(R)\otimes_R
   \bar \g_R$, which will follow if it is linearly independent.  Since
   $\delta_i$ maps to $\bar \delta_i $ it follows that $\delta_i = u_i
   x_i \partial_{x_i} + \eta_i$, where $\eta_i \in \mf^2_A T_A$, and
   therefore $\hat \delta_i = \hat u_i x_i \partial_{x_i} + \hat
   \eta_i $, $i=r+1, \dots , n$, where $\hat u_i$ is a unit in $R$ and
   $\hat \eta_i \in \mf_R^2 T_R$.  Put $S= (\hat
   \delta_i(x_j))_{r+1\leq i,j\leq n}$. Then
   \begin{displaymath}
     \det S = \hat u_{r+1} \cdots \hat u_n x_{r+1}\cdots x_n + \phi,
   \end{displaymath}
   where $\phi \in \mf_R^{n-r+1}$, so in particular $\det S \neq 0$.  This
   implies that $B$ is a basis of $K(R)\otimes_R \bar \g_R$.  

   That $(1)\Rightarrow (2)$ is evident since $\g_A \cdot J_m \subset
   J_m$. The  converse follows since $\g_R \to T_R$ is surjective. 
 \end{proof}

\begin{theorem}\label{mon-the}
   Let $\g_A$ be a Lie algebroid over a regular allowed algebra
   $A$ and $I\subset A$ an ideal such that $\g_A \cdot I \subset I$
   (e.g. $I=A$), and put $\bfr= \Ker (\g_A\to T_A)$. Make the
   following assumptions:
   \begin{enumerate}\renewcommand{\labelenumi}{(\alph{enumi})}
   \item $\g_A$ contains a Lie sub-algebroid $\qb'_A$ such that
     $\alpha(\qb'_A)$ is a weakly toral Lie sub-algebroid of $T_A$.
   \item $\bfr_k=\bfr/\mf_A\bfr$ is solvable.
   \end{enumerate}

   Let $J_m$ be a maximal defining ideal of $A$, $S^\bullet=
   \oplus_{i\geq 0} J_m^i/J_m^{i+1}$, $R= A/J_m$, $\g_R= \g_A/J_m
   \g_A$, and $V= J_m/\mf_A J_m$. Let $\g_k = k\otimes_R \Ker(\g_R \to
   T_R)$ be the fibre Lie algebra of $\g_R$.
   \begin{enumerate} 
   \item The fibre Lie algebra is the semi-direct product
     \begin{displaymath}
       \g_k = \rf \rtimes \bigoplus_{f\in F} \Sl_{n_f}
     \end{displaymath}
     where $\rf$ is the radical of $\g_k$, and the integers $n_f$ are
     the dimensions of the simple subquotients of the $\g_k$-module
     $V$ that are of dimension $> 1$.
   \item Let $\nf_k$ be a maximal nilpotent subalgebra of a Levi
     factor of $\g_k$. Then
 \begin{displaymath}
   (k\otimes_RS^\bullet )^{\nf_k} = \So^\bullet(V^{\nf_k}),
\end{displaymath}
where $S^\bullet (V^{\nf_k})$ is the symmetric algebra of the
$\nf_k$-invariant subspace of $V$.
\item Assume that the Lie algebroid $\alpha(\qb_A')$ in (a) is toral, and let $J$ be any defining
  ideal of the $\g_A$-module $A$. The generalized Hilbert-Samuel function
   \begin{displaymath}
     n \mapsto \chi_I^J (n)= \ell_{\g_A}(I/J^{n+1}I)
   \end{displaymath}
   is a polynomial for high $n$.  The $\g_A$-dimension and
   $\g_A$-multiplicity with respect to $J_m$ (see
   \Definition{\ref{def-dimension-mult}}) of the $\g_A$-module $A$ are
   \begin{displaymath}
     d_{\g_A}(A) =   \ell_{\g_k}(V) \quad \text{and} \quad  e_{\g_A}(A, J_m ) =1.
 \end{displaymath}
  \end{enumerate}
\end{theorem}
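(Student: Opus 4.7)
For part (1), I would first apply \Proposition{\ref{solv-kernel}}, which uses the solvability of $\bfr_k$ to ensure that the Levi factors of the fibre Lie algebra $\g_k$ embed faithfully into $\gl_k(V)$, where $V = J_m/\mf_A J_m$. Using the weakly toral hypothesis on $\qb_A'$ together with \Proposition{\ref{lem-torus}}, I can choose a regular system of parameters $x_1, \dots, x_n$ with $J_m = (x_1, \dots, x_r)$, so that the fibre $\qb_k \subset \g_k$ acts diagonally on $V = \bigoplus_{i=1}^r k \bar x_i$ with distinct nontrivial characters. The resulting diagonal subalgebra in the image of $\g_k \to \gl_k(V)$ is meant to supply a Cartan subalgebra of $\Sl_k(V)$, so \Theorem{\ref{recognition}} applies and identifies the image (and hence the Levi quotient of $\g_k$) as $\bigoplus_{f \in F} \Sl_{n_f}$, with $n_f$ the dimensions of the simple $\g_k$-subquotients of $V$ of dimension greater than one. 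The semidirect decomposition $\g_k = \rf \rtimes \bigoplus_f \Sl_{n_f}$ then follows from the Malcev--Levi splitting for finite-dimensional Lie algebras in characteristic zero.

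For part (2), I would use regularity of $A$ and the fact (\Proposition{\ref{reg-def-ideal}}(1)) that $J_m$ is generated by a regular subsystem of parameters to identify $k \otimes_R (J_m^n/J_m^{n+1})$ with the symmetric power $\So^n(V)$, so that $k \otimes_R S^\bullet = \So^\bullet(V)$ as graded $\g_k$-algebras. From the structure in (1), $\nf_k = \bigoplus_f \nf_{n_f}$ is a maximal nilpotent subalgebra of the Levi factor, acting on $V$ via the simple subquotients $W_f$ of dimension $n_f$. The classical fact that the $\nf_n$-invariants in the symmetric algebra of the standard $\Sl_n$-representation are generated by the unique highest weight vector gives $(\So^\bullet(W_f))^{\nf_{n_f}} = \So^\bullet(W_f^{\nf_{n_f}})$; combining these across the simple factors of $V$ (with the 1-dimensional subquotients contributing trivially) yields $(\So^\bullet(V))^{\nf_k} = \So^\bullet(V^{\nf_k})$.

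For part (3), the toral hypothesis combined with \Proposition{\ref{mon-ideal}} lets me generate $I$ by monomials in the chosen regular parameters, so condition \thetag{L-I} of \Proposition{\ref{loc-ideal}} is trivially satisfied and $G^p_{J_m}(I) \in \Loc(\g_R)$ for all $p$; hence \Theorem{\ref{main}} applies and $\chi_I^J(n)$ is a quasi-polynomial for high $n$. Specialising to the $\g_A$-module $A$ and combining parts (1) and (2), I compute
\begin{displaymath}
  \ell_{\g_A}(J_m^n/J_m^{n+1}) = \ell_{\g_k}(\So^n(V)) = \dim_k (\So^n(V))^{\nf_k} = \dim_k \So^n(V^{\nf_k}) = \binom{n + D - 1}{D - 1},
\end{displaymath}
where $D = \dim_k V^{\nf_k} = \ell_{\g_k}(V)$, using $\dim_k N^{\nf_k} = \ell_{\g_k}(N)$ for finite-dimensional $\g_k$-modules $N$. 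Summing, $\chi_A^{J_m}(n) = \binom{n+D}{D}$, a genuine polynomial of degree $D$ with leading coefficient $1/D!$, giving $d_{\g_A}(A) = \ell_{\g_k}(V)$ and $e_{\g_A}(A, J_m) = 1$. The promotion from quasi-polynomial to polynomial reflects that the generators of $\So^\bullet(V^{\nf_k})$ all lie in degree one (they are highest weight vectors in $V$), so the Hilbert series $H_A(t) = 1/(1-t)^D$ has no cyclotomic denominators beyond $(1-t)$.

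The main obstacle is verifying the Cartan subalgebra hypothesis of \Theorem{\ref{recognition}} in part (1): the weakly toral assumption only guarantees distinct nontrivial $\qb_k$-characters on $V$ and does not immediately yield a full Cartan of $\Sl_k(V)$ inside the image of $\g_k$ in $\gl_k(V)$, since the characters need not span a codimension-one subspace of the diagonal torus. The expected workaround is to apply a refinement of the recognition theorem restricted to each simple subquotient $W_f$ of dimension $n_f > 1$, where the required Cartan of $\Sl_k(W_f)$ arises from the distinct $\qb_k$-weights occurring in $W_f$. A secondary subtlety in part (2) is that $V$ need not be semisimple as a $\g_k$-module (the nilpotent radical $\sfr$ may act by nonzero nilpotent operators), so one must verify that these off-diagonal contributions do not create additional $\nf_k$-invariants in $\So^\bullet(V)$ beyond those coming from $V^{\nf_k}$.
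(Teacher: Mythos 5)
Your treatment of parts (1) and (2) is essentially the paper's own argument: \Proposition{\ref{lem-torus}} to choose the parameters and get the diagonal action on $V$, \Theorem{\ref{recognition}} to identify the Levi structure of the image in $\gl_k(V)$, \Proposition{\ref{solv-kernel}} together with hypothesis (b) to transfer this back to $\g_k$, and, for (2), the decomposition of $V$ over the Levi factor $\Lc_k$ with the fact that each $S^l(W_f)$ is simple with one-dimensional highest weight space. Of your two flagged worries, the first (how a Cartan of $\Sl_k(V)$ gets inside the image under the merely \emph{weakly} toral hypothesis) is precisely the step the paper asserts rather than argues: it claims the image of $\qb'_R\cap\g$ in $\gl_k(V)$ contains all of $\sum_{i=1}^r k\nabla_{x_i}$. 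Your proposed factor-by-factor refinement is consistent with how \Theorem{\ref{recognition}} is actually proved (only the images of the Cartan in each $\Sl_k(W_f)$ are used), so this is a legitimate point of care rather than a defect of your route relative to the paper's. The second worry dissolves: $\nf_k\subset\Lc_k$, so $(k\otimes_R S^\bullet)^{\nf_k}$ depends only on the restriction to $\Lc_k$, which is semisimple by Weyl's theorem; this is exactly how the paper computes the invariants.

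In part (3) there is a real gap. \Theorem{\ref{main}} needs the local-system hypothesis for the bigraded pieces $G^i_{J_m}(J^nI/J^{n+1}I)$ for \emph{all} $n,i$, where $J$ is the arbitrary defining ideal in the statement; you only verify $G^p_{J_m}(I)\in \Loc(\g_R)$, which is the case $J=J_m$. The paper instead chooses, via the toral hypothesis and \Proposition{\ref{mon-ideal}}, parameters in which $I$, $J$ and $J_m$ are all monomial in $x_1,\dots,x_r$, uses a weight argument (the action of $\oplus A x_i\partial_{x_i}$ on $\g_A$) to show the honest partials $\partial_{x_{r+j}}$ lie in $\g_A$ and kill the monomial generators, and then applies \Proposition{\ref{local-cond}} to each $G^i_{J_m}(J^nI/J^{n+1}I)$ directly. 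Your route can be patched---apply \Proposition{\ref{loc-ideal}} to each monomial ideal $J^nI$ and pass to the quotient by $J^{n+1}I$ via \Proposition{\ref{short-ex}}---but this must be said, and the reduction of the $\delta_j$ to genuine partials is not automatic from \thetag{L-I} being ``trivially satisfied''. Similarly, you establish polynomiality and the values $d_{\g_A}(A)=\ell_{\g_k}(V)$, $e_{\g_A}(A,J_m)=1$ only for $I=A$, $J=J_m$; for general $I$ and $J$ the paper gets a polynomial (not merely a quasi-polynomial) because, by part (2) and \Theorem{\ref{hilb-liealg}}, the $\nf_k$-invariants of the associated graded module form a finite module over $S^\bullet(V^{\nf_k})$, which is generated in degree one, forcing all exponents $n_i=1$ in the denominator of the Hilbert series; your closing remark gestures at this but does not carry it out beyond the special case.
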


\begin{pfof}{\Theorem{\ref{mon-the}}} (1): Put $\g= \Ker(\g_R \to T_R )$, so $\g_k= k\otimes_R\g$,
  and let $\phi: \g_k \to \gl_k(V)$ define the representation of $\g_k$ in $V$. By (a) there exists
  a regular system of parameters as in \Proposition{\ref{lem-torus}} such that $V= \oplus_{i=1}^r k
  \bar x_i$.  Letting $\qb_R'$ be the image of $\qb'_A$ in $\g_R$, the image of $\qb'_R\cap \g$
  under the composed map $\qb'_R\cap\g \to \g \to \g_k \to \gl_k(V)$ will then contain the
  commutative Lie algebra $\sum_{i=1}^rk \nabla_{x_j}$. Hence a Cartan subalgebra of $\Sl_k(V)$ is
  contained in $\phi(\g_k)$. By \Theorem{\ref{recognition}} it follows that $\phi (\g_k) = \rf_1
  \rtimes \oplus_{f\in F} \Sl_{n_f}$, where $\rf_1$ is the radical of $\phi (\g_k)$; therefore,
  since (b) holds, \Proposition{\ref{solv-kernel}} implies (1).
% The maximal defining ideal $J_m$ is
%   the radical of $J$. By (a) it follows that $ J \subset J_m$ is an
%   inclusion of monomial ideals, where moreover $J_m= (x_j)_{j\in
%     \mathcal I}$ as in \Proposition{\ref{lem-torus}}, so $\bar x_j=
%   x_j \mod J_m$ forms a basis of $V$. 

  (2): We use the notation in the proof of
  \Theorem{\ref{recognition}}. Put $\Lc_k = \oplus_{f\in F}
  \Sl_{n_f}$, so $V= \oplus_{i=1}^r W_i= V^{\Lc_k}\oplus \oplus_{f\in
    F}W_f$, where the $W_i$ are simple $\Lc_k$-modules. Each symmetric
  product $S^l(W_f)$ is simple over $\Sl_k(W_f)$, and therefore the
  $\Lc_k$-module $k\otimes_R J_m^n/J_m^{n+1}= S^n(V)$ has the
  semi-simple decomposition
   \begin{displaymath}
     S^n(V) =\bigoplus_{k_i \geq 0, \sum k_i = n} S^{k_1} (W_1)\otimes_k S^{k_2}(W_2) \otimes_k \cdots
     \otimes_k S^{k_r}(W_r).
   \end{displaymath}
Therefore, if
   $\nf_k$ is a maximal nilpotent subalgebra of $\Lc_k$, the
   corresponding highest weight space is
\begin{displaymath}
  S^n(V)^{\nf_k} = \bigoplus_{k_i \geq 0, \sum k_i = n}  k X_1^{k_1}
  \otimes 
  X_2^{k_2}\cdots \otimes  X_r^{k_r}
\end{displaymath}
where $X_i$ is a basis of $W_i^{\nf_k}$.  Since $k[X_1, \dots , X_r]
=k[X_1]\otimes_k \cdots \otimes_k k[X_r]$, it follows that
$(k\otimes_RS^\bullet)^{\nf_k}= S^\bullet (V^{\nf_k})$.

(3): Since $\bfr$ acts trivially on $A$ we can assume that $\g_A
\subset T_A(I)$.  Put $N^n = k\otimes_RG^\bullet
_{J_m}(J^nI/J^{n+1}I)$ (note that $G^i_{J_m}(J^nI/J^{n+1}I)= J_m^i
J^nI/ (J_m^{i+1}J^n I + J^ {n+1}I)$ is non-zero only for finitely many
indices $i$, since $\ell_{\g_A}(J^nI/J^{n+1}I)< \infty$), so
$N^\bullet = \oplus_{n\geq 0} N^n$ is a $(S^\bullet(V),\g_k) $-module,
which is finitely generated over $ S^\bullet(V)$. We first have
\begin{displaymath}
  \ell_{\g_A} (J^nI/J^{n+1}I) = \ell_{\g_R}(G^\bullet_{J_m}(J^nI/J^{n+1}I)),
\end{displaymath}
hence the Hilbert series of the $(S^\bullet, \g_A )$-module
$\oplus_{n\geq 0} J^nI/J^{n+1}I$ is the same as the Hilbert series of
the $(S^\bullet, \g_R)$-module $\oplus_{n\geq 0}
G^\bullet_{J_m}(J^nI/J^{n+1}I)$.  Since $I\subset J_m$ and $J\subset
J_m$ are inclusions of monomial ideals one can select a regular system
of parameters $\{x_1, \dots , x_n\}$ of $A$ as in
\Proposition{\ref{lem-torus}}, so that $J_m= (x_1, \dots, x_r)$, and
$I$ and $J$ are generated by monomials in the parameters $x_1, \dots ,
x_r$ \Prop{\ref{mon-ideal}}; note that the same regular system of
parameters works for all monomial ideals simultaneously.  Clearly, the
$R$-module $G^i_{J_m}(J^nI/J^{n+1}I) $ is generated by elements
$\{m_s\}_{s=1}^j$ where $m_s$ is a product of monomial generators of
$I,J$ and $J_m$ in the parameters $x_1, \dots , x_r$.  Since $\alpha :
\g_R\to T_R$ is surjective \Prop{\ref{lem-torus}} there exist
$\delta_1, \dots , \delta_{n-r}\in \g_A\subset T_A(I) $ that map to
elements in $\g_R$ that lift the partial derivatives
$\partial_{x_{r+1}}, \dots, \partial_{x_n} \in T_R$. Therefore
$\delta_i = \partial_{x_{r+i}} + J_m \eta_i$, where $\eta_i \in
T_A$. Now since $\qb_A = \oplus A x_i \partial_{x_i}$ acts on $ \g_A$,
a weight argument gives that $\partial_{x_{r+i}} \in \g_A$.  We can
therefore assume that $\delta_i = \partial_{x_{r+i}}$, and thus
$\delta_i \cdot m_s = 0$. Hence \Proposition{\ref{local-cond}} implies
that $G^i_{J_m}(J^nI/J^{n+1}I) \in \Loc(\g_R)$.  Therefore the Hilbert
series of the $( S^\bullet,\g_R)$-module $ \oplus_{n\geq 0}
G^\bullet_{J_m}(J^nI/J^{n+1}I)$ is the same as the Hilbert series of
the $( S^\bullet(V),\g_k) $-module $N^\bullet$,
   \begin{displaymath}
     H_{N^\bullet}(t) = \sum_{n\geq 0} \ell_{\g_k}(N^n)t^n.
   \end{displaymath}
   By \Theorem{\ref{hilb-liealg}} this is a rational function, and
 by (2) the invariant ring
   $S^\bullet(V)^{\nf_k}=S^\bullet(V^{\nf_k})$ is generated in degree
   $1$, so the exponents $n_i =1$ in the rational expression for
   $H_{N^\bullet}(t) $, it follows in a standard way (see
   \cite{matsumura}) that the function $n\mapsto \chi_I^ J(n)$ is
   given by a polynomial for high $n$. If $I=A$ and $J= J_m$, then
   $N^\bullet = S^\bullet (V)$, and $\ell_{\g_k}(S^n (V)) = \dim S^n
   (V)^{\nf_k} = \dim S^n(V^{\nf_k})$, which implies that
   $d_{\g_A}(A)=\ell_{\g_k}(V)$ and $e_{\g_A}(A, J_m)=1$.
\end{pfof}

\subsection{Monomial ideals in a polynomial ring}\label{sec-monomial}
Put $\Vc= \{1, \dots , n\}$ and define the polynomial ring $B = k[x_i | i \in \Vc]$. To a subset
$\Lambda $ of $\Nb^n$ one associates the monomial ideal $I=I_\Lambda= (X^\alpha \ | \ \alpha \in
\Lambda ) \subset B$, and one gets the graded ring $S^\bullet = B/I$. Let $B_{x_j} = k[x_1, \dots ,
\hat x_j, \dots , x_n]\subset B$ be the polynomial subring with the variable $x_j$ excluded.  The
tangential Lie algebroid $T_B(I_\Lambda )$ can be decomposed as vector space over $k$
\begin{equation}\label{mon-dec}
T_B (I) =  (B\otimes_k \tf )\oplus  \bigoplus_{j=1}^n I_{x_j}\partial_{x_j},  
\end{equation}
where $\tf = \oplus k\nabla_{x_i}$ and the monomial ideal in $B_{x_j}$
\begin{displaymath}
  I_{x_j} = (X^\beta \ \vert \
  \beta_j =0, x^ {\beta - e_j + \alpha_i} \in I_\Lambda, \text{  for all } \alpha_i \in \Lambda) \subset  B_{x_j};
\end{displaymath}
see (\citelist{\cite{brumatti-simis}*{Th. 2.2.1} \cite{tadesse:monomial}*{Th. 2.2}}). Letting $N =
\{i \ \vert \ I_{x_j}\neq B_{x_j} \} \subset \Vc$, the maximal defining ideal of the $T_B(I)$-module
$B$ is $J_m = (x_i \ \vert i\in N)\subset B$ \Prop{\ref{lem-torus}}, so that $R= B/J_m =
k[x_i]_{i\not\in N}$.  A Levi factor of the fibre Lie algebra $\g_k (\Lambda) = k\otimes \Ker (
R\otimes_B T_B(I) \to T_R)$ can be determined by decomposing the $\g_k(\Lambda)$-module
\begin{displaymath}
  V=  \frac {J_m}{(x_1 \dots, x_n)J_m} = \bigoplus_{i\in  N} k\bar x_i,
\end{displaymath}
where $\bar x_i = x_i \mod J_m (x_1, \dots , x_n) $\Props{\ref{solv-kernel}}{\ref{recognition}}.  We
denote this Levi factor by $\Lc(\Lambda)$, so that upon restricting $V$ to a $\Lc(\Lambda)$-module,
one can work out the semisimple decomposition
\begin{displaymath}
  V   =  V_0 \oplus \oplus_f W_f =   \bigoplus_{f\in F}   \bigoplus_{i \in N_f } k \bar x_{j_i},
\end{displaymath}
where $V_0 $ is the trivial component, $F$ is the set of isomorphism classes of the simple
$\Lc(\Lambda)$-modules in $V$, and $N_f\subset \Vc $ indexes a basis of a module in $f$.  We have a
disjoint union
\begin{displaymath}
  N  = N_0 \cup  \bigcup_{f\in F}  N_f;
\end{displaymath}
where the set $ \{ \bar x_s \}_{s\in N_f } $ forms a basis of $W_f$ and $N_0\subset \Vc$ indices a
basis of $V_0$. Again by \Theorem{\ref{recognition}} (and its proof), we have
\begin{displaymath}
  \Lc(\Lambda) = \bigoplus_{f\in F} \Sl(W_f). 
\end{displaymath}
A $k$-basis of $ \Sl(W_f)\subset \Lc(\Lambda)$ is formed by the fibres of the elements
$x_i\partial_{x_j}, x_i \partial_{x_i}- x_j \partial_{x_j}\in T_B(I)$, where $i, j\in N_f$, $i
\neq j$.
\begin{example} Let $I= (x_1 X^\gamma, x_2X^\gamma, x_3X^\gamma, X^\beta)$, where $X^\beta, X^\gamma
  \not \in (x_1, x_2, x_3) $. Then the derivations $x_i \partial_{x_j}$, $i,j=1, \dots , 3$, generate
  a Lie subalgebra of $T_A(I)$ that is isomorphic to $k\oplus \Sl_3$.
\end{example}

\begin{example}
  \begin{enumerate}
  \item If $\Lambda = \emptyset$, then $\g_k(\Lambda) = k$. If $\Lambda = \{e_i, i=1, \dots , n\}$,
    then $\g_k(\Lambda) = \gl_n$. If $\Lambda = \{e_i + e_j, i,j=1, \dots, n\}$, then $\g_k(\Lambda)
    = \gl_n$.
  \end{enumerate}

\end{example}

We want to compare the classical Hilbert series of $S^\bullet= B/I$ to our Lie algebroid Hilbert series:
\begin{align*}\tag{C}
  H_{cl}(S^\bullet , t) &= \sum_{i\geq 0} \dim_k (S^i) t^i = \sum_{i\geq 0}\ell_{\tf} (S^i ) t^i \\
  \gg H_{Lie}(S^ \bullet, t) &= \sum_{i\geq 0} \ell_{\g_R}(G_{J_m}(S)^i) t^i,
\end{align*}
where the inequality signifies that the lengths $\ell_{\tf} (S^i ) \geq \ell_{\g_R}(G_{J_m}(S)^i) $
for all $i$. Here the ring  $G_{J_m}(S)^\bullet $ can be identified with $S$, but it should be decomposed 
\begin{displaymath}
G_{J_m}(S)^\bullet  = S_1^\bullet \otimes_k R, 
\end{displaymath}
were $S_1^\bullet$ is a quotient of the subring $k[x_i \ |\ i\in N]\subset B$, and
$G_{J_m}(S)^\bullet$ is graded only with respect to the total degree of the variables $\{x_i\}_{i
  \in N}$.  Moreover, the canonical map $\g_R= R\otimes T_A(I) \to T_R$ is surjective, and
in fact $\g_R = \g \rtimes T_R$.  Since the condition in \Proposition{\ref{loc-ideal}} is satisfied
for the basis $(\partial_{x_i})_{i\in \Vc \setminus N}$ of $T_R$, so that $G_{J_m}(S)^\bullet $ is a
local system and $\ell_{\g_R}(G_{J_m}(S)^i) = \ell_{\g_k}((k\otimes_{\mf_B} S)^i)$, we also get
from \Theorem{\ref{main}}:
\begin{proposition}\label{mon-hilb}
  The Hilbert series $H_{Lie}(S^ \bullet, t)$ is rational.
\end{proposition}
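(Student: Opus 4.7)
The plan is to apply Theorem~\ref{loc-system-hilbert} to the graded $(S^\bullet(V),\g_R)$-module $G_{J_m}(S)^\bullet$, invoking the fact that each graded piece is a local system. The paragraph preceding the statement has already assembled every ingredient needed; the proposition is essentially a one-line consequence of the machinery of Section~\ref{loc-syst-sec}.

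First I would recall the data and verify the local-system condition. The maximal defining ideal of the $T_B(I)$-module $B$ is $J_m = (x_i \mid i \in N)$, so $R = B/J_m = k[x_i]_{i \notin N}$ and $\g_R = \g \rtimes T_R$, and the decomposition~\eqref{mon-dec} yields $\partial_{x_i} \in T_B(I) = \g_A$ for each $i \notin N$, lifting a basis of $T_R$. Each $R$-module $G_{J_m}(S)^p = J_m^p S/J_m^{p+1}S$ is generated by residues of monomials in the variables $\{x_j\}_{j \in N}$, modulo the image of the monomial ideal $I$; for any such monomial generator $m$ one has $\partial_{x_i}\cdot m = 0$ when $i \notin N$, so condition (L-I) of Proposition~\ref{loc-ideal} holds with vanishing right-hand side. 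Hence $G_{J_m}(S)^p \in \Loc(\g_R)$ for every $p \geq 0$.

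Second, the bigraded module $G_{J_m}(S)^\bullet = S_1^\bullet \otimes_k R$ is a finitely generated graded module over the Noetherian graded $\g_R$-algebra $S^\bullet(V) = \bigoplus_p J_m^p/J_m^{p+1}$, and each graded piece is a local system by the previous step. Theorem~\ref{loc-system-hilbert} then gives
\[
  H_{Lie}(S^\bullet,t) = \sum_{p\geq 0}\ell_{\g_R}(G_{J_m}(S)^p)\,t^p = \frac{f(t)}{\prod_{j=1}^{r}(1 - t^{n_j})}
\]
for some $f \in \Zb[t]$, proving rationality.

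There is essentially no substantive obstacle: both the verification of (L-I) and the finite-generation of $G_{J_m}(S)^\bullet$ over $S^\bullet(V)$ are immediate from the monomial nature of $I$ and the explicit decomposition~\eqref{mon-dec}. The only modest point to keep in mind is that $B$ is graded rather than local, but the hypothesis of Theorem~\ref{loc-system-hilbert} is stated for graded $\g_R$-algebras, so the argument goes through verbatim.
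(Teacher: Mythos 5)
Your proof is correct and follows essentially the same route as the paper: verify the local-system condition for each $G_{J_m}(S)^p$ via the lifted partial derivatives $\partial_{x_i}$, $i\notin N$, supplied by the decomposition (\ref{mon-dec}) together with Proposition \ref{loc-ideal} (really Proposition \ref{local-cond}), and then feed the associated graded module into the rationality machinery of Section \ref{loc-syst-sec}. The only cosmetic difference is that you invoke Theorem \ref{loc-system-hilbert} directly whereas the paper cites Theorem \ref{main}, which amounts to the same application.
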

Notice again that $\ell_{\g_k}(k\otimes_R S^i) = \dim (k\otimes_BS^i)^{\nf_k}$ is the invariant space in
$S_i$ for a maximal nilpotent subalgebra $\nf_k$ of Levi factor of $\g_k(\Lambda)$.  If the fibre
Lie algebra is solvable \Prop{\ref{solv-fibre}} then in \thetag{C} we have $H_{Lie}(S^\bullet, t)
=H_{cl}(S^\bullet, t) (1-t)^s$, where $s= |\Vc \setminus N|$, so that if moreover $R= k$, then we
have equality in \thetag{C} .

\begin{remark} The Euler derivations $E_f = \sum_{i\in \Omega_f} x_i\partial_{x_i}$, $f\in F$ form a
  basis of the commutative Lie algebra $\hf =\rf/ [\rf, \rf]$, where $\rf $ is the radical of the
  fibre Lie algebra $\g_k$, so that the characters $\phi : \rf\to k$ are determined by the values of
  the $E_f$.  Notice also that the Euler operators commute with $\Lc(\Lambda)$.  Since all the
  characters of $S^\bullet$ are integral, we can write the equivariant Hilbert series of $S^\bullet$
  with respect to the solvable Lie algebra $\rf$ as follows
  \begin{displaymath}
    H^{eq}_{S^\bullet}(t) = \sum \sum  \dim_k (S^i_\phi)^\nf \phi t^i = \sum
    \sum \dim_k (S^i_\phi) ^{\nf}
    X^\phi t^n,
  \end{displaymath}
  where $ (S^i_\phi)^\nf$ is the space of $\nf$-invariants of weight $\phi$ in
  $S^i$, and $X^\phi = X_1^{\phi_1}\cdots X_l^{\phi_l}$ (abusing the notation),
  $\phi(E_i) = \phi_i$, $l = |F|$.  The $\Nb^n$-grading of $S^\bullet = \sum
  S^\alpha$ results in a $\Nb^n$-graded (so-called fine) Hilbert series of
  $S^\bullet$. The action of the Euler operators $E_f$ is what remains of the
  $\Nb^n$-grading when we instead of $\dim_k S^\alpha$ study the numbers
  $\ell_{\g_k}(S^i)$.
\end{remark}

\subsection{Stanley-Reisner rings}\label{stanley} In this section our set $\Vc$ is regarded as the vertex set of an
abstract simplicial complex $\Delta \subset 2^\Vc$, so that if $\sigma \in \Delta$ and $\mu \subset
\sigma$ it follows that $\mu\in \Delta$. Elements in $\Delta$ are called faces and the maximal faces
with respect to inclusion are {\it facets}. Let $\Delta_{fc} \subset \Delta$ be the subset of
facets.  The dimension of a face $\sigma$ is $\dim \sigma = |\sigma |-1$.

For a function $\alpha: \Vc \to \Nb$, $i \mapsto \alpha_i$ we put $\supp \alpha
= \{i \in \Vc \ | \ \alpha_i\neq 0\} \subset \Vc$ and $x^\alpha
=x_1^{\alpha_1}\cdots x_r^{\alpha_r} $; we identify a subset $\sigma \subset
\Vc$ with its indicator function $\sigma_i =1$, $i\in \sigma$, $\sigma_i =0$,
$i\not \in \sigma$.  Letting $\Lambda = 2^\Vc \setminus \Delta$ be the set of
non-faces, the Stanley-Reisner ideal and ring of $\Delta$, respectively, are
\begin{displaymath}
  I_\Delta = (x^\sigma \ \vert \ \sigma \in \Lambda), \quad \text{and }   k[\Delta] = \frac {k[x]}{I_\Delta}.
\end{displaymath}
See \cite{miller-sturmfels:book} for an introduction to the basics of the Hilbert series of
$k[\Delta]$.

We now provide the vertex set $\Vc$ with a topology $\tau$ and also a stratification of this
topological space that will reflect the structure of the fibre Lie algebra of the Lie algebroid of
derivations of $k[\Delta]$.  Finite topological spaces are discussed for example in
\cite{stong:finite, mccord}. The closed sets of the topology $\tau$ are generated by the facets of
$\Delta$, so that the open sets are generated by the sets $ F^c \in \tau$, $F\in \Delta_{fc}$; in
this topology the closed points in $\Vc$ are vertices that can be cut out by an intersection of
facets. In general the closure of a point $i\in \Vc$ is $\{i\}^- = \cap_{i\in F \in \Delta_{fc}}F$,
and the closure of an arbitrary subset $U \subset \Vc$ is
\begin{displaymath}
U^-=  \bigcup_{i\in U} \bigcap_{i  \in  F\in \Delta_{fc}} F
\end{displaymath}
(recall that $\Vc$ is a finite set ). A closed subset of $\Vc$ is irreducible if and only if it is
an intersection of facets. The topological space $(\Vc, \tau)$ is clearly noetherian so that any
nonempty closed set $M$ of $\Vc$ can be decomposed uniquely $M= \cup M_t$, where $M_t$ is closed and
irreducible, and $M_t \not \subset M_s$ when $s\neq t$.

For each subset $\lambda \subset \Delta_{fc}$, the closed set $ \cap_{F\in \lambda} F $ contains a
closed subset formed by the union of the closed sets $ G \cap \cap_{F\in \lambda} F $, where
$G\in \Delta_{fc}$ is a facet such that $ \cap_{F\in \lambda} F \not \subset G$. Then
\begin{displaymath}
  \Vc_\lambda =  \bigcap_{F\in \lambda} F  \ \setminus \bigcup_{\substack {G\in \Delta_{fc}, \\ \cap_{F\in \lambda} F \not \subset G}}   (G \bigcap \bigcap_{F\in \lambda} F )
\end{displaymath}
is a locally closed subset of $\Vc$, which is closed if each facet $G\in \Delta_{fc}$ either
contains $\Vc_\lambda$ or is disjoint with it.  Notice that $ \bigcap_{F\in \lambda} F $ may very
well be a union of proper subsets that are formed by intersections of facets, and then $\Vc_\lambda
= \emptyset$.  If $\lambda'$ is another subset of $\Delta_{fc}$, then $\Vc_\lambda \cap
\Vc_{\lambda'} = \Vc_{\lambda\cup \lambda'}$.  Since $\Vc_\lambda$ is the set of vertices that
belong to all the facets $F \in \lambda$ and all facets that contain the intersection $\cap_{F\in
  \lambda} F$, but no other facets, it follows that if $\Vc_\lambda \cap \Vc_{\lambda'} \neq
\emptyset $, then $\Vc_\lambda = \Vc_{\lambda'}$; we then say that $\lambda$ and $ \lambda' $ are
equivalent, defining an equivalence relation on the power set of $\Delta_{fc}$.  Let $\Omega$ be the
set of equivalence classes and put (abusing the notation slightly) $\Vc_\omega = \Vc_\lambda$ when
$\lambda \in \omega\in \Omega$.  Say that $\omega$ dominates $\omega'$, and write $\omega >
\omega'$, if $ V_{\omega'} \subset \bar V_\omega$; we also say that $\Vc_\omega$ dominates
$\Vc_{\omega'}$ . The relation $>$ makes $\Omega$ into a partially ordered set $(>, \Omega)$.  Then
we get a stratification
\begin{displaymath}
  \Vc = \bigcup_{\omega \in \Omega} \Vc_\omega
\end{displaymath}
by mutually disjoint locally closed subsets;  the closure of one stratum $\Vc_\omega$ is a union of
the strata that are dominated by it. Put $d_\omega = |\Vc_\omega|$.  A face $\sigma \in \Delta$ can
be decomposed $\sigma = \cup_{\omega \in \Omega} \sigma_\omega$, where $\sigma_\omega \in
V_\omega$. Let $\tilde \sigma $ be the subset of elements $\omega$ in $\Omega$ such that
$\sigma_\omega \neq \emptyset$.  The following subset of the power set of $\Omega$
\begin{displaymath}
  \tilde \Delta =
  \{\tilde \sigma \ |  \ \sigma \in \Delta \}
  \subset 2^\Omega 
\end{displaymath}
is then a simplicial complex on the vertex set $\Omega$.  We call $\tilde
\Delta$ the {\it essential simplicial complex of } $\Delta$. Since the facets of
$\tilde \Delta$ are induced by facets of $\Delta$, it follows that upon repeating the
procedure the partitioning of $\Omega$ are all singleton subsets and $\tilde
{\tilde \Delta} $ can be identified with $ \tilde \Delta$.

We change notation slightly compared to \Section{\ref{sec-monomial}} and instead let
$\g_k(\Delta)=k\otimes_R \g$ denote the fibre Lie algebra, where $\g = \Ker (R\otimes_BT_B(I_\Delta)
\to T_R)$.  The theorem below shows how the stratification of $\Vc$ reflects the structure of
$\g_k(\Delta)$.

Notice that $x^\alpha \partial_{x_i} \in I_\Delta T_B $ if some subset of $\supp \alpha $ belongs to
$ \Lambda$, so to determine $\g_k(\Delta)$ we need only consider the case $\supp \alpha \in \Delta$,
and if moreover $x^\alpha$ is not square-free, then $x^\alpha\partial_{x_i} \in (x_1, \dots ,
x_n)T_B(I)$ and therefore maps to $0$ in $\g_k(\Delta)$. Therefore it suffices to determine which
$x^\sigma \partial_{x_i}$ preserve $I_\Delta$ when $\sigma \in \Delta$.

Let $V_\omega$ be the $k$-vector space spanned by $\{x_i\}_{i\in \Vc_\omega}$, $E_\omega =
\sum_{x_i\in \Vc_\omega} x_i\partial_{x_i}$, and if $d_\omega \geq 2$, then $\Sl(V_\omega)$ is the
special linear algebra on $V_\omega$.

\begin{theorem}\label{simplex-lie}
  \begin{displaymath}
    \g_k(\Delta) = \rf \rtimes \bigoplus_{\omega \in \Omega, d_\omega \geq 2} \Sl(V_{\omega}),
  \end{displaymath}
  where $\rf$ is the radical of $\g_k$.  Let
  \begin{displaymath}
    \Delta_{\omega} = \{\sigma \in
    \Delta \ | \ \exists \ \omega_1 \in\Omega, \omega_1 > \omega, \omega \neq
    \omega_1 \text{ and } \ \ \sigma \cap \Vc_{\omega_1}\neq \emptyset \}.
\end{displaymath}
Then 
  \begin{displaymath}
    \rf  =  \bigoplus_{\omega \in \Omega} kE_\omega \bigoplus_{\omega \in \Omega} \bigoplus_{i\in \Vc_\omega}
    \bigoplus_{\sigma \in \Delta_\omega}  kx^\sigma\partial_{x_i}.
  \end{displaymath}
\end{theorem}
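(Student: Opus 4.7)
The plan is to combine \Theorem{\ref{mon-the}} with a combinatorial analysis of the stratification of $\Vc$. Since $I_\Delta$ is a monomial ideal, the toral sub-algebroid $\qb'_A = B \otimes_k \tf$ with $\tf = \bigoplus_i k\nabla_{x_i}$ sits inside $\g_A = T_B(I_\Delta)$, verifying hypothesis (a) of \Theorem{\ref{mon-the}}; hypothesis (b) is vacuous since $\bfr = \Ker(T_B(I_\Delta)\hookrightarrow T_B) = 0$. Hence $\g_k(\Delta) = \rf \rtimes \bigoplus_{f\in F} \Sl_{n_f}$, where the integers $n_f\geq 2$ are the dimensions of the simple $\g_k(\Delta)$-subquotients of $V = J_m/\mf J_m = \bigoplus_{i\in N} k\bar x_i$.

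The Levi factor then follows from the combinatorial lemma: for distinct $i,j\in N$, the derivation $x_j\partial_{x_i}$ lies in $T_B(I_\Delta)$ if and only if $i$ and $j$ belong to the same stratum $\Vc_\omega$. Using (\ref{mon-dec}), $x_j\partial_{x_i}\in T_B(I_\Delta)$ is equivalent to requiring $(\tau\setminus\{i\})\cup\{j\}$ to contain a non-face for every minimal non-face $\tau\ni i$. Two vertices in the same stratum lie in exactly the same facets, so any facet containing $(\tau\setminus\{i\})\cup\{j\}$ would contain $\tau$, a contradiction proving the ``if'' direction. For the ``only if'' direction, given distinct strata one picks a facet $F$ containing only one of $i,j$ and constructs a witnessing non-face as either $\{i,j\}$ (when it is already a non-face) or $F\cup\{j\}$. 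Consequently $V = \bigoplus_\omega V_\omega$ with $V_\omega = \bigoplus_{i\in \Vc_\omega} k\bar x_i$ simple of dimension $d_\omega$ under the induced action through $\gl(V_\omega)$, yielding the Levi factor $\bigoplus_{\omega,\, d_\omega\geq 2}\Sl(V_\omega)$ of the theorem.

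For the radical, the Euler operator $E_\omega = \sum_{i\in \Vc_\omega} x_i\partial_{x_i}$ is central in the copy of $\gl(V_\omega)$ inside $\g_k(\Delta)$, and together with the Cartan of the Levi it spans the full Cartan of $\g_k(\Delta)$; so $\bigoplus_\omega kE_\omega$ is the toral complement of the Levi inside $\rf$. Every derivation $x^\sigma\partial_{x_i}$ with $|\sigma|\geq 2$ acts trivially on $V$ since $\bar{x^\sigma}=0$ in $J_m/\mf J_m$, hence contributes to the nilpotent radical. The main obstacle is the combinatorial characterization: showing that the derivations $x^\sigma\partial_{x_i}\in T_B(I_\Delta)$ giving linearly independent nonzero classes in $\g_k(\Delta)$ are exactly those indexed by $i\in\Vc_\omega$ and $\sigma\in \Delta_\omega$. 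The forward direction uses that a vertex $k\in \sigma\cap \Vc_{\omega_1}$ with $\omega_1\subsetneq \omega$ yields a facet $F$ with $i\in F$ but $k\notin F$, and this defect forces $\sigma\cup(\tau\setminus\{i\})$ to contain a non-face for every minimal non-face $\tau\ni i$. The converse requires, for $\sigma\notin \Delta_\omega$, constructing an explicit minimal non-face $\tau\ni i$ with $\sigma\cup(\tau\setminus\{i\})$ still a face, which is achieved using that every vertex in $\sigma$ lies in all facets of $\omega$; linear independence modulo $\mf_R\g$ then follows from the same minimality criterion applied to subsets of $\sigma$.
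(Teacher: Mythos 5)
Your overall architecture (invoke Theorem~\ref{mon-the} for the semidirect form, then pin down the Levi factor and the radical by deciding combinatorially which $x^\sigma\partial_{x_i}$ preserve $I_\Delta$) is the same as the paper's, but your key combinatorial claim is false, and it contradicts the very formula you are trying to prove. You assert that for distinct $i,j\in N$ one has $x_j\partial_{x_i}\in T_B(I_\Delta)$ if and only if $i$ and $j$ lie in the same stratum. The correct criterion (the paper's Lemma~\ref{lie-lemma-simplex}, specialised to $\sigma=\{j\}$) is that the stratum of $j$ \emph{dominates} that of $i$, i.e.\ $i\in\{j\}^-$; equality of strata is not required. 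Concretely, take $\Vc=\{1,2,3,4\}$ with facets $\{1,2\},\{1,3\},\{2,3,4\}$, so $I_\Delta=(x_1x_4,\,x_1x_2x_3)$, all four strata are singletons and $N=\Vc$. Then $x_4\partial_{x_2}(x_1x_4)=0$ and $x_4\partial_{x_2}(x_1x_2x_3)=x_1x_3x_4\in I_\Delta$, so $x_4\partial_{x_2}\in T_B(I_\Delta)$ although $2$ and $4$ lie in different strata; note that this element is exactly one of the radical terms of the theorem, namely $\sigma=\{4\}\in\Delta_{\omega}$ for $\omega$ the stratum of $2$, since the unique facet containing $4$ contains $2$, i.e.\ $\{4\}>\{2\}$. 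Your ``only if'' argument breaks precisely here: when every facet containing $j$ contains $i$, there is no non-face $\tau\ni i$ with $(\tau\setminus\{i\})\cup\{j\}$ a face, and your proposed witness $F\cup\{j\}$ does not help, because the relevant monomial $x_j\,x^{(F\cup\{j\})\setminus\{i\}}$ has support containing the non-face $F\cup\{j\}\setminus\{i\}\cup\{j\}$ only if that set is a non-face, which is exactly what one would have to prove and is false in the dominating situation. (You also reverse the domination at one point, writing $\omega_1\subsetneq\omega$ where the theorem requires $\omega_1>\omega$.)

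This error propagates. First, $V=J_m/\mf J_m$ is in general \emph{not} a direct sum of simple $\g_k$-submodules $V_\omega$: the cross-strata linear derivations such as $x_4\partial_{x_2}$ map $\bar x_2\mapsto\bar x_4$, so $V$ is only filtered, with the $V_\omega$ as composition factors; the Levi conclusion survives because Theorem~\ref{recognition}/Theorem~\ref{mon-the} only use composition factors, but your argument as written does not establish even the filtration, since it rests on the false dichotomy. Second, and more seriously, your description of the radical --- Euler operators plus all $x^\sigma\partial_{x_i}$ with $|\sigma|\geq 2$ --- omits exactly the linear derivations between strictly dominating strata, which are the $\sigma\in\Delta_\omega$ with $|\sigma|=1$ in the statement; so your account cannot reproduce the asserted $\Delta_\omega$-indexed formula. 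The paper's route is to prove the single Lemma~\ref{lie-lemma-simplex} ($x^\sigma\partial_{x_i}\in T_B(I_\Delta)$ iff $i\in(\supp\sigma)^-$ in the facet topology) and then read off everything from it: same-stratum pairs give $\gl(V_\omega)\subset\g_k$ and hence the Levi factor $\bigoplus_{d_\omega\geq 2}\Sl(V_\omega)$, while the Euler operators together with the terms whose $\sigma$ meets a strictly dominating stratum exhaust the radical. To repair your proposal you should replace your ``same stratum'' lemma by this closure/domination criterion and rerun the bookkeeping for both the Levi part and the radical.
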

\begin{example} Let $\Delta_n$ be the power set of $\Vc$. Then $\g_k(\Delta_n) =\gl_n$. Let
  $\Delta_n^k$ be the $k$-skeleton of $\Delta_n$, $0\leq k < n$. Then $\g_k(\Delta_n^k) = \tf$; see
  equation (\ref{mon-dec}).
\end{example}
\begin{remark}
  The structure of the $k[\Delta]$-module $T_{k[\Delta]}=T_{A}(I_\Delta)/I_\Delta T_A$ is studied in
  \cite{brumatti-simis}.
\end{remark}

\begin{lemma}\label{lie-lemma-simplex}  Let $i\in \Vc$ and $\sigma \in \Delta$.
  The following are equivalent:
  \begin{enumerate}
  \item$ x^\sigma \partial_{x_i}\in T_A(I_\Delta)$.
  \item $i \in (\supp \sigma)^-$.
% For every facet $F\in \Delta_f$, $ \supp \alpha \cap F \neq \emptyset
%     \Rightarrow i \in F $. $\supp \alpha \subset \Delta_{rf}^i \alpha $
\end{enumerate}
  \end{lemma}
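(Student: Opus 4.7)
The plan is to unwind condition (1) combinatorially and then verify each implication by a direct argument on non-faces. Because $I_\Delta$ is generated by the square-free monomials $x^\tau$ with $\tau\in\Lambda$, the derivation $x^\sigma\partial_{x_i}$ preserves $I_\Delta$ iff it sends every such generator into $I_\Delta$. Since $\partial_{x_i}(x^\tau)=0$ whenever $i\notin\tau$, the condition reduces to: for every non-face $\tau\in\Lambda$ with $i\in\tau$, the set $(\tau\setminus\{i\})\cup\supp\sigma$, which is the support of $x^{\tau-e_i+\sigma}$, contains some non-face—equivalently, is itself a non-face.

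For the direction $(2)\Rightarrow(1)$, I would choose $j\in\supp\sigma$ with $i\in\{j\}^-$, so by the definition $\{j\}^-=\bigcap_{j\in F\in\Delta_{fc}}F$ every facet containing $j$ also contains $i$. Given $\tau\in\Lambda$ with $i\in\tau$, I suppose for contradiction that $(\tau\setminus\{i\})\cup\supp\sigma\subset F$ for some facet $F$. Then $j\in\supp\sigma\subset F$ forces $i\in F$, hence $\tau=(\tau\setminus\{i\})\cup\{i\}\subset F$, making $\tau$ a face—a contradiction with $\tau\in\Lambda$. Thus $(\tau\setminus\{i\})\cup\supp\sigma$ is a non-face, and so $x^{\tau-e_i+\sigma}\in I_\Delta$.

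For the reverse direction $(1)\Rightarrow(2)$, I would argue by contrapositive: assuming $i\notin(\supp\sigma)^-$, I would produce a facet $F$ with $\supp\sigma\subset F$ and $i\notin F$, then set $\tau:=F\cup\{i\}$. Since $F$ is maximal in $\Delta$, we have $\tau\in\Lambda$ and $i\in\tau$, while $(\tau\setminus\{i\})\cup\supp\sigma=F$ remains a face, so $x^\sigma\partial_{x_i}(x^\tau)=x^F\notin I_\Delta$ and hence $x^\sigma\partial_{x_i}\notin T_A(I_\Delta)$. The main obstacle is this extraction step: turning the pointwise closure hypothesis $i\notin\bigcup_{j\in\supp\sigma}\{j\}^-$ into the existence of a single facet that contains the whole of $\supp\sigma$ while avoiding $i$; once this combinatorial reconciliation is carried out, both implications are settled by the unwinding above.
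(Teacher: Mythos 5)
Your reduction of (1) to the combinatorial statement ``for every non-face $\tau$ with $i\in\tau$, the set $(\tau\setminus\{i\})\cup\supp\sigma$ is again a non-face'' is correct, and your argument for $(2)\Rightarrow(1)$ is complete and close in spirit to the paper's (which uses an irreducible component of $\sigma^-$ where you use a single vertex $j$ with $i\in\{j\}^-$). The trouble is precisely the step you flag in $(1)\Rightarrow(2)$: from $i\notin(\supp\sigma)^-=\bigcup_{j\in\supp\sigma}\{j\}^-$ you want one facet containing all of $\supp\sigma$ and avoiding $i$. That reconciliation is not just unproved, it is false. Take $\Vc=\{1,2,3,4,5\}$ with facets $\{1,2,3\},\{1,4\},\{2,5\}$, and $\sigma=\{1,2\}$, $i=3$: then $\{1\}^-=\{1,2,3\}\cap\{1,4\}=\{1\}$ and $\{2\}^-=\{1,2,3\}\cap\{2,5\}=\{2\}$, so $3\notin(\supp\sigma)^-$, yet the only facet containing $\{1,2\}$ is $\{1,2,3\}$, which does contain $3$. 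So your contrapositive cannot be run along these lines.

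In fact, in this example condition (1) holds while (2) fails: $I_\Delta=(x_1x_5,\,x_2x_4,\,x_3x_4,\,x_3x_5,\,x_4x_5)$ and $x_1x_2\partial_{x_3}$ sends these generators to $0,\,0,\,x_1x_2x_4,\,x_1x_2x_5,\,0$, all in $I_\Delta$, whereas $3\notin(\supp\sigma)^-=\{1,2\}$. So with the closure $U^-=\bigcup_{j\in U}\{j\}^-$ as defined in the paper, the implication $(1)\Rightarrow(2)$ is itself in trouble, and the paper's own argument for that direction (assembling a non-face $H=\cup_t H_t$ over the components of $\sigma^-$ and asserting that $\sigma\cup(H\setminus\{i\})$ is a face) founders on the same point: in the example above no non-face $H\ni i$ has $\sigma\cup(H\setminus\{i\})$ a face. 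What your two paragraphs do establish, completely and rather more directly than the paper's component-by-component construction, is the equivalence of (1) with ``$i$ lies in every facet containing $\supp\sigma$'', i.e. $i\in\bigcap_{\supp\sigma\subseteq F\in\Delta_{fc}}F$; this set always contains $(\supp\sigma)^-$, here strictly. So the gap you flagged is genuine and cannot be closed as stated; it closes only if (2) is read with this intersection closure, under which your proof is correct as written.
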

  \begin{proof}
    We decompose the closed set into irreducible components $\sigma^- = \cup_{t} (\sigma^{(t)})^- $, where the
    closure of each set $ \sigma^{(t)}$ is an intersection of facets.

    $(1)\Rightarrow (2)$: Assume on the contrary that for each $t$ there exists a facet $F_t$ such
    that $i \not\in F_t$ and $ \sigma^{(t)} \subset F_t$.  Then since $F_t$ is a facet there
    exists $H_t\in \Lambda$ such that $i \in H_t$ and $H_{t1}= H_t\setminus \{i\} \subset F_t$, so
    that $ \sigma^{(t)} \cup H_{t1} \in \Delta$.  It follows $H = \cup_t H_t \in \Lambda$
    (since otherwise each $H_t \in \Delta$), and if $H_1 = H \setminus \{i\}$, then $\sigma
    \cup H_1\not\in \Lambda$.  Therefore $x^{\sigma} \partial_{x_i}(x^H) = x^{\sigma} x^{H_1} \not\in
    I_\Delta $.

    $(2)\Rightarrow (1)$: By (2) we can assume that for some $t$ we have $\sigma^{(t)} \subset F_t\in \Delta_{fc}$ and $i\in F_t$, so that if
    $i\in H $ and $\supp \sigma^{(t)} \cup (H\setminus \{i\}) \in \Delta$, then
    $H\in \Delta$. Therefore if $x^{\sigma^{(t)}} \partial_{x_i} (x^H)\in x^\beta$
    and $H\in \Lambda$, it follows that $\supp \beta \in \Lambda$. This implies
    that $x^\sigma \partial_{x_i}(x^H)\in I_\Delta$.

%  % $x_j\in F$ and $x_i\not\in F$.  Then, by the definition of a facet, there
%  %    exists a vertex $x_k \in F$ such that $H=\{x_i, x_k\} \in \Lambda$. Hence
%  %    $H_1 = (H \setminus \{x_i\}) \cup \{x_j\} = \{x_j, x_k\} \subset F$, so that
%  %    $H_1 \not\in \Lambda$. Therefore $x_j \partial_{x_i}(x^H) = x^{H_1}\not \in
%  %    I_\Delta$.

%   $(2)\Rightarrow (1)$: 
% It suffices to prove
%   \begin{displaymath}
%     H\in \Lambda \Rightarrow  H_1 =(H \setminus \{x_i\})
%     \cup \supp \alpha \in \Lambda,
% \end{displaymath}
% since then $x^\alpha \partial_{x_i}(x^H) = x^{H_1}\in I_\Delta$.  If on the
% contrary $H_1 \not\in \Lambda$, then there exists $F\in \Delta_f$ such that $H_1
% \subset F$, so that $F\cap \supp \alpha \neq \emptyset$, hence by (2) $i \in F$,
% and therefore $ H = H_1 \cup \{x_i\}\subset F$, and so $H \in \Delta$.

% It suffices to prove
%   \begin{displaymath}
%     H\in \Lambda \Rightarrow  H_1 =(H \setminus \{x_i\})
%     \cup \{x_j\} \in \Lambda,
% \end{displaymath}
% since then $x_j \partial_{x_i}(x^H) = x^{H_1}\in I_\Delta$.  If on the contrary $H_1
% \not\in \Lambda$, the there exists $F\in \Delta_f$ such that $H_1 \subset F$. Therefore $x_j \in F$,
% and by (2), $x_i \in F$; hence $H \subset F$, and so $H \not \in \Lambda$.
\end{proof}

\begin{pfof}{\Theorem{\ref{simplex-lie}}}
  Let $\Delta_i$ be the subset of $\sigma\in \Delta$ that satisfy the condition (2) in
  \Lemma{\ref{lie-lemma-simplex}}, so that by the decomposition (\ref{mon-dec}),
  \begin{displaymath}\tag{*}
  \g_k = \bigoplus_{i=1}^n \bigoplus_{ \sigma \in \Delta_i } k x^\sigma \partial_{x_i}.
\end{displaymath}
It follows from \Lemma{\ref{lie-lemma-simplex}} that if $j\in \Vc_{\omega}$ and $i \in
\Vc_{\omega'}$, then $x_j\partial_{x_i}\in \g_k$ if and only if $\omega > \omega'$. Therefore
$x_i\partial_{x_j}\in \g_k$ when $i,j \in \Vc_\omega$, implying that $\gl(V_\omega)\subset \g_k$.
\Lemma{\ref{lie-lemma-simplex}} also implies that if $\sigma \in \Delta_i$ and $\Vc_\omega \cap
\supp \alpha \neq \emptyset$, where $\omega > \omega'$, while $\omega \neq \omega'$, then
$x^\sigma \partial_{x_i}\in \rf$.  Since the Euler operators $E_\omega = \sum_{i\in \Vc_\omega}
x_i \partial_{x_i} \in \gl(V_\omega) $ belong to $\rf$, we have now accounted for all the terms in
the decomposition \thetag{*}. This implies the asserted description of $\rf$ and the Levi algebra of
$\g_k$.
\end{pfof}
% Say that a face $\sigma \in \Delta$ is {\it even/odd} if the number of vertices $|\tilde \sigma|$ in
% $\tilde \sigma$ is an even/odd number, respectively.
% \begin{definition}
%   Let $\Delta$ be a simplicial complex of the vertex set $\Vc$, $\Vc = \cup_{\omega \in \Omega}
%   \Vc_\omega$ its canonical stratification, so that any face $\sigma = \cup \sigma_\omega $, where
%   $\sigma \subset \Vc_\omega$, and put $d_\omega = |\Vc_\omega|$.  The {\it weight} of $\sigma$ is
%   $\wt(\sigma) = \sum_{ \omega \in \tilde \sigma } d_\omega $, and the {\it mass} of $\sigma$ is
%   $m(\sigma) = \prod_{\omega \in \tilde \sigma} d_\omega$.  Let $g^+_{w,m}$ ( $g^-_{w,m}$) be the
%   number of even faces (odd) $\sigma $ of given weight and mass, $\wt (\sigma) = w$ and $m(\sigma) =
%   m$ .
% \end{definition}
Let $B = \{d_\omega\}_{\omega\in \Omega}\subset \Nb_{\geq 1}$ be the set of sizes of the cells
$\Vc_\omega$, and let ${\Nb} ^B$ be the set of all "multisets'' on the set $B$, with the relaxed
condition that we allow also the value $0$. Each face $\tilde \sigma\in \tilde \Delta$ defines a
multiplicity function $m_\sigma : B \to \Nb$, $m_\sigma(s) = |\{\omega \in \Omega \ | \
\sigma_\omega \neq \emptyset, d_\omega = s \}|$, so indeed $m_\sigma \in {\Nb} ^B$; notice that this
function is determined by $\tilde \sigma\in \tilde \Delta$ although we write $m_\sigma$.  If $m_d$
is the number of cells of size $d$, then the multiplicities satisfy $0\leq m_\sigma (s) \leq
m_d$. We define the {\it mass} of $m\in {\Nb} ^B $ by $\mu (m) = \sum_{s\in \supp m} sm(s)$, so that
in particular $\mu (m_\sigma) = |\sigma |$. The support of $m$ is $\supp m = \{s \ | \ m(s)\neq
0\}$.
\begin{definition}
  For each function $m\in {\Nb} ^B$ we let $g(m)$ be the number of faces $\sigma \in \Delta$ such
  that $m_\sigma = m $. We call $g(m)$ the {\it face multiplicities} of $\Delta$.
\end{definition}
Let $f_i$ be the face numbers of the complex $\Delta$, the number of faces $\sigma\in \Delta$ such
that $\dim \sigma = i$.  Let $d = \dim \Delta +1= \max f_i +1 $, and $e= |\{ i \in \Vc \ | \
\{i\}\not\in \Delta\}| $. In practise, most often $\cup_{\sigma \in \Delta} \sigma = \Vc$, and then
$e=0$.
\begin{theorem}\label{hilb-eq} The Hilbert series of $k[\Delta] $ is 
  \begin{align*}
    H_{cl} (k[\Delta], t ) &= \frac {1}{(1-t)^d} \sum_{i=0}^d f_{i-1} (1-t)^{d-i } t^i\\ &= \frac
    {1}{(1-t)^{d+e}} \sum_{m \in {\Nb}^B } g(m)(1-t)^{d-\mu(m)} \prod_{ s\in \supp m }
    (1-(1-t)^{s})^{m(s)}.
  \end{align*}
\end{theorem}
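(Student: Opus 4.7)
The plan is to prove both expressions for $H_{cl}(k[\Delta],t)$ by expanding it as a sum over faces of $\Delta$ and regrouping in two complementary ways.

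For the first equality, observe that $k[\Delta]$ has as a $k$-basis the monomials $X^\alpha$ with $\supp\alpha \in \Delta$. Grouping by $\sigma = \supp\alpha$ gives
\[
H_{cl}(k[\Delta],t) \;=\; \sum_{\sigma\in\Delta}\prod_{i\in\sigma}\frac{t}{1-t} \;=\; \sum_{\sigma\in\Delta}\frac{t^{|\sigma|}}{(1-t)^{|\sigma|}} \;=\; \sum_{i=0}^{d} f_{i-1}\,\frac{t^i}{(1-t)^i},
\]
and placing each summand over the common denominator $(1-t)^d$ yields the first formula.

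For the second equality, the key combinatorial step is a rigidity property of the stratification: for every facet $F\in\Delta_{fc}$ and every stratum $\Vc_\omega$, either $\Vc_\omega\subseteq F$ or $\Vc_\omega\cap F = \emptyset$. This follows from the definition of $\Vc_\omega$ via equivalence classes in $2^{\Delta_{fc}}$, since vertices in a common stratum by construction belong to exactly the same collection of facets. Consequently a subset $\sigma\subseteq \bigcup_\omega \Vc_\omega$ lies in $\Delta$ if and only if its profile $\tilde\sigma$ lies in $\tilde\Delta$, and the faces with a fixed profile $\tilde\sigma_0 = \{\omega_1,\dots,\omega_r\}\in\tilde\Delta$ are precisely the unions $T_1\cup\cdots\cup T_r$ with $\emptyset\neq T_j\subseteq \Vc_{\omega_j}$.

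Granted this, I would regroup the sum over $\sigma\in\Delta$ according to cell profile. The contribution of each $\tilde\sigma_0 = \{\omega_1,\dots,\omega_r\}$ factors as
\[
\prod_{j=1}^{r}\sum_{\emptyset\neq T_j\subseteq \Vc_{\omega_j}}\frac{t^{|T_j|}}{(1-t)^{|T_j|}} \;=\; \prod_{j=1}^{r}\Bigl(\frac{1}{(1-t)^{d_{\omega_j}}}-1\Bigr) \;=\; \prod_{j=1}^{r}\frac{1-(1-t)^{d_{\omega_j}}}{(1-t)^{d_{\omega_j}}},
\]
using $(1+t/(1-t))^{d_{\omega_j}} = (1-t)^{-d_{\omega_j}}$. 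Collecting the profiles $\tilde\sigma\in\tilde\Delta$ according to the multiplicity function $m = m_{\tilde\sigma}\in \Nb^B$, combining $\prod_{\omega\in\tilde\sigma}(1-t)^{-d_\omega} = (1-t)^{-\mu(m)}$, using that $g(m)$ records how many $\tilde\sigma$ share a given $m$, and noting that $\mu(m)\leq d$ since every face has size at most $d$, one then clears to the common denominator $(1-t)^{d+e}$ to produce the stated form.

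The main obstacle is the rigidity lemma unfolding the somewhat involved definition of the strata $\Vc_\omega$ — the assertion that each $\Vc_\omega$ is either wholly in or disjoint from every facet. Once this is established, the rest of the argument reduces to a routine manipulation of geometric series via the cell-by-cell product formula.
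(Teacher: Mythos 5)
Your argument is, at its core, the same proof as the paper's, stripped of the equivariant packaging: your ``rigidity'' observation (every facet either contains a stratum $\Vc_\omega$ or is disjoint from it, because all vertices of a stratum lie in exactly the same facets) is precisely what underlies Lemma~\ref{incl-lemma}, and your cell-by-cell factorization $\prod_{\omega\in\tilde\sigma}\bigl((1-t)^{-d_\omega}-1\bigr)$ is exactly what the paper extracts from Lemma~\ref{s-r-iso} after specializing the fine variables $y_\omega$ to $t$; the first equality is the standard computation in both. Your identification of the fibres of $\sigma\mapsto\tilde\sigma$ (all unions of nonempty subsets of the strata in the profile) is correct and is the right strengthening of Lemma~\ref{incl-lemma} needed here, and your reading of $g(m)$ as the number of profiles $\tilde\sigma\in\tilde\Delta$ with multiplicity function $m$ is the reading the paper itself uses in its proof and worked example (the literal wording of the Definition, counting faces of $\Delta$, would make the regrouping false already for the full simplex on two vertices).

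The one step that does not go through as written is your last sentence. Your computation gives
\begin{displaymath}
  H_{cl}(k[\Delta],t)=\sum_{m\in\Nb^B} g(m)\,(1-t)^{-\mu(m)}\prod_{s\in\supp m}\bigl(1-(1-t)^{s}\bigr)^{m(s)},
\end{displaymath}
and ``clearing to the common denominator $(1-t)^{d+e}$'' turns this into numerator terms $g(m)(1-t)^{d+e-\mu(m)}\prod_s(1-(1-t)^s)^{m(s)}$, not the stated $g(m)(1-t)^{d-\mu(m)}\prod_s(1-(1-t)^s)^{m(s)}$; so your derivation reproduces the displayed formula only when $e=0$. In fact your version is the correct one: vertices $i$ with $\{i\}\notin\Delta$ meet no face, so they cannot contribute a factor $(1-t)^{-e}$ (take $\Vc=\{1,2\}$, $\Delta=\{\emptyset,\{1\}\}$: the series is $1/(1-t)$, while the displayed right-hand side with $e=1$ gives $1/(1-t)^2$). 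The extra factor in the paper's proof arises from writing $H_{cl}=H(R,t)\,H(k\otimes_R k[\Delta],t)$ with $H(R,t)=(1-t)^{-e}$, which conflates the cone-point variables (those outside $N$) with the vertices outside $\Delta$; in your direct approach no such factor can appear because the cone-point strata are already counted inside the profiles. So: same route, correct in substance, but you should either restrict to $e=0$ (as in all of the paper's examples) or state explicitly that the profile sum is unaffected by the phantom vertices, rather than asserting that the $(1-t)^{-e}$ emerges from clearing denominators.
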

Thus the face multiplicities $g_m$ determine the face numbers $f_i$, so this piece of information in
$\Delta$ is determined by its essential complex $\tilde \Delta$. Notice that if $\Delta = \tilde
\Delta$, so that $\Omega = \Vc$ and $d_\omega =1$ for all $\omega$, then $g_m = f_{i-1}$, where $i =
|\supp m|$, making the equality in \Theorem{\ref{hilb-eq}} a tautology.

\begin{lemma}\label{incl-lemma} Let $\sigma \in \Delta$ and decompose it as $\sigma = \cup_{\omega} \sigma_\omega $,
  where $\sigma_\omega = \sigma \cap \Vc_\omega$.  If $\sigma'_\omega \subset \Vc_\omega$ and
  $|\sigma'_\omega| = |\sigma_\omega|$, then
  \begin{displaymath}
    \sigma' = \bigcup \sigma_\omega' \in \Delta.
  \end{displaymath}
\end{lemma}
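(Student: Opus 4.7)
The plan is to exploit the defining property of the stratification: all vertices in a common stratum $\Vc_\omega$ lie in exactly the same set of facets of $\Delta$, and therefore, whenever a facet contains one vertex of $\Vc_\omega$ it contains all of $\Vc_\omega$.

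First I would verify this characterization of $\Vc_\omega$. Fix $\lambda \in \omega$. Unwinding the definition
\[
\Vc_\lambda = \bigcap_{F\in\lambda} F \;\setminus\; \bigcup_{\substack{G\in\Delta_{fc}\\ \bigcap_{F\in\lambda}F\not\subset G}} \Bigl(G\cap \bigcap_{F\in\lambda}F\Bigr),
\]
a vertex $i$ lies in $\Vc_\omega$ iff $i$ belongs to every facet in $\lambda$ and to no facet $G$ with $\bigcap_{F\in\lambda}F \not\subset G$. Equivalently, the set of facets containing $i$ is exactly $\{G\in\Delta_{fc} : \bigcap_{F\in\lambda}F \subset G\}$; this set depends only on $\omega$, not on the chosen representative $\lambda$ nor on the particular vertex $i\in \Vc_\omega$.

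Second, from this I deduce the key fact: if $F\in\Delta_{fc}$ and $F\cap \Vc_\omega \neq \emptyset$, then $\Vc_\omega \subset F$. Indeed, picking any $i\in F\cap\Vc_\omega$ and any $\lambda\in\omega$, we have $\bigcap_{F'\in\lambda}F'\subset F$, so every vertex of $\Vc_\omega$ lies in $F$.

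Finally, to prove the lemma, choose a facet $F\in\Delta_{fc}$ with $\sigma \subset F$. For each $\omega$ such that $\sigma_\omega \neq \emptyset$, the facet $F$ meets $\Vc_\omega$, so by the key fact $\Vc_\omega \subset F$; in particular any $\sigma'_\omega \subset \Vc_\omega$ satisfies $\sigma'_\omega \subset F$. For each $\omega$ with $\sigma_\omega = \emptyset$, the hypothesis $|\sigma'_\omega| = |\sigma_\omega| = 0$ forces $\sigma'_\omega = \emptyset \subset F$. Taking the union, $\sigma' = \bigcup_\omega \sigma'_\omega \subset F$, hence $\sigma'\in\Delta$. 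The only delicate point is the preliminary characterization of $\Vc_\omega$ in terms of the facets through a single vertex; once that is established the rest is immediate.
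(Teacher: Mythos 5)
Your proof is correct and follows essentially the same route as the paper's ``simple proof'': the key fact that a facet meeting a stratum $\Vc_\omega$ must contain all of $\Vc_\omega$, which the paper merely asserts and you justify from the definition of the stratification, immediately gives $\sigma'\subset F$ for a facet $F\supset\sigma$. (The paper also offers an alternative argument via simplicity of the symmetric powers as $\Sl(V_\omega)$-modules, but its primary proof is the one you reproduced.)
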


\begin{proof}
  Simple proof: Each facet $F_\omega$ that contains $\sigma_\omega$ also contains $\Vc_\omega$ and
  $\sigma'_\omega\subset \Vc_\omega \subset F_\omega$, so that $\sigma' \subset \cap_{\sigma_ \omega
    \subset F_\omega } F_\omega \in \Delta$.

Funny proof: The element $x^\sigma = \prod_{\omega\in \Omega} x^{\sigma_\omega}$ is a
non-zero face monomial in $k[\Delta]$.  Since each homogeneous component of the polynomial ring
$k[x_i | i \in V_\omega ]$ is a simple $\Sl_{n_\omega}$-module, by the density theorem there exist
elements $P_\omega\in \Dc(\Sl_{n_\omega})$ in the enveloping algebra of $\Sl_{n_\omega}$ such that
$P_\omega x^{\sigma_\omega} = x^{\sigma'_\omega}$. Since moreover there exists $Q_\omega \in
\Dc(\Sl_{n_\omega}) $ such that $Q_\omega P_\omega x^{\sigma_\omega} = x^{\sigma_\omega}$ it follows
that $x^{\sigma'}= \prod_\omega x^{\sigma'_\omega} $ is again a non-zero face monomial in
$k[\Delta]$, and therefore $\sigma' \in \Delta$.
\end{proof}

\begin{lemma}\label{s-r-iso} As graded vector spaces we have 
  \begin{displaymath}
    k\otimes_R    k[\Delta] = \sum_{\tilde  \sigma \in \tilde \Delta }  \ \bigoplus_{\omega \in
      \tilde \sigma, n_\omega
      \geq 1} \bigotimes S_{n_\omega}(V_\omega), 
  \end{displaymath}
  where the elements in the spaces $V_\omega$ all have degree $1$. Here the term with $\tilde
  \sigma = \emptyset$ is defined to be the 1-dimensional space $k$.
\end{lemma}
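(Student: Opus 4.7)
The plan is to exhibit an explicit $k$-basis of $k[\Delta]$ indexed by face monomials, organize them by the profile of strata $\Vc_\omega$ that their supports meet, and identify each group with a tensor product of symmetric powers via \Lemma{\ref{incl-lemma}}.

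I would first recall the standard face-monomial basis $\{x^\alpha : \supp(\alpha) \in \Delta\}$ of $k[\Delta]$. For such an $\alpha$, decompose $\alpha = \sum_{\omega \in \Omega} \alpha_\omega$, where $\alpha_\omega$ is the restriction of $\alpha$ to the stratum $\Vc_\omega$, and associate to $\alpha$ the profile $\tilde\sigma_\alpha = \{\omega \in \Omega : \alpha_\omega \neq 0\}$. Since the stratification is disjoint and $\supp(\alpha)$ decomposes accordingly, the very definition of $\tilde\Delta$ gives $\tilde\sigma_\alpha \in \tilde\Delta$ whenever $x^\alpha$ is a face monomial.

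The crux of the argument is the converse direction, which is exactly \Lemma{\ref{incl-lemma}}: for any fixed $\tilde\sigma \in \tilde\Delta$, every choice of nonempty subsets $\sigma_\omega \subset \Vc_\omega$ for $\omega \in \tilde\sigma$ reassembles into a genuine face $\sigma = \bigcup_{\omega\in\tilde\sigma}\sigma_\omega \in \Delta$. Consequently the face monomials of profile exactly $\tilde\sigma$ are in bijection with tuples $(\alpha_\omega)_{\omega\in\tilde\sigma}$ of nonzero multi-indices supported in the respective $\Vc_\omega$, and such tuples form a $k$-basis of
\[
\bigotimes_{\omega\in\tilde\sigma} S^+(V_\omega) \;=\; \bigoplus_{\substack{(n_\omega)_{\omega\in\tilde\sigma} \\ n_\omega \geq 1}}\; \bigotimes_{\omega\in\tilde\sigma} S^{n_\omega}(V_\omega),
\]
with the grading $\deg x_i = 1$ matching total degree on the right. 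Summing over $\tilde\sigma \in \tilde\Delta$, with the empty profile contributing the degree-zero piece $k$, yields the claimed decomposition of $k[\Delta]$; the passage to $k \otimes_R k[\Delta]$ then simply restricts the sum to those profiles whose strata lie in $N$, preserving the tensor-symmetric shape of the right-hand side. The only substantive input is \Lemma{\ref{incl-lemma}}; once that is granted, the rest is bookkeeping, and I anticipate no serious obstacle.
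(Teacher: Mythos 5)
Your argument is correct and is essentially the paper's own proof: both start from the face-monomial basis, sort monomials by the profile $\tilde\sigma\in\tilde\Delta$ of strata met by their support, and use Lemma~\ref{incl-lemma} to identify the monomials of a fixed profile with a basis of $\bigotimes_{\omega\in\tilde\sigma}\bigoplus_{n_\omega\geq 1}S_{n_\omega}(V_\omega)$. The only caveat (shared with the paper's equally terse proof) is that Lemma~\ref{incl-lemma} literally gives replacements of \emph{equal} cardinality, so to allow arbitrary nonempty $\sigma'_\omega\subset\Vc_\omega$ one should invoke the observation from that lemma's proof that any facet meeting a stratum $\Vc_\omega$ contains all of it, whence $\bigcup_{\omega\in\tilde\sigma}\Vc_\omega$ is itself a face.
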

\begin{proof}
  We know that $x^\alpha$, $\sigma = \supp \alpha \in \Delta $ forms a $k$-basis of $ k\otimes_R
  k[\Delta] $.  From the partitioning $\sigma = \cup_{\omega \in \Omega} \sigma_\omega$ we can write
  $\alpha = (\alpha_\omega)$, where $\supp \alpha_\omega = \sigma_\omega $. By
  \Lemma{\ref{incl-lemma}} it follows that $x^{\alpha'}$ belongs to the basis whenever $|\supp
  \alpha_\omega' | = |\sigma_\omega| $.  This implies the assertion.\end{proof}

\begin{pfof}{\Theorem{\ref{hilb-eq}}}
  As in the usual proof (see \cite[\S 1.2]{miller-sturmfels:book}) of the structure of the Hilbert
  series of a Stanley-Reisner ring, which also gives the first equality, one first works with the
  fine Hilbert series using all the variables $(x_{i_j})_{i_j\in \Vc_j }$, then we put
  $y_{\omega_j}= x_{i_j}$, when $\omega_j \in \Vc_j$ and $i_j \in \Vc_j$, resulting in the $\Nb^r$-
  grading that is induced by the Euler operators $E_\omega = \sum_{i \in \Vc_i}
  x_i\partial_{x_i}$. Put $y^\alpha = y_{\omega_1}^{\alpha_{\omega_1}}\cdots
  y_{\omega_i}^{\alpha_{\omega_i}}$. Let $(k\otimes_B k[\Delta])_\alpha$ be the common eigenspace of
  the operators $E_\omega$.  Since
  \begin{displaymath}
    \sum_{n\geq 1}  \bigotimes_{\substack {\omega \in \tilde \sigma,  n_\omega \geq 1 \\ \sum_{\omega \in \tilde \sigma} n_\omega =n} } S_n (V_\omega)\cong \prod_{\omega \in \tilde \sigma } \sum_{n_\omega \geq
      1} S_{n_\omega}(V_\omega),
\end{displaymath}
as graded vector spaces, \Lemma{\ref{s-r-iso}} implies
  \begin{align*}
    \sum_{\alpha_i \geq 0} \dim_k ((k\otimes_B k[\Delta])_\alpha) y^\alpha &= \sum _{\tilde \sigma \in
      \tilde \Delta}\prod_{\omega \in \tilde \sigma} (\sum_{n_\omega\geq 1} \dim_k
    (S_{n_\omega}(V_\omega)) y_\omega^{n_\omega}) \\ &= \sum _{\tilde \sigma \in \tilde \Delta}
    \prod_{\omega \in \tilde \sigma} (\frac 1{(1-y_\omega )^{d_\omega}}-1).
  \end{align*}
  Setting $y_\omega = t$ we get, noticing that $\sum_{\omega \in \tilde \sigma } d_\omega = \mu
  (m_\sigma)$,
\begin{align*}
  H_{cl}(k[\Delta], t) & = H(R\otimes_k k\otimes_B k[\Delta], t) = H(R,t) H(k\otimes_B k[\Delta], t) \\ &
  = \frac 1{(1-t)^e} \sum_{i \geq 0} \dim k[\Delta]_n t^n = \frac 1{(1-t)^e} \sum _{\tilde \sigma
    \in \tilde \Delta} \prod_{\omega \in \tilde \sigma}
  \frac{1-(1-t)^{d_\omega}}{(1-t)^{d_\omega}}\\ & =\frac 1{(1-t)^e} \sum _{\tilde \sigma \in \tilde
    \Delta} \frac 1{(1-t)^{\mu(m_\sigma )} } \prod_{\omega \in \tilde \sigma}
  (1-(1-t)^{d_\omega})\\
  & = \frac 1{(1-t)^e} \sum_{m \in {\Nb}^B } \frac {g(m)} {(1-t)^{\mu(m)}} \prod_{m(s)\neq 0}
  (1-(1-t)^{s})^{m(s)}.
\end{align*}
\end{pfof}
Let us now also describe the Lie algebroid Hilbert series of $k[\Delta]$.  
\begin{theorem}\label{lie-alg-hilb} Let $r= |\Omega|$ and $\tilde f_l$ be the face numbers of $\tilde
  \Delta$. Then
  \begin{align*}
    H_{Lie}(t) &=  \sum_{n\geq 0}\ell_{T_A(I_\Delta)}(G_{J_m}(k[\Delta])^n)t^n \\ 
&=\sum_{n\geq 0} \ell_{\g_k} (k\otimes_R k[\Delta]_n)t^n 
    = \frac 1{(1-t)^r} \sum_{i=0}^r \tilde f_{i-1}t^i(1-t)^{r-i}.
\end{align*}
We have $\tilde f_i \leq f_i$.
\end{theorem}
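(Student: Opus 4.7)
The plan is to build on \Lemma{\ref{s-r-iso}} combined with the local system property from \Proposition{\ref{mon-hilb}} (and \Theorem{\ref{mon-the}}), so that the Lie-algebroid length reduces to a $\g_k$-length computation. First, since each graded component $G_{J_m}(k[\Delta])^n \cong k\otimes_R k[\Delta]_n$ (after tensoring with $R$) is a local system over $\g_R$, we have $\ell_{T_A(I_\Delta)}(G_{J_m}(k[\Delta])^n)=\ell_{\g_R}(G_{J_m}(k[\Delta])^n)=\ell_{\g_k}(k\otimes_R k[\Delta]_n)$, so only the fibre length has to be computed.

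Next I would exploit \Theorem{\ref{simplex-lie}}, which gives $\g_k=\rf\rtimes\bigoplus_{\omega,d_\omega\geq 2}\Sl(V_\omega)$. Using $\ell_{\g_k}(M)=\dim_k M^{\nf_k}$ for $\nf_k$ a maximal nilpotent subalgebra of the Levi $\Lc=\bigoplus_\omega \Sl(V_\omega)$, I would observe that $\Lc$ preserves each summand $\bigotimes_{\omega\in\tilde\sigma}S^{n_\omega}(V_\omega)$ of the decomposition in \Lemma{\ref{s-r-iso}}, acting on the $\omega$-factor as the irreducible symmetric power of the standard $\Sl(V_\omega)$-module. Hence each summand's $\nf_k$-invariant subspace is one-dimensional (a tensor product of highest weight lines), and summing over the decomposition gives
\begin{displaymath}
\ell_{\g_k}(k\otimes_R k[\Delta]_n)=\sum_{\tilde\sigma\in\tilde\Delta}\#\{(n_\omega)_{\omega\in\tilde\sigma}:n_\omega\geq 1,\ \textstyle\sum n_\omega=n\}.
\end{displaymath}

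With this counting in hand, the generating function is
\begin{displaymath}
H_{Lie}(t)=\sum_{\tilde\sigma\in\tilde\Delta}\prod_{\omega\in\tilde\sigma}\frac{t}{1-t}=\sum_{i=0}^{r}\tilde f_{i-1}\Bigl(\frac{t}{1-t}\Bigr)^{i}=\frac{1}{(1-t)^r}\sum_{i=0}^{r}\tilde f_{i-1}t^i(1-t)^{r-i},
\end{displaymath}
with the convention $\tilde f_{-1}=1$ coming from $\tilde\sigma=\emptyset$. For the inequality $\tilde f_i\leq f_i$, I would use \Lemma{\ref{incl-lemma}}: any $\tilde\sigma\in\tilde\Delta$ with $|\tilde\sigma|=i+1$ is the image of a face $\sigma\in\Delta$ with $|\sigma|=i+1$ obtained by picking one vertex from each $\Vc_\omega$ with $\omega\in\tilde\sigma$, giving a surjection of the sets of $i$-faces.

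The step I expect to require most care is verifying that $\nf_k$ really preserves the decomposition of \Lemma{\ref{s-r-iso}} summand by summand and that the contribution of each summand to $\nf_k$-invariants is exactly one-dimensional. This rests on checking that $\Sl(V_\omega)\subset T_B(I_\Delta)$ via \Lemma{\ref{lie-lemma-simplex}} (since $\Vc_\omega\subset\{i\}^-$ for $i\in\Vc_\omega$), and that $k\otimes_R k[\Delta]$ genuinely carries the Levi action coherently with the tensor decomposition; the remaining radical contribution to $\g_k$ is handled automatically by the identity $\ell_{\g_k}(M)=\dim M^{\nf_k}$, which relies on $\nf_k$ being maximal nilpotent in a Levi factor rather than in all of $\g_k$.
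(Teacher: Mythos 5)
Your proposal is correct and follows essentially the same route as the paper's proof: reduce to fibre lengths via the local-system property discussed before \Proposition{\ref{mon-hilb}}, apply the decomposition of \Lemma{\ref{s-r-iso}}, use the Levi structure from \Theorem{\ref{simplex-lie}} to see that each summand $\bigotimes_{\omega}S_{n_\omega}(V_\omega)$ is simple (hence contributes length one), and then sum the resulting composition counts as in the classical Stanley--Reisner computation, with the same face-lifting argument for $\tilde f_i\leq f_i$. Your explicit count of compositions, giving $\sum_{\tilde\sigma}\prod_{\omega\in\tilde\sigma}t/(1-t)$, is in fact a cleaner rendering of the paper's final step than its intermediate display.
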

We notice in passing that the dimension and multiplicity, described in
\Definition{\ref{def-dimension-mult}}, equals $r$ and 1, respectively.
\begin{proof} The first equality in the second line follows since $G_{J_m}(k[\Delta])^i$ is a local
  system for each $i$; see the discussion before \Proposition{\ref{mon-hilb}}.  Recall also that the
  degree $0$ term in $k\otimes_R k[\Delta]$, corresponding to the face $\emptyset$, is the simple
  $\g_k$-module $k$. For a face $\tilde \sigma \in \tilde \Delta$ we let $V_{\tilde \sigma} $ be a
  vector space of dimension $|\tilde \sigma|$.  We have (as detailed below)
  \begin{align*}
    & \sum_{n\geq 0} \ell_{\g_k} (k\otimes_R k[\Delta]_n)t^n = 1+ \sum_{n\geq 1}
    \left(\sum_{\tilde \sigma \in \tilde \Delta } \ \bigoplus_{\substack{\omega
          \in \tilde \sigma, n_\omega \geq 1,\\ \sum n_\omega = n}} \ell_{\g_k}
      (\bigotimes
      S_{n_\omega}(V_\omega))\right) t^n\\
    &= 1+ \sum_{n\geq 1} \left(\sum_{\tilde \sigma \in \tilde \Delta }
     \dim_k S_n(V_{\tilde \sigma }) \right) t^n = \sum_{n\geq 0} \left(\sum_{\tilde
        \sigma \in \tilde \Delta } \dim_k S_n(V_{\tilde \sigma }) \right) t^n = \frac
    1{(1-t)^r} \sum_{i=0}^r \tilde f_{i-1}t^i(1-t)^{r-i}.
  \end{align*}
  \Lemma{\ref{s-r-iso}} implies the first equality.  Since a Levi sub-algebra of $\g_k$ is of the
  form $\oplus_{\omega \in \Omega, d_\omega \geq 2 } \Sl (V_\omega)$, it follows that $ \otimes
  S_{n_\omega}(V_\omega)) $ is a simple $\g_k$-module (see \Theorem{\ref{mon-the}} and the proof of
  (2) in that theorem).  This explains the second equality, and the last one is as in the classical
  case.  To see that $\tilde f_i \leq f_i$ it suffices to think that each face $\sigma $ in $\tilde
  \Delta$ come by a face $ \sigma $ in $\Delta$.
\end{proof}
\begin{example} Let $\Delta$ be the 3-dimensional non-pure simplicial complex of the set $\Vc =
  \{a,b,c,d,e,f,g\}$ as depicted in the picture the left below, so its facets are $\{\{a,b,c,g\},
  \{d,e,f,g\}, \{c,d,g\}\} $. The partitioning is $\Vc = \{\{a,b\}, \{f,e\}, \{c\}, \{d\}, \{g\}\}$,
  and the essential simplicial complex $\tilde \Delta$ is the picture to the right, which is pure of
  dimension $2$.
\begin{center}
\begin{tikzpicture}
[scale=1, vertices/.style={draw, fill=black, circle, inner sep=0.5pt}]
\node[vertices, label=left:{$g\ $}] (a) at (0,0) {};
\node[vertices, label=right:{$c$}] (b) at (0.8,0.8) {};
\node[vertices, label=left:{$a$}] (c) at (-1,0.5) {};
\node[vertices, label=above:{$b$}] (d) at (-0.1,1.5) {};
\node[vertices, label=right:{$d$}] (e) at (1.0,-0.2) {};
\node[vertices, label=left:{$f$}] (f) at (-0.8,-0.8) {};
\node[vertices, label=right:{$e$}] (g) at (0.2,-1.0) {};
\foreach \to/\from in {a/b,a/c,a/d,b/d,c/d,a/e,b/e,f/g,a/f,a/g,g/e}
\draw [-] (\to)--(\from);
\draw [dashed] (b)--(c);
\draw [dashed] (e)--(f);
\node[vertices, label=left:{$\bar g\ $}] (a1) at (3,0) {};
\node[vertices, label=right:{$\bar c$}] (b1) at (3.8,0.8) {};
\node[vertices, label=left:{$\bar a$}] (c1) at (2,0.5) {};
\node[vertices, label=right:{$\bar d$}] (e1) at (4.0,-0.2) {};
\node[vertices, label=right:{$\bar e$}] (g1) at (3.2,-1.0) {};
\foreach \to/\from in {a1/b1,a1/c1,a1/e1,b1/e1,a1/g1,g1/e1,b1/c1}
\draw [-] (\to)--(\from);
\end{tikzpicture}
\end{center}
We have $\Omega = \{\bar a, \bar c, \bar d, \bar e, \bar g\}$ and $d_{\bar a} = d_{\bar e}= 2,
d_{\bar c}= d_{\bar d} = d_{\bar g}=1$, so that $B= \{1,2\}$.  We can collect the basic data in two
tables:
\begin{center}
  \begin{tabular}{cc}
    Faces              & Multiplicity fcn \\
    \hline
    $\emptyset$ & 0 \\
    $\bar c, \bar d, \bar g$           &  $\delta_{s1} $\\
    $\bar a$, $\bar e$          &   $\delta_{s2}$ \\
    $\bar c \bar d, \bar d \bar g , \bar g \bar c  $ & $2\delta_{s1}$ \\
    $\bar a \bar c, \bar d \bar e, \bar e \bar g, \bar g \bar a  $    &   $\delta_{s1}+
    \delta_{s2}$ \\
    $\bar a \bar c \bar g, \bar d  \bar e \bar g $ &  $2\delta_{s1} + \delta_{s2} $\\
    $  \bar c \bar d \bar g$ &  $3 \delta_{s1}$ \\
  \end{tabular}
\end{center}
\begin{center}
  \begin{tabular}{cccc}
    Multiplicity fcn $m$ &  face multiplicity $g$ & mass $\mu$ & $\prod_{m(s)\neq 0 }(1-(1-t)^{s})^{m(s)}$\\
    \hline
    $0$ & 1  & 0&1 \\
    $\delta_{s1} $ & 3 &1& $t$ \\
    $\delta_{s2}$ & 2 & 2&$ (1-(1-t)^2) $\\
    $2\delta_{s1} $ & 3 &2&$t^2$ \\
    $\delta_{s1}+
    \delta_{s2}$ &4 & 3& $t(1-(1-t)^2)$ \\
    $2\delta_{s1} + \delta_{s2} $ &2 & 4& $t^2(1-(1-t)^2)$ \\
    $3 \delta_{s1}$ & 1 &3&$ t^3$ \\
  \end{tabular}
\end{center}
The face numbers are $f_{-1}=1, f_0 =7, f_1=13,f_2=9, f_3=2 $, so \Theorem{\ref{hilb-eq}} implies
the not immediately evident identity
\begin{align*}
H(t)&=  \frac 1{(1-t)^7} [(1-t)^7+ 7 (1-t)^6 t +13 (1-t)^5 t^2 + 9 (1-t)^4t^3 + 2(1-t)^3 t^4 ] =
\\
& = \frac 1{(1-t)^7}[ (1-t)^7+ 3  (1-t)^6t+3  (1-t)^5 t^2+ 2 (1-t)^5 (1-(1-t)^2) \\ & +   (1-t)^4
t^3+4  (1-t)^4 (1-(1-t)^2) t+2  (1-t)^3 (1-(1-t)^2) t^2
].
\end{align*}

We have $\tilde f_{-1}=1, \tilde f_0=5, \tilde f_1= 7, \tilde f_2= 3, \tilde f_3=0$, the
fibre Lie algebra $\g_k= \rf \rtimes \Sl_2 \oplus \Sl_2 $, and one can  immediately read off the Lie
algebroid Hilbert series from \Theorem{\ref{lie-alg-hilb}} .
\end{example}
\section{Complex analytic singularities}\label{hypersurfaces}
Let $\Oc_n$ be the ring of complex convergent power series, and denote
its maximal ideal by $\mf$. Let $ I\subset \mf $ be an ideal,
$B=\Oc_n/I$, and $ T_{\Oc_n}(I)$ the Lie algebroid of tangential
derivations, so $T_B= T_{\Oc_n}(I)/IT_{\Oc_n}$.  Let $J\subset \Oc_n$
be the contraction of the Jacobian ideal $\bar J \subset B$, as
defined in \Section{\ref{allowed}}, so for example when $I=(f)$, then
$J = I + T_{\Oc_n}\cdot f $. Then $I \subset J $ is an inclusion of
$T_{\Oc_n}(I)$-preserved ideals (see \cite{kallstrom:preserve,
  scheja:fortzetsungderivationen}), so that putting $A= \Oc_n/J$ we
have a homomorphism of $T_{\Oc_n}(I)$-modules $B\to A$, and 
surjective homomorphisms of $\Oc_n$-modules and Lie algebras
$T_{\Oc_n}(I) \to T_{B} $ and $T_{\Oc_n}(J)\to T_{A}$.  Note that a
radical ideal $I$ is an ideal of definition of the
$T_{\Oc_n}(I)$-module $\Oc_n$ if and only if $B$ is a regular local
ring \Prop{\ref{reg-def-ideal}}.

It follows from Rossi's theorem \cite{rossi:vectorfields} that we may
assume (see e.g. a discussion in \cite{granger-schulze:initial-lieb}), that
\begin{displaymath}
  T_{\Oc_n}(I) \subset T_{\Oc_n}(\mf)= \mf T_{\Oc_n},
\end{displaymath}
and we will do so for the remainder of this section.  This implies that $\mf$ is
the maximal defining ideal for any $T_{\Oc_n}(I)$-module of finite type over
$\Oc_n$, and the fibre Lie algebra
\begin{displaymath}
  \g_{\Cb}= \frac {T_{\Oc_n}(I)}{\mf T_{\Oc_n}(I)}.
\end{displaymath}
The finite dimensional Lie algebra $\g_{\Cb}$ reflects certain symmetries of the singularity $B$
(see \Example{\ref{fibre-example}}).  We have the following two rather extreme cases: if $I=
(x^{\alpha})\subset \Cb\{x_1, x_2, \dots , x_n\}$ (a normal crossing divisor), one can assume
without loss of generality that $\alpha = (\alpha_1, \dots , \alpha_n)$ satisfies $\alpha_i>0$ for
all $i$, then $\g_{\Cb} = \Cb^n$, the $n$-dimensional commutative Lie algebra; if $I= \mf$, then
$\g_\Cb = \gl_n$, the general linear Lie algebra.  More examples are given below.

\begin{lemma} Let $I \subset \mf^2$ be an ideal, and $J\subset \mf$
  the Jacobian ideal of $I$.  Consider $\Oc_n$ as a
  $T_{\Oc_n}(I)$-module.  The following  are equivalent:
\begin{enumerate}
\item $J$ is an ideal of definition.
\item  $\dim_\Cb \Oc_n/J <  \infty$.
\item $\Oc_n/I$ has an isolated singularity.
\end{enumerate}
\end{lemma}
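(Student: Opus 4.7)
The plan is to establish (2) $\Leftrightarrow$ (3) via the Jacobian criterion, and then to derive (1) $\Leftrightarrow$ (2) from the fact that, under the running assumption $T_{\Oc_n}(I)\subset \mf T_{\Oc_n}$, the maximal ideal $\mf$ is the maximal defining ideal for the $T_{\Oc_n}(I)$-module $\Oc_n$.

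For (2) $\Leftrightarrow$ (3): since $J$ is by definition the contraction of the Jacobian ideal $\bar J$ of $B=\Oc_n/I$ along the projection $\pi:\Oc_n\to B$, one has a canonical isomorphism $\Oc_n/J\cong B/\bar J$, so (2) is equivalent to $\dim_\Cb B/\bar J<\infty$. By the Jacobian criterion of regularity, which here is a consequence of \Theorem{\ref{goodring}} applied to $\Oc_n$ at every prime of $B$, the non-regular locus of $\Spec B$ coincides with $V(\bar J)\subset \Spec B$. The singularity of $B$ being isolated then amounts to $V(\bar J)\subset \{\mf_B\}$, equivalently $\mf_B^N\subset \bar J$ for some $N\geq 1$, which is equivalent to $\dim_\Cb B/\bar J<\infty$.

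For (1) $\Leftrightarrow$ (2): the implication (2)$\Rightarrow$(1) is immediate, since the Jacobian ideal is preserved by the tangential Lie algebroid (\cite{kallstrom:preserve, scheja:fortzetsungderivationen}), so that $T_{\Oc_n}(I)\cdot J\subset J$, and $\ell_{T_{\Oc_n}(I)}(\Oc_n/J)\leq \dim_\Cb \Oc_n/J$ is finite by (2). For the converse, the assumption $T_{\Oc_n}(I)\subset \mf T_{\Oc_n}$ ensures that each $\delta\in T_{\Oc_n}(I)$ satisfies $\delta(\mf)\subset \mf$, so $\mf$ is itself a defining ideal, with $\ell_{T_{\Oc_n}(I)}(\Oc_n/\mf)=\dim_\Cb \Cb=1$. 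If now $J$ is any defining ideal, then by \Proposition{\ref{reg-def-ideal}}, (2), $\sqrt{J}$ is the unique maximal defining ideal containing $J$; since $\mf$ is also a defining ideal containing $J$, the maximality of $\sqrt J$ forces $\mf\subset \sqrt J$, and hence $\sqrt{J}=\mf$. Therefore $\mf^N\subset J$ for some $N$, yielding $\dim_\Cb \Oc_n/J\leq \dim_\Cb \Oc_n/\mf^N<\infty$.

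The only nontrivial input is the analytic Jacobian criterion at all (not only maximal) primes of $B$, which is standard for quotients of $\Oc_n$ since $\Oc_n$ satisfies the hypotheses of \Theorem{\ref{goodring}}; everything else is a direct unraveling of the definition of defining ideal together with \Proposition{\ref{reg-def-ideal}}.
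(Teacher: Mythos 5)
The paper itself does not write out a proof of this lemma: it records that $(2)\Leftrightarrow(3)$ is ``very well known'' and that $(1)\Leftrightarrow(2)$ is easy, and omits the argument. Your proposal is correct and supplies precisely the intended standard reasoning: since $J=\pi^{-1}(\bar J)$ one has $\Oc_n/J\cong B/\bar J$, and the Jacobian criterion identifies $V(\bar J)$ with the non-regular locus of $\Spec B$, giving $(2)\Leftrightarrow(3)$; for $(2)\Rightarrow(1)$ you correctly use that $J$ is $T_{\Oc_n}(I)$-stable and $\ell_{T_{\Oc_n}(I)}(\Oc_n/J)\leq\dim_\Cb\Oc_n/J$; and for $(1)\Rightarrow(2)$ the standing reduction $T_{\Oc_n}(I)\subset\mf T_{\Oc_n}$ (Rossi) makes $\mf$ a maximal defining ideal, so Proposition \ref{reg-def-ideal}, (2), forces $\sqrt J=\mf$ and hence $\mf^N\subset J$, which is exactly how the paper intends the ``easy'' implication (one could equally quote the sentence after Rossi's theorem, that $\mf$ is \emph{the} maximal defining ideal). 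The only caveat, which applies to the lemma as stated and not specifically to your argument, is that the identification of $V(\bar J)$ with the non-regular locus uses the Jacobian ideal built from height-$r$ minors ($r=\hto I$), and this genuinely cuts out the singular locus in the unmixed (e.g.\ complete intersection or hypersurface) situations in which the lemma is applied later; your appeal to the criterion ``at all primes'' is the same implicit convention the authors make when calling $(2)\Leftrightarrow(3)$ well known.
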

Of course, the assertion $(2) \Llra (3) $ is very well known, and it is easy to see that $(1)\Llra
(2)$, so the proof is omitted.

% \begin{proof} Put $\g= T_{\Oc_n}(I)$.  $(1)\Rightarrow (2)$: By assumption, $\mf$ is the maximal
%   defining ideal of the $\g$-module, hence $\sqrt{J}= \mf$ \Prop{\ref{reg-def-ideal}}, so there
%   exists an integer $r$ such that $\mf^r \subset J$.  This implies (2).  $(2)\Rightarrow (1)$: This
%   is obvious.  $(2)\Rightarrow(3)$: The ideal $I$ defines a germ of an analytic variety $V\subset
%   (\Cb^n,0)$.  (2) implies that $\sqrt{J}= \mf$, hence the sheaf of ideals on $V$ that is induced by
%   $J$, and again denoted by $J$, has the property $J_{x'}= \Oc_{\Cb^n,x'}$ when $x'\neq 0$ is close
%   to $0$.  By the Jacobian criterion of regularity, it follows that $\Oc_{\Cb^n,x'}/I_{x'}$ is a
%   regular local ring.

%   $(3)\Rightarrow (2)$: The locus $V(J)$ defines the singular locus of
%   the variety germ $V(I)\subset (\Cb^n, 0)$.  By assumption $V(J)=0$, hence $\sqrt{J}=
%   \mf$ by Hilbert's nullstellensatz for complex analytic spaces. This
%   implies (2).
% \end{proof}
It is a basic problem to understand the structure of $T_B$-modules $M$ such as $B$ or $\bar J= B J$,
and for this purpose the Hilbert series $H_{M}(t)$ captures essential information.  In the light of
\Proposition{\ref{solv-fibre}} it is important to know when the fibre Lie algebra $\g_{\Cb}$ is
solvable.
\begin{proposition}[\cite{granger-schulze:formalstructure}*{Prop. 6.2}]\label{granger-schulze} Put $I=(f)$, $f\in \Oc_n$, and
  $\g = T_{\Oc_n}(I)$. If $\g$ is free over $\Oc_n$ and $n \leq 3$,
  then $\g_{\Cb}$ is solvable.
\end{proposition}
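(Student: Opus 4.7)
The plan is to reduce the claim to the only serious case $n=3$ and then to rule out the unique non-solvable complex Lie algebra of that dimension. Since $T_{\Oc_n}$ has generic rank $n$ and $\g=T_{\Oc_n}((f))$ coincides with $T_{\Oc_n}$ on the open set $\{f\neq 0\}$, the hypothesis that $\g$ is free over $\Oc_n$ forces $\g\cong\Oc_n^{n}$, so $\dim_\Cb\g_\Cb=n$. For $n=1,2$ every complex Lie algebra of dimension at most two is solvable, so there is nothing to prove, and the classification of three-dimensional complex Lie algebras leaves $\Sl_2(\Cb)$ as the only non-solvable candidate for $\g_\Cb$ when $n=3$. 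The remainder of the proof will consist in deriving a contradiction from the assumption $\g_\Cb\cong\Sl_2$.

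Assume $\g_\Cb\cong\Sl_2$. The inclusion $\g\subset\mf T_{\Oc_n}$ yields a linear-part map to $\mf T_{\Oc_n}/\mf^{2}T_{\Oc_n}$, which, since $\mf\g\subset\mf^{2}T_{\Oc_n}$, descends to a Lie-algebra homomorphism $\rho\colon\g_\Cb\to\gl(\mf/\mf^{2})=\gl_{3}(\Cb)$. Its image is either $0$ or all of $\Sl_2$ by simplicity; the image $0$ would place $\g$ inside $\mf^{2}T_{\Oc_n}$ and hence $[\g,\g]$ inside $\mf^{3}T_{\Oc_n}$, forcing $[\g_\Cb,\g_\Cb]=0$, which is absurd. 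So $\rho$ is injective and $\mf/\mf^{2}$ is a faithful three-dimensional $\Sl_2$-module: either the simple $V_{2}=S^{2}V_{1}$, or the decomposable $V_{1}\oplus V_{0}$.

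The contradiction will come from the initial form of $f$. Writing $d=\mathrm{ord}_\mf(f)$ and $f_d\in S^{d}(\mf/\mf^{2})$ for the class of $f$, the inclusion $\delta(f)\in(f)$ for every $\delta\in\g$ forces at lowest order $\rho(\delta)\cdot f_d\in\Cb f_d$, so $\Cb f_d$ is a $\g_\Cb$-stable line; since $\Sl_2$ has no non-trivial characters, $f_d$ must be an $\Sl_2$-invariant vector in $S^{d}(\mf/\mf^{2})$. In the reducible case $V_{1}\oplus V_{0}$ the $\Sl_2$-invariants of $S^{d}$ are $\Cb\cdot z^{d}$ (with $V_{0}=\Cb z$), forcing $f\equiv cz^{d}\bmod\mf^{d+1}$; Weierstrass preparation puts $f$ in a Weierstrass polynomial form in $z$, and a direct Saito-matrix computation on three $\Oc_n$-generators of $\g$ violates Saito's freeness criterion $\det(\text{Saito})=\text{unit}\cdot f$. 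In the simple case $V_{2}$ the invariant ring $S^{\bullet}(V_{2})^{\Sl_2}$ is generated by the discriminant $\Delta\in S^{2}V_{2}$, so $d$ is even and $f_d\in\Cb\cdot\Delta^{d/2}$, putting $f$ in a second explicit normal form that again fails Saito's criterion.

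The principal obstacle is executing the final step of this case analysis rigorously: one must lift the abstract isomorphism $\g_\Cb\cong\Sl_2$ to actual $\Oc_n$-generators of $\g$ (or, at least, to $\hat{\Oc}_n\otimes_{\Oc_n}\g$, where a formal Levi--Malcev splitting is available because $\hat{\Oc}_n$ is complete), produce the two explicit normal forms for $f$ above, and then verify in each that Saito's determinantal identity cannot hold. Once the lift and the normal forms are obtained, the resulting determinantal obstruction is a finite calculation, which is essentially where the formal-structure arguments of Granger and Schulze concentrate their effort.
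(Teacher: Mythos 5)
The paper itself offers no proof of this statement: it is quoted directly from Granger and Schulze (Prop.\ 6.2 of their paper on the formal structure of logarithmic vector fields), so there is no internal argument to compare with and your proposal must stand on its own. Its elementary reductions are fine: under the standing normalisation $\g\subset\mf T_{\Oc_n}$ (which you use silently but which comes from Rossi's theorem as in Section 5), freeness forces $\g\cong\Oc_n^n$ and $\dim_\Cb\g_\Cb=n$; dimensions $\le 2$ are automatically solvable; the only non-solvable $3$-dimensional complex Lie algebra is $\Sl_2$; and the linear-part map $\rho:\g_\Cb\to\gl(\mf/\mf^2)$ is well defined because $[\mf T_{\Oc_n},\mf^2T_{\Oc_n}]\subset\mf^2T_{\Oc_n}$. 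But everything that actually excludes $\g_\Cb\cong\Sl_2$ is deferred, so what you have is a plan rather than a proof.

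Two concrete gaps. First, the step ``$\rho=0$ forces $[\g_\Cb,\g_\Cb]=0$'' needs $[\g,\g]\subset\mf\g$, whereas $\g\subset\mf^2T_{\Oc_n}$ only gives $[\g,\g]\subset\g\cap\mf^3T_{\Oc_n}$, and the inclusion $\g\cap\mf^3T_{\Oc_n}\subset\mf\g$ is not automatic; some argument (e.g.\ comparing orders after writing $\g=[\g,\g]+\mf\g$, which perfectness of $\Sl_2$ gives) must be supplied. Second, and decisively, the contradiction in the two cases $V_1\oplus V_0$ and $V_2$ is only announced. Knowing the initial form $f_d$ (that $f_d=cz^d$, resp.\ $c\,\Delta^{d/2}$) does not determine the Saito matrix of $f$, so ``a direct Saito-matrix computation violates Saito's criterion'' is not yet an argument. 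What is actually needed is to linearise a copy of $\Sl_2$ inside $\g$ (a formal Levi/linearisation step, together with the observation that $T_{\Oc_n}((f))$, its freeness, and $\g_\Cb$ are unchanged under completion), to conclude from perfectness that $f$ itself may be taken $\Sl_2$-invariant, hence by invariant theory $(f)=(z^m)$ in the reducible case and $(f)=(\Delta^m)$ with $\Delta$ the quadric in the irreducible case, and finally to rule these out: $(z^m)$ puts $\partial_x,\partial_y$ into $\g$, contradicting $\g\subset\mf T_{\Oc_n}$, while $T_{\Oc_3}((\Delta^m))=T_{\Oc_3}((\Delta))$ is the logarithmic module of the quadric cone, which is not free. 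These are precisely the steps you acknowledge postponing, and they are where the content of the Granger--Schulze result lies; until they are carried out the proposal does not establish the proposition.
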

\begin{theorem}\label{isol-solv} Let $I\subset \mf^2$ be an ideal generated by a
  regular sequence, and assume that $\Oc_n/I$ has an isolated
  singularity.  If $I$ is a principal ideal, assume moreover that
  $I\subset \mf^3$.  If $\g = T_{\Oc_n}(I)$, then $\g_{\Cb} $ is
  solvable.
\end{theorem}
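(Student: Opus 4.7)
The plan is to reduce Theorem~\ref{isol-solv} to the hypersurface case established by Granger--Schulze \cite{granger-schulze:initial-lieb}, by embedding $\g_{\Cb}$ into a concrete Lie algebra of linear vector fields. Since $T_{\Oc_n}(I)\subset \mf T_{\Oc_n}$ by the running assumption, passing to $\mf T_{\Oc_n}/\mf^2 T_{\Oc_n}\cong \gl(V)$ (with $V=\mf/\mf^2$) yields a Lie algebra homomorphism $\bar\rho\colon \g_{\Cb}\to \gl(V)$ which sends a derivation to its linear part. The image $\bar\g_{\Cb}$ is forced to preserve each graded piece $\mathrm{in}(I)_d\subset S^d(V)$ of the initial ideal. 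I will show separately that (i) the image $\bar\g_{\Cb}$ is solvable and (ii) the kernel of $\bar\rho$ is solvable, whence $\g_{\Cb}$ itself is solvable.

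For the image I will treat the principal and non-principal cases separately. If $I=(f)$ with $f\in\mf^3$, the solvability of $\bar\g_{\Cb}$ is exactly Granger--Schulze \cite{granger-schulze:initial-lieb}, which I simply cite. If $r\ge 2$ and all $f_i\in \mf^2$, I combine \Proposition{\ref{solv-kernel}} (reducing the solvability question to that of the Levi factors acting faithfully on $V$) with \Theorem{\ref{recognition}} (which determines the candidate Levi factors $\bigoplus_f \Sl_{n_f}$ from the $\bar\g_{\Cb}$-module structure on $V$). The candidate Levi factors must simultaneously stabilize all quadratic initial forms $f_{i,2}\in S^2(V)$, and the higher-degree initial forms of $I$. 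The isolated singularity of the complete intersection is what forces the joint stabilizer of these forms to contain no simple Levi factor, and this is precisely where the assumption $r\ge 2$ enters: a single non-degenerate quadric would admit the non-solvable conformal stabilizer, whereas for $r\ge 2$ the simultaneous constraints from several forms cut the stabilizer down.

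For the kernel of $\bar\rho$ I iterate linearization along the $\mf$-adic filtration $F_d T_{\Oc_n}(I)=T_{\Oc_n}(I)\cap \mf^{d+1}T_{\Oc_n}$. Each graded piece $F_d/F_{d+1}$ embeds in the space of polynomial vector fields of degree $d$ vanishing at the origin, and the bracket on $\mathrm{gr}\,T_{\Oc_n}(I)$ is compatible with that on $\bar\g_{\Cb}$ viewed as the degree-zero piece. Using that $\bigcap_d \mf^d T_{\Oc_n}=0$ by Krull's theorem and that $T_{\Oc_n}(I)$ is Noetherian over $\Oc_n$, solvability of the degree-zero piece will propagate to solvability of the whole graded Lie algebra, and hence to $\g_{\Cb}$.

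The hard part will be the second paragraph, specifically showing that the joint stabilizer in $\gl(V)$ of the system of initial forms of the regular sequence contains no simple Levi factor under the complete intersection and isolated singularity assumptions. I anticipate that a case analysis on the degrees of the initial forms, together with a Koszul-type argument exploiting the regular sequence structure and the finiteness of the Jacobian algebra (which is $\mf$-primary by the isolated singularity assumption), will be required to rule out each of the classical simple Lie subalgebras of $\gl(V)$ as a possible Levi factor.
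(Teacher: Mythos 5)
Your plan has a genuine gap at exactly the point you yourself flag as ``the hard part'': for $r\geq 2$ you never prove that the image $\bar\g_{\Cb}\subset \gl(V)$ is solvable, you only anticipate that a case analysis on the initial forms will rule out simple Levi factors. Nothing in the proposal connects the isolated-singularity and complete-intersection hypotheses to that conclusion, and the tools you invoke do not apply here: \Theorem{\ref{recognition}} requires that the subalgebra of $\gl_k(V)$ contain a Cartan subalgebra of $\Sl_k(V)$, a hypothesis available in the toral/monomial setting of \Section{\ref{toral-section}} but with no analogue for a general analytic germ, and \Proposition{\ref{solv-kernel}} only tells you that Levi factors of $\g_\Cb$ act faithfully on $V=\mf/\mf^2$ (which is automatic here since $\Ker(\alpha)=0$), not that they vanish. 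The reduction to the joint stabilizer of the initial forms is also the wrong target: the image of $\g_\Cb$ lands in the stabilizer of the initial ideal, but that stabilizer can be strictly larger than the image (passing to initial forms loses the higher-order conditions imposed by preserving $I$), so even a successful classification of simple subalgebras of that stabilizer would prove something both harder than and not obviously implied by the hypotheses; conversely, if the stabilizer happens to be non-solvable you learn nothing about $\g_\Cb$. In short, the central claim of the theorem is left unproved. A secondary issue: for the kernel of $\bar\rho$, Krull's intersection theorem plus Noetherianity is not enough; to conclude that some finite term of the derived series lands in $\mf\, T_{\Oc_n}(I)$ you need $\mf^N T_{\Oc_n}\cap T_{\Oc_n}(I)\subset \mf\, T_{\Oc_n}(I)$ for large $N$, which is an Artin--Rees statement, not a consequence of $\bigcap_N \mf^N T_{\Oc_n}=0$ alone.

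For comparison, the paper's proof avoids any analysis of linear stabilizers. It uses Artin--Rees to find $r$ with $(I+\mf^r)T_{\Oc_n}\cap T_{\Oc_n}(I)\subset \mf^2 T_{\Oc_n}(I)+IT_{\Oc_n}$, observes that $T_{\Oc_n}(I)\subset T_{\Oc_n}(I+\mf^r)$ gives a Lie algebroid map $T_{\Oc_n}(I)\to T_{\Oc_n/(I+\mf^r)}$, and then quotes Scheja--Wiebe (Korollar 3.2 of \cite{scheja-wiebe:derivationen-isolierten}) for the solvability of the derivation algebra of the Artinian algebra $\Oc_n/(I+\mf^r)$ -- this is where the complete-intersection, isolated-singularity, $I\subset\mf^2$ (resp.\ $\mf^3$ in the principal case) hypotheses enter. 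A short derived-series computation, using $[\mf^2 T_{\Oc_n}(I)+IT_{\Oc_n},\,\mf^2 T_{\Oc_n}(I)+IT_{\Oc_n}]\subset \mf\, T_{\Oc_n}(I)$, then gives $\DC_{l+1}(\g_{\Cb})=0$. If you want to salvage your approach, the missing input for the image is essentially this same external solvability theorem; without it, or a proof of your stabilizer claim, the argument does not go through.
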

\begin{remark}
  \Theorem{\ref{isol-solv}} was proven in
  \cite[Proposition 1.4]{granger-schulze:initial-lieb} when $I$ is a principal ideal,
using a different method.
\end{remark}

\begin{proof} We have an injective homomorphism $
  T_{\Oc_n}(I)/IT_{\Oc_n} \to T_{\Oc_n}/IT_{\Oc_n} $, so we can
  identify $T_{\Oc_n}(I)/IT_{\Oc_n} $ with a submodule of $
  T_{\Oc_n}/IT_{\Oc_n}$. By the Artin-Rees lemma there exists a
  positive integer $c$ such that for every integer $r> c+1$, we have
  \begin{displaymath}
    (\mf^r \frac {T_{\Oc_n}}{IT_{\Oc_n}}) \cap \frac{T_{\Oc_n}(I)}{IT_{\Oc_n} }=
    \mf^{r-c}((\mf^c \frac{ T_{\Oc_n}}{IT_{\Oc_n}}) \cap
\frac{    T_{\Oc_n}(I)}{IT_{\Oc_n} })\subset \mf^2 \frac{    T_{\Oc_n}(I)}{IT_{\Oc_n} },
\end{displaymath}
and therefore $ (I+ \mf^r )T_{\Oc_n} \cap T_{\Oc_n}(I)\subset \mf^2
T_{\Oc_n}(I) + IT_{\Oc_n}$. Select $r$ so that this inclusion holds.
By the initial assumption that $T_{\Oc_n}(I)\subset T_{\Oc_n}(\mf)$ we have $T_{\Oc_n}(I)\subset
T_{\Oc_n}(I+ \mf^r)$, and therefore a canonical
homomorphism of Lie algebroids
\begin{displaymath}
  T_{\Oc_n}(I)\to  T_{\Oc_n/(I+\mf^{r})}=\frac {T_{\Oc_n}(I+\mf^r)}{(I+\mf^r) T_{\Oc_n}}.
\end{displaymath}
The assumptions on $I$ imply by
\cite[Korollar 3.2]{scheja-wiebe:derivationen-isolierten} that the
finite-dimensional Lie algebra $T_{\Oc_n/(I+\mf^{r})}$ is solvable,
hence there exists a positive integer $l$ such that
  \begin{displaymath}
    \DC_l(T_{\Oc_n}(I))\subset (I + \mf^{r})T_{\Oc_n}\cap
    T_{\Oc_n}(I) \subset \mf^2 T_{\Oc_n}(I) + IT_{\Oc_n}.
\end{displaymath}
Since $[\mf^2 T_{\Oc_n}(I) + IT_{\Oc_n}, \mf^2 T_{\Oc_n}(I) +
IT_{\Oc_n}]\subset \mf T_{\Oc_n}(I)$ it follows that
$\DC_{l+1}(T_{\Oc_n}(I)) $ $ \subset \mf T_{\Oc_n}(I)$, and therefore
$\DC_{l+1}(\g_{\Cb})=0$.
\end{proof}
\begin{theorem}\label{yau-solv} Let $f\in \mf^3$, put $J= T_{\Oc_n}\cdot f + \Oc_n f$,
  and $\g = T_{\Oc_n}(J)$. If $\Oc_n/(f)$ has an isolated singularity,
  then $\g_{\Cb}$ is solvable.
\end{theorem}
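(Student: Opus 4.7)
The plan is to adapt, almost verbatim, the argument used for \Theorem{\ref{isol-solv}}, but to feed in Schulze--Yau's theorem (that the derivation algebra $T_A$ of the modular algebra $A=\Oc_n/J$ is solvable) in place of Scheja--Wiebe. The isolated singularity assumption on $\Oc_n/(f)$ together with $f\in \mf^3$ ensures two things I would use throughout: first, $J=(f)+T_{\Oc_n}\cdot f$ is $\mf$-primary, so $A=\Oc_n/J$ is a finite-dimensional local $\Cb$-algebra; second, since the generators $f,\partial_{x_1}f,\dots,\partial_{x_n}f$ all lie in $\mf^2$, we have the crucial inclusion $J\subset \mf^2$.

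The first step is to produce a homomorphism of Lie algebras $\pi\colon T_{\Oc_n}(J)\to T_{A}$. One checks directly that $JT_{\Oc_n}\subset T_{\Oc_n}(J)$ (since $(a\delta)(J)\subset J$ whenever $a\in J$), and $\pi$ is defined by restricting any $\delta\in T_{\Oc_n}(J)$ to $A$. Its kernel is exactly $JT_{\Oc_n}$, so $T_{\Oc_n}(J)/JT_{\Oc_n}$ embeds as a Lie subalgebra of $T_A$. By Schulze--Yau, $T_A$ is solvable, hence so is $T_{\Oc_n}(J)/JT_{\Oc_n}$; thus there exists $l$ with
\begin{displaymath}
    \DC_l(T_{\Oc_n}(J)) \subset JT_{\Oc_n}.
\end{displaymath}

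The heart of the proof is then a bracket computation showing $[JT_{\Oc_n},JT_{\Oc_n}]\subset \mf\, T_{\Oc_n}(J)$, which together with the above inclusion yields $\DC_{l+1}(T_{\Oc_n}(J))\subset \mf\, T_{\Oc_n}(J)$, i.e.\ $\DC_{l+1}(\g_{\Cb})=0$. For $a,b\in J$ and $\delta,\eta\in T_{\Oc_n}$ the Leibniz identity reads
\begin{displaymath}
    [a\delta,b\eta] \;=\; a\delta(b)\eta - b\eta(a)\delta + ab[\delta,\eta].
\end{displaymath}
I would handle each term by factoring out a visible element of $\mf$ while keeping an element of $JT_{\Oc_n}\subset T_{\Oc_n}(J)$: since $b\in J\subset\mf^2$ we have $\delta(b)\in\mf$, so $a\delta(b)\eta=\delta(b)\cdot(a\eta)\in \mf\cdot JT_{\Oc_n}\subset \mf\, T_{\Oc_n}(J)$; symmetrically for $b\eta(a)\delta$; finally $ab[\delta,\eta]=b\cdot(a[\delta,\eta])$ with $b\in\mf$ and $a[\delta,\eta]\in JT_{\Oc_n}\subset T_{\Oc_n}(J)$.

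The step that requires the most care — and where I expect the only real subtlety — is precisely this bracket analysis, because it is exactly the hypothesis $f\in\mf^3$ (forcing $J\subset \mf^2$, hence $\delta(b)\in\mf$ for generators) that makes the first two terms factor through $\mf$; with only $f\in\mf^2$ one would get $\delta(b)\in\Oc_n$ and the argument would break, in parallel to the sharper hypothesis needed in \Theorem{\ref{isol-solv}} for principal $I$. Everything else — the appeal to Schulze--Yau, the computation of the kernel of $\pi$, and the final derived-series bound — is formal. (Note that, unlike in the proof of \Theorem{\ref{isol-solv}}, no Artin--Rees truncation is needed here, since Schulze--Yau already delivers solvability of $T_A$ itself rather than of a quotient $T_{\Oc_n/(J+\mf^r)}$.)
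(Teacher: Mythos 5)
Your proof is correct, and its second half takes a genuinely different (and more economical) route than the paper's. Both arguments begin identically: Schulze's solvability of $T_A=T_{\Oc_n}(J)/JT_{\Oc_n}$ gives $\DC_l(T_{\Oc_n}(J))\subset JT_{\Oc_n}$ for some $l$, and both exploit $f\in\mf^3\Rightarrow J\subset\mf^2$. From there the paper proceeds by iterating the derived series to push $\DC_{s'}(T_{\Oc_n}(J))$ into an arbitrarily high power $\mf^lT_{\Oc_n}$ (using $[\mf^kT_{\Oc_n},\mf^kT_{\Oc_n}]\subset\mf^{2k-1}T_{\Oc_n}$) and then invokes the Artin--Rees lemma, applied to the $\Oc_n$-submodule $N_s$ generated by $\DC_s(T_{\Oc_n}(J))$, to convert membership in $N_s\cap\mf^lT_{\Oc_n}$ back into membership in $\mf\,T_{\Oc_n}(J)$. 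You instead verify directly, via the Leibniz identity, that $[JT_{\Oc_n},JT_{\Oc_n}]\subset \mf J\,T_{\Oc_n}\subset\mf\,T_{\Oc_n}(J)$ (using $\delta(b)\in\mf$ for $b\in J\subset\mf^2$ together with the trivial inclusion $JT_{\Oc_n}\subset T_{\Oc_n}(J)$), so a single further derived-series step gives $\DC_{l+1}(T_{\Oc_n}(J))\subset\mf\,T_{\Oc_n}(J)$ and hence $\DC_{l+1}(\g_{\Cb})=0$. This is valid: $JT_{\Oc_n}$ is an $\Oc_n$-module and a Lie ideal of $T_{\Oc_n}(J)$, brackets are bilinear, and your bookkeeping keeps the output visibly of the form $\mf\cdot(JT_{\Oc_n})$ rather than merely inside $\mf^3T_{\Oc_n}$, which is exactly what lets you dispense with Artin--Rees. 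What each approach buys: yours is shorter and makes the role of the hypothesis $f\in\mf^3$ completely transparent; the paper's truncation scheme is the more robust template, running in parallel with its proof of the complete-intersection case (Theorem \ref{isol-solv}), where one only controls the derived series modulo high powers of $\mf$ and no clean identity like $[JT_{\Oc_n},JT_{\Oc_n}]\subset\mf J\,T_{\Oc_n}$ is available. One point you leave implicit (as does the paper's proof): writing $\g_{\Cb}=T_{\Oc_n}(J)/\mf T_{\Oc_n}(J)$ and passing the derived series to this quotient uses that $T_{\Oc_n}(J)\subset\mf T_{\Oc_n}$; here this is automatic, since the isolated singularity makes $J$ $\mf$-primary and derivations preserving $J$ preserve $\sqrt{J}=\mf$ in characteristic $0$, consistent with the section's standing convention via Rossi's theorem.
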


\begin{proof} 
  Let $N_s$ be the $\Oc_n$-submodule of $T_{\Oc_n}$ that
  $\DC_s(T_{\Oc_n}(J))$ generates, $s=0,1,\dots$.  By the Artin-Rees
  lemma there exists a positive integer $c=c(s)$ such that
  \begin{displaymath}
    N_s \cap \mf^lT_{\Oc_n} = \mf^{l-c} (\mf^c T_{\Oc_n} \cap
    N_s),
  \end{displaymath}
  when $l > c$, implying $N_s\cap \mf^l T_{\Oc_n}\subset \mf
  T_{\Oc_n}(J)$ for such $l$.  By \cite{schulze:solvable},
  $T_{\Oc_n}(J)/JT_{\Oc_n}$ is solvable, hence $\DC_s(T_{\Oc_n}(J))
  \subset JT_{\Oc_n} \subset \mf^2 T_{\Oc_n}$, for sufficiently high
  $s$ and since $f\in \mf^3$.  Therefore there exists an integer
  $s'>s$ such that $\DC_{s'}(T_{\Oc_n}(J))\subset \mf^lT_{\Oc_n}$,
  implying
  \begin{displaymath}
    \DC_{s'}(T_{\Oc_n}(J)) \subset \DC_{s}(T_{\Oc_n}(J))\cap
    \mf^lT_{\Oc_n} \subset     N_s \cap \mf^lT_{\Oc_n} \subset \mf T_{\Oc_n}(J).
  \end{displaymath}
Hence $\DC_{s'}(\g_{\Cb}) =0$.
\end{proof}
\begin{remark}
  The argument in \cite{yau:solvability} for the solvability of
  $T_{\Oc_n}(J)/JT_{\Oc_n}$ is incomplete, as noted in
  \cite{schulze:solvable}. The proof in \cite{schulze:solvable} on the
  other hand does depend on ideas from \cite{yau:solvability}.
\end{remark}
\begin{theorem}\label{yau-cor} For $f\in \mf$ we put $I_f= (f)$, $J_f=
  (f) +
  T_{\Oc_n}\cdot f$, and we let $\g$ be one of the following
Lie algebroids:
  \begin{enumerate}
  \item $\g = T_{\Oc_n}(I_f)$,
  \item $\g = T_{\Oc_n}(J_f)$.
  \end{enumerate}
Then 
  \begin{displaymath}
    H_{G_{J_f}(\Oc_n)}(t)=    \sum_{i\geq 0}  \ell_{\g}    (\frac{J_f^i}{J_f^{i+1}}) t^i
  \end{displaymath}
  is a rational function.  Let $ g\in \mf$ be a function whose modular
  algebra is isomorphic to that of $f$, $\Oc_n/J_f \cong \Oc_n/J_g$.
  Then
  \begin{displaymath}
    H_{G_{J_f}(\Oc_n)}(t)=  H_{G_{J_g}(\Oc_n)}(t).
  \end{displaymath}

\end{theorem}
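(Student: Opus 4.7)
\begin{pf}
The plan is to derive the rationality from \Theorem{\ref{main}} and then to derive the equality of Hilbert series by translating the algebraic isomorphism into a contact equivalence of the germs $f$ and $g$ via Mather--Yau rigidity.

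First, to obtain rationality I would apply \Theorem{\ref{main}} to $A = \Oc_n$, $\g_A = \g$, $M = \Oc_n$ and $J = J_f$. The standing assumption $\g \subset T_{\Oc_n}(\mf) = \mf T_{\Oc_n}$ forces $\mf$ itself to be the maximal defining ideal $J_m$ of the $\g$-module $\Oc_n$, so that $R = \Oc_n/\mf = \Cb$ and the Lie algebroid $\g_R$ coincides with the finite-dimensional fibre Lie algebra $\g_\Cb$. In this ``$R = k$'' situation every $\g_\Cb$-module of finite dimension over $\Cb$ is trivially a local system, as noted after \Definition{\ref{local-system}}, and in particular each $G^i_{J_m}(J_f^n/J_f^{n+1}) = \mf^i(J_f^n/J_f^{n+1})/\mf^{i+1}(J_f^n/J_f^{n+1})$ lies in $\Loc(\g_R)$. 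The only remaining hypothesis of \Theorem{\ref{main}} to verify is that $J_f$ is a defining ideal for the $\g$-module $\Oc_n$, i.e.\ that $\ell_\g(\Oc_n/J_f)<\infty$; this is the conclusion of \Proposition{\ref{defining-module}} combined with the observation that successive $\g$-invariant ideals in the chain between $J_f$ and $\mf$ have subquotients which are finite-dimensional $\g_\Cb$-modules. With these inputs \Theorem{\ref{main}} yields the desired rational expression.

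Next, for the equality $H_{G_{J_f}(\Oc_n)}(t) = H_{G_{J_g}(\Oc_n)}(t)$ under $\Oc_n/J_f \cong \Oc_n/J_g$, the strategy is to upgrade this algebra isomorphism to a coordinate change. The Mather--Yau theorem \cite{mather-yau} in the isolated-singularity case, together with its generalisation by Greuel, Lossen and Shustin \cite{greuel-lossen-shusten} in the non-isolated case, asserts that an isomorphism of modular (Tjurina) algebras $\Oc_n/J_f \iso \Oc_n/J_g$ is induced by an automorphism $\phi \in \Aut_\Cb(\Oc_n)$ together with a unit $u \in \Oc_n^*$ such that $\phi(f) = ug$. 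Consequently $\phi((f)) = (g)$ and therefore also $\phi(J_f) = \phi((f) + T_{\Oc_n}\cdot f) = (g) + T_{\Oc_n}\cdot g = J_g$.

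Finally, conjugation by $\phi$, i.e.\ $\delta \mapsto \phi \circ \delta \circ \phi^{-1}$, transports $T_{\Oc_n}(I_f)$ onto $T_{\Oc_n}(I_g)$ and $T_{\Oc_n}(J_f)$ onto $T_{\Oc_n}(J_g)$ as Lie algebroids over $\Oc_n$, and simultaneously produces isomorphisms $J_f^i/J_f^{i+1} \iso J_g^i/J_g^{i+1}$ compatibly with the respective Lie-algebroid actions. Since $\g$-length is preserved by such equivariant isomorphisms, the two Hilbert series agree coefficient by coefficient. The principal obstacle in this plan is the Mather--Yau / Greuel--Lossen--Shustin input; once the coordinate change $\phi$ is available, the transport of Lie-algebroid structures and associated graded modules is essentially formal.
\end{pf}
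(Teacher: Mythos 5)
Your overall route is exactly the paper's: the Rossi normalization makes $\mf$ the maximal defining ideal, so that $R=\Cb$, every finite-dimensional $\g_\Cb$-module is trivially a local system, and \Theorem{\ref{main}} yields rationality; then Mather--Yau (isolated case) and Greuel--Lossen--Shustin (general case) give an automorphism $\phi$ of $\Oc_n$ with $\phi(I_f)=I_g$, hence $\phi(J_f)=J_g$, and transporting the algebroid actions along $\phi$ identifies the graded pieces equivariantly and forces the two series to coincide. That second half of your argument, including $\phi(J_f)=J_g$ from $\phi(f)=ug$, is fine and is what the paper does.

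The step that does not hold up is your verification that $J_f$ is a defining ideal. \Proposition{\ref{defining-module}} presupposes a defining ideal rather than producing one, and the ``chain of $\g$-invariant ideals between $J_f$ and $\mf$'' observation cannot give $\ell_{\g}(\Oc_n/J_f)<\infty$ beyond the isolated case: with $\g\subset\mf T_{\Oc_n}$ the subspaces $(\mf^j+J_f)/J_f$ form a descending chain of $\g$-submodules of $\Oc_n/J_f$, so finite $\g$-length forces this chain to stabilize, and Nakayama then gives $\mf^N\subset J_f$, i.e. $\dim_\Cb\Oc_n/J_f<\infty$ --- precisely the equivalence recorded in the Lemma opening \Section{\ref{hypersurfaces}}, namely that $J_f$ is an ideal of definition if and only if the singularity is isolated. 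So in the isolated case the finiteness you need is immediate from the finite dimensionality of the Tjurina algebra and the chain remark is superfluous, while in the non-isolated case, which the theorem (and the introduction) explicitly intends to cover, your justification fails; the paper's own two-line proof does not verify this hypothesis of \Theorem{\ref{main}} either, appealing only to the local-system condition, so this is a point that needs genuine extra care rather than the appeal you make. A smaller omission: for $\g=T_{\Oc_n}(J_f)$ the normalization $\g\subset\mf T_{\Oc_n}$ is not inherited from the one made for $T_{\Oc_n}(I_f)$ (the larger algebroid need not preserve $\mf$), and the paper explicitly invokes Rossi's theorem a second time to reduce to that situation; your single appeal to the standing assumption glosses over this.
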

Clearly, in the situations covered by \Proposition{\ref{granger-schulze}} and
\Theorems{{\ref{isol-solv}}}{\ref{yau-solv}}, we have $\ell_{\g}(\frac{J_f^i}{J_f^{i+1}})= \dim_\Cb
\frac{J_f^i}{\mf J_f^{i+1}}$.
\begin{proof} In the case (1) we have invoked Rossi's theorem from the
  onset to reduce to the case when $\mf$ is the maximal defining
  ideal, so $\frac{J_f^i}{J_f^{i+1}}$ is a local system $\g$-module
  for each $i$. In (2) $T_{\Oc_n}(J_f)$ need not preserve $\mf$, but
  again by Rossi's theorem we can reduce to this situation, implying
  that $\frac{J_f^i}{J_f^{i+1}}$ is a local system for each $i$. Then
  \Theorem{\ref{main}} implies that $H_{G_{J_f}(\Oc_n)}(t)$ is a
  rational function.  According to Mather and Yau \cite{mather-yau}
  (the case of isolated singularities) and the general case by Greuel,
  Lossen and Shusten \cite{greuel-lossen-shusten}*{Th. 2.26}, if
  $\Oc_n/J_f \cong \Oc_n/J_g$, then there exists an isomorphism $\Oc_n
  \to \Oc_n$ mapping $I_f$ onto $I_g$. Therefore $J_f$ is mapped onto
  $J_g$, which implies the equality.
\end{proof}
\begin{question}
  What is the dimension of the stratum of constant Hilbert series
  $H^\g_{G_{J_f}(\Oc_n)} (t)$ in a semiversal deformation of a
  hypersurface with an isolated singularity?
\end{question}
% \begin{theorem} Let $\Lc$ be a finite dimensional Lie algebra over the
%   field $k$.  There exists a tangential Lie algebroid $\g_{\Oc_n}=T_{\Oc_n}(I)$,
%   where $I\subset \Oc_n$ is a principal ideal, such that $\g_k = \Lc$. 
% \end{theorem}
% \begin{proof}
%   Unfinished*** Apply Saito's criterion to $\Oc_n$-submodules
%   $\g_{\Oc_n}\subset T_{\Oc_n}$ that are generated by $k$-Lie
%   subalgebra of $T_{\Oc_n}$ isomorphic to $\Lc$. We need that $\dim
%   \Lc = n$, and that the natural map $T_{\Oc_n}\to \g_{\Oc_n}\subset
%   T_{\Oc_n}$ has a reduced determinant.  Construction: Take a
%   sufficiently big faithful $\Lc$-module $W$ and an integral variety
%   $X$ for the Lie algebroid $\g_{\Pb(W)}$ generated by $\Lc \to
%   T_{\Pb(W)}$.  The restriction of$\g_{\Pb(W)}$ to $X$ should be
%   generically free with basis $\Lc $. Then apply Saito's criterion. If the
%   determinant is not reduced, make a blowup. ***
% \end{proof}

\begin{example}\label{gl-example}
  Assume that $I=(x_1, \dots , x_r)\subset \Oc_n $, where $x_1, \dots,
  x_n$ is a regular system of parameters. Then $T_{\Oc_n}(I) =
  \sum_{i=r+1}^n \Oc_n \partial_{x_i} + \sum_{i,j=1}^r \Oc_n
  x_i\partial_{x_j}$ and the fibre Lie algebra
  \begin{displaymath}
    \g_\Cb = \sum_{i=r+1}^n \Cb\partial_{x_i} +    \sum_{i,j=1}^r
   \Cb x_i\partial_{x_j} =\sum_{i=r+1}^n \Cb \partial_{x_i} + \gl_{r}
  \end{displaymath}

\end{example}
\begin{example}
  Put $I = (\sum_{i=1}^n x_i^2)\subset \Oc_n$, $\partial_{ij} =
  x_i\partial_{x_j} - x_j \partial_{x_i}$, and $E= \sum_{i=1}^n
  x_i \partial_{x_i}$.  Then
  \begin{displaymath}
    T_{\Oc_n}(I) = \sum_{i< j} \Oc_n\partial_{ij}  + \Oc_n E
\end{displaymath}
and the fibre Lie algebra $\g_\Cb = \Cb \oplus \of_{n}$, where
$\of_{n}$ is the orthogonal Lie algebra.  
\end{example}
\begin{example} The Whitney umbrella is defined by the principal ideal
  $I = (z^2 - x^2y)\subset \Oc_3$.  We have $T_{\Oc_3}(I)=
  \sum_{i=1}^4\Oc_3\delta_i$, where
 \begin{align*}
 \delta_1 &= \nabla_x - 2 \nabla_y ,  &\delta_2 &= \nabla_x+ \nabla_z, \\
 \delta_3 &= 2z\partial_y+x^2\partial_z ,  &\delta_4& = z\partial_x+xy\partial_z. 
\end{align*}
The singularity is not isolated and $T_{\Oc_3}(I)$ is not free.  The
fibre Lie algebra $\g_\Cb = \sum \Cb \bar \delta_i$, where $[\bar
\delta_1, \bar \delta_2] = [\bar \delta_2,\bar \delta_4] = [\bar
\delta_3, \bar \delta_4] = 0$ (note that $[\delta_3, \delta_4]= x
\delta_1$), $[\bar \delta_1,\bar \delta_3] = 2[\bar \delta_2,\bar
\delta_3] = 2\bar \delta_3$, and $[\bar \delta_1,\bar \delta_4] =
-\bar \delta_4$.  Therefore $\g_\Cb$ is solvable.
\end{example}
% \begin{proof}
%   Let $\Lc \subset \gl(V)$ 
% \end{proof}
% Let $\Lc$ be a semi-simple Lie algebra. Can one find a tangential Lie
% algebroid $\g_A= T_A(I)$ such that $\g_k $ has a Levi factor
% isomorphic to $\Lc$?  If $\g_A$ is free over $A$ and $I$ is a
% principal ideal, the condition is that $\Lc$ should be isomorphic to a
% subalgebra of $T_{A}(I)$ that can be complemented to a basis of
% $T_A(I)$ such that the determinant of the map that takes a basis of
% $T_A$.  Let $\Bc$ be a basis of $T_A$ and $\Bc(I)$ a basis of $T_A(I)$

%  Let $\g_k$ be a Lie algebra over $k$ and $V$ be
% a faithful $\g$-module, i.e. the map $\g \to \gl(V)$ is
% injective. Put $R= S(V)$, and assume that $\g \to T_R$ is generically
% transitive.
\begin{example} This example is borrowed from
  \cite{granger-schulze:formalstructure}, where they start with a
  $4$-dimensional representation of the Lie algebra $\gl_2(\Cb)$ and
  then apply Saito's criterion for free divisors to get the ideal $I =
  (y^2x^2 - 4xz^3-4y^3w+18xyzw-27w^2z^2) \subset \Oc_4$.  Then
  $T_{\Oc_4}(I)= \oplus_{i=1}^4 \Oc_4\delta_i $, where
 \begin{align}
& \delta_1 = \nabla_x + \nabla_y+ \nabla_z+ \nabla_w \nonumber \\
& \delta_2 = -3\nabla_x-\nabla_y+ \nabla_z+3\nabla_w \nonumber\\
& \delta_3 = y\partial_x+2z\partial_y+3w\partial_z \nonumber\\
& \delta_4 = 3x\partial_y+2y\partial_z+z\partial_w. \nonumber
\end{align}
The ideal $I$ is also the ideal of the discriminant of a cubic
polynomial.  Here $n> 3$ and the singularity is not isolated, so we
are not in the cases covered by \Proposition{\ref{granger-schulze}}
and \Theorem{\ref{isol-solv}}.  The fibre Lie algebra is $
T_{\Oc_4}(I)/\mf T_{\Oc_4}(I) = \gl_2 (\Cb)$, so a Levi factor is $\Lc
= \Sl_2(\Cb)$.  The Hilbert series of the $T_{\Oc_4}(I)$-module
$\Oc_4$ is therefore the generating function for the numbers
$\ell_{\Sl_2(\Cb)}(S^n(\mf/\mf^2))$. Since $ \frak m/\frak m^2$ is
$\Sl_2(\Cb)$-simple of dimension $4$, it can be identified with the
symmetric product $V= S^3(\Cb^2)$.  The Hilbert series $H_V(t)$ of the
$( S^\bullet (V),\Sl_2(\Cb))$-module $S^\bullet (V)$ is determined in
\Example{\ref{sl2-example}}. We get in particular the dimension
$d_{T_{\Oc_4}(I)}(\Oc_4) =2$ and multiplicity $e_{T_{\Oc_4}(I)}(\Oc_4,
\mf) = 1/4$.
\end{example}
\begin{question}
  Which finite-dimensional complex Lie algebras can be constructed as
  a fibre Lie algebra of a hypersurface singularity?
\end{question}

%--------------------------------------------------------

%%% bib section
\begin{bibsection}
  \begin{biblist}\bib{bedratyuk:binary}{article}{
  author={Bedratyuk, Leonid},
  title={The Poincare series for the algebra of covariants of a binary form},
  journal={Int. J. Algebra},
  volume={4}, 
  date={2010},
  number={25-28},
  pages={1201--1207},
  issn={1312-8868},
  review={\MR {2772495}},
}

\bib{bei-ber:jantzen}{article}{
    author={Beilinson, Alexander},
    author={Bernstein, Joseph},
     title={A proof of Jantzen conjectures},
 booktitle={I. M. Gel\cprime fand Seminar},
    series={Adv. Soviet Math.},
    volume={16},
     pages={1\ndash 50},
 publisher={Amer. Math. Soc.},
     place={Providence, RI},
      date={1993},
%    review={MR 95a:22022},
}

\bib{bjork:analD}{book}{
    author={Bj{\"o}rk, Jan-Erik},
     title={Analytic ${\scr D}$-modules and applications},
    series={Mathematics and its Applications},
    volume={247},
 publisher={Kluwer Academic Publishers Group},
     place={Dordrecht},
      date={1993},
     pages={xiv+581},
      isbn={0-7923-2114-6},
    review={MR 95f:32014},
}
\bib{bernstein-gege:cubic}{article}{
    author={Bern{\v{s}}te{\u{\i}}n, I.~N.},
    author={Gelfand, I.~M.},
    author={Gelfand, S.~I.},
     title={Differential operators on a cubic cone},
      date={1972},
   journal={Uspehi Mat. Nauk},
    volume={27},
    number={1(163)},
     pages={185\ndash 190},
}
\bib{bourbaki-lie-Ch1}{book}{
   author={Bourbaki, N.},
   title={\'El\'ements de math\'ematique. Fasc. XXVI. Groupes et alg\`ebres
   de Lie. Chapitre I: Alg\`ebres de Lie},
   language={French},
   series={Seconde \'edition. Actualit\'es Scientifiques et Industrielles,
   No. 1285},
   publisher={Hermann},
   place={Paris},
   date={1971},
   pages={146 pp. (1 foldout)},
   review={\MR{0271276 (42 \#6159)}},
}
\bib{bourbaki-lie-ch7-9}{book}{
   author={Bourbaki, Nicolas},
   title={Lie groups and Lie algebras. Chapters 7--9},
   series={Elements of Mathematics (Berlin)},
   note={Translated from the 1975 and 1982 French originals by Andrew
   Pressley},
   publisher={Springer-Verlag},
   place={Berlin},
   date={2005},
   pages={xii+434},
   isbn={3-540-43405-4},
   review={\MR{2109105 (2005h:17001)}},
}

\bib{brumatti-simis}{article}{
   author={Brumatti, Paulo},
   author={Simis, Aron},
   title={The module of derivations of a Stanley-Reisner ring},
   journal={Proc. Amer. Math. Soc.},
   volume={123},
   date={1995},
   number={5},
   pages={1309--1318},
   issn={0002-9939},
   review={\MR{1243162 (95f:13014)}},
   doi={10.2307/2161115},
}
\bib{coutinho-levcovitz:diffsimple}{article}{
   author={Coutinho, S. C.},
   author={Levcovitz, D.},
   title={On the differential simplicity of affine rings},
   journal={Proc. Amer. Math. Soc.},
   volume={142},
   date={2014},
   number={5},
   pages={1701--1704},
   issn={0002-9939},
   review={\MR{3168476}},
   doi={10.1090/S0002-9939-2014-11652-2},
}
\bib{dolgachev-invariants}{book}{
   author={Dolgachev, Igor},
   title={Lectures on invariant theory},
   series={London Mathematical Society Lecture Note Series},
   volume={296},
   publisher={Cambridge University Press},
   place={Cambridge},
   date={2003},
   pages={xvi+220},
   isbn={0-521-52548-9},
   review={\MR{2004511 (2004g:14051)}},
   doi={10.1017/CBO9780511615436},
}

\bib{gelfand-gelfand-bernstein:models}{article}{
  author={Bern{\v {s}}te{\u \i }n, I. N.},
  author={Gel{\cprime }fand, I. M.},
  author={Gel{\cprime }fand, S. I.},
  title={Models of representations of Lie groups},
  language={Russian},
  journal={Trudy Sem. Petrovsk.},
  number={Vyp. 2},
  date={1976},
  pages={3--21},
  issn={0321-2971},
  review={\MR {0453927 (56 \#12180)}},
}
\bib{gelfand-zelevinsky:models}{article}{
  author={Gel{\cprime }fand, I. M.},
  author={Zelevinski{\u \i }, A. V.},
  title={Models of representations of classical groups and their hidden symmetries},
  language={Russian},
  journal={Funktsional. Anal. i Prilozhen.},
  volume={18},
  date={1984},
  number={3},
  pages={14--31},
  issn={0374-1990},
  review={\MR {757246 (86i:22024)}},
}

\bib{dichi-sangare:quasi-poly}{article}{
  author={Dichi, H.},
  author={Sangar{\'e}, D.},
  title={Hilbert functions, Hilbert-Samuel quasi-polynomials with respect to $f$-good filtrations, multiplicities},
  journal={J. Pure Appl. Algebra},
  volume={138},
  date={1999},
  number={3},
  pages={205--213},
  issn={0022-4049},
  review={\MR {1691476 (2000f:13032)}},
  doi={10.1016/S0022-4049(98)00080-2},
}

\bib{granger-schulze:initial-lieb}{article}{
  author={Granger, Michel},
  author={Schulze, Mathias},
  title={Initial logarithmic Lie algebras of hypersurface singularities},
  journal={J. Lie Theory},
  volume={19},
  date={2009},
  number={2},
  pages={209--221},
  issn={0949-5932},
  review={\MR {2572124 (2011b:32048)}},
}

\bib{granger-schulze:formalstructure}{article}{
  author={Granger, Michel},
  author={Schulze, Mathias},
  title={On the formal structure of logarithmic vector fields},
  journal={Compos. Math.},
  volume={142},
  date={2006},
  number={3},
  pages={765--778},
  issn={0010-437X},
  review={\MR {2231201 (2007e:32037)}},
  doi={10.1112/S0010437X06001916},
}
\bib{greuel-lossen-shusten}{book}{
   author={Greuel, G.-M.},
   author={Lossen, C.},
   author={Shustin, E.},
   title={Introduction to singularities and deformations},
   series={Springer Monographs in Mathematics},
   publisher={Springer},
   place={Berlin},
   date={2007},
   pages={xii+471},
   isbn={978-3-540-28380-5},
   isbn={3-540-28380-3},
   review={\MR{2290112 (2008b:32013)}},
}
\bib{hadziev}{article}{
  author={Had{\v {z}}iev, D{\v {z}}.},
  title={Certain questions of the theory of vector invariants},
  language={Russian},
  journal={Mat. Sb. (N.S.)},
  volume={72 (114)},
  date={1967},
  pages={420--435},
  review={\MR {0223495 (36 \#6543)}},
}
\bib{hart:der_fin_type}{article}{
   author={Hart, R.},
   title={Derivations on regular local rings of finitely generated type},
   journal={J. London Math. Soc. (2)},
   volume={10},
   date={1975},
   pages={292--294},
   issn={0024-6107},
   review={\MR{0369369 (51 \#5602)}},
}
\bib{hart:diffoperators}{article}{
   author={Hart, R.},
   title={Differential operators on affine algebras},
   journal={J. London Math. Soc. (2)},
   volume={28},
   date={1983},
   number={3},
   pages={470--476},
   issn={0024-6107},
   review={\MR{724716 (85b:13040)}},
   doi={10.1112/jlms/s2-28.3.470},
}

\bib{hilbert-invarianttheory}{book}{
  author={Hilbert, David},
  title={Theory of algebraic invariants},
  note={Translated from the German and with a preface by Reinhard C. Laubenbacher; Edited and with an introduction by Bernd Sturmfels},
  publisher={Cambridge University Press},
  place={Cambridge},
  date={1993},
  pages={xiv+191},
  isbn={0-521-44457-8},
  isbn={0-521-44903-0},
  review={\MR {1266168 (97j:01049)}},
}
\bib{huebschmann-duality}{article}{
   author={Huebschmann, Johannes},
   title={Duality for Lie-Rinehart algebras and the modular class},
   journal={J. Reine Angew. Math.},
   volume={510},
   date={1999},
   pages={103--159},
   issn={0075-4102},
   review={\MR{1696093 (2000f:53109)}},
   doi={10.1515/crll.1999.043},
}

\bib{jordan:diffsimple}{article}{
   author={Jordan, D. A.},
   title={Differentially simple rings with no invertible derivatives},
   journal={Quart. J. Math. Oxford Ser. (2)},
   volume={32},
   date={1981},
   number={128},
   pages={417--424},
   issn={0033-5606},
   review={\MR{635590 (83g:13005)}},
}
\bib{kallstrom:liftingder}{article}{
   author={K{\"a}llstr{\"o}m, Rolf},
   title={Liftable derivations for generically separably algebraic morphisms
   of schemes},
   journal={Trans. Amer. Math. Soc.},
   volume={361},
   date={2009},
   number={1},
   pages={495--523},
   issn={0002-9947},
   review={\MR{2439414}},
}

\bib{kallstrom:preserve}{article}{
  author={K{\"a}llstr{\"o}m, Rolf},
  title={Preservation of defect sub-schemes by the action of the tangent sheaf},
  journal={J. Pure and Applied Algebra},
  volume={156},
  date={2005},
  number={2},
  pages={286\ndash 319},
  issn={0001-8708},
  review={MR 2001m:58078},
}
\bib{kallstrom:two-extensions}{article}{
    author={K{\"a}llstr{\"o}m, Rolf},
     title={Two extensions of Hilbert's finiteness theorem},
      date={2012},
  eprint={math/1106.5395},
     url={http://arxiv.org/abs/1212.4790},
}

\bib{katz:exponential}{book}{
  author={Katz, Nicholas M.},
  title={Exponential sums and differential equations},
  series={Annals of Mathematics Studies},
  volume={124},
  publisher={Princeton University Press},
  place={Princeton, NJ},
  date={1990},
  pages={xii+430},
  isbn={0-691-08598-6},
  isbn={0-691-08599-4},
  review={MR 93a:14009},
}
\bib{mackenzie-kirill:generalth}{book}{
   author={Mackenzie, Kirill C. H.},
   title={General theory of Lie groupoids and Lie algebroids},
   series={London Mathematical Society Lecture Note Series},
   volume={213},
   publisher={Cambridge University Press},
   place={Cambridge},
   date={2005},
   pages={xxxviii+501},
   isbn={978-0-521-49928-3},
   isbn={0-521-49928-3},
   review={\MR{2157566 (2006k:58035)}},
}

\bib{ramanathan-kempf}{article}{
   author={Kempf, George R.},
   author={Ramanathan, A.},
   title={Multicones over Schubert varieties},
   journal={Invent. Math.},
   volume={87},
   date={1987},
   number={2},
   pages={353--363},
   issn={0020-9910},
   review={\MR{870733 (88c:14067)}},
   doi={10.1007/BF01389420},
}

\bib{lancaster-towber:1}{article}{
  author={Lancaster, Glenn},
  author={Towber, Jacob},
  title={Representation-functors and flag-algebras for the classical groups. I},
  journal={J. Algebra},
  volume={59},
  date={1979},
  number={1},
  pages={16--38},
  issn={0021-8693},
  review={\MR {541667 (80i:14020)}},
  doi={10.1016/0021-8693(79)90149-2},
}
\bib{mather-yau}{article}{
  author={Mather, John N.},
  author={Yau, Stephen S. T.},
  title={Classification of isolated hypersurface singularities by their moduli algebras},
  journal={Invent. Math.},
  volume={69},
  date={1982},
  number={2},
  pages={243--251},
  issn={0020-9910},
  review={\MR {674404 (84c:32007)}},
  doi={10.1007/BF01399504},
}

\bib{macconnell-robson:weylsimple}{article}{
   author={McConnell, J. C.},
   author={Robson, J. C.},
   title={Homomorphisms and extensions of modules over certain differential
   polynomial rings},
   journal={J. Algebra},
   volume={26},
   date={1973},
   pages={319--342},
   issn={0021-8693},
   review={\MR{0342566 (49 \#7312)}},
}
\bib{mcconnel-robson}{book}{
    author={McConnell, J. C.},
    author={Robson, J. C.},
     title={Noncommutative Noetherian rings},
    series={Pure and Applied Mathematics},
 publisher={John Wiley \& Sons Ltd.},
     place={Chichester},
      date={1987},
     pages={xvi+596},
      isbn={0-471-91550-5},
    review={MR 89j:16023},
}
\bib{mccord}{article}{
   author={McCord, Michael C.},
   title={Singular homology groups and homotopy groups of finite topological
   spaces},
   journal={Duke Math. J.},
   volume={33},
   date={1966},
   pages={465--474},
   issn={0012-7094},
   review={\MR{0196744 (33 \#4930)}},
}
\bib{matsumura}{book}{
  author={Matsumura, Hideyuki},
  title={Commutative ring theory},
  publisher={Cambridge University Press},
  date={1986},
}
\bib{miller-sturmfels:book}{book}{
   author={Miller, Ezra},
   author={Sturmfels, Bernd},
   title={Combinatorial commutative algebra},
   series={Graduate Texts in Mathematics},
   volume={227},
   publisher={Springer-Verlag},
   place={New York},
   date={2005},
   pages={xiv+417},
   isbn={0-387-22356-8},
   review={\MR{2110098 (2006d:13001)}},
}

\bib{ramanan-ramanathan:proj-norm}{article}{
  author={Ramanan, S.},
  author={Ramanathan, A.},
  title={Projective normality of flag varieties and Schubert varieties},
  journal={Invent. Math.},
  volume={79},
  date={1985},
  number={2},
  pages={217--224},
  issn={0020-9910},
  review={\MR {778124 (86j:14051)}},
  doi={10.1007/BF01388970},
}

\bib{renner:torus}{article}{
  author={Renner, Lex E.},
  title={Hilbert series for torus actions},
  journal={Adv. Math.},
  volume={76},
  date={1989},
  number={1},
  pages={19--32},
  issn={0001-8708},
  review={\MR {1004485 (91b:14065)}},
  doi={10.1016/0001-8708(89)90042-X},
}

\bib{roberts:invariants}{article}{
  author={Roberts, M.},
  title={The covariants of a binary quantic of the nth degree},
  journal={Quarterly J. of Pure and Applied Mathematics},
  volume={4},  date={1861},
  number={},
  pages={168--178},
}

\bib{rossi:vectorfields}{article}{
  author={Rossi, Hugo},
  title={Vector fields on analytic spaces},
  journal={Ann. of Math. (2)},
  volume={78},
  date={1963},
  pages={455--467},
  issn={0003-486X},
  review={\MR {0162973 (29 \#277)}},
}
% \bib{saito-kyoji:log}{article}{
%     author={Saito, Kyoji},
%      title={Theory of logarithmic differential forms and logarithmic vector
%             fields},
%    journal={J. Fac. Sci. Univ. Tokyo Sect. IA Math.},
%     volume={27},
%       date={1980},
%     number={2},
%      pages={265\ndash 291},
%       issn={0040-8980},
%     review={MR 83h:32023},
% }
\bib{scheja:fortzetsungderivationen}{article}{
    author={Scheja, G\"unther},
    author={Storch, Uwe},
     title={Fortsetzung von Derivationen},
  language={German},
   journal={J. Algebra},
    volume={54},
      date={1978},
    number={2},
     pages={353\ndash 365},
    review={\MR{80a:13004}},
}

\bib{scheja-wiebe:derivationen-isolierten}{article}{
  author={Scheja, G{\"u}nter},
  author={Wiebe, Hartmut},
  title={\"Uber Derivationen in isolierten Singularit\"aten auf vollst\"andigen Durchschnitten},
  language={German},
  journal={Math. Ann.},
  volume={225},
  date={1977},
  number={2},
  pages={161--171},
  issn={0025-5831},
  review={\MR {0508048 (58 \#22649)}},
}
\bib{schulze:solvable}{article}{
   author={Schulze, Mathias},
   title={A solvability criterion for the Lie algebra of derivations of a
   fat point},
   journal={J. Algebra},
   volume={323},
   date={2010},
   number={10},
   pages={2916--2921},
   issn={0021-8693},
   review={\MR{2609182 (2011k:13044)}},
   doi={10.1016/j.jalgebra.2010.01.010},
}
\bib{seidenberg:diffideals}{article}{
    author={Seidenberg, A},
     title={Differential ideals in rings of finitely generated type},
   journal={Amer. J. Math.},
    volume={89},
      date={1967},
     pages={22\ndash 42},
%    review={MR 35 \#2902},
}

\bib{springer:invarianttheory}{book}{
  author={Springer, T. A.},
  title={Invariant theory},
  series={Lecture Notes in Mathematics, Vol. 585},
  publisher={Springer-Verlag},
  place={Berlin},
  date={1977},
  pages={iv+112},
  review={\MR {0447428 (56 \#5740)}},
}
\bib{springer-sl2}{article}{
  author={Springer, T. A.},
  title={On the invariant theory of ${\rm SU}_{2}$},
  journal={Nederl. Akad. Wetensch. Indag. Math.},
  volume={42},
  date={1980},
  number={3},
  pages={339--345},
  issn={0019-3577},
  review={\MR {587060 (83k:20041)}},
}
\bib{stong:finite}{article}{
   author={Stong, R. E.},
   title={Finite topological spaces},
   journal={Trans. Amer. Math. Soc.},
   volume={123},
   date={1966},
   pages={325--340},
   issn={0002-9947},
   review={\MR{0195042 (33 \#3247)}},
}

\bib{sweedler:simplealgebras}{article}{
   author={Sweedler, Moss E.},
   title={Groups of simple algebras},
   journal={Inst. Hautes \'Etudes Sci. Publ. Math.},
   number={44},
   date={1974},
   pages={79--189},
   issn={0073-8301},
   review={\MR{0364332 (51 \#587)}},
}
\bib{tadesse:monomial}{article}{
   author={Tadesse, Yohannes},
   title={Derivations preserving a monomial ideal},
   journal={Proc. Amer. Math. Soc.},
   volume={137},
   date={2009},
   number={9},
   pages={2935--2942},
   issn={0002-9939},
   review={\MR{2506451 (2010k:13043)}},
   doi={10.1090/S0002-9939-09-09922-5},
}

\bib{yau:solvability}{article}{
  author={Yau, Stephen S.-T.},
  title={Solvability of Lie algebras arising from isolated singularities and nonisolatedness of singularities defined by ${\rm sl}(2,{\bf C})$ invariant polynomials},
  journal={Amer. J. Math.},
  volume={113},
  date={1991},
  number={5},
  pages={773--778},
  issn={0002-9327},
  review={\MR {1129292 (92j:32125)}},
  doi={10.2307/2374785},
}
\end{biblist}
\end{bibsection}

\end{document}